\let\oldtocsection=\tocsection
\let\oldtocsubsection=\tocsubsection
\renewcommand{\tocsection}[2]{\hspace{0em}\oldtocsection{#1}{#2}}
\renewcommand{\tocsubsection}[2]{\hspace{1em}\oldtocsubsection{#1}{#2}}
\newtheorem{thm}{Theorem}[section]
\newtheorem{theorem}[thm]{Theorem}
\newtheorem{proposition}[thm]{Proposition}
\newtheorem{lemma}[thm]{Lemma}
\newtheorem{corollary}[thm]{Corollary}
\newtheorem{definition}[thm]{Definition}
\newtheorem{remark}[thm]{Remark}
\newtheorem*{thm*}{Theorem}
\newtheorem*{cor*}{Corollary}
\newtheorem*{prop*}{Proposition}
\newcommand{\Z}{\mathbb{Z}}
\newcommand{\A}{\mathcal{A}}
\newcommand{\B}{\mathcal{B}}
\newcommand{\R}{\mathbb{R}}
\newcommand{\C}{\mathbb{C}}
\renewcommand{\P}{\mathbb{P}}
\newcommand{\Diff}{\mathit{Diff}}
\newcommand{\bdm}{\begin{displaymath}}
\newcommand{\edm}{\end{displaymath}}
\newcommand{\bq}{\begin{equation}}
\newcommand{\eq}{\end{equation}}
\numberwithin{equation}{section}
\title{Families of monotone Lagrangians in Brieskorn--Pham hypersurfaces}
\author{Ailsa Keating }
\begin{document}

\begin{abstract}
We present techniques, inspired by monodromy considerations, for constructing compact monotone Lagrangians in certain affine hypersurfaces, chiefly of  Brieskorn--Pham type. We focus on dimensions 2 and 3, though the constructions generalise to higher ones. 
The techniques give significant latitude in controlling the homology class, Maslov class and monotonicity constant of the Lagrangian, and a range of possible diffeomorphism types; they are also explicit enough to be amenable to calculations of pseudo-holomorphic curve invariants. 

Applications include infinite families of monotone Lagrangian $S^1 \times \Sigma_g$ in $\C^3$, distinguished by soft invariants for any genus $g \geq 2$; and, for fixed soft invariants, a range of infinite families of Lagrangians in Brieskorn--Pham hypersurfaces. These are generally distinct up to Hamiltonian isotopy. In specific cases, we also set up well-defined  counts of Maslov zero holomorphic annuli, which distinguish the Lagrangians up to compactly supported symplectomorphisms. Inter alia, these give families of exact monotone Lagrangian tori which are related neither by geometric mutation nor by compactly supported symplectomorphisms. 

\end{abstract}

\maketitle

\tableofcontents

\section{Introduction}

We present techniques  for constructing families of compact monotone Lagrangians, including exact ones, in affine varieties. Our prefered setting will be  Brieskorn--Pham hypersurfaces, i.e.~affine varieties of the form $\{ z_0^{a_0} + \ldots + z_m^{a_m} =1 \} \subset \C^{m+1}$, though  our constructions will carry over to e.g.~any affine variety which contains a suitable (truncated) Brieskorn--Pham hypersurface as a Stein submanifold. 
The article focuses on complex dimensions 2 and 3, though the techniques extend to give constructions in higher dimensions, which will briefly be discussed.

\subsection{Construction techniques}

Loosely speaking, the Lagrangians are built via an iterative process, increasing dimension one at a time. 
Suppose you start with a compact Lagrangian $L$ in, say, $X = \{ z_0^{a_0} + \ldots + z_{m-1}^{a_{m-1}} =1 \}$. Consider $Y = \{   z_0^{a_0} + \ldots + z_{m-1}^{a_{m-1}} + z_m^{a_m} =1 \}$, and a Lefschetz fibration $\pi: Y \to \C$, with smooth fibre $X$, given by $\epsilon(z_0, \ldots, z_{m-1}) + z_m$, where $\epsilon$ is a small generic linear deformation. We use properties of Dehn twists in vanishing cycles in $X$ (which follow from monodromy-type considerations) to construct in $Y$ embedded Lagrangians of the form $L \times S^1$, fibred over \emph{immersed} copies of $S^1$ in the base of $\pi$. We are also often able to get connected sums of such Lagrangians, via Polterovich surgery. The fact that the $S^1$ are immersed, rather than embedded, allows for considerable adaptability of the constructions; and the fact that the Lagrangians are (essentially) fibred over the base will make them tractable computationally.

\subsection{Soft invariants}
The techniques are explicit enough to allow us to control `soft' invariants such as the homology class of the Lagrangian, its Maslov class, and its monotonicity constant. Combined with results from geometric group theory, soft invariants give:

\begin{thm*}(Theorem \ref{thm:infinite_family_C3}.)
Fix $g \geq 2$, and any monotonicity constant $\kappa$. Then there exist infinitely many monotone Lagrangian $S^1 \times \Sigma_g$ in $\C^3$, distinct up to Lagrangian isotopy or symplectomorphism.
\end{thm*}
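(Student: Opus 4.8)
The plan is to split the argument into a construction step and a distinguishing step, the distinguishing invariant being the Maslov index of a canonical circle inside the Lagrangian. For the construction I would realise $\C^3$ as the total space of a Lefschetz fibration $\pi\colon \C^3 \to \C$, obtained by Morsifying a Brieskorn--Pham polynomial $z_0^{a_0}+z_1^{a_1}+z_2^{a_2}$, so that the smooth fibre $X$ is a Brieskorn--Pham surface sitting in $\C^3$ as a Stein submanifold. Inside $X$ I would first produce a monotone Lagrangian surface $\Sigma_g$ using the one-dimension-lower version of the technique (fibring a vanishing circle over an immersed loop in the base of $X \to \C$ and performing Polterovich surgeries to build up the genus). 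Applying the main construction to $\pi$ then yields an embedded Lagrangian $\Sigma_g \times S^1 \subset \C^3$, fibred over an immersed circle $\gamma$ in the base and monotone with any prescribed constant $\kappa$, where the $\Sigma_g$-factor is the fibre Lagrangian and the $S^1$-factor runs along $\gamma$.

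For the distinguishing step, the key point is that for $g \geq 2$ the surface group $\pi_1(\Sigma_g)$ is centreless, so the centre of $\pi_1(S^1 \times \Sigma_g) \cong \Z \times \pi_1(\Sigma_g)$ is exactly the $\Z$ generated by the $S^1$-factor. Hence the class $[S^1 \times \mathrm{pt}] \in H_1(L;\Z)$ is characteristic: any diffeomorphism of $L = S^1 \times \Sigma_g$ sends it to $\pm$ itself. Since a symplectomorphism of $\C^3$ carrying one such Lagrangian to another restricts to a diffeomorphism between them, and a Lagrangian isotopy induces a diffeomorphism identifying their Maslov classes, the integer $c(L) := |\langle \mu_L, [S^1 \times \mathrm{pt}]\rangle|$ — the absolute Maslov index of the central circle — is invariant under both symplectomorphism and Lagrangian isotopy. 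This is where the input ``from geometric group theory'' enters: one needs the centre computation, together with a description of the image of $\mathrm{MCG}(S^1 \times \Sigma_g)$ acting on $H^1(L;\Z)$, to conclude that $c(L)$ (equivalently, the $\mathrm{MCG}$-orbit of $\mu_L$) genuinely separates the examples; note this is precisely where the hypothesis $g \geq 2$ is used, since for $g=1$ the torus $S^1 \times \Sigma_1$ has abelian $\pi_1$ and no such characteristic circle.

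It then remains to realise infinitely many values of $c(L)$ while holding $g$ and $\kappa$ fixed. I would achieve this by varying the winding of the base loop $\gamma$ around the critical values of $\pi$: increasing the rotation number / number of Dehn twists raises the Maslov index of the central circle, while the fibre factor $\Sigma_g \subset X$ is left unchanged. As the examples are monotone the symplectic area class equals $\kappa\,\mu_L$, so as $c(L)$ grows the area enclosed by $\gamma$ must grow proportionally; one arranges this by enlarging the swept region and rescaling so the proportionality constant stays equal to the fixed $\kappa$. A sequence of loops with $c(L) \to \infty$ then produces infinitely many monotone Lagrangian $S^1 \times \Sigma_g \subset \C^3$ with pairwise distinct invariants, hence pairwise non-symplectomorphic and pairwise non-Lagrangian-isotopic.

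The main obstacle I anticipate is analytic rather than topological: simultaneously (i) pinning the monotonicity constant at the prescribed value $\kappa$ while the Maslov class is pushed to infinity, and (ii) keeping the fibred Lagrangian embedded and monotone through the Polterovich surgeries used both to manufacture $\Sigma_g$ and to close up over the immersed loop $\gamma$. Controlling all the relevant disk areas and Maslov indices at once — so that the single ratio $\kappa$ is respected across every class in $H^1(L;\Z)$, not merely on the central circle — is the delicate bookkeeping on which the whole family rests. By contrast, once the examples exist with unbounded central Maslov index, the centre argument of the previous paragraph makes the distinctness essentially immediate.
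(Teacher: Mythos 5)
Your construction step is essentially the paper's (a product of a fibre Lagrangian $\Sigma_g \subset X_r$ with a circle in the base of a Lefschetz fibration on $\C^3 \cong Y_{r,1}$), but your distinguishing step contains a genuine gap: the invariant you propose, $c(L) = |\langle \mu_L, [S^1 \times \mathrm{pt}]\rangle|$, cannot take infinitely many values. By Evans--Kedra \cite[Theorem B]{Evans-Kedra}, which the paper quotes explicitly, the $S^1$ factor of \emph{any} monotone Lagrangian $S^1 \times \Sigma_g \subset \C^3$ has Maslov index two, so $c(L) \equiv 2$ across all monotone examples and separates nothing. Your proposed mechanism for varying $c(L)$ --- winding the base loop $\gamma$ around the critical values so that monodromy twisting raises the circle's Maslov index --- is exactly the construction the paper carries out in Section \ref{sec:S^1xXi_g}, but there it lives in $Y_{r,s}$ for large $s$, not in $\C^3$; the monodromy machinery (Propositions \ref{prop:Phi_definition} and \ref{prop:Phi_Dehntwists}) requires the exponent $b$ of the last variable to be a large multiple of the lcm of the others, which fails for $b=1$, and in any case a monotone example in $\C^3$ with circle Maslov index $\neq 2$ would contradict Evans--Kedra. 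The paper does the opposite of what you propose: it keeps the circle fixed (embedded, unknotted, in a region where $P_{r,1}$ is trivial, with Maslov index two and enclosed area $2\kappa$) and varies the \emph{fibre} Lagrangian $\Lambda_g(T_{k_1,l_1,m_1},\ldots,T_{k_g,l_g,m_g})$, whose Maslov class is $(2(l_1-k_1),0,\ldots,2(l_g-k_g),0)$; the distinguishing invariant is $N = \gcd\{l_i-k_i\}$, i.e.\ half the minimal Maslov number of the $\Sigma_g$ factor, which takes infinitely many values as the $(k_i,l_i)$ vary.

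A secondary, subtler problem: your geometric-group-theory input (the centre of $\pi_1(S^1\times\Sigma_g)$ is the $\Z$ factor for $g\geq 2$) is strictly weaker than what the paper uses, and would not suffice even for the corrected invariant. An automorphism of $\Z \times \pi_1(\Sigma_g)$ preserving the centre can have the form $(n,h) \mapsto (\pm n + \chi(h), \phi(h))$ for a homomorphism $\chi\colon \pi_1(\Sigma_g) \to \Z$; on homology this shifts the Maslov pairing on $\Sigma_g$-classes by even multiples of the central index, destroying the gcd $2N$ (indeed $\gcd(2, 2(l_i-k_i)) = 2$ always). This is precisely why the paper invokes Waldhausen's theorem (Theorem \ref{thm:split_diffeo}): every diffeomorphism of $S^1\times\Sigma_g$ is isotopic to a \emph{product} diffeomorphism, which kills the $\chi$-shear and makes the collection of $\Sigma_g$-Maslov indices, up to the mapping class group action on $H^1(\Sigma_g;\Z)$, a genuine invariant. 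So your instinct about where the group theory enters is in the right spirit, but you need the full splitting theorem, not merely the centre computation, and you need to attach the Maslov data to the surface factor rather than the circle.
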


Experts may note that at least some of these can be arranged to have non-trivial Hamiltonian monodromy group, see Section \ref{sec:Hamiltonian_monodromy}. 

\subsubsection*{Discussion}
The diffeomorphism classes of closed orientable 3-manifolds admitting monotone Lagrangian embeddings into $\C^3$ are understood, by work of Evans and Kedra \cite[Theorem B]{Evans-Kedra} building on Fukaya \cite{Fukaya} and Damian \cite{Damian}: if $L$ is such a manifold, then $L$ is diffemorphic to  $S^1 \times \Sigma_g$, where $\Sigma_g$ is a surface of genus $g$; 
moreover, the $S^1$ factor in a monotone $S^1 \times \Sigma_g \subset \C^3$ must have Maslov index two.
At least one monotone embedding of $S^1 \times \Sigma_g$ exists for each $g$ (\cite[Proposition 12]{Evans-Kedra} and \cite[Corrigendum]{Evans-Kedra-corrigendum}).  Our constructions give examples with all possible (necessarily even) Maslov classes for $\Sigma_g$; by work of Waldhausen \cite{Waldhausen1, Waldhausen2, Waldhausen3} applied to $\text{Diff} (S^1 \times \Sigma_g)$, these classes are enough to distinguish the Lagrangians whenever $g \geq 2$. 

For the $g=1$ case, Auroux \cite{Auroux} showed that there are infinitely many distinct monotone Lagrangian tori in $\C^3$. This contrasts with the two- (and one-)dimensional case: it is widely expected that there are only two closed monotone Lagrangians in $\C^2$: the Clifford and Chekanov tori (see related results in \cite{DimitroglouRizell, DimitroglouRizell_Goodman_Ivrii}).  Auroux's tori all have the same `soft' invariants -- e.g.~they automatically all have Maslov class $(2,0,0) \in H^1(T^3; \Z) \cong \Z^3$ up to action by $SL(3,\Z)$; his proof uses counts of holomorphic discs with Maslov index two to tell them apart. $\Box$

In all our examples the count of Maslov index two discs will be zero. On the other hand, at least in the setting where the $a_i$ are sufficiently large, we can use calculations of other `hard' invariants to distinguish Lagrangians with the same soft invariants,  up to different possible types of equivalence depending on which hard invariant we use. 

\subsection{Lagrangian Floer theory}

Our convention will be that an `arbitrary' choice of Maslov class for a would-be Lagrangian $L$ is any class $\mu \in H^1(L;\Z)$ which satisfies the obvious restrictions imposed  by the topology of $L$: namely, that $\mu$ must pair to an even number with any class in $\pi_1(L)$ which preserves a choice of local orientation; and to an odd number with any class that reserves it. 
Suppose Lagrangians $L$ and $L'$ are diffeomorphic; 
we say that their Maslov classes 
are the same if they agree under any equivalence $H^1(L, \Z) \cong H^1(L', \Z)$ induced by a diffeomorphism from $L$ to $L'$. 

\subsubsection{Complex dimension two}

\begin{theorem} \label{thm:floer_2d} 
Let $\Sigma$ be a connected sum of tori and Klein bottles. 
If $r$ is  sufficiently large, then for any possible Maslov class and monotonicity constant, 
 we can construct  an infinite family of homologous monotone Lagrangian $ \Sigma$s in $\{ x^2 + y^4 + z^r = 1 \}$, distinct up to Hamiltonian isotopy, with that Maslov class and monotonicity constant.  

\end{theorem}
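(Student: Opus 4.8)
The plan is to realise every such $\Sigma$ through the fibred construction of the previous sections, applied to the Lefschetz fibration $\pi = \epsilon(x,y)+z$ on $Y = \{x^2+y^4+z^r=1\}$. Its smooth fibre is the $A_3$ Milnor fibre $X = \{x^2+y^4=1\}$, a twice-punctured torus carrying the three $A_3$ vanishing cycles, and its $3(r-1)$ critical values I would arrange in the base $\C$ in $r$ clusters near the $r$-th roots of unity. Fixing a Lagrangian circle $V \subset X$, I would fibre $V$ over an immersed loop $\gamma \subset \C$ that avoids the critical values and whose monodromy preserves $V$ up to Hamiltonian isotopy; symplectic parallel transport, followed by a Hamiltonian correction closing $V$ up exactly, then sweeps out an embedded Lagrangian torus, or a Klein bottle when the return map reverses the orientation of $V$. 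A general connected sum of $p$ tori and $q$ Klein bottles I would obtain by prescribing $\gamma$ with the appropriate self-intersections and resolving them by Polterovich surgery, each surgery adding a handle and the orientation-reversing loops contributing the non-orientable summands.

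Next I would read off the soft invariants from the discrete data. The homology class is determined by $[V]$ and the class of $\gamma$ relative to the critical values, which I keep fixed; the Maslov class splits as a fibre contribution, namely the Maslov index of $V$ inside $X$, plus a base contribution governed by the winding of $\gamma$ about the enclosed vanishing cycles. Varying these two pieces independently --- here the abundance of critical values and of pairwise disjoint vanishing cycles for $r$ large is what supplies the needed freedom --- I would hit every Maslov class compatible with the topology of $\Sigma$, respecting the parity constraints dictated by (non-)orientability. Monotonicity I would arrange by balancing the areas enclosed by $\gamma$ against the Maslov data, and any prescribed monotonicity constant $\kappa$ by a global rescaling of $\omega$.

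To produce an infinite homologous family with these fixed soft invariants, I would vary $\gamma$ through a sequence $\gamma_n$ lying in one homotopy class relative to the critical values, so that the homology and Maslov data are unchanged, but differing in finer combinatorics, for instance in how often they wind through a fixed region between two clusters of critical values. To separate the $\gamma_n$ up to Hamiltonian isotopy I would invoke the paper's hard invariant: a count of Maslov-index-zero holomorphic annuli with boundary on $\Sigma$. Monotonicity together with a large minimal Maslov number excludes the relevant disc bubbling, so for an almost complex structure adapted to $\pi$, for which such annuli project to regions in the base bounded by strands of $\gamma_n$, the relevant moduli space is a compact zero-manifold whose count is a Hamiltonian isotopy invariant. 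Computing it in this fibred model, I would show it takes infinitely many values along the sequence.

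The hard part will be making this annulus count genuinely well-defined and invariant: establishing transversality and, above all, compactness of the moduli of Maslov-zero annuli, so as to rule out wall-crossing as the annulus modulus degenerates or as the configuration breaks into strips and lower-dimensional pieces at the boundary of parameter space. Controlling these degenerations --- and doing so uniformly enough that the soft invariants stay pinned across the entire family while only the annulus count moves --- is exactly where the hypothesis that $r$ be sufficiently large is spent.
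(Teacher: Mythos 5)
Your construction and soft-invariant steps match the paper's (fibred tori and Klein bottles over immersed curves, Polterovich surgery for connected sums, Maslov class from winding plus configuration counts, monotonicity by adjusting lobe areas, working with the $A_3$ fibre as in Remark \ref{rmk:A_3}). But the distinguishing mechanism you propose --- a count of Maslov-zero holomorphic annuli with both boundary components on the single Lagrangian $\Sigma$ --- is not how the paper proves this theorem, and it has a genuine gap. The paper's annulus invariant (Theorem \ref{thm:main_2d}) is only defined for a \emph{pair} of disjoint Lagrangians $L \sqcup L'$, where $L'$ is a displacement of $L$ obtained from a nowhere-vanishing closed one-form representing the monotone direction (or, in the exact case, a non-exact push-off of the base curve); this disjointness is exactly what kills the modulus-zero boundary of the abstract moduli space of annuli, where the two boundary circles collide. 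With both boundaries on one $\Sigma$ you cannot exclude this degeneration, and your appeal to ``a large minimal Maslov number'' is unavailable because the theorem demands \emph{every} possible Maslov class, including minimal Maslov number two and the exact case. Worse, for $\Sigma$ of nonzero Euler characteristic no nowhere-vanishing one-form exists, so the displacement trick fails and the annulus count is simply not defined --- the paper says explicitly that the Floer-theoretic argument is needed precisely for ``Lagrangians for which \dots we didn't define the count (for instance, genus $g$ Lagrangians in $X_r$).'' So your plan, as written, cannot cover the general connected sums in the statement.

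The paper's actual proof is considerably softer. One takes the families $T_{k,l,m}$ (and Klein-bottle and connected-sum variants $\Lambda_g$), with fixed Maslov class $2(l-k)$ and fixed homology class, and varies $(k,l)$ with $l-k$ constant to get infinitely many candidates with identical soft invariants. These are then separated by the \emph{ranks} of Floer cohomology with fixed Lagrangian spheres $S_1, S_2, S_3$ given by matching cycles (Proposition \ref{prop:floer_coefficients} and Corollary \ref{cor:floer_coefficients_genus_g}): one shows $\operatorname{rk} HF(T_{k,l,m}, S_1) = m+2k+1$, $\operatorname{rk} HF(T_{k,l,m}, S_2)=2k+1$, $\operatorname{rk} HF(T_{k,l,m}, S_3)=2l$ for any rank-one local system, because projecting to the base of the Lefschetz fibration and applying the open mapping theorem, together with the Maslov index computations of Lemma \ref{thm:Maslov_indices}, forces all Floer differentials to vanish for the standard almost complex structure. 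No compactification, no annulus moduli theory, and no transversality beyond standard monotone Floer theory is needed; the invariant moreover survives exact symplectic embeddings, which the compactification-dependent annulus count does not. If you want to salvage an annulus-based argument, it works only for tori (where the displacement exists) and yields the stronger conclusion of Theorem \ref{thm:main_2d}, but it cannot replace the Floer argument for the full statement here.
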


The proof uses the Lagrangian Floer cohomology of these Lagrangians with a reference Lagrangian sphere. In particular, the conclusions remain true under exact symplectic embeddings of  $\{ x^2 + y^4 + z^r= 1 \}$ (truncated e.g.~to have contact type boundary) into larger Liouville domains. Also, in many cases we can compute the Floer cohomology between two different members of one of the above infinite families; for suitable choices of rank one local systems, it can have arbitrarily large rank (Theorem \ref{thm:no_mutations} and Remark \ref{rmk:no_mutations}). This implies the following:

\begin{thm*} (see Theorem \ref{thm:no_mutations}.)
Consider one of the infinite families of monotone Lagrangian tori given by Theorem \ref{thm:floer_2d}. No two tori in this family can be related by a sequence of geometric mutations. 
\end{thm*}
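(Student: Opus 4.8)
The plan is to argue by contradiction, extracting from geometric mutation a \emph{uniform} bound on the rank of pairwise Floer cohomology and playing it against the unbounded ranks computed in Theorem~\ref{thm:no_mutations}. So suppose two members $L$ and $L'$ of such a family were related by a finite sequence of geometric mutations. First I would record the local nature of a single mutation: it modifies a monotone torus only inside a Weinstein neighbourhood of an embedded Lagrangian mutation disc, so the disk potential $W$ of the mutated torus is obtained from that of the original one by the associated wall-crossing (cluster) transformation. This transformation is a birational isomorphism of the torus of rank-one local systems that matches the Floer-supporting local systems of $L$ with those of $L'$ and preserves the corresponding value of $W$, i.e.\ the eigenvalue labelling the summand of the monotone Fukaya category in which the object lives.

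The core step is to upgrade this matching to a statement about \emph{pairwise} Floer cohomology. A Floer-supporting local system $\rho$ presents the torus as a nonzero object whose self-Floer cohomology has rank at most that of $HF^*(T^2)$; under the wall-crossing correspondence the presentations $(L,\rho)$ and $(L',\rho')$ determine isomorphic objects, so $HF^*((L,\rho),(L',\rho'))$ inherits this same bound. For non-corresponding supporting local systems the two objects are orthogonal simple summands and the pairwise group vanishes, while for a non-supporting $\rho$ the acyclicity of self-Floer forces $(L,\rho)$ to be a zero object, so the pairwise group again vanishes. In every case one obtains a bound on $\dim HF^*((L,\rho),(L',\rho'))$ that is \emph{independent of the local systems}. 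Because isomorphism of objects is transitive, the bound survives composition along the whole mutation sequence, so it applies to the original pair $L,L'$.

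It then remains to contradict this bound, which is exactly the content of Theorem~\ref{thm:no_mutations}. Here I would use the fibred description underlying Theorem~\ref{thm:floer_2d}: each $L_n$ lies over an immersed circle $\gamma_n$ in the base of the Lefschetz fibration, and the Floer complex $CF^*(L_i,L_j)$ is generated by the intersection points of $\gamma_i$ with $\gamma_j$, tensored with the Floer data of the common fibre vanishing cycle. As the family parameter grows the geometric intersection number $\#(\gamma_i\cap\gamma_j)$ grows without bound, and — this is the delicate point — one must choose the rank-one local systems so that the Floer differential vanishes on these generators, forcing $\dim HF^*((L_i,\rho_i),(L_j,\rho_j))$ to exceed the mutation bound. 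Comparing with the previous paragraph yields the contradiction, and hence the theorem.

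The main obstacle I expect is the second step: promoting the disk-potential (self-Floer) statement about a single mutation to a genuinely uniform bound on pairwise Floer cohomology that is stable under sequences. One must be careful that the wall-crossing correspondence is only birational, so that the identification of objects may degenerate precisely on the mutation walls; ruling out pathologies there, and confirming that the large-rank local systems produced by the computation really do land in a single eigenvalue summand (so that they witness the contradiction rather than vanishing for trivial reasons), is where the real work lies. By contrast I expect the no-cancellation input in the third paragraph to reduce to careful local-system bookkeeping, entirely analogous to the computation already used to separate the $L_n$ up to Hamiltonian isotopy.
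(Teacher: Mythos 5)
There is a genuine gap in your final step. Your contradiction is purely quantitative: mutation\--relatedness gives the uniform bound $\dim HF \leq \text{rk}\, H^*(T^2;\C)=4$, and you then want the computed ranks to exceed this bound. But for two \emph{fixed} members of one of the families of Theorem \ref{thm:floer_2d} --- these are tori $T_{k+\lambda_1,l+\lambda_1,m}$ and $T_{k+\lambda_2,l+\lambda_2,m}$, with $\lambda=|\lambda_1-\lambda_2|$ --- Theorem \ref{thm:no_mutations} gives rank exactly $2\lambda$ on the non-vanishing locus. This is a fixed number for the given pair (your phrase \qq{the geometric intersection number grows without bound} conflates growth across the family with the rank between two fixed members), and it exceeds $4$ only when $\lambda\geq 3$. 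So your argument rules out mutation-relatedness only for pairs sufficiently far apart in the family and says nothing about $\lambda=1$ (rank $2$) or $\lambda=2$ (rank $4$); transitivity of mutation-equivalence cannot repair this, since ruling out far-apart pairs does not rule out close ones. As written, the proposal therefore proves a strictly weaker statement than \qq{no two tori in this family}.

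What closes the gap in the paper is the finer structure of the mutation dichotomy, which your uniform bound discards. First, when local systems are matched so that the objects are isomorphic, the rank is not merely $\leq 4$ but \emph{exactly} $4$: the tori $T_{k,l,m}$ bound no non-constant holomorphic discs (open mapping theorem), so self-Floer cohomology with any rank-one local system is $H^*(T^2;\C)$. This already contradicts the rank-$2$ answer at $\lambda=1$. Second --- and this handles $\lambda=2$, and in fact every $\lambda\geq 1$ uniformly --- a sequence of mutations forces the non-vanishing locus of pairs of local systems to lie on the graph of the composed wall-crossing birational map of $(\C^*)^2$: for a generic fixed $\rho=(\alpha,\beta)$ on $L$, at most one $\rho'$ on $L'$ can have $HF\neq 0$, and the whole non-vanishing locus is (at most) $2$-dimensional. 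By contrast, Theorem \ref{thm:no_mutations} exhibits, for \emph{every} fixed $(\alpha,\beta)$, two entire $\C^*$-families $(\alpha',\pm\beta)$, $\alpha'\in\C^*$, of local systems with non-zero Floer cohomology, i.e.~a $3$-dimensional non-vanishing locus with two branches $\beta'=\pm\beta$. It is this mismatch between the support loci --- not a rank inequality --- that contradicts mutation-relatedness for all $\lambda \geq 1$; your proposal never invokes this matching structure, which is exactly the point flagged in the paper's remark that \qq{the matching of local systems to get non-zero Floer cohomology is what gives the wall-crossing formula.}
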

This constrasts with most known constructions of interesting families of Lagrangian tori -- for further discussion, and more details on geometric mutation, see Section \ref{sec:no_mutations}. 

\subsubsection{Complex dimension three}

\begin{theorem}\label{thm:floer_3d}
Let $L = \#_{i=1}^l \left( S^1 \times ( \#_{g_i} T^2 \#_{g'_i} K ) \right) $, where $T^2$ is a torus and $K$ a Klein bottle, and $l, g_i, g'_i$ arbitrary. If $r$ and $s$ are sufficiently large, then for any possible Maslov class and monotonicity constant, 
 we can construct  an infinite family of homologous monotone Lagrangian $L$s in  $\{ x^2 + y^4 + z^r + w^s = 1 \}$, distinct up to Hamiltonian isotopy,  with that Maslov class and monotonicity constant.  
\end{theorem}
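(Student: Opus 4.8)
The plan is to run the iterative, one-dimension-at-a-time construction sketched in the introduction, taking Theorem \ref{thm:floer_2d} as the base case, promoting its surfaces to three-manifolds by fibring over immersed loops, and then assembling the connected sum by Polterovich surgery. First I would set up the Lefschetz fibration $\pi : Y \to \C$ on $Y = \{x^2 + y^4 + z^r + w^s = 1\}$ with smooth fibre a truncation of $X = \{x^2 + y^4 + z^r = 1\}$, given by $\epsilon(x,y,z) + w$ for a small generic linear $\epsilon$, so that $w$ plays the role of the new variable. Theorem \ref{thm:floer_2d} then supplies, for each $i$, an infinite family of homologous monotone surfaces $\Sigma_i = \#_{g_i} T^2 \#_{g'_i} K$ inside the fibre $X$, with any prescribed Maslov class and monotonicity constant, distinguished by Floer cohomology against a reference Lagrangian $S^2 \subset X$.

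Next, for each such $\Sigma_i$ I would apply the fibred construction: using the control on Dehn twists in the two-dimensional vanishing cycles, I build an embedded $S^1 \times \Sigma_i \subset Y$ lying over an immersed loop $\gamma_i$ in the base of $\pi$. The monotonicity constant and the $\Sigma_i$-directions of the Maslov class are inherited from the fibre, while the new $S^1$-direction of the Maslov class is governed by the winding of $\gamma_i$; adjusting these realises any admissible Maslov class together with the prescribed monotonicity constant. I would then form $L = \#_{i=1}^l (S^1 \times \Sigma_i)$ by Polterovich surgery at points where the pieces are made to intersect, checking that surgery preserves monotonicity and the homology and Maslov data, so that the whole family remains homologous with fixed soft invariants.

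The crux is distinguishing the members up to Hamiltonian isotopy, and for this I would compute Floer cohomology against a reference Lagrangian three-sphere $V \subset Y$, namely a matching sphere fibred over a path $\delta$ in the base whose fibre is precisely the two-dimensional vanishing cycle $S^2$ used in Theorem \ref{thm:floer_2d}. Because both $L$ and $V$ are essentially fibred over the base, the intersection $L \cap V$ concentrates over the points $\gamma_i \cap \delta$, and a fibred, Künneth-type argument should express $HF(L, V)$, for a suitable rank-one local system, in terms of the two-dimensional groups $HF(\Sigma_i, S^2)$ already controlled in dimension two, together with an $S^1$ factor. Since those two-dimensional ranks can be made arbitrarily large, so can the three-dimensional ones, yielding a Hamiltonian-isotopy invariant that separates the family.

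I expect the main obstacle to be making this fibred computation rigorous in the monotone rather than exact setting: one must show that the relevant pseudo-holomorphic strips project controllably to the base — via a neck-stretching or adiabatic degeneration of $\pi$ — and then rule out or carefully account for disc bubbling, so that the count genuinely reduces to the fibre theory rather than merely being modelled on it. A secondary point requiring care is verifying that Polterovich surgery does not introduce new low-area curves that would corrupt either monotonicity or the reduction.
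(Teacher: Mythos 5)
Your construction phase is essentially the paper's: the Lefschetz fibration $P_{r,s}$ with fibre $X_r$ (or its $A_3$ analogue $\{x^2+y^4+z^r=1\}$ — the paper confirms at the end of Section \ref{sec:Floer_theory} that $x^2+y^4$ suffices here, since the $D_4$ fibre was only needed for the annuli counts), surfaces from Theorem \ref{thm:floer_2d} promoted to $S^1 \times \Sigma_i$ over immersed loops via the monodromy results (Propositions \ref{prop:Phi_definition} and \ref{prop:Phi_Dehntwists}), Maslov control of the loop direction as in Proposition \ref{prop:Maslov_index_3d}, and Polterovich connected sums along matching spheres. All of that matches.

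The genuine gap is exactly where you flagged it: the Floer computation. Your plan lets $L$ and the reference sphere $V$ meet over \emph{several} base points $\gamma_i \cap \delta$ and then invokes a Künneth-type reduction via neck-stretching or adiabatic degeneration of $\pi$ — unproven machinery in the monotone setting, and with multiple base intersection points you would also have to exclude strips projecting non-constantly between distinct points of $\gamma_i \cap \delta$, which a "concentration" heuristic does not do. The paper's proof (Proposition \ref{prop:Floer_coho_3d}) avoids all of this with one geometric arrangement you are missing: the matching path for the reference sphere $\mathbb{S}$ is chosen to meet the relevant loop in \emph{exactly one transverse point} of the base of $P_{r,s}$ (and, in the connected-sum case, to miss the loops of all other summands). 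For the standard fibred almost complex structure of Lemma \ref{lem:fibration_J}, $P_{r,s}$ is pseudo-holomorphic, so every Floer strip projects to a holomorphic map to $\C$ with boundary on the two base curves; since these meet in a single point, open-mapping-theorem considerations force the projection to be constant, and every strip lies in the one fibre above that point. The complex $CF(L,\mathbb{S})$, differential included, then coincides on the nose with the two-dimensional complex of Proposition \ref{prop:floer_coefficients} and Corollary \ref{cor:floer_coefficients_genus_g}, whose differential vanishes for the standard $J$ because the candidate discs have non-positive Maslov index, hence non-positive area; the ranks are bare intersection counts recovering all parameters $(k_i,l_i,m_i)$. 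The same projection argument kills the disc bubbling you were worried about — there are no non-constant discs with boundary on these fibred Lagrangians for this $J$ — so the monotone (non-exact) case requires no additional analysis, and no adiabatic limit ever enters.
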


As before, the conclusion remains true under exact symplectic embeddings of (suitable large compact subsets of) $\{ x^2 + y^4 + z^r +w^s= 1 \}$ into larger Liouville domains. For calculations of Floer cohomology groups between members of a fixed infinite family, see Section \ref{sec:floer_3d_family}.

\subsubsection{Extensions}

Proceeding iteratively gives statements in higher dimensions;
see Proposition \ref{prop:floer_higher_dim}. 
We flag that Theorems \ref{thm:floer_2d} and \ref{thm:floer_3d} and Proposition \ref{prop:floer_higher_dim} give affine varieties with monotone Lagrangians, including tori, with arbitrarily high minimum Maslov number. (Experts may note that their homology class is primitive.) This contrasts with the case of $\C^m$, where it is known to be heavily restricted: for instance, Oh \cite{Oh} showed that if $L$ is a compact monotone embedded Lagrangian in $\C^m$, then $1 \leq N_L \leq m$, where $N_L$ is the minimal Maslov number of $L$; and Damian \cite{Damian} proved a number results for compact monotone Lagrangians $L$ in a monotone symplectic manifold $M$ such that every compact subset of $M$ is displaceable through a Hamiltonian isotopy (e.g.~$M = \C^m$): for instance, we always have $1 \leq N_L \leq m+1$, and, if $L$ is moreover aspherical, then $N_L \in \{  1, 2 \}$ (necessarily 2 in the orientable case -- for tori a number of further proofs are available \cite{FOOO, Buhovsky, Cieliebak-Mohnke, Irie}).

In some circumstances our techniques give monotone Lagrangians with different diffeomorphism types -- see Sections \ref{sec:further_diffeo_types} and \ref{sec:further_diffeo_higher_dim}. For instance, we get $\Sigma_g$ bundles over $S^1$ with non-trivial, finite order monodromy in $\{ z_0^{2} + z_1^{4} + z_2^{a_2}  + z_3^{a_3} =1 \}$ for sufficiently large $a_2$ and $a_3$, including in infinite families -- constrasting with the aforementioned constraints on the topology of compact orientable monotone Lagrangians in $\C^3$. 

\subsubsection*{Relation to other works} 

There are very interesting recent constructions by Oganesyan \cite{Oganesyan1, Oganesyan2} and by Oganesyan and Sun \cite{Oganesyan-Sun} of monotone Lagrangia submanifolds in $\C^n$, typically for large $n$, using ideas from toric geometry and building on work of Mironov \cite{Mironov}; with the exception of tori, this gives Lagrangians with different diffeomorphim types from the ones that are considered in this paper. (The reader may also be interested in Mikhalkin's constructions of Lagrangian submanifolds in symplectic toric varieties \cite{Mikhalkin}.)
In dimension two, there are interesting constructions of monotone Lagrangian embeddings of some non-orientable compact surfaces in $\C\P^2$ and $\C\P^1 \times \C\P^1$ in \cite{Abreu-Gadbled}. 
While in the final stages of writing this article the author learnt about the results of Casals and Gao \cite{Casals-Gao}; in particular, \cite[Corollary 1.8]{Casals-Gao} gives infinite families of smoothly isotopic exact higher genus Lagrangians surfaces  distinct up to Hamiltonian isotopy in Weinstein manifolds which are homotopic to $S^2$ (but do not contain Lagrangian spheres).

\subsection{Hard invariants: holomorphic annuli counts}
With string theoretic motivations in mind \cite{Hori-Vafa}, symplectic topologists have used counts of Maslov index two holomorphic discs with boundary on the monotone Lagrangian to tell such Lagrangians apart in a wide range of cases, starting e.g.~with \cite{Cho, Cho-Oh, Auroux}; this naturally distinguishes monotone Lagrangians up to symplectomorphism rather than merely Hamiltonian isotopy. On the other hand, this disc count vanishes if the Lagrangians are exact or have minimal Maslov number greater than two. Even if it is two, there are diffeomorphism types where one expects the count of Maslov two discs to be zero for topological reasons: it should follow from the arguments in e.g.~\cite{Fukaya, Irie} that Maslov 2 counts vanish on a Lagrangian $L$ whenever the fundamental class of $L$ is not in the image of the evaluation map from the homology of a non-trivial component of its free loop space; one then gets vanishing counts, for instance for $S^1 \times \Sigma_g$, from standard arguments about uniqueness of geodesics in spaces of negative curvature. 

Stepping back, holomorphic discs can be thought of as the simplest of open Gromov--Witten invariants; holomorphic annuli, the next simplest. (This also ties back to the physics perspective, viewing the former as a first-order invariant and the latter as a second-order one; for instance, in the setting of \cite{GLM} the former should correspond to hypermultiplets and the latter to vector multiplets, see e.g.~Section 4.1 therein.) This motivates us to consider
counts of Maslov zero holomorphic annuli; in dimensions 2 and 3 we give settings in which these are well-defined invariants, which moreover distinguish some of our Lagrangians:

\begin{thm*}(see Theorems \ref{thm:main_2d} and \ref{thm:main_3d})
\underline{Dim 2}: For any sufficiently large $r$, we can construct an infinite family of homologous monotone Lagrangian tori in $X_r= \{ x^3 + y^3 + z^r  =1 \}$, with fixed arbitrary Maslov class and monotonicity constant, distinct up to compactly supported symplectomorphisms of $X_r$. 

\underline{Dim 3}: Fix $g$. For any sufficiently large $r$ and $s$, we can construct an infinite family of homologous monotone Lagrangian $S^1 \times \Sigma_g$ in $Y_{r,s} = \{ x^3 + y^3 + z^r + w^s =1 \}$, with fixed arbitrary Maslov class and monotonicity constant, distinct up to compactly supported symplectomorphisms of $Y_{r,s}$.   

\end{thm*}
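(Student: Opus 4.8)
The plan is to build the Lagrangians with the fibred construction of the body of the paper and then to separate them with a count of Maslov-zero holomorphic annuli, the latter being the genuinely new ingredient. In complex dimension two I would put on $X_r = \{x^3+y^3+z^r=1\}$ a Lefschetz fibration $\pi \colon X_r \to \C$ with smooth fibre the curve $\{x^3+y^3=c\}$ (genus one, thrice-punctured), and realise each member of the family as an $S^1$-bundle over an immersed circle $\gamma \subset \C$ in the base, the fibre being a fixed vanishing cycle $V$ parallel-transported around $\gamma$. The family is indexed by combinatorial data for $\gamma$ (a word in the relevant Dehn twists, equivalently a matching configuration). I would first invoke the explicit control already used for Theorem \ref{thm:floer_2d} to arrange that, as this data varies, the homology class, Maslov class and monotonicity constant stay pinned at prescribed values; the point of $x^3+y^3$ rather than $x^2+y^4$ is that the genus-one fibre supplies the annuli below. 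The three-dimensional statement I would reach by iterating: a higher-genus $\Sigma_g \subset X_r$ built by Polterovich surgery, then crossed with an $S^1$ over a new base circle after adjoining $w^s$, giving $S^1 \times \Sigma_g \subset Y_{r,s}$, with the annuli inherited fibrewise from the two-dimensional count in $X_r$.

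I would next isolate the invariant. The relevant annuli have both boundary circles on one torus $T$ but in \emph{different} classes of $H_1(T;\Z)$, coinciding only after the map to $H_1(X_r;\Z)$; such an annulus represents a class in $H_2(X_r,T)$ which is not a disc class, so monotonicity places no constraint on it, and its symplectic area is the pairing of the Liouville class $[\lambda|_T] \in H^1(T;\R)$ with the difference of the two boundary classes. For the monotone, non-exact tori this pairing is positive while the Maslov index can be arranged to vanish, so genuine non-constant rigid Maslov-zero annuli exist. Geometrically they project under $\pi$ to holomorphic maps into $\C$ with boundary on $\gamma$, sweeping immersed bigon-type regions between self-intersections of $\gamma$; over such a region the fibre direction contributes a cylinder interpolating between $V$ and its image under the monodromy around the enclosed critical values, and the Maslov-zero condition selects which regions contribute. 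The proposed invariant is the (signed or $\Z/2$) count of rigid such annuli, organised by relative class.

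The main obstacle is showing this count is well defined and a compactly supported symplectomorphism invariant, since, unlike disc counts for monotone Lagrangians, annulus counts are not automatically rigid under deformation. The key is to spend the freedom in $r$ (and $s$) to make the minimal Maslov number $N_T$ arbitrarily large: in a generic one-parameter family of almost complex structures the ends of the one-dimensional moduli space of Maslov-zero annuli are disc bubbling, which raises the Maslov index by at least $N_T$, and annulus degeneration into two discs, whose Maslov indices would sum to zero; for $N_T$ large both are impossible on index grounds. Sphere bubbling is excluded because $X_r$ is affine and carries no non-constant closed holomorphic curves, and compactness comes from a maximum principle for a plurisubharmonic exhaustion compatible with the fibration, keeping annuli in a fixed compact set. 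Regularity I would obtain from domain-dependent perturbations, or preferably from the explicit integrable geometry of the fibred configuration which should cut the annuli out transversally; here the genuine subtleties are the conformal modulus and the $S^1$ of automorphisms of the annulus, which must be handled in the index bookkeeping. A compactly supported symplectomorphism fixes the structure at infinity and pushes forward admissible almost complex structures, so it preserves the count.

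Finally I would compute and conclude. Because the annuli are essentially fibred, the count reduces to a combinatorial enumeration of the contributing base regions determined by the self-intersections of $\gamma$ and the monodromy action on $V$; I would choose the family so that this count is a strictly increasing (for instance linear) function of the indexing parameter, hence realises infinitely many distinct values. Two members with different counts cannot be exchanged by a compactly supported symplectomorphism, which gives the claimed infinite families distinct up to such equivalence; the same enumeration, performed fibrewise, settles the three-dimensional case. I expect the technical heart to be entirely in the well-definedness step above — compactness, the exclusion of boundary degenerations, and transversality for annuli — with the enumeration itself being a routine consequence of the explicit construction.
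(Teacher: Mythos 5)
The construction half of your proposal matches the paper, but your invariant fails at the existence step, and the failure is structural. You count Maslov-zero annuli with both boundary circles on the single torus $T$ inside $X_r$ itself, claiming that monotonicity places no constraint on annulus classes and that the symplectic area is a positive pairing of the Liouville class with the difference of the boundary classes. This is internally inconsistent: $X_r$ is exact and simply connected, so every loop in $T$ bounds a disc in $X_r$, and monotonicity applied to those disc classes forces $[\lambda|_T] = \kappa \, [\mu] \in H^1(T;\R)$. By Stokes, the area of \emph{any} annulus with boundary on $T$ is then $\kappa$ times its Maslov index; your Maslov-zero annuli have zero area and must be constant, and in the exact case ($\lambda|_T$ exact) everything vanishes identically, even though the theorem explicitly includes exact tori. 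The paper flags exactly this obstruction in the introduction (``a compactification or other modification is inevitable: by monotonicity the Maslov zero annuli would otherwise have to be constant''). The two ideas you are missing are: (i) the partial compactifications $\bar{X}_r$ and $\bar{Y}_{r,s}$ of Section \ref{sec:partial_compactifications}, obtained by capping off one puncture of every fibre, which create a divisor at infinity $D$ and a class $H$ with $\omega(H) > 0$; the counted annuli satisfy $A \cdot D = 1$ and draw their area from $\omega(H)$ rather than from the Liouville class; and (ii) the boundary conditions are placed on a two-component Lagrangian link $L \sqcup L'$, where $L'$ is a \emph{disjoint displacement} of $L$ (the graph of $\epsilon \, df_\mu$ in the monotone non-exact case, a small non-exact push-off of the base circle in the exact case), rather than both boundaries lying on $L$ (Definition \ref{def:moduli_space}).

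This gap also undercuts your well-definedness scheme. You address disc bubbling and the splitting of the annulus into two discs, but the abstract moduli space of annuli has two boundary strata --- modulus zero (the two boundary circles collide) and modulus infinity --- and with both boundaries on one torus you have no way to exclude the modulus-zero end; the paper kills it precisely because $L \cap L' = \emptyset$, and kills modulus infinity because the boundary classes $\alpha, \alpha'$ are primitive, hence non-zero (Proposition \ref{prop:monotone_invariance}). Your remaining mechanism, taking the minimal Maslov number $N_T$ arbitrarily large, contradicts the statement being proved: the Maslov class is \emph{fixed in advance and arbitrary}, so no index room can be bought this way, and the device is vacuous for exact tori in any case. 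The paper rules out bubbling instead via the divisor: a bubble passing through $D$ consumes all the energy $\omega(H)$, leaving nothing for the annulus, while a bubble missing $D$ lies in the exact monotone part and is constant or of positive index; in the exact case, with the non-monotone push-off, this is supplemented by a monotonicity-lemma argument for minimal surfaces forced to cross a fibre $P_{r,s}^{-1}(\star)$ (Proposition \ref{prop:regularity_invariance_nonmonotone}). Once the count is set up on the compactification, your computational endgame --- project to the base, reduce to rigid fibre annuli over intersection points of the two projections, and read the count off the combinatorics of the immersed curves --- does coincide with the paper's (Proposition \ref{prop:regularity_monotone}), as does the observation that compactly supported symplectomorphisms preserve the resulting count.
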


Note both of the above statements include the monotone exact case. Added motivation is that in complex dimension two, squares of Dehn twists act as the identity on homology. They are by now a well-established tool for producing infinite families of homologous Lagrangians none of which are Hamiltonian isotopic, following \cite{Seidel_knotted_spheres}. Given the wealth of Lagrangian spheres in Milnor fibres, including Brieskorn--Pham hypersurfaces, it is particularly relevant to be able to show that Lagrangians \emph{cannot} be related by Dehn twists. Combined with \ref{thm:no_mutations}, we get tori which are related neither by mutations nor by symplectomorphisms.

\subsubsection*{Technical discussion}  A trade-off is that these invariants don't behave well with respect to embeddings, as defining them requires us to compactify the ambient symplectic manifold. (On the other hand, a compactification or other modification is inevitable: by monotonicity the Maslov zero annuli would otherwise have to be constant.) 
For technical reasons we end up working with $x^3+y^3+z^r+w^s$ rather than  $x^2+y^4+z^r+w^s$; note $x^2+y^4$ is the singularity $A_3$ while $x^3+y^3$ is $D_4$. The theorem stated immediately above (and Theorems \ref{thm:main_2d} and \ref{thm:main_3d}) extends to Milnor fibres of a few other singularities, see Section \ref{sec:extensions_limitations}. 

Recall that if a moduli space of pseudo-holomorphic annuli is regular, then after quotienting out by reparametrisation its dimension is simply its Maslov index, by \cite[Theorem 1.2]{Liu_thesis}. However, in dimension greater than 3, disc bubbling a priori gets in the way of having a well-defined invariant.

Even in dimension at most 3, it can be difficult in general to extract well-defined invariants from Maslov zero holomorphic annuli counts: as well as bubbling, one needs to worry about the fact that the abstract moduli space of holomorphic annuli has boundary. 
In the present work, we carefully restrict ourselves to a geometric set-up in which the primary analytical difficulties can readily be ruled out. In particular, the `modulus infinity' boundary of the abstract moduli space is avoided by asking that both boundary curves lie in homologically non-trivial classes; and the `modulus zero' one by using a displacement of the Lagrangian off itself, so that the boundary curves formally lie on different components of a Lagrangian link -- this is why all of the Lagrangians in Theorems \ref{thm:main_2d} and \ref{thm:main_3d} have trivial cotangent bundle. Extra care is then taken to rule out holomorphic disc bubbling.
One could view our results as motivation for studying how to get invariants in more general cases, as has been done in 
\cite{Ekholm-Shende} in a similar setting.

\subsection*{Structure of the paper}
Section \ref{sec:preliminaries} gives background on Brieskorn--Pham hypersurfaces; in particular, in Section \ref{sec:rotations} we give explicit descriptions of their total monodromy. Section \ref{sec:building_blocks} gives constructions of monotone Lagrangians surfaces, and explains how to calculate their soft invariants. Section \ref{sec:C^3} is dedicated to monotone Lagrangians in $\C^3$, including the proof of Theorem \ref{thm:infinite_family_C3} stated above. A wider range of constructions of 3-dimensional monotone Lagrangians is given in Section \ref{sec:S^1xXi_g}, with the rest of Section \ref{sec:annuli} devoted to defining and evaluating counts of Maslov index zero annuli for these spaces, including a proof of Theorems \ref{thm:main_2d} and \ref{thm:main_3d} stated above. Floer--theoretic properties in dimensions 2 and 3, and consequences thereof, are given in Section \ref{sec:Floer_theory_properties}, including proofs of Theorems \ref{thm:floer_2d} and \ref{thm:floer_3d} . Finally, Section \ref{sec:higher_dim} briefly covers extensions to higher dimensions.

\subsection*{Acknowledgements} 
This project is inspired by earlier attempts, joint with Mohammed Abouzaid, to generalise Auroux' result \cite{Auroux} to Lagrangian $S^1 \times \Sigma_g$s in $\C^3$. In particular, many thanks to Abouzaid for suggesting using holomorphic annuli, and for discussions of technical difficulties which might arise when trying to get a well-defined invariant out of them. 

The author learnt Theorem \ref{thm:split_diffeo} from Henry Wilton, and Lemma \ref{lem:non_orientable_constraint} from Oscar Randal-Williams. 
Many thanks also to
Roger Casals for discussions of \cite{GLM} and feedback on earlier versions of this project; 
Georgios Dimitroglou-Rizell for discussions related to Remark \ref{rmk:Georgios}; 
 Tobias Ekhlom for discussions relating to counts of holomorphic annuli; 
Jonathan Evans for discussions relating to his joint work \cite{Evans-Kedra};
 Melissa Liu for explanations of her thesis \cite{Liu_thesis};
 and Ivan Smith for feedback on an earlier version of this article.

The author was partially supported by a Junior Fellowship from the Simons Foundation, NSF grant DMS--1505798, by NSF grant DMS--1128155
whilst at the Institute for Advanced Study, and by a Title A Fellowship from Trinity College, Cambridge.

%%%%%%%%%%%%%%%%%%%%%%%
%%%%%%%%%%%%%%%%%%%%%%%

\section{Preliminaries on Brieskorn-Pham hypersurfaces}\label{sec:preliminaries}

\subsection{Lefschetz (bi-)fibrations on  Brieskorn--Pham hypersurfaces}

Fix integers $a_0, \ldots$, $a_{m} \geq 1$. Let $X_{\mathbf{a}}$ be the hypersurface given by
$$
X_{\mathbf{a}}=\left\{
\sum_{i=0}^{m} z_i^{a_i} =1
\right\}. 
$$
This carries the structure of an exact symplectic manifold, inherited from $\C^{m+1}$. As the polynomial $\sum_{i=0}^{m} z_i^{a_i}$ is weighted homogeneous, its only singularity is at the origin. In particular, $X_\mathbf{a}$ is a representative, as an exact symplectic manifold, of the Milnor fibre of the singularity $ \sum_{i=0}^{m} z_i^{a_i}$, with half-infinite conical ends glued to its boundary.

Many of our explicit constructions will involve two families of hypersurfaces, for which we use dedicated notation, namely, for fixed integers $r, s \geq 1$:
\begin{align}
& X_r = \{ (x,y,z) \in \C^3 \, | \, x^3 + y^3 + z^r = 1 \} \\
& Y_{r,s} = \{ (x,y,z,w) \in \C^4 \, | \, x^3 + y^3 + z^r +w^s= 1 \}.
\end{align}

For notational convenience, let $a_m=b$, and label the hypersurface accordingly as $X_{\mathbf{a}, b}$. 
 Morsifying $\sum_{i=0}^{m-1} z_i^{a_i}$ 
and projecting to $z_m$ realises $X_{\mathbf{a}, b}$ (appropriately cut off) as the total space of a Lefschetz fibration 
$\Pi: X_{\mathbf{a}, b} \to \C$, with smooth fibre $X_{\mathbf{a}}=\{ \sum_{i=0}^{m-1} z_i^{a_i} =1 \}$. 

This has a built-in $\Z /b$ symmetry: multiplying $z_m$ by a $b$th root of unity gives a symplectomorphism of $X_{\mathbf{a}, b}$ which preserves fibres of $\Pi$, and induces an automorphism of its base $\C$ given by a rotation by $2 \pi /b$ in the origin. Let $\varpi = \prod_{i=0}^{m-1} (a_i-1) $. The fibration $\Pi$ has $b \varpi$ vanishing cycles in total: ordered clockwise with a $\Z /b$ symmetric choice of vanishing paths, $v_1, v_2, \ldots, v_\varpi, v_1, v_2, \ldots, v_\varpi, \ldots , v_1, v_2, \ldots, v_\varpi$ 
(see Figure \ref{fig:base_rotation}).
We will later use the observation that the base of the Lefschetz fibration on $X_{\mathbf{a}, kb}$ can naturally be divided into $k$ cyclically symmetric sectors, such that the total space of the restriction of $\Pi$ to each sector is $X_{\mathbf{a}, b}$. 

We will again want special notation for the cases we will make the most use of, as follows. 

In the case of $X_r$,  we call this fibration $\Pi_r: X_r \to \C$. It has  smooth fibre the thrice-punctured elliptic curve $\{ x^3 + y^3 =1 \}$, i.e.~the Milnor fibre of the two-variable $D_4$ singularity, which we will denote by $M$. For a suitable choice of vanishing paths, the critical points split into $r$ groups of size four, each giving the same vanishing cycles; these four cycles correspond to the `standard' $D_4$ configuration of vanishing cycles on $M$. See Figure \ref{fig:vcycles_on_Xr2}.

\begin{figure}[htb]
\begin{center}
 \includegraphics[scale=0.35]{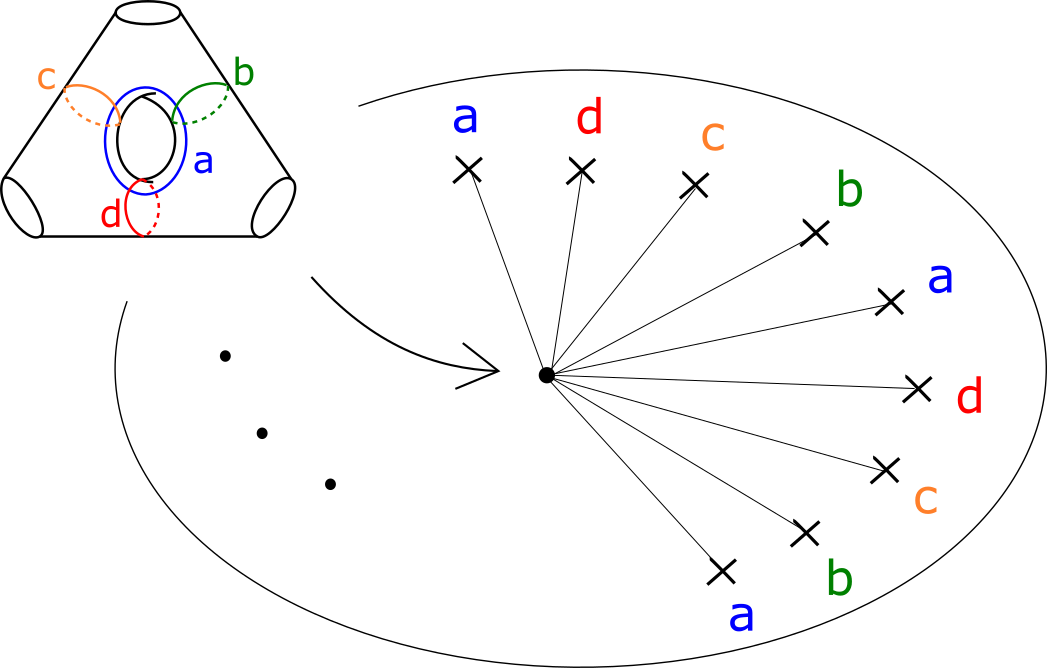}
% or [scale=0.85]
\caption{Lefschetz fibration $\Pi_r$ on $X_r$. The marked points in the base are singular values; the segments joining them to the central point are vanishing paths, and the corresponding vanishing cycles are given in the fibre above the central smooth point. 
}
\label{fig:vcycles_on_Xr2}
\end{center}
\end{figure}

In the case of $Y_{r,s}$, we call this fibration $P_{r,s}: Y_{r,s} \to \C$. By construction, the smooth fibre is $X_r$. There is a total of $4(r-1)s$ critical points, and a $\Z/s$ cyclic symmetry, with vanishing cycles grouped into collections of size $4(r-1)$.  
We will sometimes take advantage of the fact that the smooth fibre of $P_{r,s}$ is $X_r$ to think of $Y_{r,s}$ as the total space of a bifibration $(P_{r,s}, \Pi_r)$.

\subsection{Deformations and parallel transport}\label{sec:deformations}

We'll make repeated use of the following well-known fact.

\begin{lemma}\label{lem:add_base_form}
There is a global isotopy of $X_{\mathbf{a}, b}$ which takes the standard K\"ahler form $\omega$ to $\omega+k \Pi^\ast \omega_0$, where $k \geq 0$ is a constant, and $\omega_0$ is the standard symplectic form on the base of $\Pi$. 
\end{lemma}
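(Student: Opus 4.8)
The plan is to run a Moser-type argument on the one-parameter family $\omega_t := \omega + tk\,\Pi^\ast\omega_0$, $t\in[0,1]$, interpolating between $\omega$ and the target form. The starting observation is that both endpoints are exact, and in fact cohomologous: $\omega = d\lambda$ for the restriction $\lambda$ of the standard Liouville form on $\C^{m+1}$, while on the base $\omega_0 = d\lambda_0$ (taking e.g.\ $\lambda_0 = \tfrac12(x\,dy - y\,dx)$ for $z = x+iy$), so that $\Pi^\ast\omega_0 = d(\Pi^\ast\lambda_0)$ and hence $\tfrac{d}{dt}\omega_t = k\,\Pi^\ast\omega_0$ is exact with the explicit primitive $\sigma := k\,\Pi^\ast\lambda_0$. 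This is what makes the Moser scheme available.

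Next I would check that each $\omega_t$ is symplectic. Away from the critical points of $\Pi$ I use the symplectic connection splitting $TX_{\mathbf{a},b} = T^{\mathrm{vert}}\oplus T^{\mathrm{hor}}$, where $T^{\mathrm{hor}}$ is the $\omega$-orthogonal complement of the vertical tangent space. On this splitting $\omega$ has no mixed vertical–horizontal terms, restricts to the (nondegenerate) fibre form on $T^{\mathrm{vert}}$, and to a positive area form on the real-two-dimensional $T^{\mathrm{hor}}$. The form $\Pi^\ast\omega_0$ annihilates vertical vectors (they lie in $\ker d\Pi$) and restricts on $T^{\mathrm{hor}}$ to the pullback of $\omega_0$, a positive area form in the same orientation; so adding $tk\,\Pi^\ast\omega_0$ only enlarges the horizontal block and keeps $\omega_t$ nondegenerate for all $t\in[0,1]$ and all $k\ge 0$. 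At the critical points $d\Pi = 0$, whence $\Pi^\ast\omega_0 = 0$ and $\omega_t = \omega$ is automatically nondegenerate there. Thus $\{\omega_t\}$ is a genuine family of symplectic forms.

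The Moser vector field $V_t$ is then defined by $\iota_{V_t}\omega_t = -\sigma = -k\,\Pi^\ast\lambda_0$; by Cartan's formula and closedness of $\omega_t$ its time-$t$ flow $\phi_t$ satisfies $\phi_t^\ast\omega_t = \omega$, so the time-one map $\phi_1$ pushes $\omega$ to $\omega + k\,\Pi^\ast\omega_0$ and the path $\{\phi_t\}$ is the desired global isotopy. Since $\Pi^\ast\lambda_0$ kills vertical vectors, $V_t$ is forced to be horizontal, and it covers the base vector field $\bar V_t$ determined by $\beta_t(\bar V_t,\cdot) = -k\lambda_0$, where $\beta_t = \beta + tk\,\omega_0$ is the positive area form on $\C$ induced from $\omega_t|_{T^{\mathrm{hor}}}$.

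I expect the one real obstacle to be completeness of $V_t$ near the conical ends of $X_{\mathbf{a},b}$: the manifold is noncompact, $\Pi^\ast\omega_0$ does not decay at infinity, and a naive Moser field on a noncompact manifold need not integrate for all time. The plan is to control this using the radial structure at infinity. The base field $\bar V_t$ has at most linear growth in $|z|$ (as $\lambda_0$ grows linearly while $\beta_t \ge tk\,\omega_0$), so its flow exists on all of $[0,1]$; the remaining point is that the horizontal lift of this flow is complete, which follows because symplectic parallel transport for $\omega_t$ stays tame along the weighted-homogeneous conical ends, so fibres do not escape to infinity in finite time. Once completeness is in hand, integrating $V_t$ to time $1$ yields the global isotopy carrying $\omega$ to $\omega + k\,\Pi^\ast\omega_0$, as claimed.
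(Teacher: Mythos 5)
Your overall scheme --- Moser flow along the linear interpolation $\omega_t=\omega+tk\,\Pi^\ast\omega_0$, exactness of $\dot\omega_t$ via the explicit primitive $k\,\Pi^\ast\lambda_0$, and an explicit worry about completeness at the conical ends --- is the same as the paper's, and your nondegeneracy discussion is essentially sound, with one caveat. For a general symplectic fibration the sign of $\omega|_{T^{\mathrm{hor}}}$ relative to the orientation pulled back from the base is \emph{not} automatic (consider a product carrying $\omega_F\oplus(-\omega_B)$), so your claim that $\Pi^\ast\omega_0$ is ``a positive area form in the same orientation'' on $T^{\mathrm{hor}}$ secretly uses that $\Pi$ is holomorphic and $\omega$ is K\"ahler: then $T^{\mathrm{hor}}=(\ker d\Pi)^{\perp_\omega}$ is a complex line and $d\Pi$ restricts to a $\mathbb{C}$-linear isomorphism on it. The paper sidesteps this entirely by exhibiting the K\"ahler potential $t\,|\Pi(\mathbf{z})|^2+\sum_i|z_i|^2$, so that each $\omega_t$ is K\"ahler by inspection.

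The genuine gap is in your completeness step, in two ways. First, your Moser field $V_t$ is horizontal but in general \emph{not} projectable: $\omega_t|_{T^{\mathrm{hor}}}$ does not descend to a $2$-form on $\mathbb{C}$, since the horizontal pairing varies from point to point within a fibre; hence your ``$\beta_t$'' and the base field $\bar V_t$ are not well defined, and the reduction ``complete base flow plus complete horizontal lift'' never gets started. Second, even granting that structure, the assertion that ``symplectic parallel transport for $\omega_t$ stays tame along the weighted-homogeneous conical ends'' is precisely the analytic content that needs proof, and you leave it as a claim. The paper replaces both steps by a direct pointwise bound on $V_t$ itself: the relevant primitive $\dot\theta_t=k\,\Pi^\ast\lambda_0$ grows at most linearly in the distance to the origin (as $\Pi(\mathbf z)=z_m+\epsilon(z_0,\dots,z_{m-1})$ is a linear polynomial map), while the $\omega_t$ are uniformly nondegenerate for the ambient metric --- concretely, holomorphicity of $\Pi$ gives $\omega_t(V,JV)\geq\omega(V,JV)=|V|^2$, so $\iota_{V_t}\omega_t=-\dot\theta_t$ yields $|V_t|\leq|\dot\theta_t|$. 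A vector field of at most linear growth integrates for all time, which is exactly the completeness required; your argument needs this (or an equivalent) quantitative estimate to close.
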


\begin{proof}
$\Pi$ is given by a polynomial map to $\C$ of the form $z_m + \epsilon(z_0, \ldots, z_{m-1})$, where $\epsilon$ is (for instance) a linear map with arbitrary small coefficients. 
For $t \in [0,k]$ the function $ t |\Pi(\mathbf{z})|^2+\sum_{i=0}^m |z_i|^2$ is a K\"ahler potential; this gives an interpolation of exact symplectic forms between $\omega$ and $\omega + k \pi^\ast \omega_0$. Let $\omega_t = \omega + k \pi^\ast \omega_0$, $t \in [0,k]$. 
Let's use the standard metric on $X_{\mathbf{a}, b}$ (induced by the one on $\C^{m+1}$) to estimate growth rates of differential forms and vector fields. The natural primitive to $\omega_t$, say $\theta_t$, grows linearly with the distance to the origin; on the other hand, $\omega_t$ is bounded (as, say, a family of bilinear forms applied to the unit sphere with respect to the metric). Thus the Moser vector field associated to the path $\omega_t$ and $\theta_t$ grows linearly with the distance to the origin. In particular, the vector field is integrable everywhere, which completes the proof. 
\end{proof}

Recall that on any Lefschetz fibration, there's a parallel transport map on smooth fibres, determined by taking the symplectic orthogonal of the tangent space of the fibre inside the tangent space of the total space. In order for this to be well-defined, technical care is required near the boundary of the fibres, which we haven't yet worried about beyond noting that the Milnor fibres of $\sum_{i=0}^m z_i^{a_i}$, with a half-infinite conical end attached, agrees with $X_\mathbf{a}$.

\begin{lemma}  \label{lem:fibration_J}

Consider $\Pi: X_{\mathbf{a}, b} \to \C$ as above, and let $\omega$ be the standard K\"ahler symplectic form on $X_{\mathbf{a}, b}$. Then for any  $s$, there exists $R$ such that for all $a \in D_s(0) \subset \C$, $\Pi^{-1}(a) \pitchfork S_{R'}(0)$ for all $R' \geq R$.
Moreover there exists $\tilde{R} > R$ and an exact symplectic form $\tilde{\omega}$ on $\pi^{-1}(D_s(0))$ such that 
\begin{itemize}
\item $\omega = \tilde{\omega}$ on $\pi^{-1}(D_s(0)) \cap B_R(0)$;
\item $\tilde{\omega}$ is a product outside of a sufficiently large bounded set, roughly points at distance greater than $\tilde{R}$ to zero. More precisely, there exists an open set $N \subset \Pi^{-1} (D_s(0))$ such that $(\Pi^{-1}D_s(0)) \backslash N$ is bounded, and parallel transport induces a symplectomorphism from $(N, \tilde{\omega})$ to 
$$
\Big( 
\big( \Pi^{-1}(0) 
\backslash
 \bar{B}_{\tilde{R}}(0) \big) \times D_s(0), 
\omega|_{ \Pi^{-1}(0)} \oplus \omega_0 
\Big)
$$
intertwining $\Pi$ and the projection map to $\C$, where $\omega_0$ is a symplectic form on $D_s(0) \subset \C$ compatible with the standard complex structure. Denote by $\rho$ the pullback of the distance to zero function on $\Pi^{-1}(0)$.

\item $\tilde{\omega}$ agrees with $\omega$ when restricted to any fibre of $\Pi$.

\item The form $\tilde{\omega}_k = \tilde{\omega} + k \Pi^\ast \omega_0$, for $k$ a non-negative constant, is also a symplectic form. Moreover, for all sufficiently large $k$, there is an $\tilde{\omega}_k$--compatible almost-complex structure on $\Pi^{-1}(D_r(0))$, say $\tilde{J}_k$, agreeing with a product  for $\rho > \tilde{R}+1$, with the standard $J$ for $\rho < \tilde{R}$, such that $\tilde{J}_k$ preserves vertical tangent spaces, and such that
$\Pi$ is $(\tilde{J}_k, J_0)$--holomorphic, where $J_0$ is the standard complex structure on $D_r(0)$. 
\end{itemize}

Moreover, for any compact set $L$ in the domain of definition of $\omega_k$, there is a Moser isotopy $\iota$ such that $\iota^\ast \omega = \tilde{\omega}_k$ on $L$.

\end{lemma}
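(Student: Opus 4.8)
The plan is to obtain $\iota$ as the time-one map of a Moser flow along a path of exact symplectic forms joining $\tilde{\omega}_k$ to $\omega$ on $\Pi^{-1}(D_s(0))$, in the same spirit as Lemma \ref{lem:add_base_form}. The only genuinely new issue, compared with that lemma, is that here the base has been restricted to the open disc $D_s(0)$, so that $\Pi^{-1}(D_s(0))$ is non-compact and \emph{not} complete; I must therefore make sure the resulting vector field can be integrated over the compact set $L$ without its flow escaping, either through the conical fibre ends or across $\Pi^{-1}(\partial D_s(0))$.

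First I would build the interpolating family in two concatenated stages joining $\omega$ to $\tilde{\omega}_k$. By Lemma \ref{lem:add_base_form}, the family $\omega + \tau\Pi^\ast\omega_0$, $\tau\in[0,k]$, consists of K\"ahler (hence symplectic) forms and joins $\omega$ to $\omega + k\Pi^\ast\omega_0$. Then the straight-line family $\eta_t = (1-t)\omega + t\tilde{\omega} + k\Pi^\ast\omega_0$, $t\in[0,1]$, joins $\omega + k\Pi^\ast\omega_0$ to $\tilde{\omega} + k\Pi^\ast\omega_0 = \tilde{\omega}_k$. Each $\eta_t$ is symplectic: by the third bullet point $\omega$ and $\tilde{\omega}$ agree on every fibre, so $(1-t)\omega + t\tilde{\omega}$ restricts to the fixed nondegenerate form $\omega|_{\mathrm{fibre}}$ on each vertical tangent space, and adding the large base term $k\Pi^\ast\omega_0$ then makes the total form nondegenerate for all sufficiently large $k$, uniformly in $t$ by compactness of $[0,1]$ — this is exactly the mechanism which makes $\tilde{\omega}_k$ itself symplectic in the fourth bullet point. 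Concatenating gives a smooth family $\omega_t$, $t\in[0,1]$, of exact symplectic forms from $\omega$ to $\tilde{\omega}_k$.

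Now I would run the Moser trick, reading the path backwards so that the flow moves $L$ forward in time. As in Lemma \ref{lem:add_base_form} I would choose primitives $\theta_t$, $d\theta_t = \omega_t$, depending smoothly on $t$ and growing at most linearly in the distance to the origin, while the $\omega_t$ stay bounded; setting $\sigma_t = \partial_t\theta_t$ gives $\partial_t\omega_t = d\sigma_t$, and nondegeneracy defines $X_t$ by $\iota_{X_t}\omega_t = -\sigma_t$. The usual computation $\partial_t(\phi_t^\ast\omega_t) = \phi_t^\ast( d\,\iota_{X_t}\omega_t + d\sigma_t ) = 0$ shows $\phi_t^\ast\omega_t$ is constant where the flow $\phi_t$ is defined, so the time-one map $\iota$ of the Moser flow of the path traversed from $\tilde{\omega}_k$ to $\omega$ satisfies $\iota^\ast\omega = \tilde{\omega}_k$. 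The main obstacle is precisely that this $\iota$ be defined on $L$: since $X_t$ is in general not vertical, its flow could in principle leave $\Pi^{-1}(D_s(0))$. I would rule out escape through the fibre ends by the linear growth estimate of Lemma \ref{lem:add_base_form}, which bounds $X_t$ and prevents $\rho\to\infty$ in finite time. For escape across $\Pi^{-1}(\partial D_s(0))$ I would use the compactness of $L$, exploiting that the first-stage flow contracts the base toward the origin and that the second-stage correction is supported only where $\tilde{\omega}$ and $\omega$ differ; if necessary I would simply run the whole construction over a slightly larger disc $D_{s'}(0)$, $s'>s$, so that the finitely-long trajectories issuing from the compact set $L$ have no room to reach the base boundary. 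Restricting the resulting global flow to $L$ then gives the claim, and this control of the flow near the two kinds of ends — rather than the formal Moser computation — is where the real work lies.
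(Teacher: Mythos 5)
Your proposal only addresses the final sentence of the lemma --- the existence, for compact $L$, of a Moser isotopy $\iota$ with $\iota^\ast\omega = \tilde{\omega}_k$ on $L$ --- and it does so by treating the earlier bullet points as hypotheses: you invoke ``the third bullet point'' to know that $\omega$ and $\tilde{\omega}$ agree on fibres, and ``the fourth bullet point'' for the mechanism making $\tilde{\omega}_k$ symplectic. But those bullets are conclusions of the lemma, not hypotheses, and they carry its real content. The bulk of the paper's proof is the construction of $\tilde{\omega}$ itself: one uses radial symplectic parallel transport maps $\psi_a$ to build a diffeomorphism $\Psi: N \to \big(\Pi^{-1}(0)\setminus B_{\tilde R}(0)\big) \times D_s(0)$, decomposes $\omega = \omega_F + \omega_B + \omega_\epsilon$ in the resulting coordinates (no $dx_i \wedge dr$ terms appear, precisely because $\Psi$ is defined by transport in the radial direction), produces a primitive $\alpha = b\, d\theta$ of $\omega - \omega_F$ by fibrewise integration, and then patches with a cut-off $\eta$, setting $\tilde{\omega}_k = \omega_F + d(\eta r^2/2\, d\theta) + d((1-\eta)\alpha) + k\Pi^\ast\omega_0$, whose top wedge power (Equation \ref{eq:tildeomega_k}) is checked to be positive. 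Also absent from your proposal are the opening transversality statement (quoted in the paper from Milnor) and the construction of the compatible almost-complex structure $\tilde{J}_k$, which the paper obtains by writing $\tilde J_k$ in block form with respect to the vertical/symplectic-orthogonal splitting and solving for the off-diagonal entries from the symmetry condition $\tilde{\omega}_k(u,\tilde J_k v)=\tilde{\omega}_k(v,\tilde J_k u)$. As a proof of the lemma as stated, this is a genuine gap: you prove only the last claim, assuming everything that precedes it.

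Even within that last claim, two steps need repair. First, your second-stage family $\eta_t = (1-t)\omega + t\tilde\omega + k\Pi^\ast\omega_0$ is argued to be nondegenerate only ``for all sufficiently large $k$''; but $k$ is given in the statement, not at your disposal, and compactness of $[0,1]$ gives no uniformity over the non-compact total space, where the mixed terms of $\omega$ need not be bounded. The paper avoids both problems by interpolating linearly between $\tilde\omega_k$ and $\omega_l = \omega + l\Pi^\ast\omega_0$ with $l \gg k$, checking nondegeneracy by the same explicit wedge-power computation as for Equation \ref{eq:tildeomega_k}, and then connecting $\omega_l$ back to $\omega$ by Lemma \ref{lem:add_base_form}. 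Second, your treatment of escape across $\Pi^{-1}(\partial D_s(0))$ is left heuristic: the paper's key point is that for $l \gg k$ the Moser vector field points \emph{inwards} along $\Pi^{-1}\{|z|=s\}$, which, combined with the growth estimate, is exactly what allows the flow to be integrated on any compact $L$. Your alternative fix of enlarging the disc to $D_{s'}(0)$ presupposes quantitative control on the speed and lifespan of the trajectories that you acknowledge but do not establish.
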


\begin{proof}
The statement in the opening paragraph (existence of $R$ so as to ensure transversality of $\Pi^{-1}(a)$ and $S_{R'}(0)$ for all $a, R'$ as described) goes back to Milnor \cite[Corollary 2.8]{Milnor_singularities}. 
The point of the rest is to have a common technical set-up for a Lefschetz fibration, often knows as a `trivial horizontal boundary' -- see e.g. \cite[Section 15]{Seidel_book}. To obtain this set-up, one can proceed as follows.

Given $a \in D_s(0)$, consider the symplectic parallel transport along a straight segment from $a$ to $0$, say $\psi_a$.
Note that away from critical points of $\Pi$, parallel transport is always well defined, as similar considerations to the proof of Lemma \ref{lem:add_base_form} show that the relevant vector field is integrable (indeed, it has at worst polynomial growth). 
In particular,  outside of $\Pi^{-1} (a) \cap B_{R'}(0)$, where $R'$ may be quite a bit larger than $R$, 
$\psi_a$ is always defined, and 
a symplectomorphism onto its image. 
(In fact, we see that for a path outside of $B_r(0)$ with $r$ large enough such that it contains all the critical values, parallel transport along that path is defined on the entire (non-truncated) fibre.)
 Wlog assume that $R'$ works for all $a \in D_s(0)$. Now fix $\tilde{R}$ such that $\Pi^{-1}(0) \backslash B_{\tilde{R}}(0)$ is contained in $\psi_a \big( \Pi^{-1}(a) \backslash B_{R'}(0) \big)$ for all $a \in D_s(0)$.  

Let $$N = \bigcup_{a \in D_s(0)} \psi_a^{-1} \big( \Pi^{-1}(0) \backslash B_{\tilde{R}}(0) \big),$$ and define $\Psi$ by:
\begin{eqnarray}
\Psi: N  & \to  & \big( \Pi^{-1}(0) \backslash B_{\tilde{R}}(0) \big) \times D_s(0) \\
p  &\mapsto & (\psi_{\Pi(p)} (p), \Pi(p)).
\end{eqnarray}
The map $\Psi$ is a diffeomorphism by construction. We will use $F_{a} = \Pi^{-1}(a) \cap N$ to denote a (subset of a) fibre.

Let's now construct $\tilde{\omega}$. 
Let $r, \theta$  be radial and angle coordinates on the base  $\C$, and $x_1, \ldots, x_{2m-2}$ be local coordinates on the central fibre $F_0$; together these pull back to local coordinates on $N$ via $\Psi$. Observe that there is a decomposition
$$
\omega = \omega_F + \omega_B + \omega_{\epsilon}
$$
where $\omega_F = \sum f_{ij} dx_i \wedge dx_j$, for some functions $f_{ij}$, and should be thought of as a fibre term; $\omega_B = h r dr \wedge d\theta$, for some positive function $h$, and should be thought of as a base term; and $\omega_{\epsilon} = \sum \epsilon_i dx_i \wedge d \theta$ consists of mixed fibre / base terms. 
By construction, $\omega_F = \Psi^\ast (\omega|_{\pi^{-1}(0)})$; in particular, the $f_{ij}$ are independent of $r$ and $\theta$, and $\omega_F$ is exact. Moreover, note that there are no terms of the form $dx_i \wedge dr$ in $\omega$. (This is because we are using parallel transport in the $r$ direction to define $\Psi$.) Further, by varying over coordinate charts for $F_0$, we can patch forms together to get $\omega_F$, $\omega_B$ and $\omega_{\epsilon}$ globally defined on $N$.

Say $\omega - \omega_F = d \alpha$, some one-form $\alpha$. By construction, $d \alpha = \beta \wedge d \theta$, for $\beta$ a one-form. Now $d \beta \wedge d \theta = 0 $, so, for fixed $\theta$, we can integrate $\beta$ on $K_\theta = \{ F_a \, | \, \text{arg}(a) = \theta \}$ to $b_\theta \in C^\infty (K_\theta)$ with $d b_\theta = \beta \in \Omega^1 (K_\theta)$. As $K_\theta$ is connected, $b_\theta$ is uniquely defined up to a constant. Now one can choose constants so that $b_\theta$ varies smoothly with $\theta \in S^1$, and extends over $r=0$ by the zero constant. Let $b \in C^{\infty} (N)$ be the resulting function, which is now uniquely determined; by construction we can choose $\alpha$ to be $b d \theta$.

As defined before, let $\rho$ be the distance to zero function on $\Pi^{-1}(0)$, pulled back via $\Psi$ to a function on $N$.
Let $\eta = \eta(\rho)$ be a smooth non-decreasing cut-off function on $[0, \infty)$ which is identically zero for $\rho \leq \tilde{R}$, and identically one for $\rho \geq \tilde{R}+1$.

Let $\omega_k = \Psi^{\ast}(\omega|_{\Pi^{-1}(0)}) \oplus k \Pi^{\ast} (\omega_0) \in \Omega^2 (N)$, where $\omega_0$ is the standard symplectic form on the base, and $k$ a positive constant. By construction, $\omega_k$ is a symplectic form on $N$, and agrees with $\omega$ when restricted to fibres.

Consider the exact two-form
$$
\tilde{\omega}_k =  \omega_F + d (\eta r^2 /2 d \theta ) + d ( (1-\eta) \alpha) + k \Pi^\ast (\omega_0)
$$
where $k \geq 0$ is a constant. As $d \eta$ only involves terms of the form $dx_i$, we see that 
\begin{equation}\label{eq:tildeomega_k}
\tilde{\omega}_k^{m} = m \omega_F^{m-1} \wedge \left(  \eta r d r \wedge d \theta +(1- \eta) d b \wedge d \theta + k \Pi^\ast (\omega_0)  \right)
\end{equation}
which is positive everywhere. Thus $\tilde{\omega}_k$ is symplectic. By construction, it's equal to $\omega+k \Pi^\ast (\omega_0)$ for $\rho \leq \tilde{R}$, and $\Psi^\ast (\omega|_{\Pi^{-1}(0)}) + (k+1) \Pi^\ast (\omega_0)$ for $\rho \geq \tilde{R}+1$; moreover, it agrees with $\omega$ when restricted to fibres. 
This is true of $\tilde{\omega}_k$ for all $k \geq 0$. In particular, $\tilde{\omega} = \tilde{\omega}_0$ works for the statement of the lemma.

Let us next check the claim about a Moser isotopy between $\omega$ and $\tilde{\omega}_k$.
A similar calculation to Equation \ref{eq:tildeomega_k} shows that the linear interpolation between $\tilde{\omega}_k$ and  $\omega$ symplectic for all time; in fact, the same would be true for $\tilde{\omega}_k$ and  $\omega_l = \omega + l \Pi^\ast \omega_0$. Consider the Moser vector field associated with the obvious choice of primitives for these. Its grow at worst polynomially with distance to the origin. Further, if $l \gg k$, the vector field points inwards along $\Pi^{-1}\{ |z|=s \}$; thus for any $L \subset \Pi^{-1}(D_s(0))$, we can integrate the Moser vector field to get an isotopy $\iota: L \to X_{\mathbf{a}, b}$ such that $\iota^\ast \omega_l = \tilde{\omega}_k$. The claim then follows from Lemma \ref{lem:add_base_form}.

In order to establish the final point, about almost-complex structures, we now want to find a suitable one  for any sufficiently large $k$, say $\tilde{J}_k$, compatible with $\tilde{\omega}_k$.

For $p \in N$, consider a basis for $T_pN$ given by 
taking a basis for $T_pF_{\Pi(p)}$ followed by one for its symplectic orthogonal $T_pF_{\Pi(p)}^{(\tilde{\omega}_k, \perp)}$. 
With respect to this basis, we want $\tilde{J}_k$ to be of the form
$$
\mathcal{J} = 
\begin{pmatrix}
 & & & \ast \\
 & J|_{T_{\phi_\eta (p)}F_{\Pi( \phi_\eta(p))}} & & \ast \\
  & & & \vdots \\
 & & & \ast \\
0 &\ldots & 0 & J_0 \\
\end{pmatrix}
$$
where $\phi_t$, some fixed $t \in [0,1]$, denotes the parallel transport along a straight line segment from $F_a$ to $F_{(1-t)a}$ for any $a \in D_s(0)$, and  the entries $\ast$ are to be determined. Note that any such matrix $\mathcal{J}$ satisfies $\mathcal{J}^2 = -I$ (irrespective of the values $\ast$); moreover, by construction it satisfies $D\pi \circ \mathcal{J} = J_0 \circ D \pi$ and $\mathcal{J} (TF_a) = TF_a$. 

Now notice that the condition $\tilde{\omega}_k (u, \tilde{J}_k v) = \tilde{\omega}_k (v, \tilde{J}_k u)$, for all $u,v \in T_pM$, uniquely determines each of the entries $\ast$. 
For $\rho \leq \tilde{R}$, we have arranged to have $\tilde{\omega}_k = \omega + k \Pi^\ast (\omega_0)$. Note that the standard $J$ is of the form $\mathcal{J}$, and is compatible with both $\omega$ and $\omega + k \Pi^\ast (\omega_0)$. This implies that $\tilde{J}_k = J$ for $\rho \leq \tilde{R}$. Moreover, for $\rho \geq \tilde{R}+ 1$, $\tilde{\omega}_k$ is a product, and it follows that $\tilde{J}_k$ is too (with all of the entries $\ast$ vanishing). Finally, notice that for any sufficiently large $k$, we have $\tilde{\omega}_k (u, \tilde{J}_k u) >0 $ for $u \neq 0$. This completes the proof. 
\end{proof}

\begin{remark}\label{rmk:bifibred_J}
Applying the preceeding lemma iteratively, we can arrange to have symplectic forms and almost complex structures which are standard on a large compact set (up to a positive pullback of the base symplectic form), and fibred with respect to the bifibration $(P_{r,s}, \Pi_r)$ outside a slightly larger compact set. 
\end{remark}

%%%%%%%%%%%%%%%%%%%%%%%%%%%%%

\subsection{Fractional boundary twists for Brieskorn-Pham Milnor fibres}\label{sec:rotations}

Recall that
$$
X_{\mathbf{a},b}=\left\{
\sum_{i=0}^{m-1} z_i^{a_i} + z_m^b =1
\right\}. 
$$
and that the Lefschetz fibration $\Pi:  X_{\mathbf{a}, b} \to \C$ is given by morsifying $\sum_{i=0}^{m-1} z_i^{a_i}$ 
and projecting to $z_m$. The smooth fibre is $X_{\mathbf{a}}=\{ \sum_{i=0}^{m-1} z_i^{a_i} =1 \}$. 
Recall that we let $\varpi = \prod_{i=0}^{m-1} (a_i-1) $, which means that the fibration $\Pi$ has $b \varpi$ vanishing cycles in total -- ordered clockwise, $v_1, v_2, \ldots, v_\varpi, v_1, v_2, \ldots, v_\varpi, \ldots , v_1, v_2, \ldots, v_\varpi$, as in Figure \ref{fig:base_rotation}.

We start by recalling a result about the total monodromy of these Milnor fibres.

\begin{lemma}
Let $\nu = \tau_{v_1} \tau_{v_2} \ldots \tau_{v_{\varpi}} \in \text{Symp}^c(X_{\mathbf{a}})$ be the total monodromy of $z_0^{a_0} + \ldots + z_{m-1}^{a_{m-1}}$. Then $\nu^{\text{lcm}(a_0, \ldots, a_{m-1})}$ is compactly Hamiltonian isotopic to a boundary Dehn twist on $X_{\mathbf{a}} \cap B_R(0)$,  say $\varrho$, defined using a periodic Reeb flow ($R$ just needs to be sufficiently large). In particular, $\varrho$ has support in a collar neighbourhood of the boundary of  $X_{\mathbf{a}} \cap B_R(0)$, which wlog is disjoint from all of $v_1, v_2, \ldots, v_{\varpi}$. Thus $\varrho$ commutes with each of $\tau_{v_1}, \ldots, \tau_{v_{\varpi}}$. 
\end{lemma}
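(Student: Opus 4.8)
The plan is to reduce everything to an explicit computation in a collar of the boundary, after recalling the standard identification of the total monodromy with the geometric monodromy of the Milnor fibration. First I would recall (e.g.~from the Picard--Lefschetz framework of \cite[Section 15]{Seidel_book}) that $\nu = \tau_{v_1} \cdots \tau_{v_\varpi}$, as a compactly supported exact symplectomorphism, is Hamiltonian isotopic to the geometric monodromy of the Milnor fibration of $f = \sum_{i=0}^{m-1} z_i^{a_i}$. Since $f$ is weighted homogeneous of degree $L := \text{lcm}(a_0, \ldots, a_{m-1})$ with integer weights $w_i = L/a_i$, this monodromy is explicit: the weighted circle action $\sigma_\theta(z) = (e^{i w_0 \theta} z_0, \ldots, e^{i w_{m-1}\theta} z_{m-1})$ satisfies $f(\sigma_\theta(z)) = e^{iL\theta} f(z)$ and $\sigma_{2\pi} = \mathrm{id}$, so transporting once around the base circle of $f$ is realised by the honest symplectomorphism $g := \sigma_{2\pi/L}$, i.e.~$z_i \mapsto e^{2\pi i/a_i} z_i$, which preserves $X_{\mathbf a} = \{f=1\}$ and satisfies $g^L = \sigma_{2\pi} = \mathrm{id}$. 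The subtlety is that $g$ is \emph{not} compactly supported: it rotates the boundary. So $\nu$ agrees with $g$ on a large compact set, but is damped back to the identity in a collar of $\partial(X_{\mathbf a}\cap B_R(0))$.

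The main step is to make this collar behaviour precise using a periodic Reeb flow. For $R$ large, Milnor's transversality (\cite[Corollary 2.8]{Milnor_singularities}, recalled in Lemma \ref{lem:fibration_J}) makes $\partial(X_{\mathbf a}\cap B_R(0))$ a contact-type boundary, and the restriction of $\sigma$ is, up to reparametrisation, a Reeb flow whose full period is realised by $\sigma_{2\pi}$; thus $g$ restricts to a $1/L$-fraction $c$ of this period. Writing the collar as $[0,1]\times \partial(X_{\mathbf a}\cap B_R(0))$ with collar coordinate $\rho$, I would present $\nu$ on the collar as the Reeb flow by time $c\,(1-\gamma(\rho))$, where $\gamma$ is a cut-off with $\gamma(0)=0$ and $\gamma(1)=1$, so that it matches $g$ at the inner edge and the identity at $\partial(X_{\mathbf a}\cap B_R(0))$. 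Since Reeb flows for different times commute, raising to the $L$-th power simply scales the time: on the large compact set $\nu^L = g^L = \mathrm{id}$, while on the collar $\nu^L$ is the Reeb flow by time $Lc\,(1-\gamma(\rho))$, i.e.~exactly one full Reeb period's worth of twisting spread across the collar, equal to the identity on both edges. This is by definition a boundary Dehn twist $\varrho$ defined via the periodic Reeb flow, so $\nu^L$ is compactly Hamiltonian isotopic to $\varrho$. The hard part here is the boundary bookkeeping: verifying that $\sigma$ genuinely generates the Reeb flow for the contact form induced on $X_{\mathbf a}\cap S_R(0)$, and that the $g$-rotation is \emph{exactly} $1/L$ of its period, so that $L$ iterations produce one full twist rather than some other multiple.

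Finally, the remaining assertions are formal. The vanishing cycles $v_1, \ldots, v_\varpi$ lie in a fixed compact subset of $X_{\mathbf a}$, so by enlarging $R$ and shrinking the collar we arrange the support of $\varrho$ --- contained in the collar of $\partial(X_{\mathbf a}\cap B_R(0))$ --- to be disjoint from all the $v_i$, and hence disjoint from representatives of the Dehn twists $\tau_{v_i}$. Symplectomorphisms with disjoint supports commute, so $\varrho$ commutes with each $\tau_{v_i}$, which completes the proof.
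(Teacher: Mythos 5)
Your proof is correct and follows essentially the same route as the paper: the paper's proof is a two-line appeal to the discussion of weighted homogeneous singularities in \cite[Section 4c]{Seidel00}, which is exactly the weighted circle action argument you spell out (identifying $\nu$ with the geometric monodromy $\sigma_{2\pi/L}$, noting $\sigma_{2\pi}=\mathrm{id}$, and recognising the cut-off rotation in the collar as one full period of the Reeb flow, i.e.~a boundary Dehn twist). The only difference is that you unpack the cited argument explicitly, including the collar bookkeeping and the formal disjoint-support observation, both of which are handled correctly.
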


\begin{proof}
The singularity $\sum_{i=0}^{m-1} z_i^{m_i}$ is weighted homogeneous with weight $\text{lcm}(a_0, \ldots, a_{m-1})$. Thus we get a periodic Reeb flow on the boundary of $\{z_0^{a_0} + \ldots + z_{m-1}^{a_{m-1}} = 1\} \cap B_R(0)$ (contactomorphic to the boundary of $\{ z_0^{a_0} + \ldots + z_{m-1}^{a_{m-1}} = 0\} \cap B_R(0)$), and $\nu^{\text{lcm}(a_0, \ldots, a_{m-1})}$ is Hamiltonian isotopic to the boundary Dehn twist which it induces -- see  the discussion in \cite[Section 4c]{Seidel00}.
\end{proof}

Assume for the rest of this section that $b$ is a multiple of $\text{lcm}(a_0, \ldots, a_{m-1})$, say $b = k \cdot \text{lcm}(a_0, \ldots, a_{m-1})$. Pick $r$ such that all of the critical values of $\Pi$ lie in $B_r(0)$. Define a relative mapping class $[\varphi] \in \pi_0 \Diff^{cpt} (\C, \text{Crit}(\Pi))$, fixing the $b\varpi$ critical values of $\Pi$ setwise,  by a counterclockwise (i.e.~positive) rotation by $2 \pi / k$ on $B_r(0)$, the identity outside $B_{r+1}(0)$, and a smoothing of the linear interpolation between the rotation and the identity on the annulus between the two.

Fix a representative $\varphi$ of $[\varphi]$ which is a symplectomorphism of the base. As an element of $\text{Symp}^{cpt} (\C)$ (forgetting the marked points), $\varphi$ is Hamiltonian isotopic to the identity. Fix such an isotopy, say $\{ \varphi_t \}_{t \in [0,1]}$, with $\varphi_t = \varphi$ on a neighbourhood of zero and $\varphi_t = \text{Id}$ on a neighbourhood of 1. For simplicity we take $\varphi_t$ to be rotationally symmetric, as an element of $\text{Symp}^{cpt}(\C)$, for all $t$. 

Following Lemma \ref{lem:fibration_J}, we fix a constant $s$ such that $D_s(0)$ contains all of the critical values of $\Pi: X_{\mathbf{a}, b} \to \C$, and the supports of the $\varphi_t$. Given $s$, we also fix constants $R$, $\tilde{R}$, and symplectic forms $\tilde{\omega}_k$, $k \geq 0$ ($\tilde{\omega}_0 = \tilde{\omega}$) as in Lemma \ref{lem:fibration_J}, and a large set $K$ lying over ${D}_s(0)$ and compact in the vertical direction, such that there is a symplectomorphism $\Psi: \Pi^{-1}({D}_s(0)) \backslash K \to \big( \Pi^{-1}(0) \backslash B_{R}(0) \big) \times D_s(0)$ such that the product symplectic form pulls back to $\tilde{\omega}$. (With the notation of Lemma \ref{lem:fibration_J}, $\Pi^{-1}({D}_s(0)) \backslash K = N$.)  There is a Moser isotopy such that $\omega$ pulls back to $\tilde{\omega}_k$ on $K$. Loosely speaking, $K$ is  capturing all of the topology of $X_{\mathbf{a}, b}$. Also, we will use the fact that symplectic parallel transport of fibres of $\Pi$ with respect to $\tilde{\omega}$ is flat for $\tilde{R} \leq \rho \leq \tilde{R}+1$ (as $\tilde{\omega}$ is a product).

\begin{proposition}\label{prop:Phi_definition}
The map $\varphi$ induces a compactly supported symplectomorphism $\Phi$ of $X_{\mathbf{a}, b}$ which, up to a compactly supported Moser isotopy, has the following properties:

\begin{itemize}
\item $\Phi$ is the identity away from $\Pi^{-1}(D_s(0))$, and on the set identified with 
$$\Pi^{-1}(0) \cap \{ \tilde{R}+2 \leq  \rho  \} \times D_s(0).$$ In particular, $\Phi$ has compact support. 

\item The following diagram commutes:
\begin{equation}\label{eq:Phi}
\xymatrix{
K \ar[r]^\Phi \ar[d]_\Pi & K  \ar[d]^\Pi \\
\C \ar[r]_\varphi & \C
}
\end{equation}

\item On the set identified with  $\big( \Pi^{-1}(0) \cap \{ \tilde{R}+1 \leq  \rho \leq \tilde{R}+2 \} \big) \times D_s(0)$, we have that
\begin{equation}\label{eq:isotope_back}
\Phi (y,a) = (y, \varphi_{\rho - (\tilde{R}- 1)} a).
\end{equation}

\end{itemize}

The map $\Phi$ is uniquely defined up to compactly supported Hamiltonian isotopy. 

\end{proposition}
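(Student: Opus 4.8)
The core idea is that, although $\varphi$ acts as a non-trivial mapping class once we remember the critical values (it cyclically permutes the $k$ symmetric groups of vanishing cycles), it is covered by an honest ambient symmetry of $X_{\mathbf a, b}$ coming from the Brieskorn--Pham structure. The plan is therefore to build $\Phi$ by (i) lifting the base rotation over the critical values using this cyclic symmetry, (ii) interpolating to the identity in the base by parallel transport where $\varphi$ does, and (iii) damping the lift back to the identity in the fibre direction to obtain compact support, finally correcting by a Moser isotopy.

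First I would use the $\Z/b$ symmetry $g\colon z_m\mapsto e^{2\pi i/b}z_m$. Since $b = k\cdot\mathrm{lcm}(a_0,\dots,a_{m-1})$ and the morsification and vanishing paths were chosen $\Z/b$--symmetrically, the map $g^{\mathrm{lcm}(a_0,\dots,a_{m-1})}$ is a symplectomorphism of $X_{\mathbf a, b}$ covering the \emph{exact} rotation of the base by $2\pi/k$ and permuting the critical values exactly as $\varphi$ does on $B_r(0)$. This settles the only conceptually delicate point, namely that $\Phi$ must move singular fibres, so it cannot be a Hamiltonian lift of the form $\mathrm{flow}(\Pi^\ast H)$: such a flow fixes every critical point of $\Pi$ (there $d(\Pi^\ast H)=dH\circ D\Pi=0$) and hence fixes the critical values, contradicting $\varphi$ moving them. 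Over the annulus $r\le|z|\le r+1$, where $\varphi$ interpolates from this rotation to the identity and where there are no critical values, I would lift $\varphi$ by symplectic parallel transport, which is well defined and symplectic there; this produces a symplectomorphism realising the commuting square \eqref{eq:Phi} on $K$.

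This lift does not yet have compact support, since the cyclic symmetry acts non-trivially at fibre infinity. Here I would use that, after forgetting the marked points, $\varphi\in\mathrm{Symp}^{cpt}(\C)$ is Hamiltonian isotopic to the identity via $\{\varphi_t\}$, and damp the lift to the identity as $\rho$ grows, by the explicit prescription \eqref{eq:isotope_back} on the transition annulus $\tilde R+1\le\rho\le\tilde R+2$ and the identity beyond. Working in the product coordinates of Lemma \ref{lem:fibration_J}, where $\tilde\omega$ splits as $\omega_F\oplus\omega_0$, this prescription $\Phi_0(y,a)=(y,\varphi_{f(\rho(y))}(a))$ is an honest symplectomorphism outside the transition annulus (because $\varphi$ was chosen to preserve the base form), and on the annulus it is a diffeomorphism with $\Phi_0^\ast\tilde\omega_k=\tilde\omega_k+\sigma$, where $\sigma=d\rho\wedge\beta$ records the dependence of $\varphi_{f(\rho)}$ on the fibre coordinate through $\rho$. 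A short computation shows the wedge of $\sigma$ with the relevant powers of $\tilde\omega_k$ vanishes, since $d\rho$ is a fibre covector and the base is two real dimensions; hence $(1-t)\tilde\omega_k+t\,\Phi_0^\ast\tilde\omega_k$ stays non-degenerate for all $t$, and $\sigma$ is exact with compactly supported primitive because $\Phi_0$ is compactly supported and isotopic to the identity.

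It then remains to run Moser's argument on the non-compact total space to produce a compactly supported isotopy $\iota$ with $\iota^\ast\Phi_0^\ast\tilde\omega_k=\tilde\omega_k$, so that $\Phi=\Phi_0\circ\iota$ is symplectic; transferring between $\omega$ and $\tilde\omega_k$ via Lemma \ref{lem:add_base_form} accounts for the phrase ``up to a compactly supported Moser isotopy''. I expect this non-compact Moser step — ensuring the Moser vector field is complete, points inward along $\Pi^{-1}\{|z|=s\}$, and has compact support — to be the main technical obstacle, but it is governed by exactly the polynomial growth and trivial-horizontal-boundary estimates already set up in Lemmas \ref{lem:add_base_form} and \ref{lem:fibration_J}. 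Finally, for uniqueness up to compactly supported Hamiltonian isotopy, given two such maps $\Phi,\Phi'$ the composite $\Phi^{-1}\circ\Phi'$ covers the identity of the base and is compactly supported; I would show it is Hamiltonian isotopic to the identity by noting that the auxiliary choices range over connected spaces — the representative $\varphi$, and the null-isotopy $\{\varphi_t\}$, whose space is connected because $\mathrm{Symp}^{cpt}(\C)$ (which here equals $\mathrm{Ham}^{cpt}(\C)$) is simply connected, indeed contractible — while the Moser isotopies are canonical up to Hamiltonian isotopy once their endpoints are fixed. Lifting these homotopies through the fibered construction yields the required compactly supported Hamiltonian isotopy.
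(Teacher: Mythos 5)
Your core idea --- bypassing the monodromy consistency check by lifting the rotation to the ambient cyclic symmetry $g\colon z_m\mapsto e^{2\pi i/b}z_m$ and then damping at fibre infinity --- has a genuine gap, and it is exactly the phenomenon this section of the paper is named after. Write $\ell=\mathrm{lcm}(a_0,\ldots,a_{m-1})$. A first, fixable, nuisance is that $g^{\ell}$ does not intertwine $\Pi$ with the base rotation at all: equivariance of $\Pi(z)=z_m+\epsilon(z_0,\ldots,z_{m-1})$ under $g$ would force $\epsilon=0$, so the morsification necessarily breaks the symmetry. The serious problem is the gluing between your two pieces. Over $B_r(0)$ you use $g^{\ell}$; over the annulus $r\le|z|\le r+1$ you use the parallel-transport lift of the interpolation. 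Along $|z|=r$ these do not agree, even up to isotopy: the discrepancy $g^{-\ell}\circ(\text{arc transport from }z\text{ to }\varphi(z))$ is precisely a \emph{fractional boundary twist} of the fibre, whose $k$-th power is the total monodromy $\nu^{b}\simeq\varrho^{k}$, hence is not isotopic to the identity. The damping prescription \eqref{eq:isotope_back} cannot absorb this, since it only modifies behaviour in the base direction; so your map is not even continuous across $|z|=r$, and no choice of annulus lift repairs it.

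One can see a priori that no patch of this construction can work. Since a symplectomorphism fibred over $\varphi$ carries $\omega$-horizontal spaces to $\omega$-horizontal spaces, it intertwines parallel transport; so any $\Phi$ with the three listed properties is forced to satisfy $\Phi_0=\sigma_{\varphi(\gamma)}\circ\sigma_\gamma^{-1}$ on the core of the central fibre, for every path $\gamma$ from $\star$ to $0$. The paper computes this forced value: it is $\Gamma_i=\bigl(\tau_{v_i}\cdots\tau_{v_{i+\varpi-1}}\bigr)^{\ell}\simeq\varrho$, the boundary Dehn twist, which is not isotopic to the identity (for $X_r$ the fibre is the thrice-punctured torus and $\varrho$ is the product of twists in boundary-parallel curves). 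Your construction, if it glued, would instead restrict to the identity on the central fibre, since $g^{\ell}$ does --- a contradiction. In other words, the consistency computation $\Gamma_i=\varrho$ independent of $i$, together with the careful extension over critical fibres via the conjugated isotopies $G_{i,t}$ (where $b=k\cdot\ell$ is used to close up the last cut), is not a dispensable technicality that a global symmetry lets you skip: it is the actual content of the proposition. The parts of your proposal that do match the paper are the final steps --- the interpolation of forms $\lambda\tilde\omega_k+(1-\lambda)\tilde\Phi^{\ast}\tilde\omega_k$, the non-degeneracy argument using that the error term involves $d\rho$, and the compactly supported Moser correction back to $\omega$ --- but they are applied to a map that your earlier steps do not succeed in constructing.
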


\begin{remark}\label{rmk:fibred_symplecto}
Even though $\Phi$ is only fibred with respect to $\Pi$ over a large compact set, we will sometimes refer to it, somewhat abusively, as a `fibred symplectomorphism'. 
\end{remark}

\begin{proof}
Without loss of generality we  work with $\tilde{\omega}$ rather than $\omega$.
Fix a smooth reference point $\star \in D_s(0)$ away from the support of the $\varphi_t$. Fix a system of smooth paths from $\star$ to all other points of $D_s(0)$. We can arrange for these to vary smoothly away from a choice of cuts, which we can take to run cyclically between absolutely ordered critical values (first to second, second to third, etc -- but not from the final one back to the first), and from the final critical value all the way out of $D_s(0)$. Moreover, we choose them so that they foliate the complement of the cuts. See Figure \ref{fig:cut_choices}.  We also arrange for the paths between $\star$ and points outside the support of the $\varphi_t$ not to enter the support of the $\varphi_t$. 

\begin{figure}[htb]
\begin{center}
\includegraphics[scale=0.30]{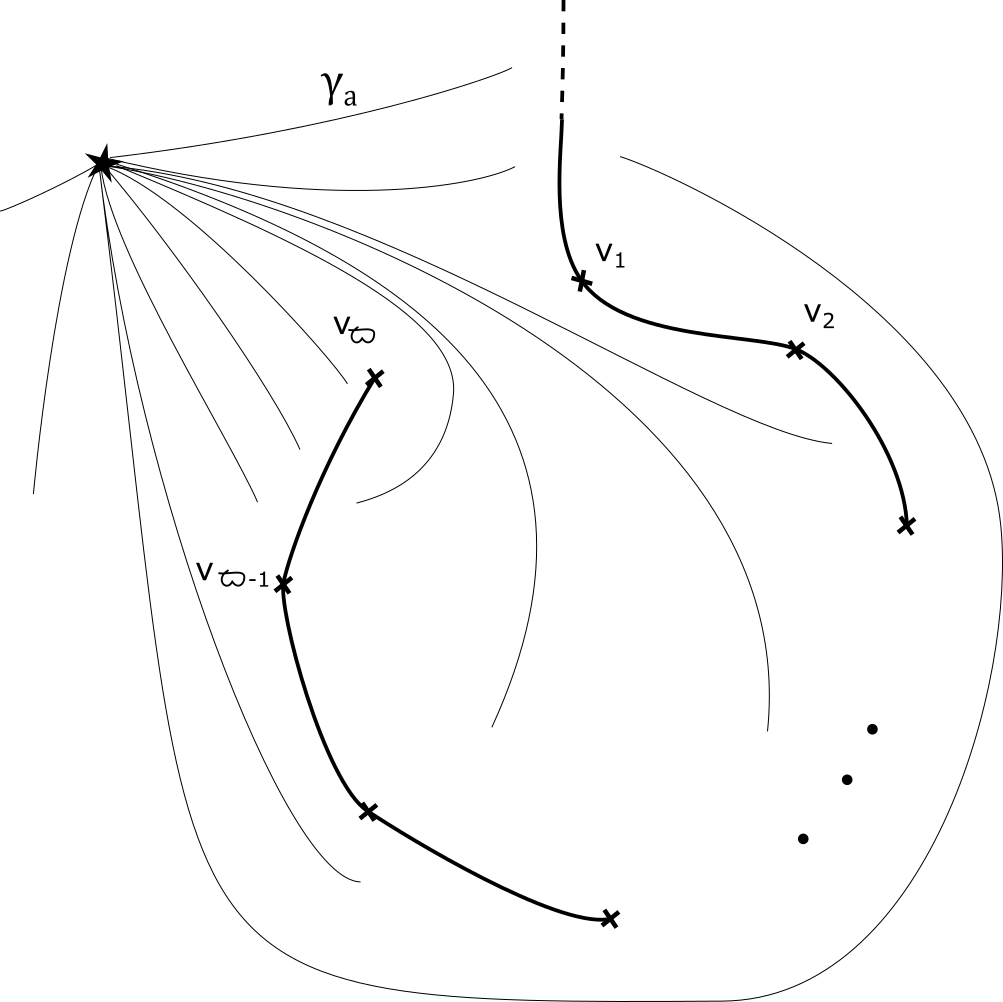}
% or [scale=0.85]
\caption{Base of $\Pi: X_{\mathbf{a},b} \to \C$, with its $b \varpi$ critical values, and our choice of cuts (thicker segments), and smooth system of paths (thinner segments) from $\star$ to points $z \in \C$, say $\gamma_z$, used in the proof of Proposition \ref{prop:Phi_definition}.
}
\label{fig:cut_choices}
\end{center}
\end{figure}

Let $\gamma_a$ be the paths from $\star$ to $a$, and let $F_a = \Pi^{-1}(a) \cap K$.  

By assumption, on $K$, if $\Phi$ is defined it must be given by a collection of maps $\Phi_z: F_z \to F_{\varphi(z)}$. 
Moreover, we require that $\Phi_z = Id$ for $z \notin D_{s+1}(0)$. 
Given a path $\gamma: [0,1] \to \B \backslash \text{Crit}(\Pi)$ with $\gamma(0)= \star$, $\gamma(1) = z$, let $\sigma_\gamma: F_\star \to F_z$ be the symplectic parallel transport map induced by $\gamma$. 
Now notice that \emph{if} the map $\Phi$ is defined, then we must have that $\sigma_{\varphi(\gamma)}^{-1} \circ \Phi_z \circ \sigma_{\gamma}$ is isotopic to $\Phi_\star$, i.e.~the identity $\text{Id}$, so $\Phi_z$ is isotopic to $\sigma_{\phi(\gamma)} \circ \sigma_{\gamma}^{-1}$. In particular, if $\Phi$ is well-defined, this should be independent of the choice of $\gamma$. Let us check that this is the case for our system of paths: we will show that for each of the paths $\gamma_1, \ldots, \gamma_{b \varpi}$ on Figure \ref{fig:base_rotation}, from $\star$ to $0$, the symplectomorphism $\Gamma_i = \rho_{\varphi(\gamma_i)} \circ \rho_{\gamma_i}^{-1}$ is independent of $i = 1, \ldots,b \varpi$ up to Hamiltonian isotopy. (This will be enough to construct a well-defined symplectomorphism, which in turn will imply the statement for all paths.)

\begin{figure}[htb]
\begin{center}
\includegraphics[scale=0.30]{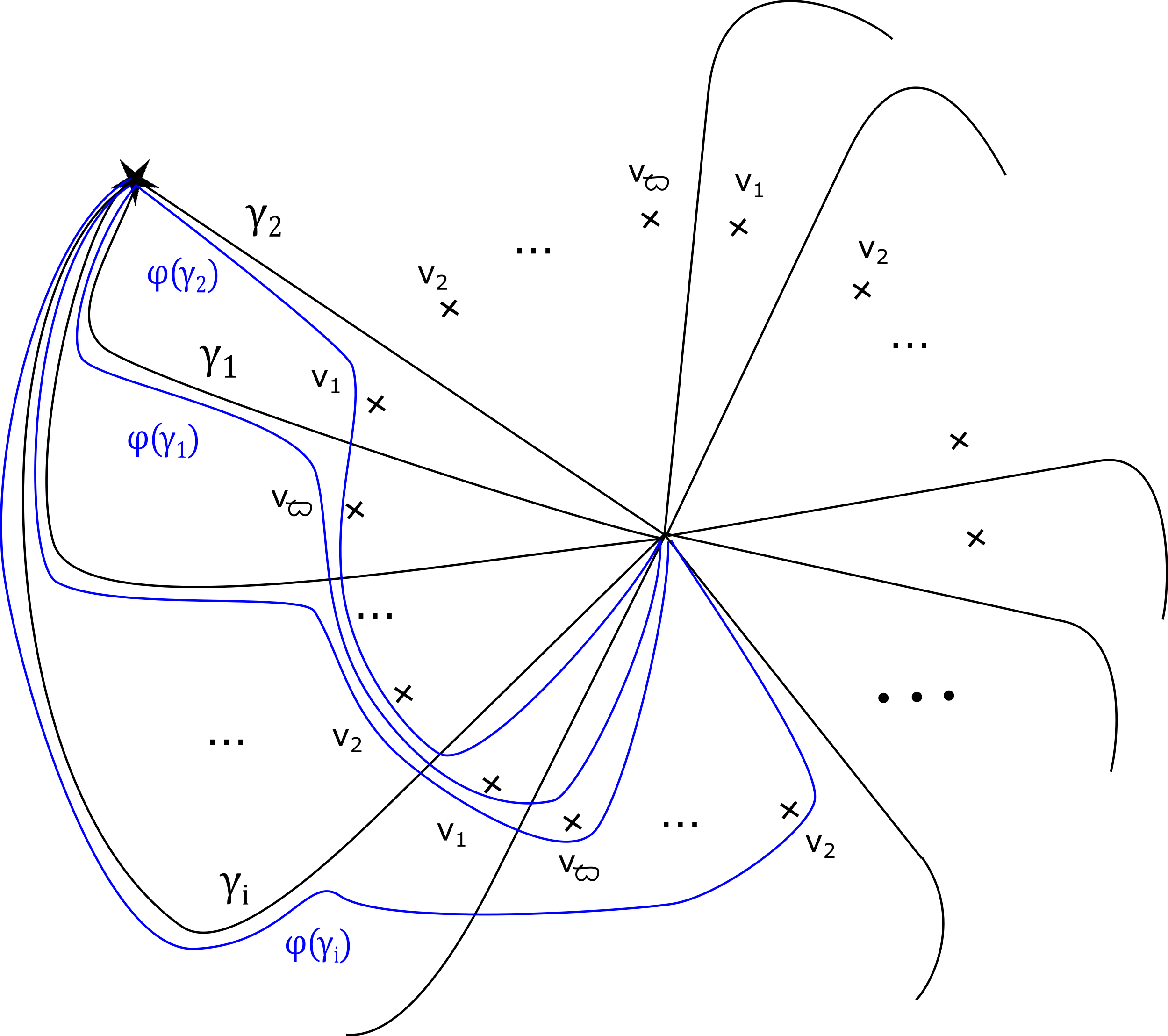}
% or [scale=0.85]
\caption{Base of $\Pi: X_{\mathbf{a}, b} \to \C$, with a total of $b \varpi$ critical values, together with the paths $\gamma_1, \ldots, \gamma_{b \varpi}$ and their images under $\varphi$, as used in the proof of Proposition \ref{prop:Phi_definition}.}
\label{fig:base_rotation}
\end{center}
\end{figure} 

We can read off the monodromy factorisation of each $\Gamma_i$ directly from Figure \ref{fig:base_rotation}:
$$
\Gamma_i = \big(\tau_{v_i} \tau_{v_{i+1}} \ldots, \tau_{v_{i+\varpi-1}} \big)^{\text{lcm}(a_0 , \ldots, a_{m-1})} = \varrho
$$
where indices are taken modulo $\varpi$, and we are using the fact that $\varrho$ commutes with each of the $\tau_{v_i}$ for the second equality. Thus $\Gamma_i$ is independent of $i$.

We now use this to define a fibred map $\tilde{\Phi}: K \to K$ lifting $\varphi$ such that:
\begin{itemize}
\item $\tilde{\Phi}$ intertwines $\Pi$, and is given by fibrewise symplectomorphisms $\tilde{\Phi}_z$ (including for critical fibres, away from the critical points);

\item Away from a thickening of the cuts, $\tilde{\Phi}_z = \sigma_{\phi(\gamma_z)} \circ \sigma^{-1}_{\gamma_z}$;

\item For a point $a$ on a cut (but not a critical point), there are two choices, say $\tilde{\Phi}_a^l$ and $\tilde{\Phi}_a^r$; we know these to be Hamiltonian isotopic, so pick a Hamiltonian isotopy from one to the other and realise this by symplectomorphisms along a segment across  the thickened cut (in particular, this is a smooth family of symplectomorphisms with two parameters: the distance traveled across the thickened cut, and $a$, where $a$ varies between the two consecutive critical values on the cut). 

\item $\tilde{\Phi}_z$ is the identity outside of the support of the $\varphi_t$.

\end{itemize}

In order to extend this over the critical fibres, we need to be careful with our choices of Hamiltonian isotopies over cuts (as in general there is no reason to expect the space of all Hamiltonian maps on a fibre to be simply connected). A `hands on' way of resolving this in this particular case is as follows:

Using our system of paths, the map on the central fibre is given by $\Gamma_1$. Fix a Hamiltonian isotopy $h_t$, $t \in [0,1]$, from $\Gamma_1$ to $\varrho$.

Now notice that if we used instead a path going across the first cut (the one between critical points of type $v_\varpi$ and $v_{\varpi-1}$), the map on the central fibre would be given by $\Gamma_\varpi$. This means that picking a Hamiltonian isotopy to use to define $\tilde{\Phi}_z$ over the first cut amounts to picking a Hamiltonian isotopy between $\Gamma_\varpi$ and $\Gamma_1$. As the monodromy about the critical point $v_{\varpi}$ is $\tau_{v_\varpi}$, choosing the following Hamiltonian isotopy  allows us to extend the maps $\tilde{\Phi}_z$ over that first critical point:
$$
G_{1,t} = 
\begin{cases}
 \tau_{v_{\varpi}}h_{2t} \tau_{v_{\varpi}}^{-1} & \text{for } t \in [0,1/2] 
\\
h_{2-2t} & \text{for } t \in [1/2,1] 
\end{cases}
$$
(This is well-defined at $t = 1/2$ as the $\tau_{v_j}$ have support disjoint from that of our model for $\nu^{\text{lcm}(a_0, \ldots, a_{m-1})}$, i.e.~$\rho$.) In order to extend over the next critical point (of type $v_{\varpi-1}$), we need to pick a suitable Hamiltonian isotopy between $\Gamma_{\varpi-1}$ and $\Gamma_1$, for instance:
$$
G_{2,t} = 
\begin{cases}
 \tau_{v_{\varpi-1}} \tau_{v_{\varpi}}h_{2t} \tau_{v_{\varpi}}^{-1}  \tau_{v_{\varpi-1}}^{-1}  & \text{for } t \in [0,1/2] 
\\
h_{2-2t} & \text{for } t \in [1/2,1] 
\end{cases}
$$
Proceeding iteratively, for the final cut (which stretches off out of the support of $\varphi$), we use the Hamiltonian isotopy between $\Gamma_{1}$ and $\Gamma_1$ given by 
$$
G_{b \varpi,t} = 
\begin{cases}
 (\tau_{v_{1}} \ldots \tau_{v_{\varpi}})^{b} h_{2t} (\tau_{v_{1}} \ldots \tau_{v_{\varpi}})^{-b} & \text{for } t \in [0,1/2] 
\\
h_{2-2t} & \text{for } t \in [1/2,1] 
\end{cases}
$$
Now use the fact that $b$ is a multiple of $\text{lcm}(a_0, \ldots, a_{m-1})$ to deform $G_{b \varpi, t}$ (rel.~endpoints) to the constant isotopy. This allows us to ensure that $\tilde{\Phi}_z$ is the identity outside of the support of the $\varphi_t$.

As $\tilde{\omega}$ is a product over $D_s(0)$ for $\tilde{R} \leq \rho \leq \tilde{R}+1$, $\tilde{\Phi}_z$ is given by $(p,z) \mapsto (p, \varphi(z))$ on that region (using the identification $\Psi$). This means that we're free to extend the collection $\tilde{\Phi}_z$ to a compactly supported diffeomorphism $\tilde{\Phi}$ of the total space simply by requiring that equation \ref{eq:isotope_back} hold (without isotopy) for $\tilde{\Phi}$. 

Recall $\tilde{\omega}_k = \tilde{\omega} + k \Pi^\ast \omega_0$. 
Consider the family of closed two-forms, for $\lambda \in [0,1]$ and $k \geq 0$;
$$
\tilde{\omega}_{k,\lambda} = \lambda (\tilde{\omega}_k) + (1-\lambda) \tilde{\Phi}^\ast ( \tilde{\omega}_k )
$$
We claim that for sufficiently large $k$, $\tilde{\omega}_{k,\lambda}$ is a symplectic form for any $\lambda$. 
The claim follows from noticing the following:

\begin{itemize}

\item For $\rho \leq \tilde{R}+1$, $\tilde{\Phi}^\ast \Pi^\ast \omega_0 = \Pi^\ast \phi^\ast \omega_0 = \Pi^\ast \omega_0$. On the other hand, $\tilde{\Phi}$ preserves the restriction of $\tilde{\omega}$ (or equally of $\tilde{\omega}$) to each fibre. It follows that for sufficiently large $k$, $\omega_{k, \lambda}$ is certainly symplectic on this region.

\item For $ \tilde{R}+1 \leq \rho \leq \tilde{R}+2$,  $\tilde{\Phi}^\ast \tilde{\omega}_{k} - \tilde{ \omega}_{k}$ is of the form 
$d \rho \wedge  d r$, where $r$ is a radial coordinate on the base $\C$. 
(Here we use the assumption that the $\varphi_t$ are rotationally invariant.) Now note that this term doesn't contribute to $\tilde{\omega}_{k,\lambda}^m$. 

\item For $\tilde{R}+2 \leq \rho$, as $\tilde{\Phi} = \text{Id}$, $\tilde{\omega}_{k,\lambda} = \tilde{\omega}_k$ for all $\lambda$. 

\end{itemize}

This then implies that we can perform a compactly supported Moser isotopy to deform $\tilde{\Phi}$ to a symplectomorphism with respect to the symplectic form $\tilde{\omega} + k \Pi^\ast \omega_0$, for sufficiently large $k$. (The support of the isotopy is contained in that of $\tilde{\Phi}$, so we needed worry about issues of compactness / being able to integrate the Moser vector field.) 
We can now conjugate this with a Moser isotopy between $\tilde{\omega}_k$ and $\omega$ to get a symplectomorphism with respect to the original $\omega$.
\end{proof}

From the discussion in \cite[Section 2.5]{Keating_tori}, we know that the matching cycles 
$$V^1_1, V^1_2, \ldots, V^1_{\varpi}, V^2_1, V^2_2, \ldots, V^2_{\varpi}, \ldots,  V^{b-1}_1, \ldots, V^{b-1}_{\varpi}$$
as given by Figure \ref{fig:distinguished_matching_cycles}, are a distinguished collection of vanishing cycles in $X_{\mathbf{a},b}$ considered as the Milnor fibre of the singularity $z_0^{a_0} + \ldots + z_{m-1}^{a_{m-1}} + z_m^b$.

\begin{figure}[htb]
\begin{center}
\includegraphics[scale=0.65]{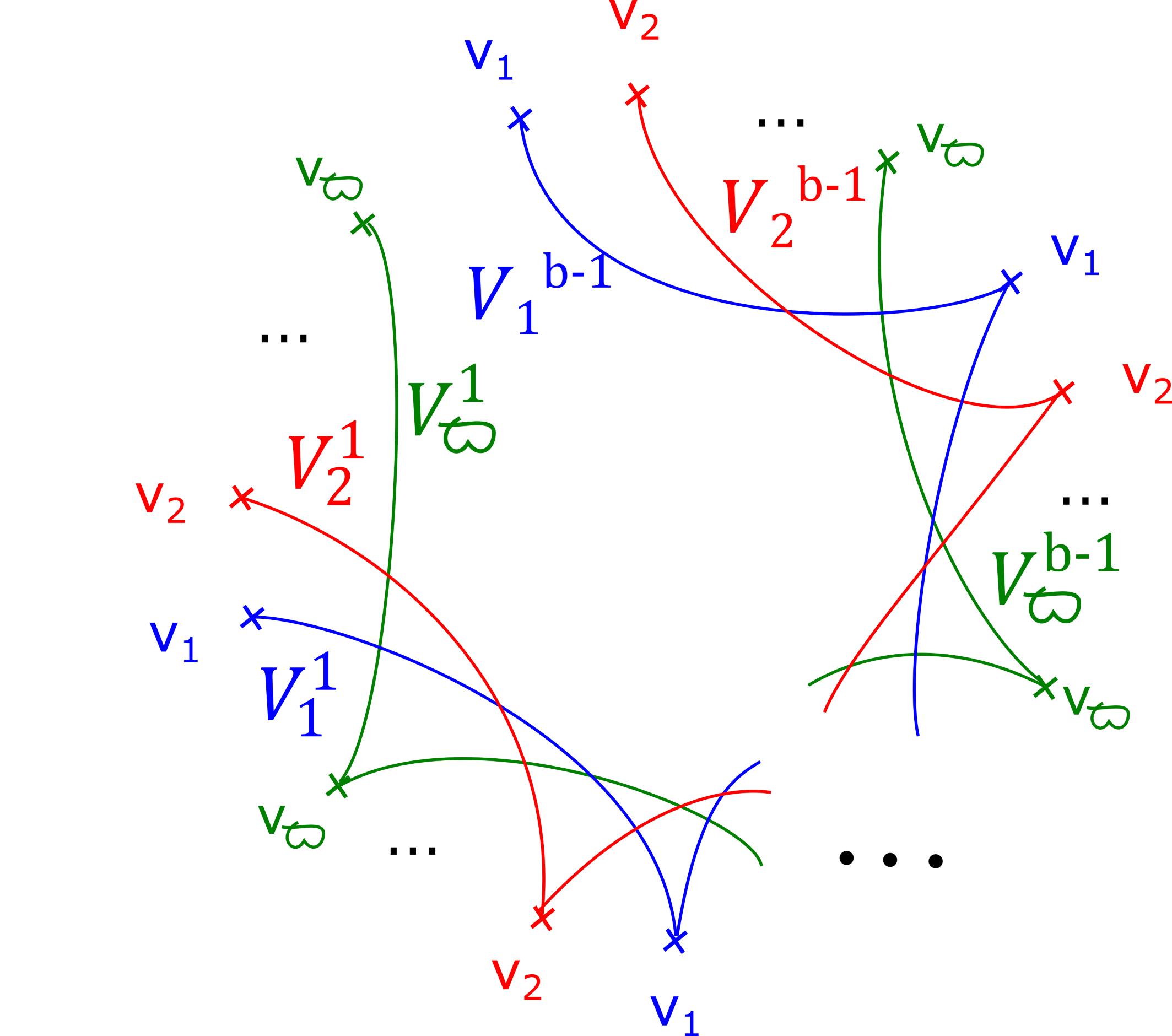}
% or [scale=0.85]
\caption{The matching cycles $V^1_1, \ldots, V^1_{\varpi},  \ldots,  V^{b-1}_1, \ldots, V^{b-1}_{\varpi}$.}
\label{fig:distinguished_matching_cycles}
\end{center}
\end{figure} 

\begin{proposition}\label{prop:Phi_Dehntwists}
Up to compactly supported Hamiltonian isotopy, the following symplectomorphisms are equal:
$$
\Phi = \Big( \tau_{V^{1}_1}  \tau_{V^{1}_2} \ldots \tau_{V^{1}_{\varpi}}
 \tau_{V^{2}_1}  \tau_{V^{2}_2} \ldots \tau_{V^{2}_{\varpi}}
 \ldots  \tau_{V^{b-1}_1}  \tau_{V^{b-1}_2} \ldots
 \tau_{V^{b-1}_{\varpi}}
\Big)^{\text{lcm}(a_0, \ldots, a_{m-1})}.
$$
\end{proposition}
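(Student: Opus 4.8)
The plan is to prove the identity by recognising both sides as compactly supported symplectomorphisms lifting the \emph{same} base rotation $\varphi$, and then invoking the uniqueness statement of Proposition \ref{prop:Phi_definition}. Write $\mu = \tau_{V^1_1}\cdots\tau_{V^1_\varpi}\cdots\tau_{V^{b-1}_1}\cdots\tau_{V^{b-1}_\varpi}$ for the product over all $(b-1)\varpi$ of the distinguished matching cycles noted above (following \cite[Section 2.5]{Keating_tori}), so that the claim reads $\Phi = \mu^{\mathrm{lcm}(a_0,\ldots,a_{m-1})}$. By Proposition \ref{prop:Phi_definition} the map $\Phi$ is, up to compactly supported Hamiltonian isotopy, the unique compactly supported symplectomorphism that is fibred over the large compact set $K$, covers the rotation $\varphi$ by $2\pi/k$, and has the prescribed collar behaviour. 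It therefore suffices to exhibit $\mu^{\mathrm{lcm}(a_0,\ldots,a_{m-1})}$ as such a lift. As a sanity check one expects $\Phi^k = \mu^{b}$ to be the boundary twist of $X_{\mathbf{a},b}$, which is the analogue, one dimension up, of the total-monodromy lemma $\nu^{\mathrm{lcm}(a_0,\ldots,a_{m-1})}=\varrho$ recalled above.

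First I would identify $\mu$ with the geometric monodromy of $X_{\mathbf{a},b}$, viewed as the Milnor fibre of $f = z_0^{a_0}+\cdots+z_{m-1}^{a_{m-1}}+z_m^b$. Since the $V^j_i$ form a distinguished basis of vanishing cycles in the prescribed order, Picard--Lefschetz theory identifies $\mu$, up to compactly supported Hamiltonian isotopy, with the total monodromy of this fibration; and because $f$ is weighted homogeneous, that monodromy is realised by the $\C^\ast$--flow $H\colon (z_0,\ldots,z_m)\mapsto (e^{2\pi i/a_0}z_0,\ldots,e^{2\pi i/a_{m-1}}z_{m-1}, e^{2\pi i/b}z_m)$, truncated near the boundary to have compact support (this truncation, with its boundary-twist bookkeeping, is precisely the mechanism behind the lemma $\nu^{\mathrm{lcm}(a_0,\ldots,a_{m-1})}=\varrho$, applied now to $X_{\mathbf{a},b}$). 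Raising to the power $\mathrm{lcm}(a_0,\ldots,a_{m-1})$ kills every fibre coordinate, since $a_i \mid \mathrm{lcm}(a_0,\ldots,a_{m-1})$ for $i<m$, while on $z_m$ it produces multiplication by $e^{2\pi i\,\mathrm{lcm}(a_0,\ldots,a_{m-1})/b} = e^{2\pi i/k}$, using $b = k\,\mathrm{lcm}(a_0,\ldots,a_{m-1})$. Hence $H^{\mathrm{lcm}(a_0,\ldots,a_{m-1})}$ covers the base rotation $\varphi$, is fibred with respect to $\Pi$ (the small correction coming from the morsifying term $\epsilon$ being absorbed by an isotopy, as $\epsilon$ is unchanged on the compact part), and acts trivially in the fibre directions there, its only fibrewise effect being the collar winding, namely the fibre boundary twist $\varrho$ already seen as the transition map $\Gamma_i$ in Proposition \ref{prop:Phi_definition}. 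Comparing with the characterisation of $\Phi$ and applying the uniqueness clause of Proposition \ref{prop:Phi_definition} then gives $\mu^{\mathrm{lcm}(a_0,\ldots,a_{m-1})}=\Phi$ up to compactly supported Hamiltonian isotopy.

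I expect the main obstacle to be the reconciliation of compact support with boundary behaviour: $H$ itself is \emph{not} compactly supported, so $\mu = H$ only holds after truncating $H$ over a collar, and one must verify that the collar winding accrued by $\mu^{\mathrm{lcm}(a_0,\ldots,a_{m-1})}$ matches that of $\Phi$ \emph{exactly} (rather than up to a spurious extra boundary twist), and that the commutation with $\Pi$ genuinely survives the truncation up to Hamiltonian isotopy, so that $\mu^{\mathrm{lcm}(a_0,\ldots,a_{m-1})}$ lands in the class of fibred lifts to which the uniqueness statement applies. I would control all of this inside the trivial-horizontal-boundary model of Lemma \ref{lem:fibration_J}, tracking the $\C^\ast$--flow on the product region $(\Pi^{-1}(0)\setminus \bar{B}_{\tilde{R}}(0))\times D_s(0)$ exactly as in the proof of Proposition \ref{prop:Phi_definition}.

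A complementary, more hands-on route would bypass the $\C^\ast$--flow altogether: realise $\varphi$ by an explicit isotopy of the base pushing the $b\varpi$ critical values cyclically forward by $\mathrm{lcm}(a_0,\ldots,a_{m-1})$ groups, and read off the product of Dehn twists along the matching cycles traced out as the critical values braid past one another. The ordering of the factors $\tau_{V^j_i}$ and the appearance of the $\mathrm{lcm}(a_0,\ldots,a_{m-1})$--th power would then fall out of the braiding bookkeeping, with the collar behaviour once again pinned down by $\nu^{\mathrm{lcm}(a_0,\ldots,a_{m-1})}=\varrho$ and by the computation $\Gamma_i=\varrho$ from Proposition \ref{prop:Phi_definition}.
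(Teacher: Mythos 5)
There is a genuine gap at the pivotal step of your first (main) route: you invoke ``the uniqueness clause of Proposition \ref{prop:Phi_definition}'' as though it characterised $\Phi$ among \emph{all} compactly supported symplectomorphisms that are fibred over $K$, cover the rotation $\varphi$, and have the prescribed collar behaviour. The uniqueness asserted there is weaker: it says only that the construction of Proposition \ref{prop:Phi_definition} is independent of the choices made within it (the system of paths $\gamma_z$, the Hamiltonian isotopies across the cuts). Two compactly supported lifts of $\varphi$ with the same collar behaviour differ by a compactly supported fibred lift of the identity, and proving that such a difference is Hamiltonian isotopic to the identity is essentially the entire content of the proposition --- it cannot be quoted. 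Indeed, in the paper's own proof, $\Phi^{-1} \circ \tau^{\text{lcm}(a_0, \ldots, a_{m-1})}$ is exactly such a difference: after a Hamiltonian isotopy it preserves the half-lines $\gamma_1, \ldots, \gamma_b$ and the fibres above them pointwise, hence is supported in $b$ disjoint sectors, each a copy of $\{ z_0^{a_0} + \ldots + z_{m-1}^{a_{m-1}} + z_m = 1 \} \cong \C^m$, and the triviality of each sector piece is obtained \emph{explicitly}, by smoothly turning off the Morsification of $z_0^{a_0} + \ldots + z_{m-1}^{a_{m-1}}$ so that the twisting unravels. No general theorem could substitute for this step: connectedness of the compactly supported symplectomorphism group of $\C^m$ is Gromov's theorem for $m=2$ and open for $m \geq 3$, which is presumably why the paper argues by hand. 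To repair your route you would have to verify that the fibrewise maps of the truncated $\C^\ast$--flow agree, coherently over all of $K$, with the parallel-transport comparison maps $\sigma_{\varphi(\gamma)} \circ \sigma_\gamma^{-1}$ defining $\Phi$ --- which is precisely the ``collar winding'' bookkeeping you flag as the main obstacle and then defer.

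That said, several ingredients of your proposal are sound and genuinely clarifying: identifying $\tau$ (your $\mu$) with the total symplectic monodromy of $z_0^{a_0} + \ldots + z_{m-1}^{a_{m-1}} + z_m^b$ via Picard--Lefschetz is correct, and your observation that the \emph{untruncated} statement is a triviality --- $H^{\text{lcm}(a_0,\ldots,a_{m-1})}$ is literally the global symmetry $z_m \mapsto e^{2\pi i/k} z_m$ --- is a good sanity check, consistent with the remark after the proposition that $\Phi^k$ is isotopic to $\nu^b$, the boundary twist. But this observation shows that the whole mathematical content of the proposition lives exactly in the truncation/collar comparison you postpone. The paper's actual argument is different and more elementary: present $\tau$ as a fibred symplectomorphism, make it $\Z/b$--symmetric, check that $\tau^{\text{lcm}(a_0,\ldots,a_{m-1})}$ and $\Phi$ agree on the central fibre (using $\Gamma_i = \varrho$ from Proposition \ref{prop:Phi_definition} together with the fact that $\tau$ restricts there to $\nu$), and then kill the sectorwise difference explicitly as above. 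Your second, ``complementary'' braiding route is closer in spirit to this, but as written it also stops short of the decisive unravelling step.
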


\begin{proof}
Let 
$$\tau = \tau_{V^{1}_1}  \tau_{V^{1}_2} \ldots \tau_{V^{1}_{\zeta}} \tau_{V^{2}_1}  \tau_{V^{2}_2} \ldots \tau_{V^{2}_{\varpi}} \ldots  \tau_{V^{b-1}_1}  \tau_{V^{b-1}_2} \ldots \tau_{V^{b-1}_{\varpi}}.$$
The map $\tau$ is presented as a fibred symplectomorphism, in the sense of Remark \ref{rmk:fibred_symplecto}.
See Figure \ref{fig:map_tau}. It is the monodromy of the singularity $z_0^{a_0} + \ldots + z_{m-1}^{a_{m-1}} $. Note that after a compactly supported Hamiltonian isotopy (induced by one of the base relative to the critical points), we can take $\tau$ to be $\Z / b$--symmetric; the action on the central fibre (which is fixed set-wise) is precisely $\nu$, the monodromy of the singularity $z_0^{a_0} + \ldots + z_{m-1}^{a_{m-1}}$, i.e.~$\tau_{v_1} \tau_{v_2} \ldots \tau_{v_{\varpi}} \in \text{Symp}_{\text{cpt}} X_{\mathbf{a}}$. In particular, $\tau^{\text{lcm}(a_0, \ldots a_{m-1})}$ and $\Phi$ agree on the central fibre. 

\begin{figure}[htb]
\begin{center}
\includegraphics[scale=0.30]{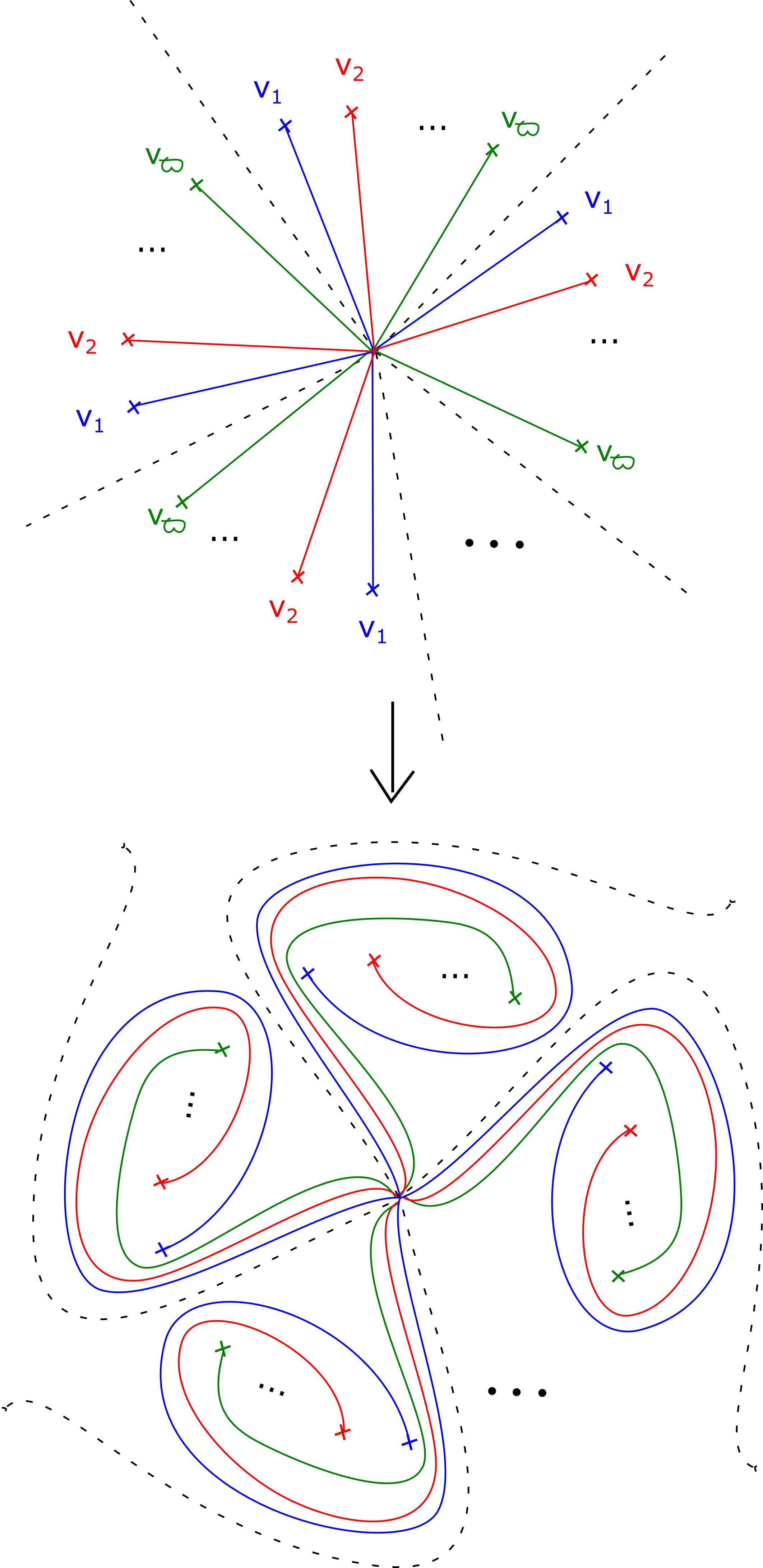}
% or [scale=0.85]
\caption{The automorphism of the base of $\Pi$ which describes the map $\tau$.}
\label{fig:map_tau}
\end{center}
\end{figure} 

Now consider the fibred symplectomorphism $\Phi^{-1} \circ \tau^{\text{lcm}(a_0, \ldots a_{m-1})}$.
 After Hamiltonian isotopy, this is given by Figure \ref{fig:map_tau^lcm}; in particular, the half-lines $\gamma_1, \ldots, \gamma_b$ (relabelled compared with Figure \ref{fig:base_rotation}) are preserved point-wise, as are the fibres above them. 
Thus $\Phi^{-1} \circ \tau^{\text{lcm}(a_0, \ldots a_{m-1})}$ can be decomposed as a composition of cyclically symmetric, compactly supported symplectomorphisms with disjoint support, each contained in a `sector' between $\gamma_i$ and $\gamma_{i+1}$. Let us focus on one such sector, say between $\gamma_1$ and $\gamma_2$; let $\Upsilon$ be the compactly supported symplectomorphism of the sector given by restricting $\Phi^{-1} \circ \tau^{\text{lcm}(a_0, \ldots a_{m-1})}$ . Notice that the total space of that (sub) Lefschetz fibration is $\{ z_0^{a_0} + \ldots + z_{m-1}^{a_{m-1}} + z_m =1 \} \cong \C^m$, with the map to $\C$ simply given by morsifying $z_0^{a_0} + \ldots + z_{m-1}^{a_{m-1}}$ and projecting to $z_m$. 
It now follows that $\Upsilon$ is Hamiltonian isotopic to the identity as a compactly supported symplectomorphism of $\C^m$, albeit not as a fibred map with respect to the Lefschetz fibration: smoothly `turn off' the Morsification of $z_0^{a_0} + \ldots + z_{m-1}^{a_{m-1}}$ to get a single critical point (the projection is now given by just mapping to $z_m$), and it's now immediate that the `twisting' which defines $\Upsilon$ can be unravelled, inducing a Hamiltonian isotopy to the identity. As this can be done in each sector, the conclusion follows.
\end{proof}

\begin{figure}[htb]
\begin{center}
\includegraphics[scale=0.40]{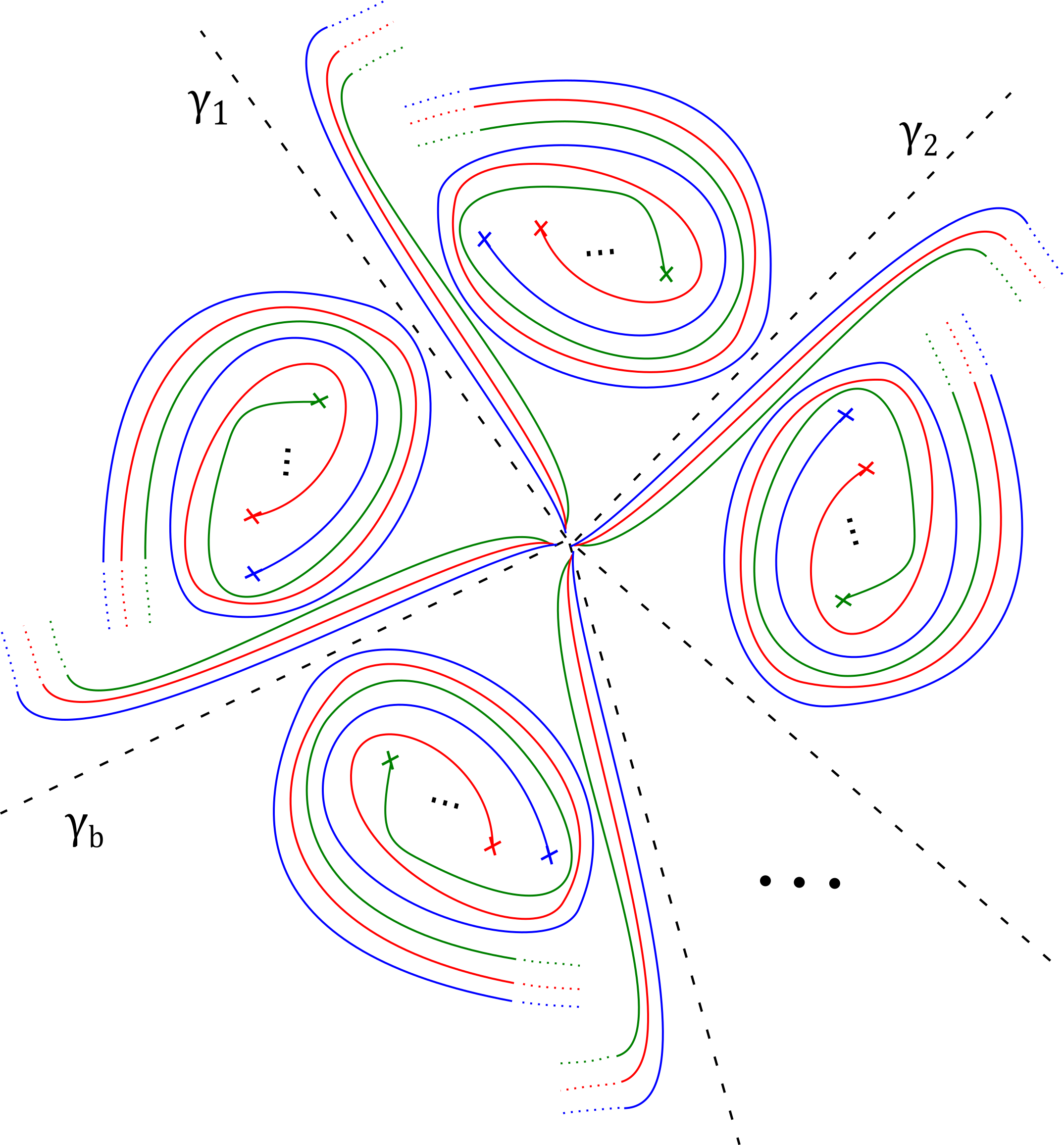}
% or [scale=0.85]
\caption{The automorphism of the base of $\Pi$ which describes the map $\Phi^{-1} \circ \tau^{\text{lcm}(a_0, \ldots, a_{m-1})}$. (Note the indices on the $\gamma_i$ have been renamed for simplicity.) Each vanishing path in the base rotates about the relevant cluster of vanishing cycles $\text{lcm}(a_0, \ldots, a_{m-1})$ times.}
\label{fig:map_tau^lcm}
\end{center}
\end{figure} 

\begin{remark}

In the final step of the proof above, in the case $m=2$ one could also appeal to Gromov's theorem \cite{Gromov} that any compactly supported symplectomorphism of $\C^2$ is Hamiltonian isotopic to the identity.

\end{remark}

\begin{remark}
Proposition \ref{prop:Phi_Dehntwists} implies that  the symplectomorphism $\Phi^k$, which corresponds to a $2 \pi$ rotation of the base, is Hamiltonian isotopic to $\nu^{k \circ \text{lcm}(a_0, \ldots, a_{m-1})} = \nu^b$. On the other hand, by assumption, $b = \text{lcm}(a_0, \ldots, a_{m-1},b)$, so this agrees with the boundary Dehn twist constructed using the periodic Reeb blow in \cite[Section 4c]{Seidel00}. 
\end{remark}

\begin{remark} \label{rmk:weighted_homogeneous}
For simplicity, we've chosen to restrict ourselves to the case of Brieskorn--Pham singularities. However, the discussion e.g.~in \cite[Section 4c]{Seidel00} applies more broadly to weighted homogeneous singularities. In particular, this means that the constructions (and conclusions) of this section apply more broadly to any singularity of the form $f(z_0, \ldots, z_{m-1}) + z_m^{b}$, where $f$ is a weighted homogeneous singularity. 
\end{remark}

%%%%%%%%%%%%%%%%%%%%%

\subsection{Monotone Lagrangians and Maslov indices}\label{sec:Maslov_background} \

Let $L \subset (X^{2m}, \omega)$ be a closed Lagrangian submanifold of a symplectic manifold. Let $LGr(2m)$ be the Grassmanian of Lagrangian $m$ planes in $\R^{2m}$. Recall $\pi_1 (LGr(2m)) = \pi_1 (U(m) / O(m)) = \Z$. An element $\beta \in \pi_2 (X, L)$ induces a trivialization of $T X |_{\partial \beta}$, and a class $\left[ TL|_{\partial \beta} \right] \in \pi_1 (LGr(2n))$. This is called the \emph{Maslov index} of $\beta$, denoted $\mu(\beta)$. (This should not be confused with the total monodromy of a Lefschetz fibration, also conventionally denoted $\mu$, though we have avoided that notation in this article.) 
Whenever $L$ is orientable, $\mu(\beta) \in 2 \Z$. 
If $2c_1(X) = 0$, we can fix a trivialisation of $(\Lambda^mT^*X)^{\otimes 2} \simeq \C \times X$; given a Lagrangian $L$, this determines the homotopy class of a map $L \to \C^\ast$; the induced class in $H^1(L, \R)$ is called the \emph{Maslov class} of $L$. Dually, via the standard identification $\text{Hom}(H_1(L; \Z), \Z) \cong H^1 (L, \R)$, each class in $H_1(L; \Z)$ has a Maslov index (in particular, the Maslov index of a disc only depends on its boundary).

\begin{definition}
$L$ is monotone if there exists $\kappa >0$ such that for all $\beta \in \pi_2(X,L)$,
\bq 
[ \omega ] (\beta ) = \kappa \cdot \mu (\beta).
\eq \label{eq:monotone}
\end{definition}

If $L$ is a Lagrangian in $X_r$, respectively $Y_{r,s}$, we have
\begin{align}
& \pi_2 (X_r, L) \cong \pi_2(X_r) \oplus \pi_1 (L) \cong \Z^{4(r-1)} \oplus \pi_1 (L) \\
& \pi_2 (Y_{r,s}, L) \cong \pi_1(L)
\end{align}
where $4(r-1)$ is the Milnor number of the singularity $f_r$. 
On the $\pi_1 (L)$ term, both maps from $\pi_2(X,L)$ to $\R$ in Equation \ref{eq:monotone} factor through $H_1(L)$. 
Moreover, all classes in the image of $\pi_2(X)$, which are represented by Lagrangian spheres, have symplectic area zero. In particular, the symplectic area of any class in $\pi_2(X,L)$ is determined by the homology  class of its boundary in $H_1(L)$. 

We briefly review some relevant background concerning Maslov indices as well as tools for computations which appear later. 
Recall that given any hypersurface singularity $f$ in $m+1$ variables, its Milnor fibre $Z_f$ has trivial tangent bundle: $[Z_f , BU(m) ] = [ \vee_{i=1, \ldots  \text{Mil} (f)} S_i, BU(m) ] $, where the $S_i$ are vanishing cycles and $\text{Mil}(f)$ is the Milnor number of $f$; now use the fact that for each $S_i$, $T(T^\ast S_i)$ is a trivial $U(m)$ bundle.

In the $m=1$ case, using $[Z_f , U(1) ] = [ \vee_{i=1, \ldots  \text{Mil} (f)} S_i, U(1) ] $, we see that  a choice of trivialisation of $TZ_f$ is determined up to homotopy by the Maslov classes of all of the vanishing cycles. 

Now fix trivialisations of $TX_r$ and $TY_{r,s}$ as $U(2)$ and $U(3)$--bundles, say $\sigma_r: TX_r \cong X_r \times \C^2$ and $\sigma_{r,s}: TY_{r,s} \cong Y_{r,s} \times \C^3$. Let $L$ be a Lagrangian in $X_r$ or $Y_{r,s}$. Using the trivialisations, any path $\rho: S^1 \to L$ induces a path $\tilde{\rho}: S^1 \to LGr (\R^{2m})$, $m=2,3$; the class $\tilde{\rho} (S^1) \in \pi_1 (LGr(\R^{2m}))$ is the Maslov index of $\rho$.  
Note that this class is independent of the choice of $\sigma_r$ and $\sigma_{r,s}$: $\sigma_r$, the trivialisation of $TX_r$, is essentially  unique, as the difference between two trivialisations is given by a class in 
$$
[ X_r, U(2) ] = [ \vee_{i=1,\ldots , 4(r-1)} S^2, U(2) ] = \{1 \}.
$$
On the other hand, $\sigma_{r,s}$, the trivialisation of $TY_{r,s}$, is not unique, as it determined up to a class in 
$$
[ Y_{r,s}, U(3) ] = [ \vee_{i=1,\ldots , 4(r-1)(s-r)} S^3, U(3) ] = \Z^{4(r-1)(s-r)},
$$ 
where $4(r-1)(s-1)$ appears as the Milnor number of the singularity $f_{r,s}$ -- but as $S^3$ is simply connected this doesn't affect Maslov indices.

Let $Z$ denote either $X_r$ or $Y_{r,s}$, and let $m=2$ or $3$ be its dimension. To calculate Maslov indices, we fix a `reference' Lagrangian plane $\R^m \subset \C^n$, and pull it back to Lagrangian planes $\mathcal{L}_p \subset TZ$ at each point $p \in Z$ via $\sigma_r$ or $\sigma_{r,s}$. 
Given an oriented path $\{ TL_p \}_{p \in \rho(S^1)}$, we count the non-transverse intersections with $\mathcal{L}_p$. The Maslov index $\mu(\rho)$ is given by the sum of the signed dimensions of these non-transverve intersections. 

For $X_r$, we can use the following. 
Suppose we are given a disc $B \subset \C$ in the base of $\Pi_r$, containing no critical points. Then we may assume that the trivialization of $TX_r$ restricts to the product of trivialisations of the fibre $M$ and the base $B$, and take reference Lagrangian planes given by the product of two reference Lagrangian lines, one in the tangent bundle to the fibre and one in the tangent bundle to the base. For the base, we pick e.g.~a constant horizontal line. A trivialisation of $TM$ is determined up to homotopy by the Maslov indices of  the curves $a, b, c, d$ (labelled as before following Figure \ref{fig:vcycles_on_Xr2}); we pick one where these are all zero; for instance, we may take our reference Lagrangian lines to be as in Figure \ref{fig:reference_Lagrangian_line2}.  (In a suitable identification with a thrice-punctured square with sides glued in pairs, these tangent lines all have slope one.)

\begin{figure}[htb]
\begin{center}
\includegraphics[scale=0.35]{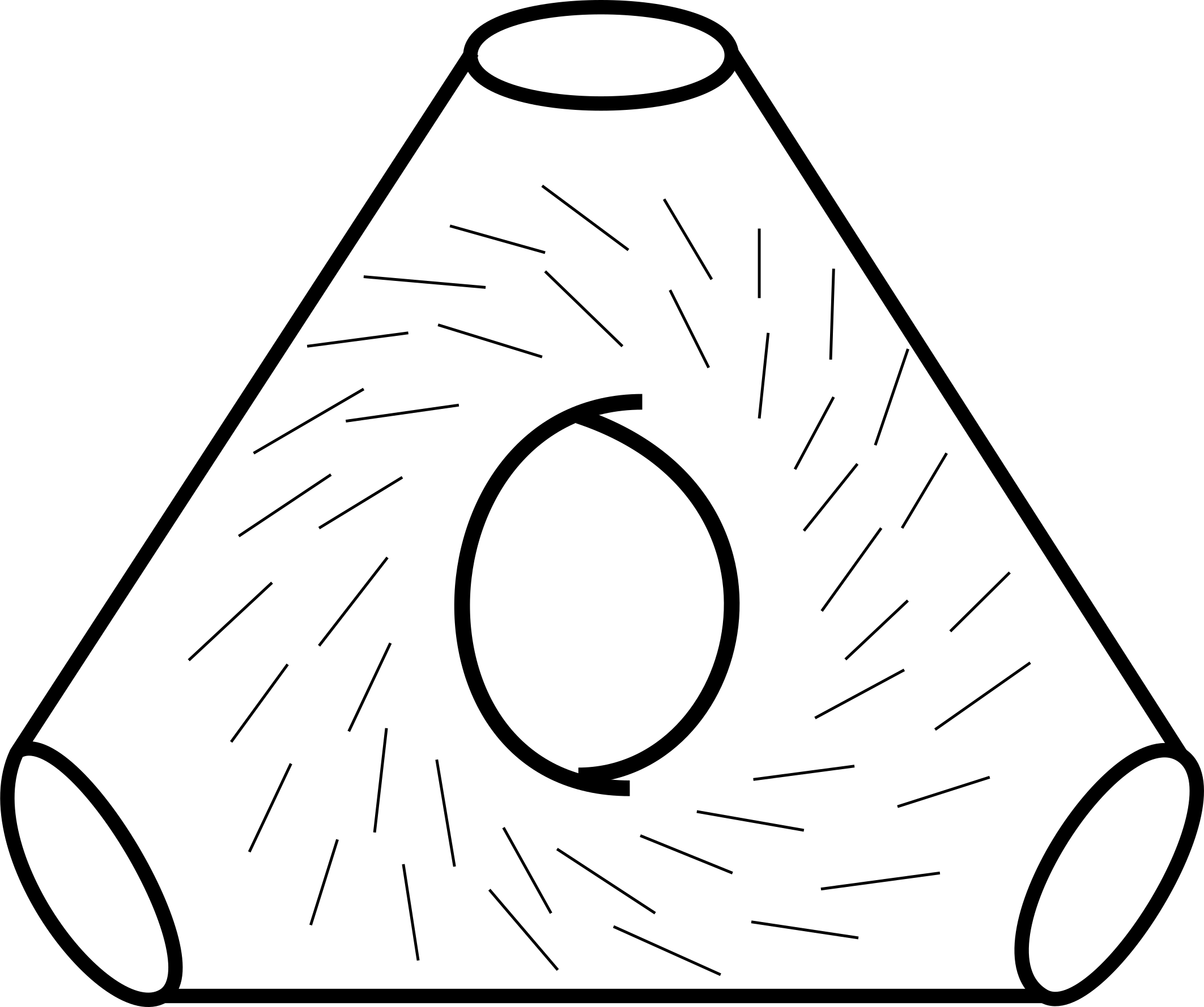}
% or [scale=0.85]
\caption{Reference Lagrangian lines in $T\Sigma$. 
}
\label{fig:reference_Lagrangian_line2}
\end{center}
\end{figure}

The trivialisation of $TM$ is invariant in Dehn twists in $a, \ldots, d$ up to isotopy, so our choices extend to give a trivialisation of $T\Pi^{-1}_r (\C \backslash D_\epsilon ( \text{Crit}(\Pi_r)) )$, where $D_\epsilon ( \text{Crit}(\Pi_r))$ is a small neighbourhood of the critical values of $\Pi_r$. Further, as we have chosen the curves $a, \ldots, d$ to have Maslov index zero, our choice of trivialisation can be extended (up to homotopy) over the critical fibres (recall that $a,b,c$ and $d$ are also the vanishing cycles for $\Pi_r$).

For $TY_{r,s}$, we proceed similarly: using $P_{r,s}$, start with the product of our trivialisation of $TX_r$ with the obvious trivialisation of $TB$ for a disc $B \subset \C \backslash \text{Crit}(P_{r,s})$, and notice that it can be extended to the whole space.  

We will later use the following observation.
\begin{lemma}
Suppose $\Sigma_g \subset X_r$ is an orientable Lagrangian submanifold. Fix a decomposition of $\Sigma_g$ (as an abstract surface) as a connect sum of $g$ tori, say $T_1, \ldots, T_g$. Then there are bases of $H_1 (T_j; \Z)$, for each $j=1, \ldots, g$, such that for the basis of $H_1 (\Sigma_g, \Z)$ induced by the natural isomorphism 
\bq
H_1 (\Sigma_g ; \Z) = H_1 (T_1; \Z) \oplus \ldots \oplus H_1 (T_g; \Z)
\eq
the Maslov class of $\Sigma_g$  is equal to
\bq
(2n_1, 0, 2n_2, 0, \ldots, 2n_g, 0) \in Hom(H_1 (\Sigma_g;\Z); \Z)
\eq
for some integers $n_i \in \Z_{\geq 0}$. 
\end{lemma}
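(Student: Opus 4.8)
The plan is to reduce the statement to a normal-form problem for a single linear functional on the symplectic lattice $(H_1(\Sigma_g;\Z),\langle\cdot,\cdot\rangle)$, where $\langle\cdot,\cdot\rangle$ denotes the algebraic intersection pairing, and then to realise the resulting algebraic basis geometrically by a connect sum decomposition. First I would use orientability: since $\Sigma_g$ is orientable, every class has even Maslov index, so the Maslov class is divisible by two. Writing $\mu = 2\mu'$ with $\mu' \in H^1(\Sigma_g;\Z) = \mathrm{Hom}(H_1(\Sigma_g;\Z),\Z)$ (there is no torsion for a surface), it then suffices to produce a symplectic basis $a_1,b_1,\ldots,a_g,b_g$ of $H_1(\Sigma_g;\Z)$ — i.e.\ one with $\langle a_i,b_i\rangle = 1$ and all other pairings zero — on which $\mu'$ vanishes on each $b_i$ and is non-negative on each $a_i$. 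Such a symplectic basis is exactly the data of a connect sum decomposition together with bases of the $H_1(T_j;\Z)$: each torus contributes a hyperbolic pair and distinct tori are symplectically orthogonal.

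Second I would put $\mu'$ into normal form via Poincar\'e duality on $\Sigma_g$, which gives a unique class $c\in H_1(\Sigma_g;\Z)$ with $\mu'(x)=\langle c,x\rangle$ for all $x$. If $c=0$ then $\mu=0$ and any symplectic basis works with all $n_i=0$. Otherwise write $c = d\,c_0$ with $c_0$ primitive and $d\geq 1$ its content. Since $\mathrm{Sp}(2g,\Z)$ acts transitively on primitive vectors of the symplectic lattice, $c_0$ extends to a symplectic basis; taking $c_0$ to be its first $a$-vector makes $\mu'$ supported on $b_1$ with value $d$, and interchanging the first $a$- and $b$-vectors (adjusting a sign so that $\langle a_1,b_1\rangle=1$ is preserved) moves this to $a_1$. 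The result is the sharper normal form $(2d,0,0,\ldots,0)$, which is the asserted one with $n_1=d\geq 1$ and all other $n_i=0$; non-negativity is automatic here, and in general a simultaneous sign change $(a_i,b_i)\mapsto(-a_i,-b_i)$ fixes any unwanted signs while preserving the pairing.

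Third I would realise this algebraic symplectic basis by an honest connect sum decomposition. The standard decomposition of $\Sigma_g$ produces the standard symplectic basis, and any two symplectic bases differ by an element $A\in \mathrm{Sp}(2g,\Z)$; by the classical surjection $\mathrm{MCG}(\Sigma_g)\twoheadrightarrow \mathrm{Sp}(2g,\Z)$ there is a diffeomorphism $\phi$ of $\Sigma_g$ with $\phi_* = A$. Applying $\phi$ to the standard decomposition transports it to a new decomposition $T_1,\ldots,T_g$ whose associated homology basis, namely the images under $\phi_*$ of the standard one, is precisely the basis found in the previous step. This yields the desired $T_j$ and the bases of the $H_1(T_j;\Z)$, completing the argument.

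I expect the only step that is not pure linear algebra to be this final realisation: that every symplectic basis of $H_1(\Sigma_g;\Z)$ comes from a geometric connect sum decomposition is exactly the surjectivity of the mapping class group onto $\mathrm{Sp}(2g,\Z)$ (see e.g.\ Farb--Margalit), and this is the one external input. Everything else is formal, and I would note that beyond the parity statement $\mu(\beta)\in 2\Z$ no symplectic geometry enters — the Maslov class is treated purely as a functional on the homology of the smooth surface.
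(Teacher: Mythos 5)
There is a genuine gap here, and it is a quantifier mismatch rather than a local error. The lemma \emph{fixes} the connect sum decomposition $T_1, \ldots, T_g$ in advance and only permits you to re-choose bases of each $H_1(T_j;\Z)$; equivalently, the admissible changes of basis of $H_1(\Sigma_g;\Z)$ are block-diagonal with respect to the given splitting $H_1(T_1;\Z) \oplus \cdots \oplus H_1(T_g;\Z)$. Your third step abandons this constraint: applying a diffeomorphism $\phi$ realising $A \in \mathrm{Sp}(2g,\Z)$ \emph{replaces} the given decomposition by its image under $\phi$, so what you prove is that \emph{some} decomposition exists in which the Maslov class is $(2d, 0, \ldots, 0)$ --- a different statement. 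Moreover, your sharper normal form is genuinely unattainable in the fixed-decomposition setting: writing $\mu = 2\mu'$, the restriction $\mu'|_{H_1(T_j;\Z)}$ is independent of any choice of bases, so if (say) $g=2$ and the restrictions of $\mu'$ to both $H_1(T_1;\Z)$ and $H_1(T_2;\Z)$ are nonzero, no block-diagonal basis change can make the entries over $T_2$ vanish. This is exactly why the lemma's conclusion retains $g$ integers $n_1, \ldots, n_g$ rather than the single $d$ your argument produces.

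The correct argument --- and the paper's, which is one line --- is just the block-diagonal version of your second step, and needs none of the machinery you invoke: restrict $\mu'$ to each rank-two summand $H_1(T_j;\Z) \cong \Z^2$, where it is some pair $(\alpha_j, \beta_j)$, and apply an $SL(2,\Z)$ change of basis of that summand alone taking it to $(\gcd\{\alpha_j,\beta_j\}, 0)$ (interpreted as $(0,0)$ when both entries vanish), yielding $(2n_1, 0, \ldots, 2n_g, 0)$ with $n_j = \gcd\{\alpha_j, \beta_j\} \geq 0$. Note also that the lemma places no symplectic condition on the chosen bases --- arbitrary bases of the $H_1(T_j;\Z)$ are allowed --- so Poincar\'e duality, transitivity of $\mathrm{Sp}(2g,\Z)$ on primitive vectors, and the surjection $\mathrm{MCG}(\Sigma_g) \twoheadrightarrow \mathrm{Sp}(2g,\Z)$ are all unnecessary; the only inputs are evenness of the Maslov class (orientability) and the $SL(2,\Z)$ normal form on a single torus factor. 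Your steps are individually sound and would prove the existence-of-a-decomposition variant, but as written they do not establish the lemma as stated.
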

 
\begin{proof}
As $\Sigma_g$ is orientable, all entries for the Maslov class with respect to any basis are even, and wlog non-negative. In the case $\Sigma_g = T^2$, a class of the form $(2\alpha, 2\beta)$ with respect to some basis can be transformed to $(2\text{gcd}\{\alpha,\beta\}, 0)$ with respect to another one. The claim is then immediate.
\end{proof}

%%%%%%%%%%%%%%%%%%%%%%%%%%%%

\section{Building blocks: monotone Lagrangian surfaces}\label{sec:building_blocks}

\subsection{Distinguished monodromy actions}

We will study Lagrangian tori and Klein bottles which  are fibred over immersed $S^1$s in the base of $\Pi_r$. As a preliminary, we calculate the images of certain curves under parallel transport along some distinguished arcs in the base of $\Pi_r$, and fix some notation.

We will care about six configurations, each involving four critical points. Two of them are given in Figures \ref{fig:bc2} and \ref{fig:bb2}; we will refer to these two configurations as being of types $BC$ and $BB$ respectively. Configurations $CC$, $BD$, $CD$ and $DD$ are defined similarly. (This shorthand will also be used on diagrams later when describing more complicated configurations, to aid legibility.) As the vanishing cycles $b$ and $c$ do not intersect, one could swap their order in Figure \ref{fig:bc2}; in particular, a configuration of type $CB$ would be the same as $BC$.

\begin{figure}[htb]
\begin{center}
\includegraphics[scale=0.40]{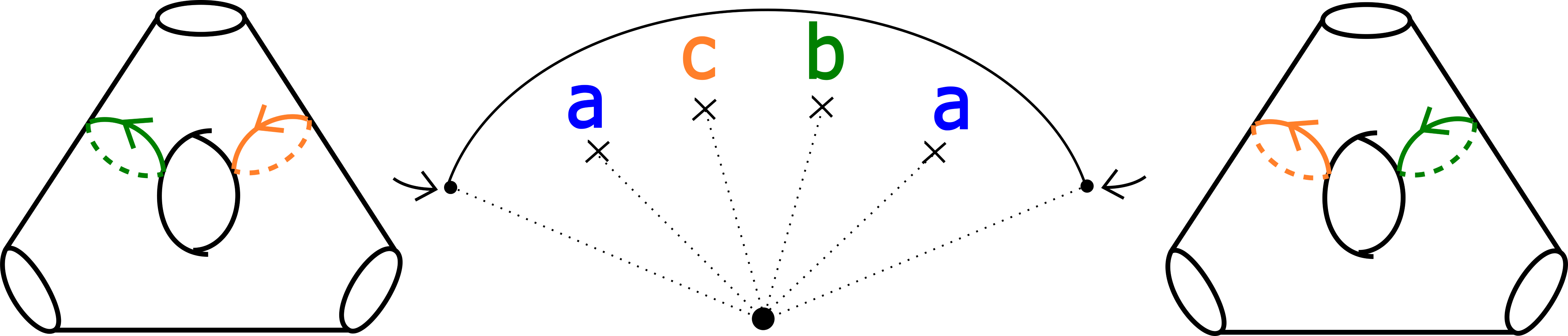}
% or [scale=0.85]
\caption{Configuration of type $BC$. With our choices of comparison paths to the reference central fibre (the dotted segments), the oriented $b$ and $c$ curves get swapped under parallel transport along the given arc.
}
\label{fig:bc2}
\end{center}
\end{figure}

\begin{figure}[htb]
\begin{center}
\includegraphics[scale=0.40]{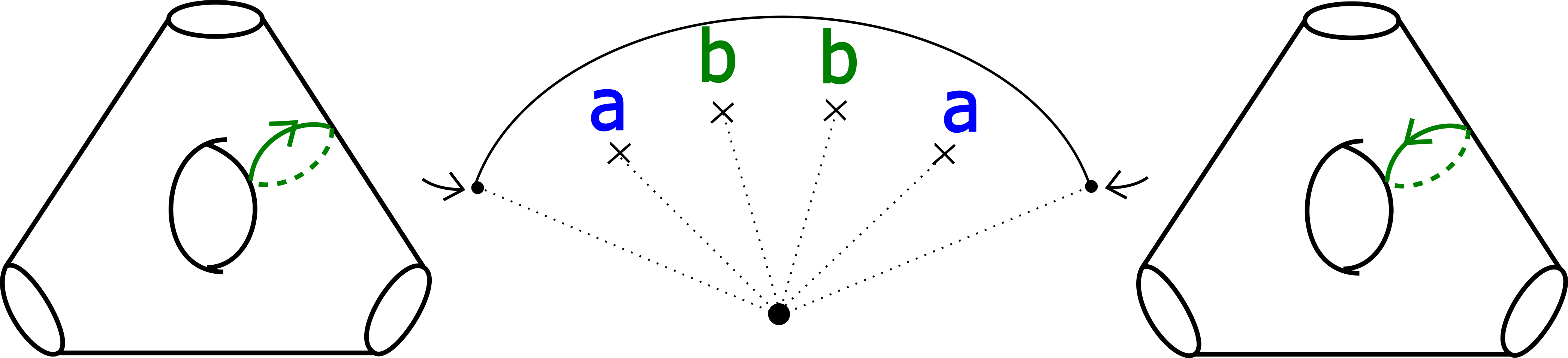}
% or [scale=0.85]
\caption{Configuration of type $BB$. With our choices of comparison paths to the reference fibre, the curve $b$ is fixed setwise, and its orientation reversed.
}
\label{fig:bb2}
\end{center}
\end{figure}

\begin{remark}
Each of the configurations just involves an $A_3$ chain of vanishing cycles; in particular, they could be defined for different fibres, the simplest of which would be a twice-punctured elliptic curve, i.e. the fibre of the two-variable $A_3$ singularity $x^2+y^4$. 
\end{remark}

\subsection{The Lagrangian tori $T_{k,l,m}$}

Consider an immersed loop $\gamma \subset \C \backslash \text{Crit}(\Pi_r)$. Suppose we're given an exact Lagrangian $S^1$ in the fibre above a point $\star$ of $\gamma$, say $V \subset M_\star$. (In all the cases we will consider, we just take $V$ to be a vanishing cycle for $\Pi_r$.) We will be interested in special cases in which the image of $V$ under the total monodromy along $\gamma$ will happen to be Hamiltonian isotopic to $V$ itself; in such cases, taking the union of the images of $V$ under parallel transport along $\gamma$ yields a Lagrangian in the total space $X_r$, which, a priori, is immersed. If monodromy preserves the orientation of $V$, it is a torus, and otherwise, a Klein bottle; call this hypothetical Lagrangian $L$. (We will give a range of possible explicit constructions of such an $L$ further down in this section.)

In very special cases, we can arrange for $L$ to be embedded, as follows. Assume wlog that $\gamma$ is immersed with transverse double intersection points. We will give examples of curves $\gamma$ with the property that for each of their intersection points, the images under parallel transport of $V$ on each of the two segments do \emph{not} intersect in the fibre above the intersection point. To construct such curves, we will exploit the fact that the $b$ and $c$ curves do no intersect, together with the fact that a $BC$ `move' trades them (and similarly with $b$ and $d$, and $c$ and $d$).

Two simple examples are given in Figure \ref{fig:simple_tori2}.  By greedily counting elements needed and using Figure \ref{fig:vcycles_on_Xr2}, 
the left-hand one can be constructed in $(X_r, \Pi_r)$ for any $r \geq 6 $, and the right-hand one for any $r \geq 4 $.

\begin{figure}[htb]
\begin{center}
\includegraphics[scale=0.6]{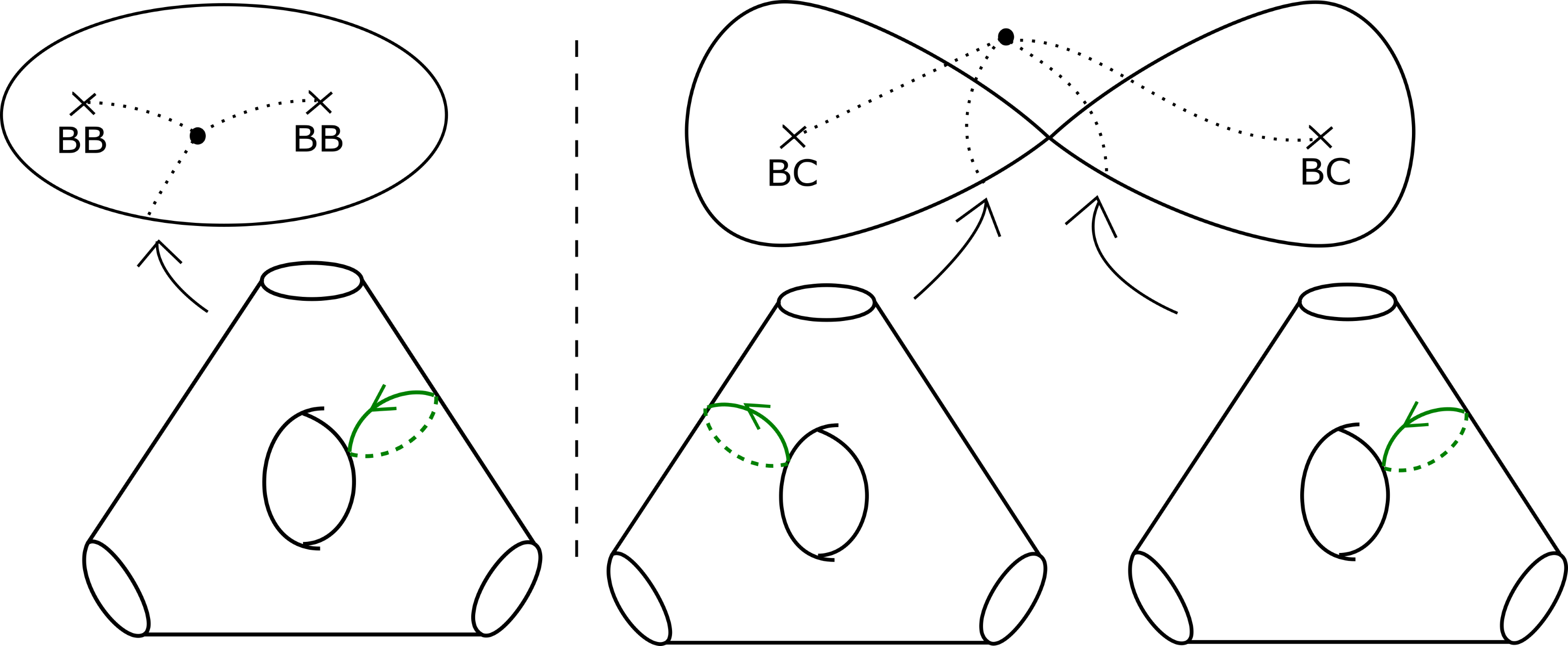}
% or [scale=0.85]
\caption{Two examples of tori in $X_r$. We record the orientation of the meridional curve. As before, the dotted lines define paths to a smooth reference fibre. For legibility, we have moved this a little bit off-centre in the right-hand example.
}
\label{fig:simple_tori2}
\end{center}
\end{figure}

We will mostly interested in more sophisticated families of tori, such as the three-parameter one defined as follows.

\begin{definition}
Fix non-negative integers $k$, $l$, $m$. We define the immersed curve $\gamma_{k,l,m}$ in the base of $\Pi_r$ to be as given by Figure \ref{fig:fibred_torus_Tklm2}. As there are 7 basic configurations involved, each of which containing four critical points, it can certainly be drawn in the base of $\Pi_r$ for $r \geq 28$, and in fact one can check that $r \geq 18$ suffices. There is an embedded Lagrangian torus $T_{k,l,m} \subset X_r$, fibred over $\gamma_{k,l,m}$, also given by Figure \ref{fig:fibred_torus_Tklm2}. The figure also fixes an orientation of $\gamma_{k,l,m}$ for future reference. 

For $l=0$, our convention is to delete the obvious $BB$ configuration, namely, within the right-hand `lobe' of $\gamma_{k,l,m}$, the left-most of the two $BB$s.

\begin{figure}[htb]
\begin{center}
\includegraphics[scale=0.35]{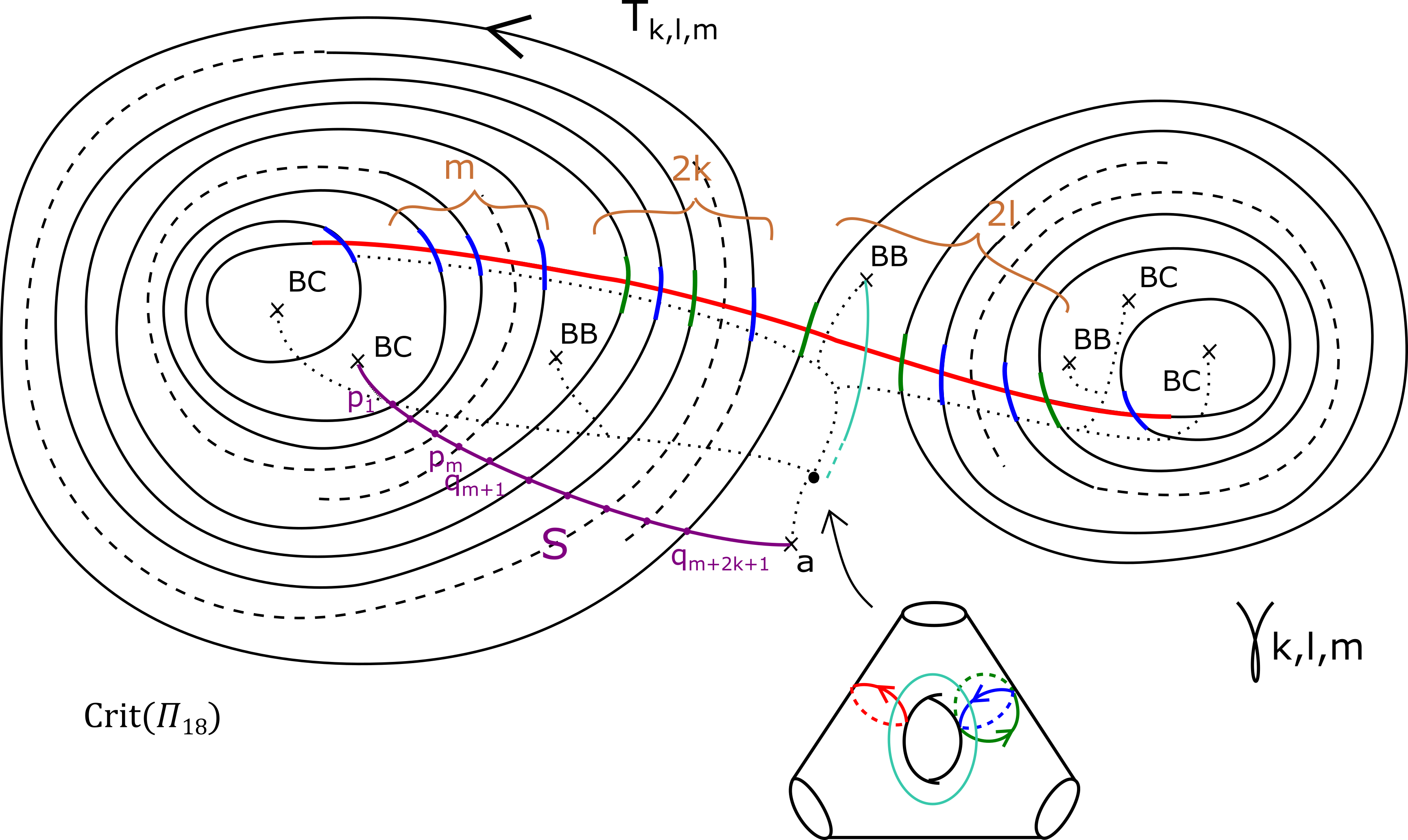}
% or [scale=0.85]
\caption{The fibred torus $T_{k,l,m}$ above the immersed curve $\gamma_{k,l,m}$. The coloured sections of the base curve encode the oriented vanishing cycle living above it, with respect to the choices of references given by the small dotted lines. The purple curve represents a matching path between two type $a$ critical points, say $S$; the intersection points of $S$ and $T_{k,l,m}$ are labelled $p_1, \ldots, p_m, q_{m+1}, \ldots, q_{m+2k+1}$. This will be used in subsequent sections. The cyan line, which will also be used later, denotes two possible thimbles, both starting at an $a$ type critical point (of which there are two in a $BB$ configuration).
We take the immersed curve to lie in the base of $X_{18}$. 
}
\label{fig:fibred_torus_Tklm2}
\end{center}
\end{figure} 

\end{definition}

\begin{remark}\label{rmk:A_3}
To construct $T_{k,l,m}$ we didn't use the vanishing cycle $d$. In particular, we could instead work in the total space of a Lefschetz fibration with fibre a two-punctured elliptic curve (supporting an $A_3$ configuration of vanishing cycles), e.g.~$\{x^2 + y^4 + z^r =1 \}$. 
\end{remark}

\begin{remark}
One can use slight variations on these monodromy techniques to get an alternative construction of a monotone exact torus in the Milnor fibre of a simple elliptic singularity (for instance, the affine hypersurface $\{ x^2+y^4+z^4=1\}$) whose Floer cohomology with any vanishing cycle is zero, reproducing part of the results in \cite{Keating_tori}. 
\end{remark}

\begin{remark}
Let $I$ denote the immersed interval in the base just before it gets closed to an immersed $S^1$.
As mentioned above, the Lagrangian $I \times S^1$ given by using parallel transport might need to be modified by a Hamiltonian isotopy, in order to get the two ends to match up precisely. This will also be true with analogous constructions later, though we shall hereafter omit explicitly saying so, except for Maslov index calculations, to which the Hamiltonian isotopy will contribute.
\end{remark}

\subsection{Maslov indices, monotonicity and homology classes}

As before, let $L \subset X_r$ be a Lagrangian torus or Klein bottle fibred over an immersed oriented $S^1$, say $\gamma \subset \C \backslash \text{Crit}(\Pi_r)$. Pick a basis of $H_1(L; \Z)$ given by the ordered pair of:
\begin{enumerate}
\item any lift of $\gamma$ (with the induced orientation); and
\item the restriction of $L$ to the fibre of any point of $\gamma$ (with any orientation), i.e.~the class of the cycle $V$ which has been parallel transported.
\end{enumerate}

\begin{lemma}\label{thm:Maslov_indices}
Assume that 
there are choices of reference paths to a fixed (smooth) reference fibre such that 
the parallel transport of $V$ along $\gamma$ can be decomposed into a concatenation of basic configurations (i.e.~of types $BB$, $BC$, etc.), each traversed either positively or negatively. 

This is the case for instance for the examples of Figure \ref{fig:simple_tori2}, and the $T_{k,l,m}$ of Figure \ref{fig:fibred_torus_Tklm2}. Then, with respect to the basis given above, $L$ has Maslov class
$$(2t - \nu_+ + \nu_-\,, \, 0),$$ 
where
\begin{itemize}
\item $t$ is the total winding number of $\gamma$ (in other words, $\gamma$ has total curvature $2\pi t$);
\item $\nu_+$ is the number of basic configurations traversed positively (for our choice of orientation of $\gamma$);
\item $\nu_-$ is the number of basic configurations traversed negatively.
\end{itemize}
In particular, for any $m$, $T_{k,l,m}$ has Maslov class $(2(l-k), 0)$. 
\end{lemma}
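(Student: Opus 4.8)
The plan is to compute the two coordinates of the Maslov class separately, exploiting the product structure of the trivialisation of $TX_r$ and of the reference Lagrangian planes fixed in Section \ref{sec:Maslov_background}. Over the smooth locus $\C \setminus \text{Crit}(\Pi_r)$ the chosen trivialisation splits $TX_r$ into a vertical summand $TM$ and a horizontal summand $T\C$, and the reference plane $\mathcal{L}_p$ is correspondingly a product $\ell^{\mathrm{fib}}_p \oplus \ell^{\mathrm{base}}_p$ of the slope-one fibre line and a constant horizontal base line. Since $\gamma$ lies in this smooth locus and $L$ is fibred over it, for any loop in $L$ the plane $T_pL$ contains the vertical Lagrangian line tangent to the transported cycle and projects isomorphically onto $T_z\gamma$ under $d\Pi_r$; because $\mathcal{L}_p$ is a product, the signed count of non-transverse intersections splits as a sum of a base contribution (from $T_z\gamma$ against $\ell^{\mathrm{base}}$) and a fibre contribution (from the vertical line against $\ell^{\mathrm{fib}}$). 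For the second basis class, the cycle $V$ sitting in a single fibre, the base contribution vanishes and the fibre contribution is the Maslov index of $V$ inside $M$; as $V$ is one of $a,b,c,d$ and the trivialisation of $TM$ was chosen precisely so that these have Maslov index zero, this coordinate is $0$.

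The first basis class, a lift of $\gamma$, carries the content. Its base contribution is the Maslov index of the immersed loop $\gamma\subset\C$ relative to the constant horizontal line, equal to twice the turning number, i.e.\ $2t$ (a loop of turning number one, such as the boundary of an embedded disc, has Maslov index two). For the fibre contribution I would use the hypothesis to decompose $\gamma$ into a concatenation of basic configurations and compute each one's effect. In a basic configuration the vertical cycle is carried from one of $a,b,c,d$ to another, or to itself with reversed orientation, as in Figures \ref{fig:bc2} and \ref{fig:bb2}; each such configuration is a half-twist — a square root of the local monodromy — so in the trivialised fibre $TM\cong\C$ the Lagrangian tangent line of $V$ rotates by $\pm\pi$, producing exactly one signed crossing with $\ell^{\mathrm{fib}}$: negative for a positively traversed configuration and positive for a negatively traversed one. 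Summing over all of them gives fibre contribution $-\nu_+ + \nu_-$, and hence the Maslov class $(2t - \nu_+ + \nu_-,\,0)$. I would confirm the $\pm 1$ and its sign by an explicit computation in the $A_2$/$A_3$ local model (by the Remark after Figure \ref{fig:bb2} the configurations only involve an $A_3$ chain), tracking the tangent line of $V$ against the slope-one reference as $V$ is dragged through the configuration; by the symmetry among $b,c,d$ the six configuration types all contribute identically.

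The step I expect to be the main obstacle is exactly this local half-twist computation: establishing that each configuration contributes $\pm 1$ rather than $0$ or $\pm 2$, and keeping the sign conventions consistent with the chosen orientation of $\gamma$. A related technical point is correctly accounting for the Hamiltonian isotopy that closes $I\times S^1$ into the embedded $L$ (flagged in the Remark preceding this subsection), which supplies the final leg of the vertical-tangent-line loop from the monodromy image of $V$ back to $V$ itself; one must check that, against the slope-one reference, this leg adds no further crossings, so that the fibre Maslov index is genuinely $-\nu_+ + \nu_-$.

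Finally, for $T_{k,l,m}$ I would read off $t$, $\nu_+$ and $\nu_-$ directly from Figure \ref{fig:fibred_torus_Tklm2} and evaluate $2t - \nu_+ + \nu_-$. The point is that the $m$-indexed portion of $\gamma_{k,l,m}$ contributes equal and opposite amounts to the winding term $2t$ and to $-\nu_+ + \nu_-$, so that its net effect cancels and the result is independent of $m$; the two lobes governed by $k$ and $l$ then contribute $-2k$ and $+2l$ respectively, yielding the stated Maslov class $(2(l-k),\,0)$.
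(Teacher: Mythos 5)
Your overall skeleton --- splitting the Maslov class into the meridian coordinate (zero, since $V$ is a vanishing cycle and the trivialisation of $TM$ makes $a,b,c,d$ have Maslov index zero), a base contribution of $2t$, and a fibre correction of $-\nu_+ + \nu_-$ coming from the basic configurations --- coincides with the reduction made in the paper's proof. But the step you yourself flag as ``the main obstacle'' is in fact the entire quantitative content of the lemma, and your proposal does not prove it. The assertion that a basic configuration is ``a half-twist --- a square root of the local monodromy'' so that the vertical tangent line rotates by exactly $\pm\pi$ is a heuristic, not an argument: the transport through a single configuration is not a loop (in the $BC$ case the cycle $b$ ends at the disjoint cycle $c$; in the $BB$ case at $b$ with reversed orientation), so there is no local monodromy of which it is a square root, and nothing in your setup pins down that the tangent line sweeps exactly $\pm\pi$ (rather than $0$ or $\pm 2\pi$) against the slope-one reference, nor fixes the sign. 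Since $2t-\nu_+ + \nu_-$ is precisely what is to be proved, deferring this to an unspecified ``explicit computation in the local model'' leaves a genuine hole.

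For comparison, the paper closes exactly this hole as follows: it suffices to check two closed model Lagrangians fibred over embedded curves of winding number one --- a Klein bottle $K$ over a curve through one $BB$ configuration (claim: a lift of the base circle has Maslov index $1$) and a torus $T$ over a curve through two $BC$-type configurations (claim: Maslov index $0$). For $T$, the paper replaces the fibred torus, up to Lagrangian isotopy, by the Polterovich surgery of an ordered chain of four matching cycles $B, A_1, C, A_2$, using the identity $\tau_{L_2}^{-1}L_1 = \tau_{L_1}L_2$ to convert each surgered corner into a matching-cycle picture; in that model the loop of tangent planes is explicit, and one counts crossings with the reference lines: the base reference is crossed twice, both positively, and the fibre reference twice, both negatively, giving total index zero and hence contribution $-1$ per positively traversed configuration. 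To salvage your approach you would need to actually carry out the postponed computation --- for instance by running this surgery-model argument, or by tracking the vertical tangent line through one configuration with all orientation and sign conventions fixed --- before the formula $(2t-\nu_+ +\nu_-,\,0)$, and its consequence $(2(l-k),0)$ for $T_{k,l,m}$, can be considered established.
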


\begin{proof} We use the set-up of Section \ref{sec:Maslov_background} to calculate Maslov indices. The claim about the Maslov index of `meridians' $V \subset M$ being zero is immediate, as they are vanishing cycles for the Lefschetz fibration $\Pi_r$.

For the other index, suppose first that we have a trivial fibration $M \times B \to B$, some disc $B \subset \C$, and that $\gamma \subset B$ is immersed. Fix  $V \subset M$ a vanishing cycle for $\Pi_r$, and let $L_\gamma$ be the Lagrangian given by parallel transporting $V$ along $\gamma$. Then $L_\gamma$ is an immersed Lagrangian, and the Maslov index of any lift of $\gamma$ is $2 t$, where $t$ is the total winding number of $\gamma$. 

More generally, the Maslov index of a lift of $\gamma$ given by concatenating basic configurations will be twice the total winding number of $\gamma$, adjusted for the effect of each of the basic configurations; we need to show that the contribution of each  basic configuration, positively traversed, is $-1$.

Consider the Klein bottle $K$ and the torus $T$ given in Figure \ref{fig:Maslov_basic_cases2}, associated to positively oriented embedded curves in the base.
To show that the contribution of a (positively traversed) $BB$ configuration is $-1$, it suffices to show that any lift of the base $S^1$ in $K$ has Maslov index one; to show that the contribution of a positively traversed $BC$ configuration is $-1$, it is enough to show that any lift of the base $S^1$ in $T$ has Maslov index zero. 
We shall prove the claim about $T$; the one about $K$ can be proved analogously. 

\begin{figure}[htb]
\begin{center}
\includegraphics[scale=0.50]{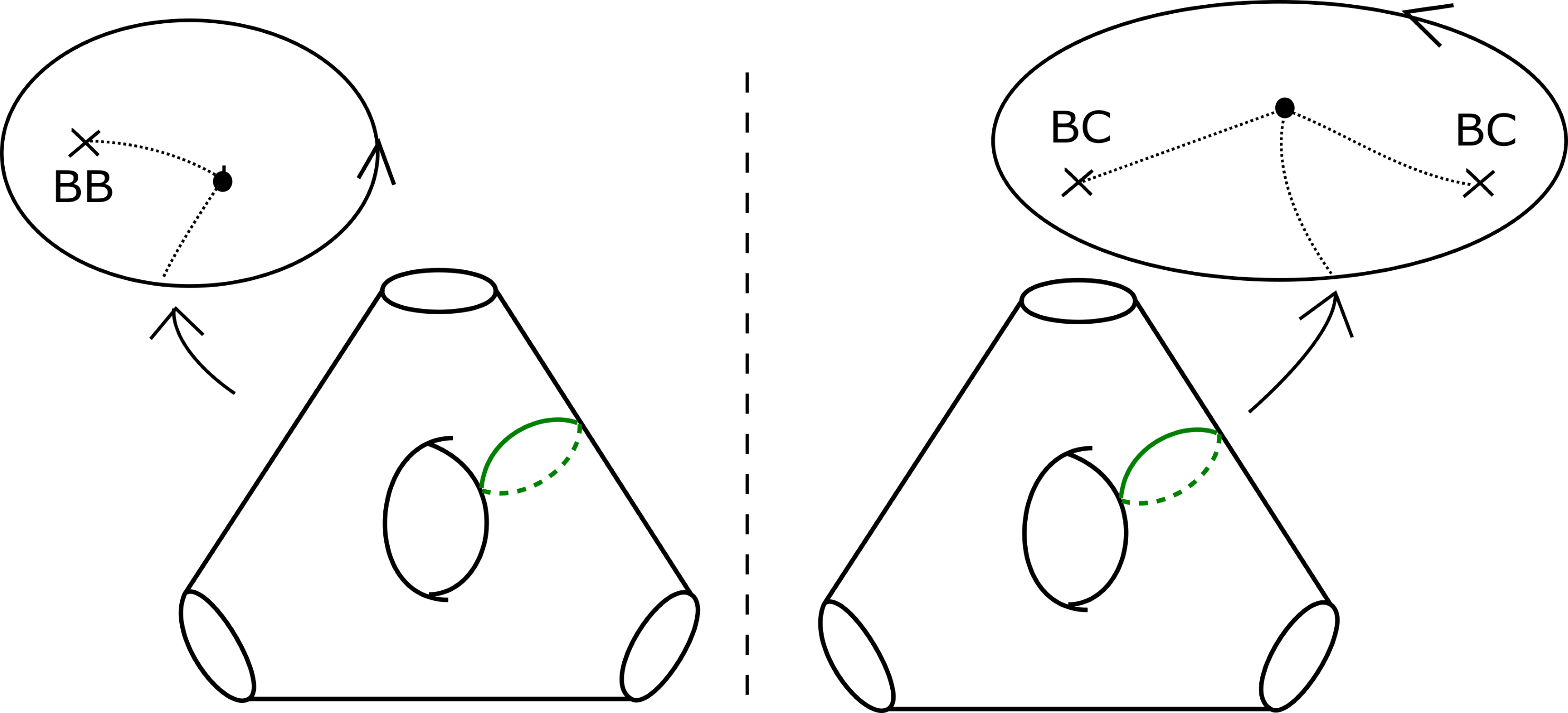}
% or [scale=0.85]
\caption{Basic cases for the Maslov index computation: the Klein bottle $K$ (left) and torus $T$ (right).
}
\label{fig:Maslov_basic_cases2}
\end{center}
\end{figure} 

We will see that $T$ is Lagrangian isotopic to a Lagrangian torus obtained by performing Polterovich surgery on an ordered chain of four matching cycles, $B$,  $A_1$, $C$ and $A_2$, as given in Figure \ref{fig:Maslov_calculation2}. 
We use the convention of \cite[Appendix A]{Seidel_knotted_spheres}:
 in the case where two Lagrangian spheres $L_1$ and $L_2$ intersect transversally at a single point, the surgery $L_1 \# L_2$ is Lagrangian isotopic to $\tau^{-1}_{L_2} L_1 = \tau_{L_1} L_2$. 
Suppose $L_1$ and $L_2$ are two of the matching spheres at hand. To check our claim, one locally compares the different descriptions of $\tau^{-1}_{L_2} L_1 = \tau_{L_1} L_2$: both viewed as $ \tau_{L_1} L_2$ and as $\tau^{-1}_{L_2} L_1$ it can be described as a matching cycle, as in \cite[Figure 18.2]{Seidel_book}.
 Now consider Figure \ref{fig:Maslov_change2}. 
This shows  portions of $\tau^{-1}_B (A_1) = A_1 \# B$ and of $ \tau_{A_1} (B) = A_1 \# B$; gluing these together, one gets a different description of $A_1 \# B$ as a matching cycle, given in Figure \ref{fig:Maslov_change2andahalf}. Proceeding similarly at the intersections points of $(C, A_1)$, $(A_2, C)$ and $(B, A_2)$, one recovers $T$.

\begin{figure}[htb]
\begin{center}
\includegraphics[scale=0.6]{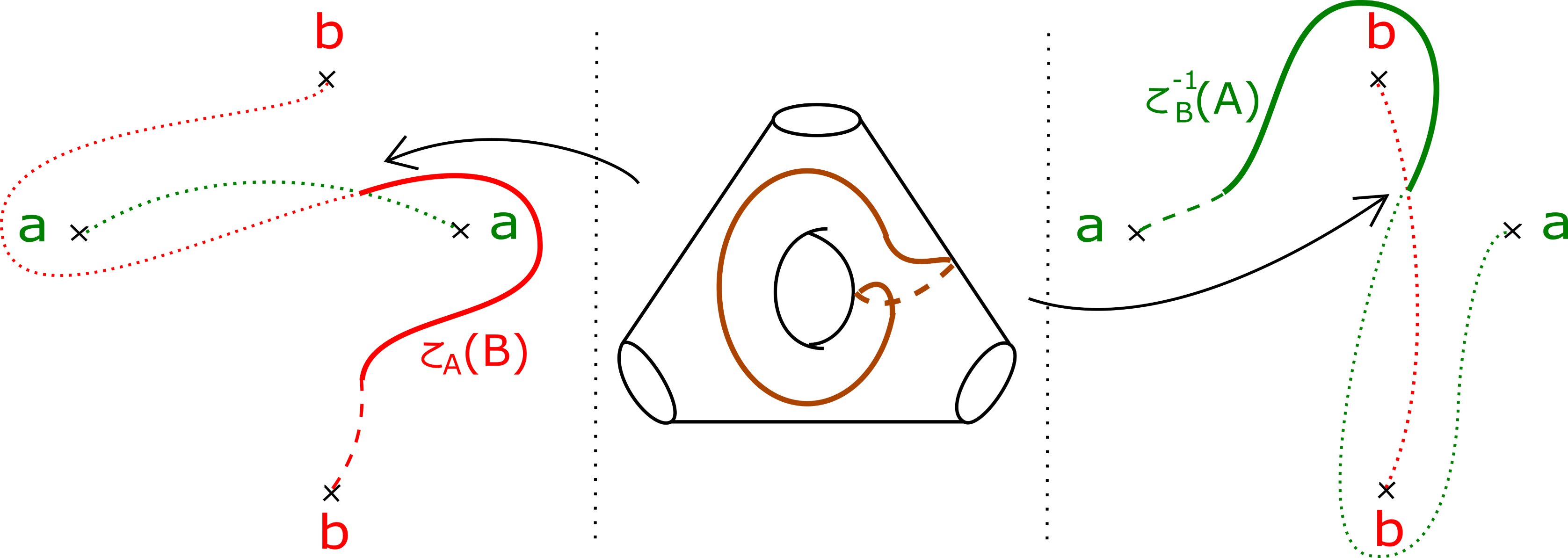}
% or [scale=0.85]
\caption{Portions of $\tau^{-1}_B (A) = \tau_A (B) = A \# B$ used to compare the two descriptions of $T$. The brown cycle is the fibre above the point of the green, respectively  red, matching paths. 
}
\label{fig:Maslov_change2}
\end{center}
\end{figure}

\begin{figure}[htb]
\begin{center}
\includegraphics[scale=0.6]{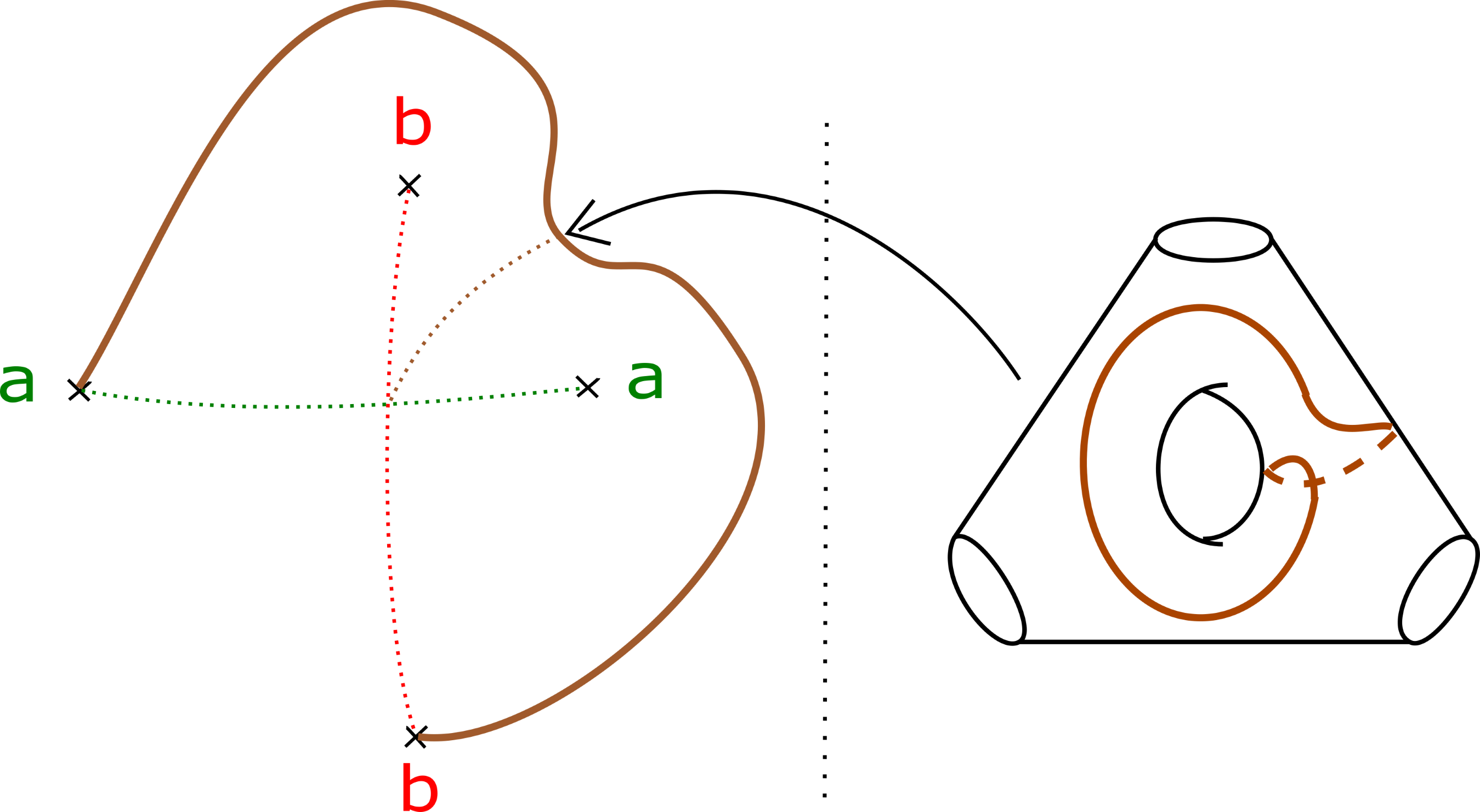}
% or [scale=0.85]
\caption{Different description of $A \# B$ as a matching cycle. 
}
\label{fig:Maslov_change2andahalf}
\end{center}
\end{figure}

\begin{figure}[htb]
\begin{center}
\includegraphics[scale=0.35]{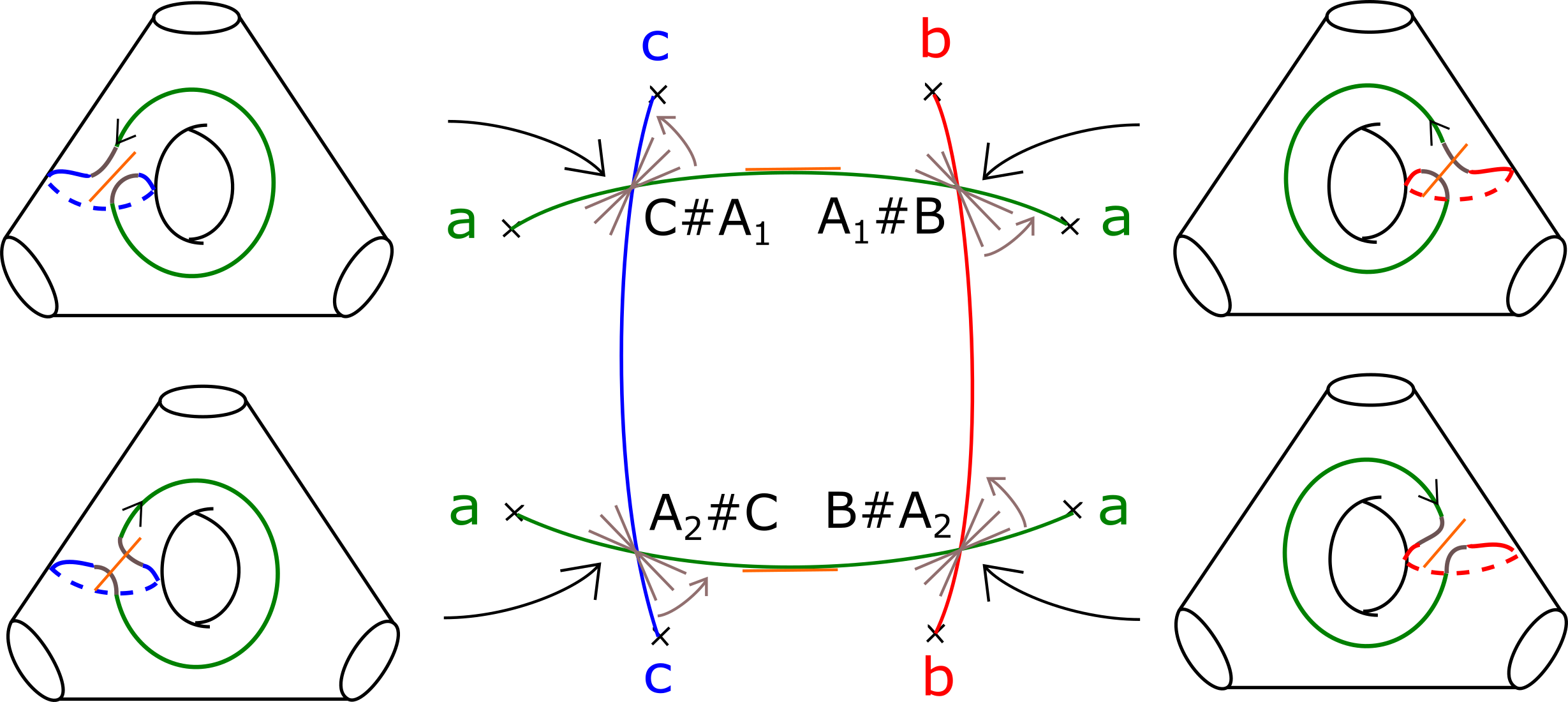}
% or [scale=0.85]
\caption{A model for $T$ as the result of four Polterovich surgeries, at the intersections of the matching cycles $B$, $C$, $A_1$ and $A_2$.
}
\label{fig:Maslov_calculation2}
\end{center}
\end{figure} 
We calculate the Maslov index of a lift in $T$ of the base $S^1$ using the model for $T$ given by Polterovich surgery, and the choices of reference Lagrangian lines given at the end of Section \ref{sec:Maslov_background}. 
The path of Lagrangian planes in the base direction is given by following the matching paths, and the grey tangent directions at each of the surgery points. The orange segments give our choices of reference Lagrangian lines (in the fibre and base). Going around $S^1$, the reference Lagrangian line in the base is crossed twice,  both times positively. The reference Lagrangian in the fibre is crossed twice (at diagonally opposite surgery points), both times negatively.
Thus the required Maslvo index is zero.
\end{proof}

With the amount of information specified thus far, the paths $\gamma_{k,l,m}$ (and the associated Lagrangian submanifolds $T_{k,l,m}$) are only defined up to by a compactly supported isotopy of $\C$ relative to the critical values of $\Pi_r$. Any such isotopy
 lifts to a compactly supported isotopy of $X_r$. While this will not in general be a symplectic isotopy, it restricts to a Lagrangian isotopy $T_{k,l,m}$ and more generally of any Lagrangian $L$ fibred over an immersed path $\gamma \subset \C \backslash \text{Crit}(\Pi_r)$. 
We fix the Lagrangian isotopy class of $T_{k,l,m}$ by choosing a monotone representative, as follows.

\begin{lemma} Fix a constant $\kappa > 0$. There exists a  compactly supported isotopy of  $\C$ relative to the critical values of $\Pi_r$ such that the induced image of $T_{k,l,m}$ is monotone, with monotonicity constant $\kappa$.
\end{lemma}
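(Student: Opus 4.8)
The plan is to reduce the monotonicity condition to a single real equation on $H_1(T_{k,l,m};\Z)$ and then to satisfy it by tuning the symplectic area of a lift of $\gamma_{k,l,m}$. Write $L = T_{k,l,m}$. Using the structural facts from Section~\ref{sec:Maslov_background}, namely $\pi_2(X_r, L) \cong \pi_2(X_r) \oplus \pi_1(L)$, the vanishing of both area and Maslov index on the sphere classes $\pi_2(X_r)$ (Lagrangian spheres, and $2c_1(X_r)=0$), and the fact that the area of a relative class depends only on its boundary in $H_1(L)$, I would observe that $\beta \mapsto [\omega](\beta)$ descends to a linear functional $A\colon H_1(L;\Z)\to\R$. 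Monotonicity with constant $\kappa$ is then equivalent to the identity $A = \kappa\,\mu$ of functionals on $H_1(L;\Z)\cong\Z^2$, so it suffices to check it on the basis $\{e_1, e_2\}$, where $e_1$ is a lift of $\gamma_{k,l,m}$ and $e_2=[V]$ is the parallel-transported vanishing cycle.

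The meridian class needs no adjustment: the vanishing cycle $V$ bounds a Lefschetz thimble, which is a Lagrangian disc, so $A(e_2)=0$, matching the value $\mu(e_2)=0$ from Lemma~\ref{thm:Maslov_indices}; moreover this persists under any base isotopy, since $V$ remains a vanishing cycle in every configuration. Thus the only equation left to arrange is $A(e_1) = \kappa\,\mu(e_1) = 2\kappa(l-k)$, again reading $\mu(e_1)=2(l-k)$ off Lemma~\ref{thm:Maslov_indices}.

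The crux is to show that $A(e_1)$ can be set to any prescribed real value by a compactly supported isotopy of $\C$ relative to $\mathrm{Crit}(\Pi_r)$, with $\mu(e_1)$ unchanged. The Maslov index is insensitive to such an isotopy because, by Lemma~\ref{thm:Maslov_indices}, it is determined by the turning number of $\gamma_{k,l,m}$ and by the sequence of basic configurations it traverses, both invariants of the isotopy class of $\gamma$ in $\C$ rel the critical values. For the area, I would compute the variation of $A(e_1)$ along such an isotopy: its lift to $X_r$ (which, as recorded just before the lemma, restricts to a Lagrangian isotopy of $T_{k,l,m}$) sweeps the longitude through a cylinder whose $\omega$-area equals the signed base area swept by $\gamma$, weighted by the base component of $\omega$ — in the spirit of Lemma~\ref{lem:add_base_form}, whose contribution $\Pi_r^\ast\omega_0$ integrates to zero over the thimble bounding $e_2$, so $A(e_2)=0$ is preserved throughout. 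Inflating a "balloon" in a region of $\C$ free of critical values is an ambient isotopy rel $\mathrm{Crit}(\Pi_r)$ that changes this enclosed area by an arbitrary amount, of either sign according to the orientation of the balloon. Hence $A(e_1)$ varies continuously and without bound in both directions, and the intermediate value theorem produces an isotopy realising the target $2\kappa(l-k)$ (including the value $0$ when $l=k$); the resulting image of $T_{k,l,m}$ is then monotone with constant $\kappa$.

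I expect the third step to be the main obstacle: one must make precise that the full range of $A(e_1)$ is all of $\R$. This requires verifying that the variation of $A(e_1)$ is genuinely governed by the swept base area — so that it is independent of the fibrewise monodromy data, which is frozen once the configuration type is fixed — and that balloons of \emph{both} orientations can be grown in critical-value-free regions of $\C$. The orientation-reversing case is exactly what lets us decrease $A(e_1)$, and so reach negative targets and the value $0$; confirming this flexibility is the delicate point.
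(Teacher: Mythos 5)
Your proposal is correct and follows essentially the same route as the paper: reduce monotonicity to the single condition that a lift of $\gamma_{k,l,m}$ bound area $2\kappa(l-k)$ (the meridian being a vanishing cycle that bounds a Lagrangian thimble of area zero), and then tune that area by an isotopy of the base rel $\mathrm{Crit}(\Pi_r)$, which leaves the Maslov class untouched since it only depends on the winding number and the sequence of basic configurations. The delicate point you flag --- achieving area changes of \emph{both} signs --- is resolved in the paper exactly as your analysis suggests it must be: the two lobes of $\gamma_{k,l,m}$ are traversed with opposite winding senses, so stretching the outermost loop of the right lobe decreases the signed area while stretching that of the left lobe increases it, which is the paper's concrete implementation of your ``balloons of either orientation.''
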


\begin{proof} 
From Section \ref{sec:Maslov_background} it's enough to consider one disc for each of the generators of $H_1(T_{k,l,m})$. 
As the meridian curve on $T_{k,l,m}$, of Maslov index zero, is a vanishing cycle for $\Pi_r$, it thus bounds a Lagrangian disc in $X_r$, which in particular has symplectic area zero.
This means that for $T_{k,l,m}$ to be $\kappa-$monotone, we simply need to the symplectic area of any oriented disc with boundary a lift of $\gamma_{k,l,m}$ to be equal to $2 \kappa (l-k)$. Start with an oriented immersed disc with boundary $\gamma_{k,l,m}$, any pick any of its lifts. If it has signed area greater than $2 \kappa (l-k)$, we adjust by stretching outwards the outmost loop in the right lobe of $\gamma_{k,l,m}$; if it has area less than $2 \kappa (l-k)$, we instead stretch the outmost loop in the left lobe of $\gamma_{k,l,m}$. 
\end{proof}

Note that for fixed $\kappa$, $T_{k,l,m}$ is now determined up to Hamiltonian isotopy. 

We record the following property.

\begin{lemma}
Consider tori $T_{k,l,m}$ and $T_{k', l', m'}$ in $X_r$, with paths $\gamma_{k,l,m}$ and $\gamma_{k', l', m'}$ drawn using the same $BC$ and $BB$ configurations (so that the paths are almost superimposed, notwithstanding the different numbers of twists). 
They are homologous if and only if $m=m'$ and either $l, l' \geq 1$ or $l=l'=0$. (The latter condition is required simply because of our convention for $l=0$.)
\end{lemma}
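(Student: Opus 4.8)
The plan is to reduce the statement to two computations in $H_2(X_r)$: an intersection-number calculation that detects $m$, and an explicit null-homology showing that $k$ and $l$ are invisible once $l,l' \geq 1$. Since $X_r$ is the Milnor fibre of an isolated surface singularity it is simply connected, so by Hurewicz $H_2(X_r) \cong \pi_2(X_r) \cong \Z^{4(r-1)}$, a free abelian group generated by the matching (vanishing-cycle) spheres of $\Pi_r$. In particular it suffices to work with the intersection pairing against such spheres together with explicit $2$- and $3$-chains, rather than worrying about any degeneracy of the intersection form.

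First I would establish the necessity of $m = m'$. The purple matching sphere $S$ of Figure \ref{fig:fibred_torus_Tklm2}, joining two critical points of type $a$, defines a class $[S] \in H_2(X_r)$, and its algebraic intersection with $T_{k,l,m}$ is the signed count of the points $p_1,\dots,p_m,q_{m+1},\dots,q_{m+2k+1}$. Reading the local orientations off the figure, the $q$-points occur with alternating signs along $S$, so the $2k+1$ of them contribute an amount independent of $k$ (and, as they do not involve $l$, independent of $l$), while the $p_1,\dots,p_m$ all contribute with a single sign. Thus $[T_{k,l,m}]\cdot[S]$ is a strictly monotone function of $m$ alone, and homologous tori must have $m = m'$.

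For sufficiency when $l,l' \geq 1$ with $m = m'$ fixed, the crux is a local null-homology: \emph{a torus fibred over a small embedded loop $\mu$ encircling critical points of $\Pi_r$ all of whose vanishing cycle equals the transported fibre $V$ is null-homologous.} Indeed, over each such critical point $V$ collapses, and capping the torus by the union of Lefschetz thimbles over all radial directions from $\mu$ produces a $3$-chain $W \subset X_r$ with $\partial W$ equal to that torus; for several critical points one writes $[\mu]$ as a sum of small loops in $H_1(\C \setminus \mathrm{Crit}(\Pi_r))$ and adds the caps, the construction being additive because $V$ is monodromy-invariant (the orientation reversal of a $BB$-move at worst introduces signs, which do not affect the vanishing of each summand). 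Since $\gamma_{k,l,m}$ and $\gamma_{k',l',m}$ use the same $BC$ and $BB$ configurations and differ only by the number of $BB$-twists in their two lobes — each twist being precisely a loop of the above type around a pair of $b$-type critical points — the two tori differ by a union of null-homologous tori, giving $[T_{k,l,m}] = [T_{k',l',m}]$; the $l=l'=0$ case is identical once the deleted $BB$ is fixed consistently.

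Finally I would treat the convention at $l=0$: deleting the left-most $BB$ of the right lobe does not merely remove a null-homologous twist but reroutes $\gamma$, changing its winding numbers about nearby critical values, and a direct inspection (recomputing $[T]\cdot[S]$, or pairing against a second matching sphere crossing the rerouted arc) shows that $T_{k,0,m}$ and any $T_{k',l',m}$ with $l' \geq 1$ lie in distinct classes — exactly the asserted failure of homology when the $l$-conditions are violated. I expect the main obstacle to be this middle step: pinning down the precise local model of a single $BB$-twist and its capping $3$-chain, and verifying that the orientation reversal inherent in a $BB$-move (Figure \ref{fig:bb2}) genuinely does not obstruct the null-homology. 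This is the place where the computation must be carried out carefully rather than merely invoked.
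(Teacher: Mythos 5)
Your necessity step ($m = m'$ via the signed count of the intersections $p_1,\dots,p_m,q_{m+1},\dots,q_{m+2k+1}$ with the purple matching sphere $S$) is exactly the paper's argument, and your framing via Hurewicz and the possibly degenerate intersection form is fine. The genuine gap is in your middle step. Your scheme of a ``local null-homology plus additivity'' requires that each extra twist loop decompose into small loops around critical points whose vanishing cycles all equal the transported cycle (or are at least disjoint from it). But a $BB$ configuration is not a pair of $b$-type points: it consists of four critical points, two of which are of type $a$ (this is stated in the caption of Figure \ref{fig:fibred_torus_Tklm2}), and $a$ intersects $b$ in the fibre. Consequently the transported cycle $b$ is \emph{not} invariant under the monodromy of the individual small loops --- around an $a$-point it becomes $\tau_a^{\pm 1}(b) \neq b$ --- so the fibred surface over the twist loop does not split as a sum of fibred tori over small loops, and the thimble caps you want simply do not exist for the pieces around the $a$-points. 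Your parenthetical dismissal of the $BB$ orientation reversal as ``at worst signs'' misdiagnoses the problem: the failure is not one of signs but of the non-invariance of the transported cycle on the individual pieces, which destroys the additivity your argument rests on.

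The paper's proof avoids this entirely by a cancellation between loops rather than a capping of each: superimposing $\gamma_{k+1,l,m}$ on $\gamma_{k,l,m}$, the difference joins up into an immersed torus fibred over \emph{two} nearly identical loops in the base, and --- precisely because a $BB$ traversal reverses the orientation of $b$ --- the meridian cycles over the two loops are the same curve with opposite orientations. Hence the two fibred pieces cancel in $H_2(X_r;\Z)$, with no need for either loop-torus to bound by itself. So the orientation reversal you flagged as the ``main obstacle'' is in fact the engine of the correct argument, and it is structurally incompatible with your capping framework: if each loop really enclosed only $b$-type points, the monodromy around it would be a power of $\tau_b$, which preserves the orientation of $b$, contradicting the defining feature of the $BB$ move. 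Your final step, separating $l = 0$ from $l' \geq 1$ by pairing with a second matching sphere, is a reasonable plan for a point the paper's written proof leaves implicit (it only proves the $m \neq m'$ obstruction and the sufficiency statements), but as written it is only a sketch and would need the same level of care as the intersection count with $S$.
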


\begin{proof}
To see that $T_{k,l,m}$ and $T_{k',l',m'}$ lie in different homology classes for $m \neq m'$, 
 consider the path given in purple in Figure \ref{fig:fibred_torus_Tklm2}, which we think of as a matching path between two critical points of type $a$ in the base of $\Pi_r$. Now notice that the associated vanishing cycle intersects $T_{k,l,m}$ transversally in the $m$ points $p_1, \ldots, p_m$, all with the same sign, and then with alternating signs at the $2k$ points $q_{m+1}, \ldots, q_{m+2k+1}$, which cancel.

Let's compare $T_{k,l,m}$ and $T_{k+1, l,m}$. One could calculate intersections with a basis for $H^2(X_r)$, given by vanishing cycles. Alternatively, note that the classes of $T_{k,l,m}$ and $T_{k+1, l,m}$ differ by the class of an immersed torus, given by joining up the two extra loops in the base of $T_{k+1, l,m}$. Along each of the two loops, the meridian $S^1$'s (that is, the classes of the fibres of the Lagrangian above points of the loops) are the same cycles, with opposite orientations. Thus it readily follows that this immersed torus is null-homologous, 
so $T_{k,l,m}$ and $T_{k+1,l,m}$ are homologous. Similarly, $T_{k,l,m}$ and $T_{k,l+1, m}$ are homologous if $l \geq 1$. 
\end{proof}

\subsection{Further tori}

We will want to consider variations on $T_{k,l,m}$.
The ones defined in Figure \ref{fig:fibred_linked_tori}, which we will call $R_{k,l,m}$ and $S_{n,p,q}$ ($k,\ldots, q \geq 0$), will be particularly useful. We'll care about the relative position of $R_{k,l,m}$ and $S_{n,p,q}$. Note that as drawn in Figure \ref{fig:fibred_linked_tori}, they do not intersect in $X_r$. Indeed, any intersection point in $X_r$ would project to an intersection point of the projections; restrict attention to those. Consider the fibre $M_{pt}$ above an intersection point of the two projections; now notice that we have constructed $R_{k,l,m}$ and $S_{n,p,q}$ in Figure \ref{fig:fibred_linked_tori} so that $R_{k,l,m}$ restricts to the vanishing cycle $b$ (with one or the other choice of orientation) on that fibre $M_{pt}$, whereas $S_{n,p,q}$ resticts either to $c$ (for half of the fibres above intersection points) or to $d$ (for the other half). As $b$ is disjoint from $c$ and $d$, it follows that $R_{k,l,m}$ and $S_{n,p,q}$ are disjoint Lagrangians. 
 Further, as there are twenty basic configurations involved in total, the whole picture certainly (crudely) fits in the basis of $\Pi_{80}$.

\begin{figure}[htb]
\begin{center}
\includegraphics[scale=0.3]{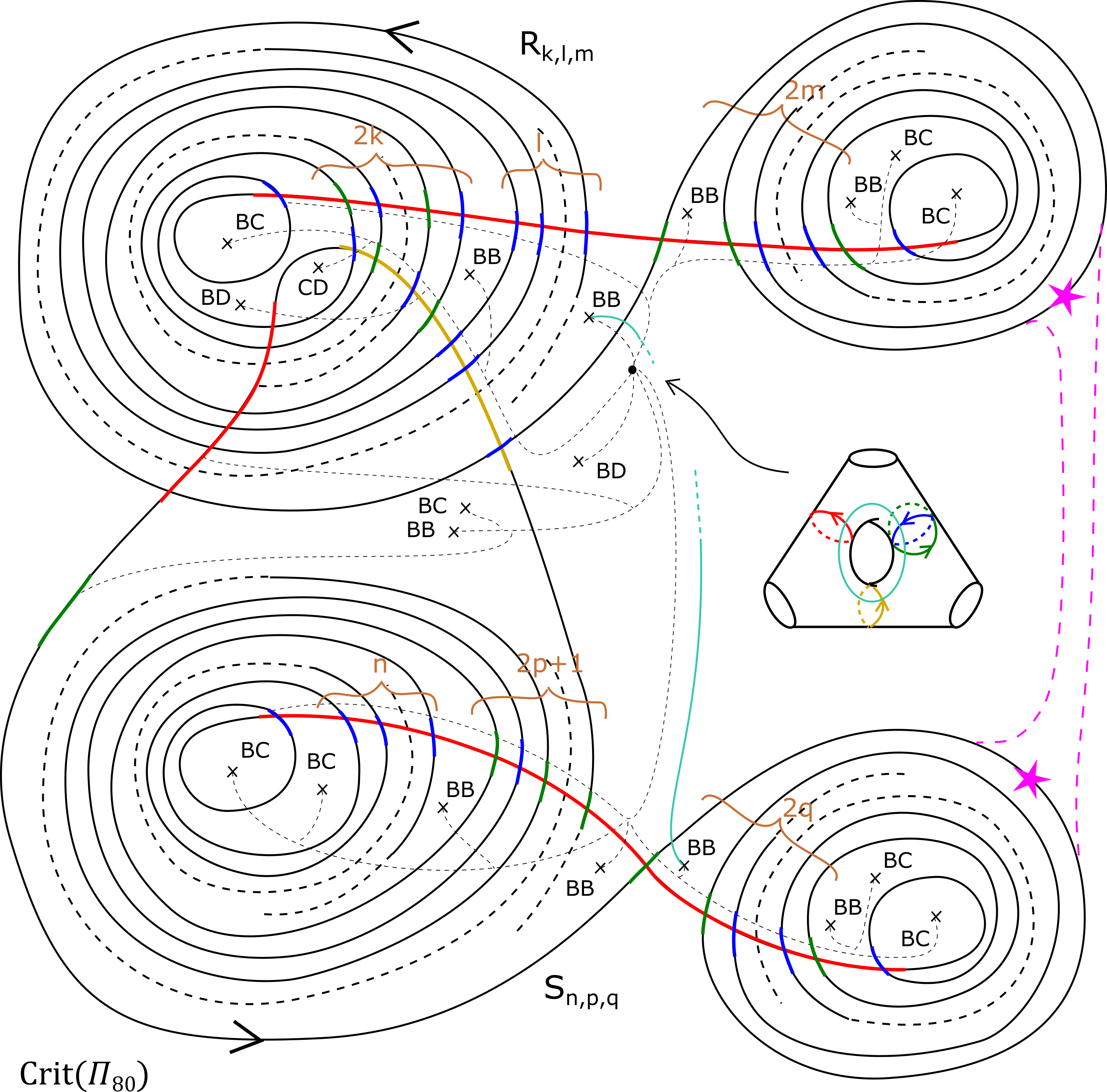}
% or [scale=0.85]
\caption{The tori $R_{k,l,m}$ and $S_{n,p,q}$. 
The purple star will be used later to build further Lagrangians from  $R_{k,l,m}$ and $S_{n,p,q}$ -- for instance, note that they can be combined to form a single Lagrangian torus by joining them along the dashed purple lines.
As with $T_{k,l,m}$ the cyan lines denote thimbles starting at an $a$ type critical point (there are two possible ones for a $BB$ configuration); these will also be used later, for Polterovich surgery.
}
\label{fig:fibred_linked_tori}
\end{center}
\end{figure} 

It immediately follows from Lemma \ref{thm:Maslov_indices} that for $R_{k,l,m}$, the Maslov index of a lift of the base $S^1$ is $2(m-k-l)$, and that for $S_{n,p,q}$ it's $2(q-p-1)$. As before, by adjusting the area of the different lobes we can arrange for them to be monotone for any monotonicity constant, and their homology classes only depend on $l$, respectively $n$.

\begin{remark}\label{rmk:linking}
We will see in Section \ref{sec:annuli_counts_2d} that for any $(k,l,m)$ and $(n,p,q)$, these Lagrangians are linked with respect to the fibration, in the  following sense: there cannot exist a Hamiltonian isotopy (or indeed, a compactly supported symplectomorphism) such that the projection of their two images under the isotopy (or symplectomorphism) are disjoint. It will also follow from the arguments in that section that we get different links for e.g.~different pairs $(k,l)$. 
\end{remark}

\subsection{Higher genus}
 
The cyan thimbles of Figures \ref{fig:fibred_torus_Tklm2} and \ref{fig:fibred_linked_tori} intersect $T_{k,l,m}$, $R_{k,l,m}$ and $S_{n,p,q}$, respectively, transversally in a single point. We can patch these thimbles together to get matching paths, and perform Polterovich surgery at the intersection points of the associated matching cycles with copies of $T_{k,l,m}$, etc., 
to construct higher genus monotone Lagrangians in $X_r$ for sufficiently large $r$. See Figure \ref{fig:Lambda_g} for a genus $g$ Lagrangian which we will denote $\Lambda_g (T_{k_1, l_1, m_1}, \ldots, T_{k_g, l_g, m_g})$, and which can be realised in $X_{18g}$. One can make constructions using some $R_{k_i,l_i,m_i}$ or $S_{k_i,l_i,m_i}$ completely analogously.

\begin{figure}[htb]
\begin{center}
\includegraphics[scale=0.50]{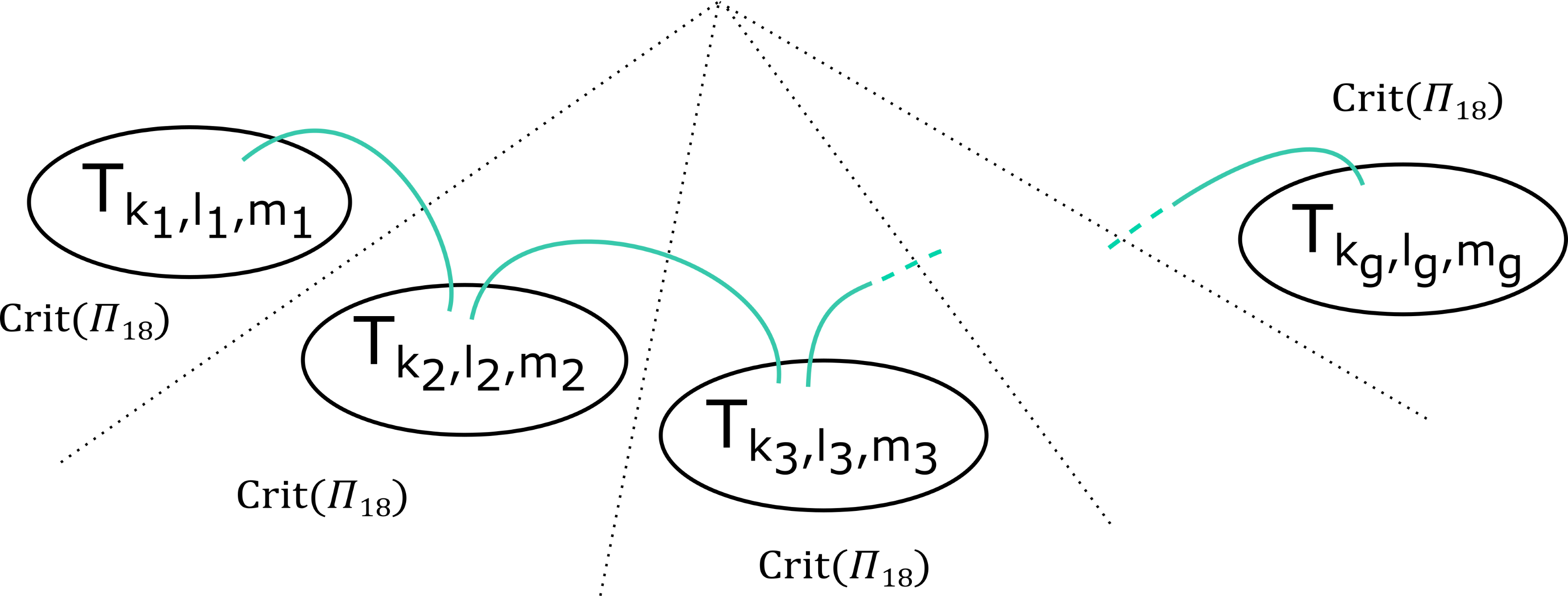}
% or [scale=0.85]
\caption{ The genus $g$ monotone Lagrangian $\Lambda_g (T_{k_1, l_1, m_1}, \ldots, T_{k_g, l_g, m_g})$.
}
\label{fig:Lambda_g}
\end{center}
\end{figure}

The following readily follows. 

\begin{corollary}
Fix an integer $g \geq 1$ and a constant $\kappa > 0$. Suppose $r \geq 18g$. Then 
there exist Lagrangian surfaces of genus $g$ in $X_r$, say $L_g$ such that
\begin{itemize}
\item the $L_g$ are monotone with monotonicity constant $\kappa$; 
\item $L_g$ can lie in countably infinitely many homology classes;
\item in each of these homology classes, the Maslov class of $L_g$ can take any possible value.
\end{itemize}

\end{corollary}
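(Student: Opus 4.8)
The plan is to take $L_g$ to be the genus $g$ Lagrangian $\Lambda_g(T_{k_1,l_1,m_1}, \ldots, T_{k_g,l_g,m_g})$ of Figure \ref{fig:Lambda_g}, and to extract the three prescribed properties from judicious choices of the parameters $(k_i,l_i,m_i)$. Since this surface can be drawn in $X_{18g}$, it sits inside $X_r$ for every $r \geq 18g$. The starting observation is that $H_1(L_g;\Z)$ carries a natural symplectic basis $\{\alpha_i, \beta_i\}_{i=1}^g$, where $\alpha_i$ is a lift of the base loop $\gamma_{k_i,l_i,m_i}$ of the $i$-th handle and $\beta_i$ is the meridional cycle $V$ parallel transported in that handle; within a handle $\alpha_i$ meets $\beta_i$ once, and cycles from distinct handles are disjoint. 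The Polterovich surgeries used to assemble $\Lambda_g$ are supported in small balls about single transverse intersection points, so they leave these generators intact.

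First I would compute the Maslov class. By Lemma \ref{thm:Maslov_indices}, within the $i$-th handle the pair $(\alpha_i, \beta_i)$ carries Maslov index $(2(l_i-k_i), 0)$, so in the basis above the Maslov class of $L_g$ is $(2(l_1-k_1), 0, \ldots, 2(l_g-k_g), 0)$. On the other hand, the lemma at the end of Section \ref{sec:Maslov_background} shows that, up to the action of $\text{Diff}(\Sigma_g)$, every possible Maslov class of an orientable genus $g$ surface is of the form $(2n_1, 0, \ldots, 2n_g, 0)$ with $n_i \geq 0$. To realise a given such class I would set $l_i - k_i = n_i$ while keeping $l_i \geq 1$, for instance $k_i = 1$ and $l_i = n_i + 1$; both parameters are then non-negative, as required.

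Next I would produce the countably many homology classes and check their compatibility with prescribing the Maslov class. The homology lemma for the $T_{k,l,m}$ shows that the class of a single handle is sensitive to $m_i$ (detected by the signed intersection number with the matching cycle $S$ of Figure \ref{fig:fibred_torus_Tklm2}) but is unchanged under varying $k_i, l_i$ so long as $l_i \geq 1$. Since the relevant matching cycle meets the assembled surface $L_g$ in the same transverse points as it meets the handle, this intersection count survives the surgeries, so letting a single parameter, say $m_1$, range over $\Z_{\geq 0}$ yields infinitely many distinct homology classes of $L_g$. Crucially, this choice is independent of the choice of $l_i - k_i$ fixing the Maslov class (keeping $l_i \geq 1$ throughout, so the $l=0$ convention never interferes), so within each such homology class an arbitrary possible Maslov class can still be arranged. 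Monotonicity with the prescribed constant $\kappa$ is then imposed exactly as in the monotone-representative lemma for $T_{k,l,m}$: the meridians are vanishing cycles and bound area-zero discs, so it remains only to stretch the outer lobes of each $\gamma_{k_i,l_i,m_i}$ to match the disc areas to $\kappa$ times the respective Maslov indices.

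The only place where a genuine argument beyond bookkeeping is required is verifying that the local Polterovich surgeries neither disturb the Maslov indices of the generators $\alpha_i, \beta_i$ nor collapse the homological distinction between different values of $m_i$. Both follow from the locality of the surgery: it is supported near a single transverse intersection point of a matching cycle with a torus, hence it affects neither the winding-number and basic-configuration data feeding Lemma \ref{thm:Maslov_indices} nor the global intersection pairing with $S$ that separates the homology classes. Granting this, the three bullet points follow immediately from the corresponding statements already established for the individual tori $T_{k,l,m}$.
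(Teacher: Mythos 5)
Your proposal is correct and is essentially the argument the paper intends: the corollary is stated with \qq{the following readily follows} after the construction of $\Lambda_g(T_{k_1,l_1,m_1},\ldots,T_{k_g,l_g,m_g})$, and the intended proof is exactly your assembly of Lemma \ref{thm:Maslov_indices} (Maslov class $(2(l_1-k_1),0,\ldots,2(l_g-k_g),0)$, normalised via the lemma at the end of Section \ref{sec:Maslov_background}), the homology lemma (varying $m_1$ with $l_i \geq 1$ fixed), and the lobe-stretching monotonicity lemma. Your extra verification that the Polterovich surgeries are local and so disturb neither the basis $\{\alpha_i,\beta_i\}$ nor the intersection count with the matching cycle $S$ is a useful explicit filling-in of what the paper leaves implicit (and is consistent with how the paper itself uses these facts in the proof of Theorem \ref{thm:infinite_family_C3}).
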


We'll see how to tell these apart for fixed `soft' data as above in Sections \ref{sec:annuli} and \ref{sec:Floer_theory_properties}. 
Following Remark \ref{rmk:A_3}, note that these Lagrangians could even be realised in the `smaller' variety $X_{2,4,r}$ for the same bounds on $r$.

\subsection{Non-orientable examples}\label{sec:non-orientable}
The above constructions also readily give non-orientable Lagrangian submanifolds: replacing $2k$ with $2k \pm 1$, \emph{or} $2l$ with $2l \mp 1$, in the contruction of Figure \ref{fig:fibred_torus_Tklm2}, yields a Klein bottle. 
Lemma \ref{thm:Maslov_indices} shows that it has Maslov class
\bq
(2l-2k \mp 1, 0 )
\eq
with respect to the obvious basis, and can be arranged to be monotone. Moreover, by taking $r$ sufficiently large, one can obtain connected sums of Klein bottles (or a Klein bottle and several tori) of arbitrary length.

On the other hand, note that there are  topological constraint on non-orientable Lagrangians, going back to work of Givental \cite{Givental} for the case where the ambient manifold is $\C^2$, as follows.
\begin{lemma} \label{lem:non_orientable_constraint}
Suppose $X^{2n}$ is an exact symplectic manifold with trivial tangent bundle, and that there is a Lagrangian immersion $(S^1)^{n-2} \times\Sigma \looparrowright  X$, for a closed 2-manifold $\Sigma$. Then $\Sigma$ is either orientable, or the connected sum of an even number of $\R \P^2$s. 
\end{lemma}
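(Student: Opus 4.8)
The plan is to run a Stiefel--Whitney class obstruction, exploiting the fact that a Lagrangian immersion identifies the symplectic normal bundle with the cotangent bundle. Write $L = (S^1)^{n-2} \times \Sigma$ and let $f: L \looparrowright X$ be the given Lagrangian immersion. First I would recall that a choice of compatible almost complex structure $J$ splits $f^\ast TX \cong df(TL) \oplus J\,df(TL)$, and the symplectic form identifies the second summand with $T^\ast L$. Since $TX$ is trivial by hypothesis, $f^\ast TX$ is trivial, so this gives an isomorphism of real vector bundles $TL \oplus T^\ast L \cong L \times \R^{2n}$. Working over $\Z/2$, where $TL$ and $T^\ast L$ have equal total Stiefel--Whitney class (they are isomorphic via any Riemannian metric), triviality yields $w(TL)^2 = 1$ in $H^\ast(L; \Z/2)$. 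Reading off the degree-two part, and using that the $2w_2$ term vanishes mod $2$, gives $w_1(L)^2 = 0$.

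Next I would transfer this constraint to $\Sigma$. Since the $(S^1)^{n-2}$ factor is parallelisable, the total Stiefel--Whitney class satisfies $w(TL) = p^\ast w(T\Sigma)$ for the projection $p: L \to \Sigma$, whence $w_1(L)^2 = p^\ast\!\big(w_1(\Sigma)^2\big)$. By the Künneth theorem $p^\ast$ is injective on $\Z/2$--cohomology, so the vanishing of $w_1(L)^2$ forces $w_1(\Sigma)^2 = 0 \in H^2(\Sigma; \Z/2)$. Finally I would evaluate this against the fundamental class. For a closed surface Wu's formula gives $w_2(\Sigma) = w_1(\Sigma)^2$, together with $\langle w_2(\Sigma), [\Sigma]\rangle = \chi(\Sigma) \bmod 2$; hence $\langle w_1(\Sigma)^2, [\Sigma]\rangle = \chi(\Sigma) \bmod 2$. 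If $\Sigma$ is non-orientable, write $\Sigma = \#_k \R\P^2$, so $\chi(\Sigma) = 2 - k$ and the pairing equals $k \bmod 2$. The vanishing $w_1(\Sigma)^2 = 0$ then forces $k$ to be even, which is exactly the assertion; the orientable case is vacuous.

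The only genuinely delicate point is the identification of the normal bundle of the immersion with $T^\ast L$ in the \emph{immersed} (as opposed to embedded) setting, which is why triviality of $TX$ — rather than merely $2c_1(X) = 0$ — is the hypothesis being used: it is what upgrades the stable relation to a genuine vanishing of $w_1(L)^2$. Everything else is a formal manipulation of characteristic classes, with the sole external input being Wu's formula identifying $w_1(\Sigma)^2$ with the mod $2$ reduction of the Euler characteristic of a closed surface.
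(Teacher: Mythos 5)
Your argument is correct and is exactly the paper's proof, merely written out in full: the paper's one-line proof computes the total Stiefel--Whitney classes of the tangent and normal bundles of $(S^1)^{n-2}\times\Sigma$, which are "equal and inverse to each other" -- precisely your $w(TL)^2=1$ coming from $f^\ast TX\cong TL\oplus T^\ast L$ trivial -- and then extracts $w_1(\Sigma)^2=0$, forcing an even number of $\R\P^2$ summands. The Künneth injectivity step and the Wu-formula evaluation $\langle w_1(\Sigma)^2,[\Sigma]\rangle=\chi(\Sigma)\bmod 2$ are the implicit details of that same calculation, so nothing here diverges from the paper's route.
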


\begin{proof}
This follows from a calculation of the total Steifel--Whitney classes of the tangent and normal bundles of $(S^1)^{n-2} \times\Sigma$, which must be both equal and inverse to each other.
\end{proof}

Note that by taking parallel copies of the same immersed $S^1$ curve (e.g.~in the right-hand side of Figure \ref{fig:simple_tori2}) and Polterovich surgering them with a fixed matching cycle, we can get embedded Lagrangian $\Sigma_g$ in $X_r$ for arbitrary $g$ so long as $r \geq 5$. (We need to take $ r \geq 5$ rather than $r \geq 4$ in order to also have the matching cycle for surgery.) Of course this sacrifices monotonicity. Similarly for connected sums of Klein bottles for $r \geq 4$. 

In particular, we readily get the following:

\begin{itemize}
\item So long as $r \geq 5$, we can get Lagrangians surfaces of arbitrarily high genus in $\{ x^2 + y^4 + z^r =1 \}$, which itself has an exact symplectic embedding into $X_r$ \footnote{This partially answers a question of Peter Kronheimer at the author's thesis defense.}; the bound on $r$ is sharp: $\{x^2 + y^4 + z^4 = 1\}$ can only contain Lagrangian tori or spheres as it has a negative semi-definite intersection form. (It is a parabolic, modality one singularity, see \cite[Chap.~2, \textsection 2.5]{Arnold_book}.) 

\item So long as $r \geq 9$,  all the diffeomorphism types of Lagrangian surfaces allowed by Lemma \ref{lem:non_orientable_constraint} can be realised in $\{ x^2 + y^4 + z^r =1 \}$. 

\item As $r$ increases we can gradually cover all possible Maslov types.

\end{itemize}

Of course, there is no reason for the later two bounds to be optimal.

\subsection{Examples with non-trivial automorphisms}\label{sec:finite_order_automorphisms}

As an aside, we briefly note that our techniques allow us to construct examples of monotone Lagrangians such that there are symplectomorphisms of $X_r$ which fix them set-wise but act non-trivially on e.g.~their homology. The most naive such construction is, for for odd $g$, to define a genus $g$ Lagrangian, say 
$\zeta_g (k_1, k_2, l_1, l_2, m_1, m_2)$, embedded inside $X_{18(g-1)}$, as in Figure \ref{fig:Zeta_g}: this is given by taking $(g-1)/2$ copies of $T_{k_1, l_1, m_1}$ and $T_{k_2, l_2, m_2}$, and attaching them via Polterovich surgery with $g-1$ different matching cycles. (Recall that to do a Polterovich surgery one needs to choose an ordering of the two Lagrangians involved; in this case we want to pick any orderings that are cyclicly symmetric.) 
Using the same techniques as in the proof of Lemma \ref{thm:Maslov_indices} (see in particular Figure \ref{fig:Maslov_calculation2}), we get that the Maslov index of the `first' longitude  -- the lift of the obvious cyclically symmetric curve in the base (which travels around the blue and red matching cycles, counterclockwise) -- is $2(3-g)$; we ensure that $\zeta_g (k_1, k_2, l_1, l_2, m_1, m_2)$ is monotone by suitably adjusting the areas of the `overlaps' between the blue and red matching paths in the base of $\Pi_r$, as sketched in Figure \ref{fig:Zeta_g}.

\begin{figure}[htb]
\begin{center}
\includegraphics[scale=0.60]{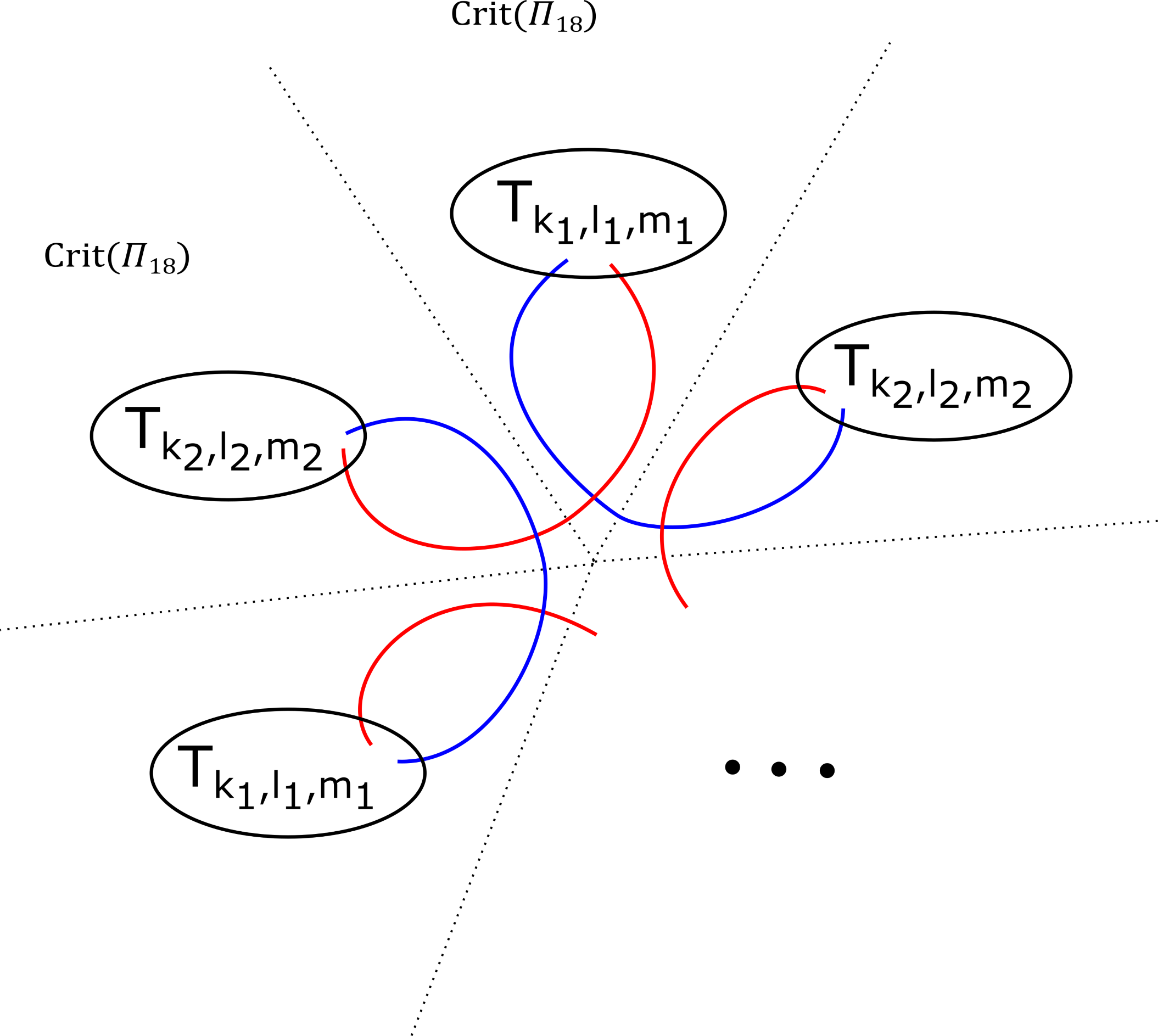}
% or [scale=0.85]
\caption{ The genus $g$ monotone Lagrangian $\zeta_g (k_1, k_2, l_1, l_2, m_1, m_2)$ inside $X_r$, for $r = 18(g-1)$. The blue, resp.~red, curves correspond to matching cycles of types $b$, resp.~$c$, attached to the critical points of those two types in the obvious $BB$ configurations, namely the ones also used for the cyan (type $a$) matching cycles of Figure \ref{fig:Lambda_g} (with more details on Figure \ref{fig:fibred_torus_Tklm2}).}
\label{fig:Zeta_g}
\end{center}
\end{figure} 

Let $r = 18(g-1)$. 
Consider the automorphism $\rho$ of $X_{r}$ corresponding to the positive rotation of the base by angle $4\pi / (g-1)$, which is a power of the map $\Phi$ defined in Proposition \ref{prop:Phi_definition}. By Proposition \ref{prop:Phi_Dehntwists}, this is Hamiltonian isotopic to a product of Dehn twists in spheres in $X_r$, which are themselves vanishing cycles for the singularity $x^3 +y^3 + z^r$, say $\rho = \tau_{V_1} \ldots \tau_{V_{n_\rho}}$. Moreover, if $k_2 = \ldots = k_g$, and similarly for the $l_i$ and $m_i$, $\rho$ fixes $\zeta_g$ setwise, and acts as an order $(g-1)/2$ rotation pointwise.

\begin{remark}
The Maslov index calculation would be the same if we had used type $a$ matching cycles (i.e.~with the conventions of Figures \ref{fig:fibred_torus_Tklm2} and \ref{fig:Lambda_g}, cyan matching paths), though in that case there would be no ready way of adjusting things to ensure monotonicity. Dropping the monotonicity requirement, such a construction gives genus $g$ Lagrangians and a symplectomorphism of $X_r$ which fixes them setwise and acts as an order $g-1$ rotation pointwise for arbirary $g$. (The lowest genus case would need separate treatment: instead, one could for instance construct a genus 2 monotone Lagrangian with a rotation of order two, by using Polterovich surgery on two copies of $T_{k,l,m}$ inside $X_{r}$ for $r \geq 36$.)
\end{remark}

\begin{remark}
Following Remark \ref{rmk:A_3}, note that these constructions could instead have been realised in  $X_{2,4,r}$, as the type $d$ vanishing cycle is never needed.
\end{remark}

%%%%%%%%%%%%%%%%%%%%%%%%%%%%%%%%%%%%
%%%%%%%%%%%%%%%%%%%%%%%%%%%%%%%%%%%%

%%%%%%%%%%%%%%%%%%%%%%%%%%%%%%%%%%%%
%%%%%%%%%%%%%%%%%%%%%%%%%%%%%%%%%%%%

\section{Monotone Lagrangians in $\C^3$}\label{sec:C^3}

\subsection{Infinitely many monotone $S^1 \times \Sigma_g$}\label{sec:monotone_in_C^3}

We will use the following result from geometric group theory.

\begin{theorem}\label{thm:split_diffeo}\cite{Waldhausen1, Waldhausen2, Waldhausen3}
Let $\Sigma$ be a closed surface of negative Euler characteristic.
Suppose $f$ is a diffeomorphism of $S^1 \times \Sigma$. Then $f$ is isotopic to a product, i.e.~an element of $\Z/2 \oplus \text{Diff}(\Sigma)$,
where the first factor acts on $S^1$, and the second factor on $\Sigma$.

\end{theorem}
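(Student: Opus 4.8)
The plan is to obtain the statement from Waldhausen's rigidity theorem for closed Haken $3$–manifolds, combined with the observation that the $S^1$–fibres of $M:=S^1\times\Sigma$ are canonical. First I would record the standing topology: since $\chi(\Sigma)<0$ the surface $\Sigma$ is none of $S^2$, $\R\P^2$, $T^2$ or the Klein bottle, so $M$ is a closed, aspherical, irreducible Haken manifold, and $\pi_1\Sigma$ is centreless. By Waldhausen's theorem every isomorphism of fundamental groups of such manifolds is induced by a homeomorphism, and homotopic homeomorphisms are isotopic; passing freely between $\text{Homeo}$ and $\text{Diff}$ on $\pi_0$ in dimension three, the natural map $\pi_0\,\text{Diff}(M)\to\text{Out}(\pi_1 M)$ is an isomorphism. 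So it is enough to compute $\text{Out}(\pi_1 M)$ and to realise each class by an explicitly fibred diffeomorphism.

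Next I would run the algebra. We have $\pi_1 M=\Z\times\pi_1\Sigma$ with centre $Z(\pi_1 M)=\Z\times\{1\}$, precisely the subgroup carried by the fibres; this is intrinsic because $\pi_1\Sigma$ has trivial centre. Hence every automorphism preserves the fibre subgroup, acting on it by $\pm1$, and induces an automorphism of the quotient $\pi_1\Sigma$, which by Dehn--Nielsen--Baer lies in $\text{Out}(\pi_1\Sigma)=\text{MCG}^{\pm}(\Sigma)$ and is realised by a diffeomorphism $\phi$ of $\Sigma$. The residual datum is a homomorphism $\pi_1\Sigma\to\Z$, i.e.\ a class in $H^1(\Sigma;\Z)$, recording how the fibre coordinate is carried along loops in the base. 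This pins down $\text{Out}(\pi_1 M)\cong\big(\Z/2\times\text{MCG}^{\pm}(\Sigma)\big)\ltimes H^1(\Sigma;\Z)$, whose three pieces are an orientation $\iota\in\Z/2$ of the $S^1$, the diffeomorphism $\phi$ of $\Sigma$, and a vertical part.

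The geometric heart of the argument --- and the step I expect to be the main obstacle --- is to upgrade this to an \emph{isotopy through fibre–preserving maps}: one must show $f$ is isotopic to a diffeomorphism carrying the fibres of $M\to\Sigma$ to fibres. This is exactly where $\chi(\Sigma)<0$ is indispensable: the base orbifold is hyperbolic, $M$ is not one of the finitely many exceptional Seifert manifolds, and so by the uniqueness of Seifert fibrations (Waldhausen, Scott) the fibration is unique up to isotopy and any $f$ can be isotoped to respect it. A fibre–preserving diffeomorphism is then determined by the covered map $\phi\in\text{Diff}(\Sigma)$, the global orientation $\iota\in\Z/2$ of the fibres (constant because $\Sigma$ is connected and $\pi_0\,\text{Diff}(S^1)=\Z/2$), and a vertical part valued in $\text{Diff}^{+}(S^1)\simeq S^1$ whose only isotopy invariant is its class in $\pi_0\,\text{Map}(\Sigma,S^1)=H^1(\Sigma;\Z)$. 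Thus, up to isotopy and up to this fibred twist, $f=\iota\times\phi$ is the asserted product in $\Z/2\oplus\text{Diff}(\Sigma)$; the remaining bookkeeping --- that each algebraic class is hit by exactly one such model and that the normal twist subgroup $H^1(\Sigma;\Z)$ accounts for the only departure from an honest product --- is immediate once the fibre–preserving reduction is in hand, and the negative Euler characteristic hypothesis cannot be dropped anywhere in it.
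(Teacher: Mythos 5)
Your route --- Waldhausen rigidity for closed Haken manifolds ($\pi_0\,\text{Diff}(M)\cong\text{Out}(\pi_1 M)$), the centre argument identifying the fibre subgroup, Dehn--Nielsen--Baer, and uniqueness of the Seifert fibration over a hyperbolic base --- is exactly the standard machinery behind the paper's citation (the paper gives no argument of its own beyond pointing to Waldhausen via Orlik's exposition), and it is sound up to the final step. But that final step is a genuine gap, not ``immediate bookkeeping'': the vertical twists you correctly isolate cannot be absorbed. Your own computation gives $\text{Out}(\pi_1(S^1\times\Sigma))\cong\bigl(\Z/2\times\text{MCG}^{\pm}(\Sigma)\bigr)\ltimes H^1(\Sigma;\Z)$, and the product mapping classes $\iota\times\phi$ form precisely the proper subgroup with trivial $H^1(\Sigma;\Z)$--component. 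Concretely, for $0\neq[c]\in H^1(\Sigma;\Z)$ the twist $\tau_c(t,x)=(t+c(x),x)$ acts on $H_1(S^1\times\Sigma;\Z)$ by the shear $a\mapsto a+\langle c,a\rangle\, s$ (with $s$ the fibre class), whereas every product acts block-diagonally; since isotopic diffeomorphisms induce equal maps on homology, $\tau_c$ is isotopic to no product --- one does not even need Waldhausen to see this. So what your argument actually establishes is that $f$ is isotopic to $(\iota\times\phi)\circ\tau_c$ for a well-defined twist class $c\in H^1(\Sigma;\Z)$, i.e.\ the computation of $\pi_0\,\text{Diff}(S^1\times\Sigma)$; your closing claim that ``each algebraic class is hit by exactly one such [product] model'' contradicts your own description of $\text{Out}$.

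This is not a repairable oversight in your write-up: the twist subgroup is nontrivial whenever $H^1(\Sigma;\Z)\neq 0$, hence for every closed $\Sigma$ with $\chi(\Sigma)<0$, so the product splitting can only hold modulo vertical twists --- which is the form in which the structure theorem appears in the literature your argument reconstructs (every diffeomorphism is isotopic to a fibre-preserving one, and the fibre-preserving mapping class group is the semidirect product above). Your proposal in fact surfaces an imprecision in the statement as quoted; and the discrepancy is not cosmetic for the way the theorem is used downstream, since composing with $\tau_c$ changes the $\Sigma$--component of the pulled-back Maslov class of $S^1\times\Lambda_g$ by $2c$ (the $S^1$ factor having Maslov index two), which is exactly the kind of ambiguity the product form is invoked to exclude. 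To close your proof one would have to either show the relevant twists are isotopically trivial (they are not) or restate the theorem with the $H^1(\Sigma;\Z)$ twist factor and re-verify that the intended invariants are insensitive to it.
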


\begin{proof}
This was established in work of Waldhausen \cite{Waldhausen1, Waldhausen2, Waldhausen3}; for an account of the results in English, see e.g.~the exposition in \cite[Section 8.1, Theorem 4]{Orlik}.
\end{proof}

This now allows us to `upgrade' our constructions $\Lambda_g$ to get Lagrangians in $\C^3$, as follows.

\begin{theorem}\label{thm:infinite_family_C3}
Fix $g \geq 2$, and any monotonicity constant $\kappa$. Then there exist infinitely many monotone Lagrangian $S^1 \times \Sigma_g$ in $\C^3$, distinct up to any equivalence that preserves Maslov classes. (This includes Lagrangian isotopy, and almost-complex diffeomorphisms of $\C^3$ -- so in particular, symplectomorphisms.)
\end{theorem}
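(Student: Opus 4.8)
The plan is to build the Lagrangians by taking a product construction with the higher-genus surfaces $\Lambda_g$ from Section \ref{sec:building_blocks}, and then to distinguish infinitely many of them using the soft invariants (Maslov classes) together with the rigidity afforded by Theorem \ref{thm:split_diffeo}. First I would realise $S^1 \times \Sigma_g$ in $\C^3$: view $\C^3 = \{x^3 + y^3 + z^r + \text{(higher)}\}$-type picture, or more directly use the Lefschetz fibration perspective in which $\C^3$ fibres over $\C$ with fibre a copy of (a Stein domain symplectomorphic to) $X_r$ containing the monotone surface $\Lambda_g = \Lambda_g(T_{k_1,l_1,m_1}, \ldots, T_{k_g,l_g,m_g})$. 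Fibring $\Lambda_g$ over an embedded circle in the base — chosen so that parallel transport returns $\Lambda_g$ to itself up to Hamiltonian isotopy — yields an embedded Lagrangian diffeomorphic to $S^1 \times \Sigma_g$. Monotonicity with the prescribed constant $\kappa$ is arranged exactly as in the surface case, by adjusting the areas of lobes in the base so that the area-to-Maslov ratio of each generating disc class equals $\kappa$; here one checks that the $S^1$ factor can be given Maslov index two (as forced by \cite{Evans-Kedra}), and that the $\Sigma_g$ factor inherits Maslov class $(2(l_i - k_i), 0)$ on each handle from Lemma \ref{thm:Maslov_indices}.

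Next I would produce the infinite family by varying the handle-wise Maslov data: for each tuple $(n_1, \ldots, n_g)$ of non-negative integers one obtains a monotone $S^1 \times \Sigma_g$ whose Maslov class restricted to $\Sigma_g$ is $(2n_1, 0, 2n_2, 0, \ldots, 2n_g, 0)$ in the basis of the chosen handle decomposition, following the Lemma at the end of Section \ref{sec:Maslov_background}. The key point is that all of these embed in $\C^3$ with the same monotonicity constant $\kappa$ and are smoothly $S^1 \times \Sigma_g$, so they differ only through their Maslov classes. To separate them I would argue that the Maslov class is a genuine invariant up to the stated equivalences: any Lagrangian isotopy, or any symplectomorphism (respectively almost-complex diffeomorphism) of $\C^3$, induces a diffeomorphism between the two Lagrangians which must intertwine their Maslov classes under the induced map on $H^1$.

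The main obstacle — and the step where Theorem \ref{thm:split_diffeo} does the essential work — is controlling the action of the diffeomorphism group on $H^1(S^1 \times \Sigma_g; \Z)$. A priori a self-diffeomorphism of $S^1 \times \Sigma_g$ could mix the $S^1$ and $\Sigma_g$ factors, which would allow many superficially different Maslov classes to be identified. Since $g \geq 2$ gives $\Sigma_g$ negative Euler characteristic, Theorem \ref{thm:split_diffeo} guarantees that every self-diffeomorphism is isotopic to a product in $\Z/2 \oplus \text{Diff}(\Sigma_g)$; hence on cohomology it acts block-diagonally, preserving the $S^1$ summand (up to sign) and acting on $H^1(\Sigma_g)$ through the mapping class group. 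The induced action on $H^1(\Sigma_g)$ factors through $Sp(2g, \Z)$, and I would invoke the geometric-group-theoretic input to extract invariants of the $Sp(2g,\Z)$-orbit of the class $(2n_1, 0, \ldots, 2n_g, 0)$ — for instance, the multiset of $\gcd$'s or the image subgroup — that take infinitely many values as the $n_i$ vary. This orbit-invariant, being preserved by all admissible equivalences, is what distinguishes infinitely many members of the family.

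I would close by noting that the statement covers almost-complex diffeomorphisms of $\C^3$, which is strictly weaker than requiring a symplectomorphism, so the argument automatically yields distinctness up to symplectomorphism and up to Lagrangian isotopy as special cases; the crux in every case is that the induced map on $H^1$ of the Lagrangian preserves the Maslov class, and that the $Sp(2g,\Z)$-orbit invariant detects infinitely many distinct Maslov classes once $g \geq 2$.
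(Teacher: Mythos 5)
Your proposal is correct and follows essentially the same route as the paper: the same product construction of $S^1 \times \Lambda_g$ over an embedded circle in the base of the Lefschetz fibration on $Y_{r,1} \cong \C^3$ with fibre $X_r$, monotonicity arranged by area adjustments, and the same use of Theorem \ref{thm:split_diffeo} to reduce the problem to an invariant of the Maslov class on the $\Sigma_g$ factor under lattice automorphisms. Your ``image subgroup'' invariant is precisely the paper's $N = \gcd\{l_i - k_i\}$ (equivalently, half the minimal Maslov number of $\Lambda_g$), so the distinguishing argument in Step 2 of the paper's proof coincides with your orbit-invariant argument.
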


\begin{proof}
\underline{Step 1: Construction.}
Recall $Y_{r,1} = \{ x^3 + y^3 + z^r + w=1 \} \cong \C^3$, and that $P_{r,1}: Y_{r,1} \to \C$ has smooth fibre $X_r$. 
Given a monotone Lagrangian $\Lambda_g \subset X_r$, with monotonicity constant $\kappa$, our strategy will be to construct a monotone Lagrangian $S^1 \times \Lambda_g$ in $\C^3$ by taking a product with a suitable $S^1$ in the basis of $P_{r,1}$, away from the critical values.

The map $P_{r,1}$ has finitely many critical points and values; pick a disc $D$ in the base away from these; this can be chosen with arbitrary (finite) symplectic area. 
The fibration $P_{r,1}$ above $D$ is essentially trivial. More precisely, 
following the ideas of Section \ref{sec:deformations}, 
for any fixed compact subset $K \subset X_r$, we can assume that after a Moser isotopy, $P_{r,1}$ restricts to the trivial fibration $K \times D \to D$, where $K \times D$ is equipped with a product symplectic form.

   Pick an embedding $\gamma: S^1 \to D$. Up to Hamiltonian isotopy, the symplectic parallel transport map about $\gamma$ is trivial. In particular, by taking the union of the images of $\Lambda_g$ one gets a Lagrangian $ \Lambda_g \times S^1 \subset \C^3$, fibred over $\gamma$. By construction, the positively oriented curves $ \{ pt \}  \times S^1$ have Maslov index two. Let $D_\gamma \subset D$ be the (positively oriented) disc with boundary $\gamma$. This lifts to a family of discs $\{ pt \} \times D_\gamma \subset K \times D$, with boundary on $\{ pt  \} \times \gamma$, where $\{ pt \}$ varies in $K$.  Adjusting $\gamma$, one can arrange for these to have symplectic area $2\kappa$. Then, by construction, $S^1 \times \Lambda_g \subset \C^3$ is a monotone Lagrangian.

\underline{Step 2: Invariants.} Recall that $\Lambda_g$ depended on some choices:
$$
\Lambda_g = \Lambda_g (T_{k_1, l_1, m_1}, \ldots, T_{k_g, l_g, m_g}).
$$
Consider a basis for $H_1 (\Lambda_g)$ given by $(\alpha_1, \beta_1, \ldots, \alpha_g, \beta_g)$, where $\alpha_i$ is any lift of the base $S^1$ for $T_{k_i, l_i, m_i}$, and $\beta_i$ is the meridian $S^1$ for it (i.e.~a cycle on $M$, the thrice-punctured elliptic curve $\{ x^3 + y^3 = 1\}$); this is the natural generalisation of the basis considered in Lemma \ref{thm:Maslov_indices}; moreover, by Lemma \ref{thm:Maslov_indices}, with respect to the induced basis for $H_1 (S^1 \times \Lambda_g) \cong H_1(S^1) \oplus H_1 (\Lambda_g)$, $S^1 \times \Lambda_g$ has Maslov class given by the indices:
$$
(2, 2(l_1-k_1), 0, 2(l_2-k_2), 0, \ldots, 2(l_g-k_g), 0).
$$ 
Suppose we're also given $\Lambda_g'= \Lambda_g (T_{k_1', l_1', m_1'}, \ldots, T_{k_g', l_g', m_g'})$. Let $N = \text{gcd}\{l_1-k_1, \ldots, l_g-k_g\}$ and $N' = \text{gcd}\{l_1'-k_1', \ldots, l_g'-k_g'\}$; $2N$, respectively $2N'$, is the minimal Maslov number of $\Lambda_g$, resp.~$\Lambda'_g$. By Theorem \ref{thm:split_diffeo}, if $N \neq N'$, then there cannot be any map taking $S^1 \times \Lambda_g$ to $S^1 \times \Lambda_g'$ and preserving the Maslov class: irrespective of the choice of bases for $H_1(\Lambda_1; \Z)$ and $H_1(\Lambda'_g, \Z)$, we cannot get the two collections of indices to agree. In particular, this means that the two cannot be Lagrangian isotopic, and that there cannot exist any symplectomorphism of $\C^3$ taking one to the other.
\end{proof}

\begin{remark}\label{rmk:Georgios}
The following results from discussion with Georgios Dimitroglou-Rizell: first, note that the Lagrangian $\Lambda_g$ in the fibre $X_r$ is an isotropic subcritical surface in the whole of $\C^3$, and is exact in $\C^3$ whenever it is exact in $X_g$. 
At least in this case, our construction should be equivalent to the circle bundle construction by Audin, Lalonde and Polterovich \cite{ALP}: the $S^1 \times \Sigma_g$ can instead be thought of as the product of $\Sigma_g$ with a circle in its (trivial) symplectic normal bundle. (The circle may need to be taken to be very small, which can be remedied by an overall scaling -- in the non-exact case one would have to be more careful.)
Now recall that exact subcritical isotropics satisfy an h-principle (one can use e.g.~the h-principle for isotropic submanifolds in $J^1(\R^3)$ as given in \cite[Theorem 12.4.1]{Eliashberg-Mishachev}); thus two exact subcritical isotropics with the same soft invariants will be Hamiltonian isotopic, and, at the cost of sufficiently shrinking the circles, it should be possible to carry along the circle bundle Lagrangians with the Hamiltonian isotopy (and later perform an overall rescalling of $\C^3$ to make up for having shrunk the circles).

\end{remark}

%%%%%%%%%%%%%%%%%%%%%%%%%%%%%%%%%%%%
%%%%%%%%%%%%%%%%%%%%%%%%%%%%%%%%%%%%

\subsection{Extensions}

\subsubsection{Non-trivial Hamiltonian Lagrangian monodromy group}
\label{sec:Hamiltonian_monodromy}
In general, given a Lagrangian $L$ in a symplectic manifold $(X, \omega)$, we can study the group of time-1 Hamiltonian isotopies of $X$ which preserve $L$ set-wise, called the Hamiltonian Lagrangian monodromy group of $L$; often one simply studies its action on a flavour of homology of $L$. Mei-Lin Yau \cite{MLYau} studied this in the case of the standard (i.e.~Clifford) and Chekanov monotone tori in $\C^2$; in both cases, the Hamiltonian Lagrangian monodromy group for $\Z$--homology is $\Z/2$. In contrast, Hu, Lalonde and Leclerq \cite{Hu-Lalonde-Leclerc} showed that if $L$ is a weakly exact closed Lagrangian submanifold, i.e.~if $[\omega] (\pi_2(X, L))$ vanishes, then the Hamiltonian Lagrangian monodromy group acts trivially on the $\Z/2$--homology of $L$.

As an aside on our main results, we note that our techniques give examples of monotone Lagrangians in $\C^3$ with non-trivial Hamiltonian Lagrangian monodromy group. 
This builds on Section \ref{sec:finite_order_automorphisms}, as follows: consider a monotone Lagrangian of the form $S^1 \times \zeta_g$ ($g$ odd) in $\C^3 \cong Y_{r,1} = \{ x^3 + y^3 + z^r + w=1 \}$, given by the product of a copy of $\zeta_g$ (described in that section) in a smooth fibre of $P_{r,1}$ with an $S^1$ in a locally trivial part of the base -- i.e.~the same construction as in Section \ref{sec:monotone_in_C^3} above. 
Recall that there exists a symplectomorphism $\rho = \tau_{V_1} \ldots \tau_{V_{n_\rho}} $ of $X_r$ which fixes $\zeta_g$ setwise and acts as an order $(g-1)/2$ rotation pointwise. The $V_i$ are vanishing cycles for the singularity $x^3 + y^3 + z^r$, which implies that each of the $\tau_{V_i}$ is the monodromy of a path in the base of $P_{r,1}$ (avoiding the critical points). It then follows that the map $\text{Id} \times \rho: S^1 \times \zeta_g \to S^1 \times \zeta_g$ is induced by a Hamiltonian isotopy of $\C^3$. 

\subsubsection{Non-orientable examples}

Much of the constructions and arguments above extend to the non-orientable case. In particular, combining the constructions of Section \ref{sec:non-orientable} with the ideas of Section \ref{sec:monotone_in_C^3}, we get infinitely many monotone Lagrangian $S^1 \times \Sigma$ in $\C^3$, where $\Sigma$ is the connect sum of an arbitrary number of Klein bottles and tori, which are distinguished by the minimal Maslov number of $\Sigma$; and, using the ideas of Section \ref{sec:Hamiltonian_monodromy}, we can give families of examples of such $S^1 \times \Sigma$ with non-trivial Hamiltonian Lagrangian monodromy group.

%%%%%%%%%%%%%%%%%%%%%%%%%%%%%%%%%%%%
%%%%%%%%%%%%%%%%%%%%%%%%%%%%%%%%%%%%

%%%%%%%%%%%%%%%%%%%%%%%%%%%%%%%%%%%%
%%%%%%%%%%%%%%%%%%%%%%%%%%%%%%%%%%%%

\section{Monotone Lagrangians in affine 3-folds and holomorphic annuli}\label{sec:annuli}

\subsection{Further constructions of monotone $S^1 \times \Sigma_g$ in $Y_{r,s}$}\label{sec:S^1xXi_g}

The constructions of Lagrangian surfaces in Section \ref{sec:building_blocks} relied on one-dimensional features: the existence of a sequence of positive Dehn twists on $M$ (the smooth fibre of $\Pi_r: X_r \to \C$) taking an exact Lagrangian, namely the vanishing cycle $b$, to a disjoint exact Lagrangian, namely the vanishing cycle $c$; and the existence of a sequence of \emph{positive} twists taking $c$ back to $b$. 

Given a sequence of positive Dehn twists displacing one of our Lagrangian surfaces (e.g.~$T_{k,l,m}$), and another one bringing the displaced copy back to the starting point, one could use similar ideas to construct interesting monotone Lagrangians in 3-dimensional Brieskorn--Pham hypersurfaces. 
Propositions \ref{prop:Phi_definition} and \ref{prop:Phi_Dehntwists} provide these sequences, possibly at the cost of passing to a larger $r$: they allow us to write down lots of constructions of embedded Lagrangian $S^1 \times \Sigma$ in $Y_{r,s}$, for sufficiently large $r$ and $s$, where $\Sigma \subset X_r$ is a Lagrangian surface from Section \ref{sec:building_blocks}, and  $S^1 \times \Sigma$ itself is fibred over an immersed $S^1$ in the base of $P_{r,s}$. 

We'll shortly give examples, and explain how to calculate the Maslov index of the $S^1$ factor. Of course, all of the constructions inside $\C^3$ also give monotone Lagrangians of the form $S^1 \times \Sigma_g$ in affine 3-folds, which can still be distinguished by the Maslov index of the $\Sigma_g$ factor. On the other hand, recall that by \cite[Theorem B]{Evans-Kedra}, given a monotone Lagrangian $S^1 \times \Sigma_g$ in $\C^3$, the $S^1$ factor must have Maslov index two. Of course this need not be the case for such Lagrangians in a general affine hypersurface -- indeed, we'll see examples where the $S^1$ factor takes arbitrary (even) Maslov index, including new Maslov two examples. 

Instead of presenting the simplest possible construction for a given topological type, we directly describe examples which are sophisticated enough to give all of the applications we have in mind. In particular, to get a setting in which the count of holomorphic annuli will readily be well-defined, we make slightly careful choices.
Concretely, our constructions will be in $Y_{r,s} = X_{3,3,r,s}$. For Floer-theoretic features, explored in Section \ref{sec:Floer_theory_properties}, it would typically be enough to work in $X_{2,4,r,s}$ (following on from Remark \ref{rmk:A_3}), which we will note when applicable.

We start with a variation on the genus $g$ Lagrangian surfaces in $X_r$ considered so far: the monotone Lagrangian surface $$\Xi_g (\{k_0, l_0, m_0\}, \ldots,\{k_g, l_g, m_g\})$$
described by Figure \ref{fig:Xi_g}, which in turn must be read with the conventions of Figures  \ref{fig:fibred_torus_Tklm2} and \ref{fig:fibred_linked_tori}. The surface $\Xi_g$ has genus $g$, as $S_{k_0, l_0, m_0}$ and $R_{k_1, l_1,m_1}$ get combined to form a single genus one component. 

\begin{figure}[htb]
\begin{center}
\includegraphics[scale=0.30]{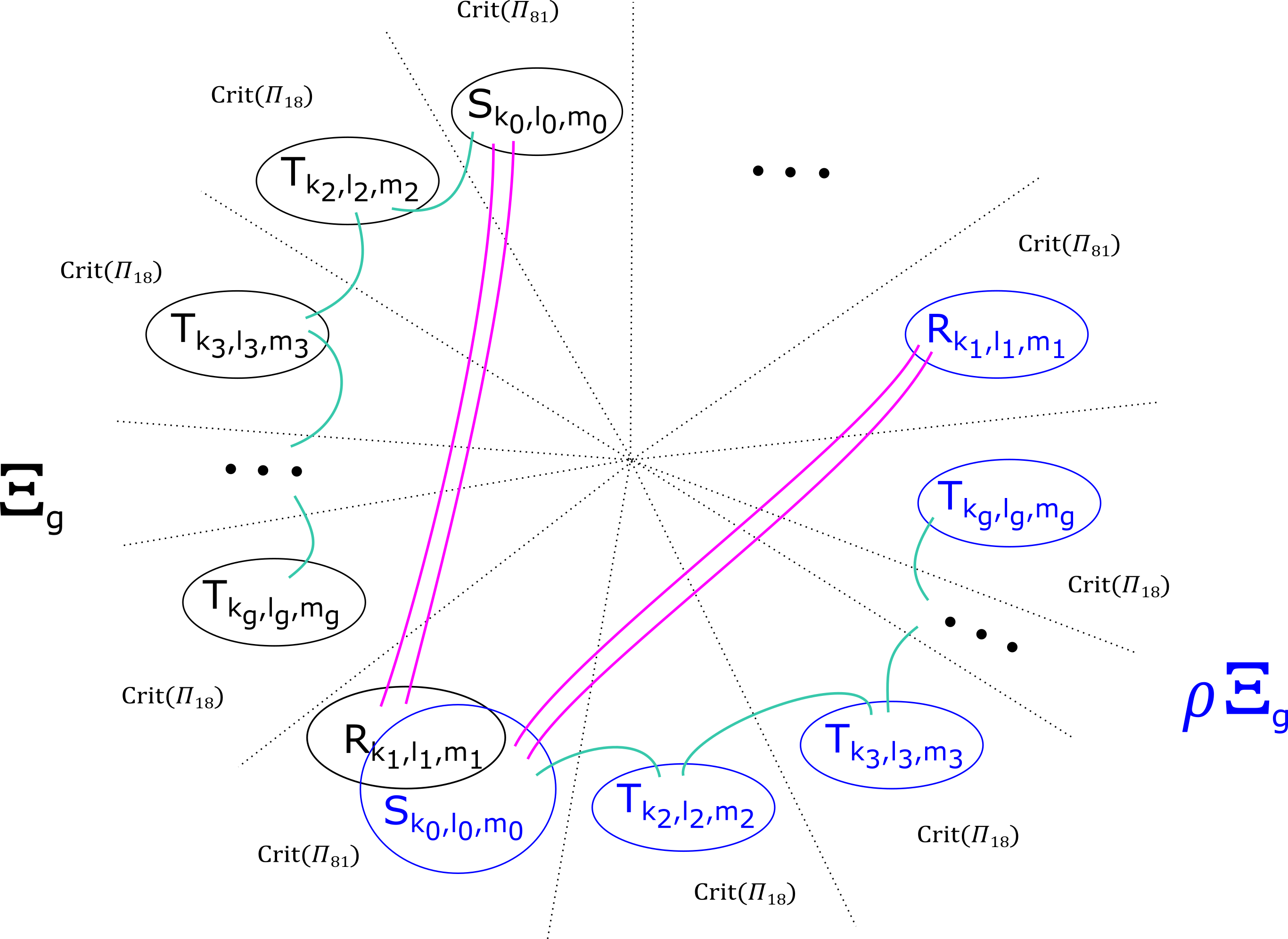}
% or [scale=0.85]
\caption{ The genus $g$ monotone Lagrangian $\Xi_g (\{k_0, l_0, m_0\}, \ldots,\{k_g, l_g, m_g\})$ inside $X_r$, for $r \geq 243+36(g-1)$, together with $\rho(\Xi_g)$.}
\label{fig:Xi_g}
\end{center}
\end{figure} 

We take the immersed curves describing genus one components to each lie in $\text{Crit}(X_{81})$ or $\text{Crit}(X_{18})$ as indicated. Let $r'=r'(g)=  243 + 36 (g-1)$, the smallest multiple of 3 compatible with Figure \ref{fig:Xi_g}. Set $\eta_g = (81+18(g-1))/3$ and $\tilde{\eta}_g  = (162+18g)/3$. 
Note $\eta_g + \tilde\eta_g = r'/3$. 
Pick any $r \geq r'$.  We can apply Propositions \ref{prop:Phi_definition} and \ref{prop:Phi_Dehntwists}: there exist  compactly supported symplectomorphisms of $X_{r}$,  say $\rho$ and $\tilde{\rho}$, corresponding to positive rotations of the base of $\Pi_{r'}$ by $\eta_g$, respectively $\tilde{\eta}_g$, blocks of the form $\text{Crit}(X_{3})$ (i.e.~12 critical points). Using the notation $\Phi$ of Section \ref{sec:rotations}, $\rho = \Phi^{-\eta_g}$ and $\tilde{\rho} = \Phi^{-\tilde{\eta}_g}$. Both $\rho$ and $\tilde{\rho}$ are given by a product of negative Dehn twists in spheres which are matching cycles for $\Pi_{r'}$ (and so for $\Pi_r$: the symplectomorphism is extended from $X_{r'}$ to $X_r$ by the identity). Moreover, these matching cycles are themselves vanishing cycles for the singularity $x^3 + y^3 + z^r$ -- indeed, their ordered list is a distinguished collection of vanishing cycles for $x^3 + y^3 + z^{r'}$, say
$$V_1, \ldots, V_{4(r'-1)}, \ldots, V_1, \ldots, V_{4(r'-1)}$$
 repeated $3 \eta_g$ times in the case of $\rho$, and $3 \tilde{\eta}_g$ times in that of $\tilde{\rho}$. 

Note that $\rho(\Xi_g)$ is disjoint from $\Xi_g$, and that the intersection of the images under $\Pi_r$ of $\Xi_g$ and $\rho(\Xi_g)$ is precisely given by the intersection of the images of $S_{k_0, l_0, m_0}$ and $R_{k_1, l_1, m_1}$. 
Moreover, $\tilde{\rho} \rho (\Xi_g) = \Xi_g$, pointwise up to Hamiltonian isotopy.

We can use this to construct various families of embedded monotone Lagrangian $S^1 \times \Xi_g$ in $Y_{r,s}$ for $r \geq r'(g)$ and sufficiently large $s$. 
We will consider three different variations, with $S^1$ given by a path $\gamma_{i}$ as follows:
\begin{itemize}
\item
 $\gamma_0$ as given in Figure \ref{fig:S^1xXi_g_Maslov0};

\item $\gamma_2$ and $\gamma_4$ as given in Figure \ref{fig:S^1xXi_g_Maslov2};

\item $\gamma_6$, and $\gamma_{4n}$ and $\gamma_{4n+2}$ for $n \geq 2$, as given in Figure \ref{fig:S^1xXi_g_generalMaslov}. 

\end{itemize}

\begin{figure}[htb]
\begin{center}
\includegraphics[scale=0.30]{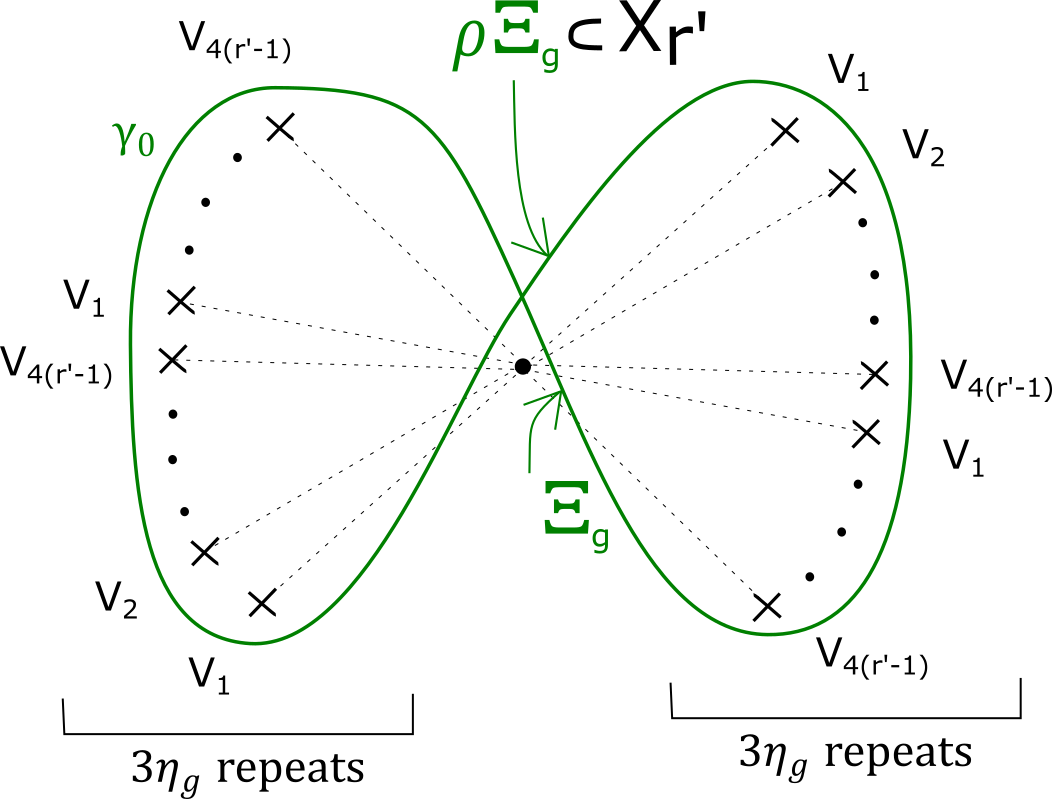}
% or [scale=0.85]
\caption{ The monotone Lagrangian $\gamma_0 \times \Xi_g$, given by its image in (part of) the base of $P_{r,t}$.}
\label{fig:S^1xXi_g_Maslov0}
\end{center}
\end{figure} 

\begin{figure}[htb]
\begin{center}
\includegraphics[scale=0.30]{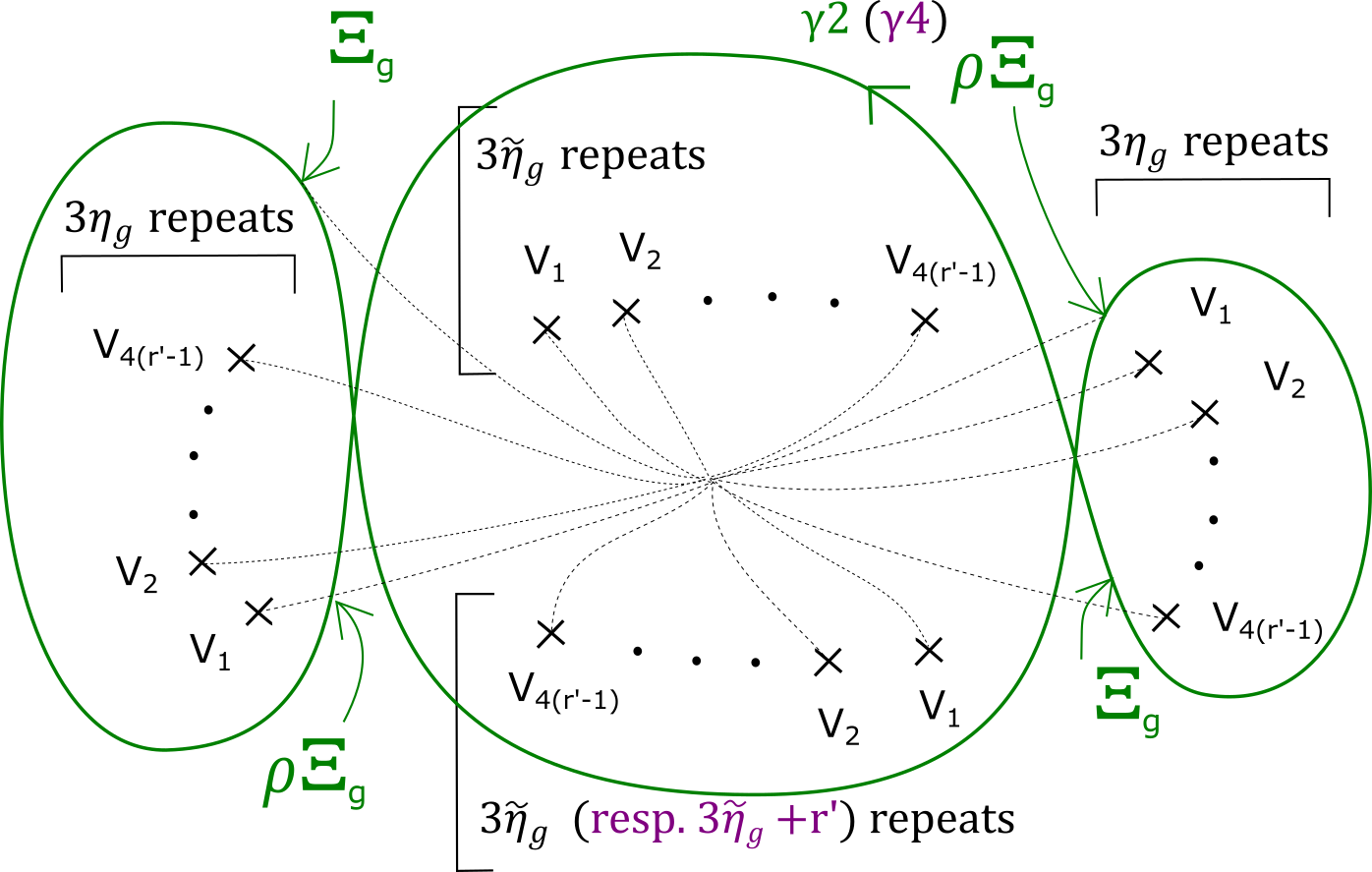}
% or [scale=0.85]
\caption{ The monotone Lagrangian $\gamma_2 \times \Xi_g$, given by its image in (part of) the base of $P_{r,t}$.}
\label{fig:S^1xXi_g_Maslov2}
\end{center}
\end{figure}

\begin{figure}[htb]
\begin{center}
\includegraphics[scale=0.30]{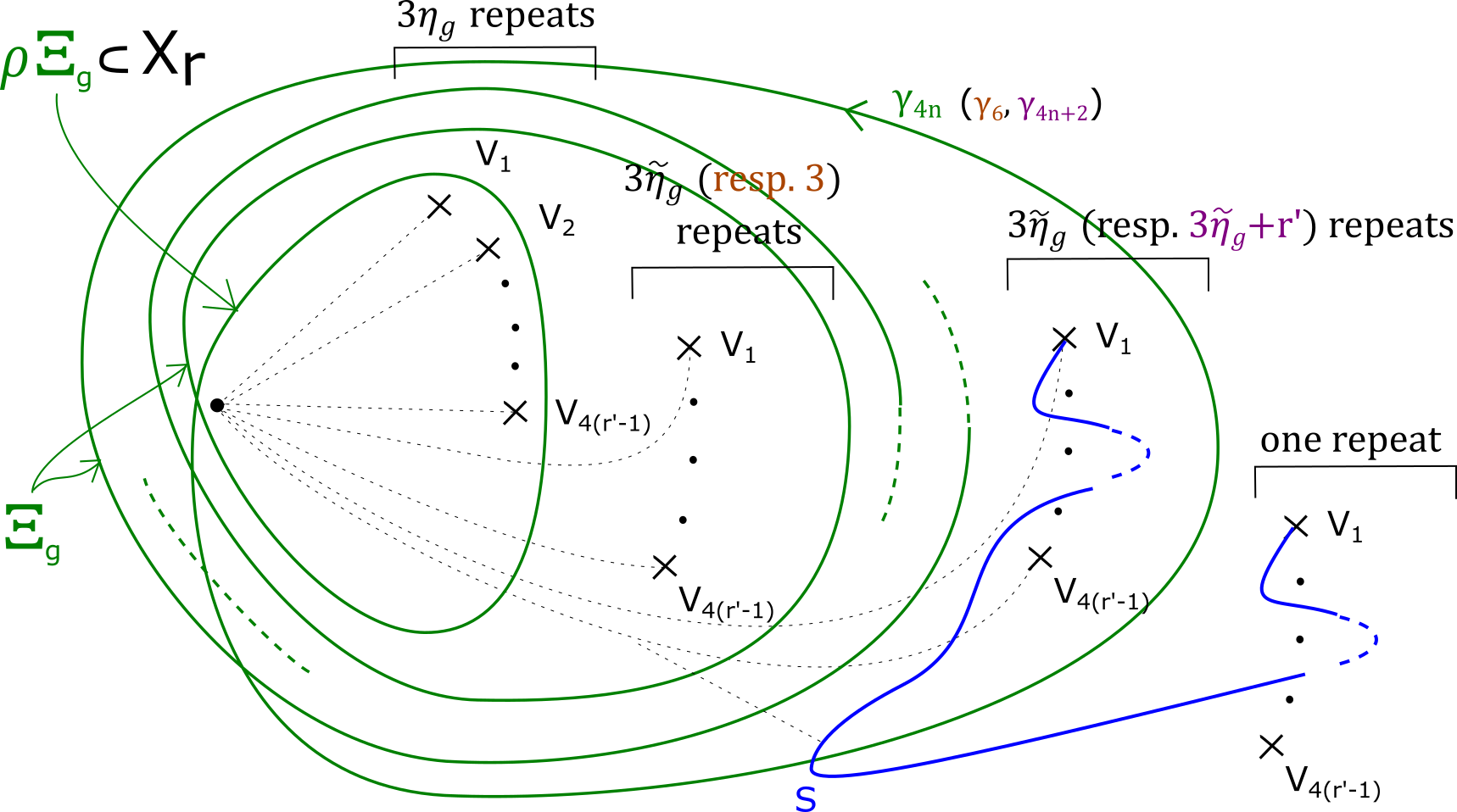}
% or [scale=0.85]
\caption{ The monotone Lagrangian $\gamma_{2n} \times \Xi_g$, for $2n \geq 4$, given by its image in (part of) the base of $P_{r,t}$.
The labels for numbers of repeats apply to the sequence of vanishing cycles $V_1, \ldots, V_{4(r'-1)}$. For legibility we haven't labelled the Figure with the (total) winding numbers of the $\gamma_k$, which are as follows: the curve $\gamma_6$ has winding number two; for $n \geq 2$, the curves $\gamma_{4n}$ and $\gamma_{4n+2}$ have winding number $n$. (For $\gamma_8$, resp.~$\gamma_{10}$, the convention is that the two clusters of $3 \tilde{\eta}_g$ critical points (resp.~ the clusters of $3 \tilde{\eta}_g$ and  $3 \tilde{\eta}_g+r'$ points) are grouped together.)
Finally, the blue arc gives a matching path, corresponding to some vanishing cycle $S$, which will not in general be one of the $V_i$ and we will consider in Section \ref{sec:Floer_theory}.
}
\label{fig:S^1xXi_g_generalMaslov}
\end{center}
\end{figure} 

Our choice of indexing is explained by the following:

\begin{proposition}\label{prop:Maslov_index_3d}
For $n \geq 0$, the Maslov index of the lift of $\gamma_{2n}$ to a path $\gamma_{2n} \times \{ p \}$, given by the image under parallel transport of a point $ p \in \Xi_g$, is $2n$.
\end{proposition}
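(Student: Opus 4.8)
The plan is to adapt the two-dimensional Maslov computation of Lemma~\ref{thm:Maslov_indices} to the bifibration $(P_{r,s}, \Pi_r)$. Using Remark~\ref{rmk:bifibred_J}, I would first fix a trivialisation of $TY_{r,s}$ together with reference Lagrangian planes that are \emph{products} of reference lines at each of the three levels of the tower $Y_{r,s}\to\C$ (fibre $X_r$), $X_r\to\C$ (fibre $M$). Concretely, over a polydisc in the two bases avoiding critical values I take the product of the reference line in $TM$ from Section~\ref{sec:Maslov_background} (for which $a,b,c,d$ have Maslov index zero), a horizontal reference line in the base of $\Pi_r$, and a horizontal reference line in the base of $P_{r,s}$. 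Along the lift $\gamma_{2n}\times\{p\}$ the tangent plane of $\gamma_{2n}\times\Xi_g$ splits as $T\gamma_{2n}\oplus T_q\delta\oplus TV_q$, where $\delta\subset\C$ is the curve in the base of $\Pi_r$ over which $\Xi_g$ is fibred, $q\in\delta$ lies under $p$, and $V_q$ is the meridional vanishing cycle in $M_q$. Since the reference planes split compatibly, the Maslov index is the sum of three contributions, one per line-direction.

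The three contributions I would compute as follows. The $T\gamma_{2n}$ term contributes $2t$, where $t$ is the winding number of $\gamma_{2n}$, exactly as in the first part of the proof of Lemma~\ref{thm:Maslov_indices}. The $TV_q$ term vanishes: $V_q$ is a vanishing cycle for $\Pi_r$, so, as for the meridian in Lemma~\ref{thm:Maslov_indices}, the chosen trivialisation is arranged so that it contributes nothing. The interesting term is $T_q\delta$. As $\gamma_{2n}$ is traversed, the monodromy of $P_{r,s}$ carries $p$ around $X_r$; by construction this monodromy is a word in $\rho^{\pm 1},\tilde\rho^{\pm 1}$, which by Propositions~\ref{prop:Phi_definition} and~\ref{prop:Phi_Dehntwists} are the powers $\Phi^{-\eta_g},\Phi^{-\tilde\eta_g}$ of the fibred rotation $\Phi$ of the base of $\Pi_r$. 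Each factor rotates $q$ through a circular arc; since $\eta_g+\tilde\eta_g=r'/3$ and, by the Remark following Proposition~\ref{prop:Phi_Dehntwists}, $\Phi^{r'/3}$ is a full $2\pi$ rotation (the boundary twist $\nu^{r'}$), the net effect on $q$ is an integer number $N$ of full turns about the origin, whence the $T_q\delta$ term contributes $2N$ and the total Maslov index is $2(t+N)$.

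It then remains to read off from Figures~\ref{fig:S^1xXi_g_Maslov0}, \ref{fig:S^1xXi_g_Maslov2} and~\ref{fig:S^1xXi_g_generalMaslov} that $t+N=n$ in each case; for instance $\gamma_6$ has $t=2$ and $N=1$, while $\gamma_{4n}$ (resp.\ $\gamma_{4n+2}$) has $t=n$ and $N=n$ (resp.\ $N=n+1$), matching the stated winding numbers and the clusters of $3\eta_g$ and $3\tilde\eta_g$ critical points encircled, with the remaining small cases $\gamma_0,\gamma_2,\gamma_4$ treated analogously. The step I expect to be the main obstacle is justifying that the $T_q\delta$ contribution is exactly $2N$ with \emph{no} extra correction from the meridional direction: one must check that a full rotation $\Phi^{r'/3}$, although it equals the nontrivial boundary Dehn twist $\nu^{r'}$, acts trivially up to homotopy on the essentially unique trivialisation of $TX_r$, which is where I would invoke $[X_r,U(2)]=\{1\}$ from Section~\ref{sec:Maslov_background}, so that the meridian stays at Maslov index zero and the rotations introduce none of the basic-configuration corrections appearing in Lemma~\ref{thm:Maslov_indices}. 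Once this compatibility is pinned down, additivity of the three contributions and the bookkeeping of winding numbers and full turns complete the proof.
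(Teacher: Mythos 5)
Your proposal is correct and follows the same basic strategy as the paper's proof: split the Maslov index of $\gamma_{2n}\times\{p\}$ into a base term, twice the winding number of $\gamma_{2n}$, plus a fibre term coming from the monodromy word in $\rho^{\pm1},\tilde\rho^{\pm1}$, which Propositions \ref{prop:Phi_definition} and \ref{prop:Phi_Dehntwists} convert into a signed number $t_{2n}$ of full $2\pi$ rotations of the base of $\Pi_r$, each contributing $2$; your bookkeeping $2(t+N)=2n$ matches the paper's $2w+2t_{2n}=2n$ case by case against Figure \ref{fig:S^1xXi_g_generalMaslov}. Where you genuinely differ is in how the fibre term is justified. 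You compute directly in a product trivialisation, splitting $T(\gamma_{2n}\times\Xi_g)$ into three line fields and arguing that the meridional line contributes zero, invoking essential uniqueness of the trivialisation of $TX_r$ (i.e.\ $[X_r,U(2)]=\{1\}$) to rule out a correction from the boundary twist $\nu^{r'}$ — this is a legitimate justification, and it is the right tool, since trivialisation-independence is exactly what makes the Maslov index of the rotated orbit computable from the product model. The paper instead uses a localisation trick that sidesteps this step and, at the same time, handles a point your sketch leaves implicit: it chooses representatives of $\rho,\tilde\rho$ as products of Dehn twists supported \emph{away} from $\Pi_r^{-1}(D)$, so that $\sigma_{2n}$ fixes a neighbourhood of $p$ and the loop of tangent planes along $\gamma_{2n}\times\{p\}$ honestly closes up; it then introduces the dragging isotopy $\psi$ (a full $2\pi$ rotation with large compact support) and arranges the Hamiltonian isotopy taking $\psi\circ\sigma_{2n}(\Xi_g)$ back to $\Xi_g$ to be relative to $\Pi_r^{-1}(\psi(D))$, so the entire fibre contribution is read off from the effect of $\psi$ on $p$. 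In your version, the closing-up issue appears as the final Hamiltonian isotopy implicit in $\tilde\rho\rho(\Xi_g)=\Xi_g$ (which the paper elsewhere warns contributes to Maslov computations); your statement that the net effect on $q$ is an integer number $N$ of full turns silently absorbs this correction, and to make the argument airtight you would want to arrange that isotopy rel a neighbourhood of $p$, exactly as the paper does. With that point pinned down, your trivialisation computation and the paper's $\psi$-trick buy the same thing: yours is more explicit and self-contained at the level of plane fields, while the paper's is shorter because it never needs to analyse the meridional direction separately.
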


\begin{proof}
Let $D \subset \C$ be a disc containing a small segment of $\Pi_r(\Xi_g)$. 
Pick explicit representatives for $\rho$ and $\tilde{\rho}$, given by composing the `standard' representatives for Dehn twists in matching cycles in a Lefschetz fibration, as fibred symplectomorphisms (as before see \cite[Figure 16.3]{Seidel_book}), with support away from $\Pi^{-1}_r(D)$. Assume $\Pi_r(p)$ belongs to the segment in $D$. Let $\sigma_{2n}$ be the composition of copies of $\rho$ and $\tilde{\rho}$ given by the total monodromy about $\gamma_{2n}$. We know that $\sigma_{2n}$ is Hamiltonian isotopic to $\Phi^{t_{2n} r'}$, where $t_{2n} r'$ is the signed total number of repeats of the list of Dehn twists $\tau_{V_1} \tau_{V_2} \ldots \tau_{V_{4(r'-1)}}$. (Informally,  $\sigma_{2n}$ corresponds to $t_{2n}$ full, i.e.~ of angle $2\pi$, rotations of the base of $\Pi_r$.) 

Consider the Hamiltonian isotopy of $X_r$ induced by dragging the disc $D$ around by a rotation of $2\pi$ (with large compact support), say $\psi$. Inspecting the proof of Proposition \ref{prop:Phi_Dehntwists}, we see that the Hamiltonian isotopy taking $\psi \circ \sigma_{2n} (\Xi_g)$ to $\Xi_g$ can then be arranged to be relative to $\Pi^{-1}_r (\psi (D))$. We can use this to calculate the  Maslov index of $\gamma_{2n} \times \{ p \}$:
\begin{itemize}
\item the base contributes twice the winding number of $\gamma_{2n}$; 
\item the fibre contributes $2t_{2n}$ from the effect of $\psi$ on $p$ (recall that we arranged for a neighbourhood of $p$ to be fixed by $\sigma_{2n}$). 
\end{itemize}
This completes the proof.
\end{proof}

By adjusting the sizes of the different `lobes' (e.g.~taking the picture to be symmetric for the $n=0$ case), the lift of $\gamma_{2n}$ can also be arranged to bound a disc of arbitrary symplectic area. Thus the Lagrangians we have constructed, which in a slight abuse of notation we will denote $\gamma_n \times \Xi_g$, can be taken to be monotone.

\begin{remark}
One shouldn't  expect to be able to realise our construction $\gamma_2 \times \Xi_g$ into $\C^3$: we'll see in Proposition \ref{prop:Floer_coho_3d} that its Floer cohomology with a Lagrangian $S^3$ is non-zero. 
\end{remark}

\begin{remark}\label{rmk:variations_on_Xi_g} 
There are plenty of variations using Polterovich sums of other combinations of $T_{k,l,m}$, $R_{n,p,q}$ and $S_{u,v,w}$, or other tori constructed using the same ideas; and also Klein bottles, as considered in Section \ref{sec:non-orientable}. 
\end{remark}

\begin{remark}
Let $\Sigma$ be the Polterovich connected sum of a collection of tori and / or Klein bottles as before. 
We can use powers of $\Phi$ to write down a product of negative Dehn twists $\rho$ of $X_r$ such that not only  $\rho \Sigma \cap \Sigma = \emptyset$, but also $\Pi_r (\Sigma) \cap \Pi_r (\rho \Sigma) = \emptyset$. One can then contruct Lagrangians in $Y_{r,s}$ of the form $\gamma_{2n} \times \Sigma$, where $\gamma_{2n}$ is an immersed curve in the base of $P_{r,s}$, such that above the transverse intersection points the Lagrangian is given by $\Sigma \sqcup \rho {\Sigma}$. This may be useful in other circumstances. 
\end{remark}

\begin{remark}
If we allowed ourselves to work with more general classes of Liouville domains, we could make similar constructions by using the trick of `doubling' a Lefschetz fibration, i.e.~taking its double branched cover over a fibre as described in \cite[Section 18(a)]{Seidel_book}. This replaces a Lefschetz fibration 
 $\pi: U \to D$, with distinguished collection of vanishing cycles say $v_1, \ldots, v_r$, with a Lefschetz fibration $\pi: U \# U \to D$ with the same smooth fibre, and distinguished collection of vanishing cycles $v_1, \ldots, v_r, v_1, \ldots, v_r$. By construction, there are matching cycles, say $V_i$, between the critical values corresponding to the two $v_i$. Further, we have that $\tau_{V_1} \ldots \tau_{V_r }$ acts as a $\Z/2$ rotation on (large compact subsets of) the first and second copies of $U$ -- see \cite[Lemma 18.1]{Seidel_book}.
 One can also further exploit this by instead trebbling, etc the fibration. 
\end{remark}

%%%%%%%%%%%%%%%%%%%%%%%%%%%%%%%%%%%%
%%%%%%%%%%%%%%%%%%%%%%%%%%%%%%%%%%%%

\subsection{Partial compactifications of $X_r$ and $Y_{r,s}$}\label{sec:partial_compactifications}

\subsubsection{Partial compactifications}

Consider  $M = \{ x^3+y^3=1 \} $, the smooth fibre of $X_r$. Fix a partial compactification of $M$ corresponding to capping off one (and only one) of the punctures, between the curves $b$ and $d$, as in Figure \ref{fig:D_4compactified}. Call this $\bar{M}$. This carries a symplectic form which extends the one on a large compact subset of $M$ . Note that we could have arranged for the compactification to be arbitrarily `far out', i.e.~for the symplectic area of the cylinder enclosed between $b$ and $d$ in $\bar{M}$ to be arbitrarily large.

\begin{figure}[htb]
\begin{center}
\includegraphics[scale=0.50]{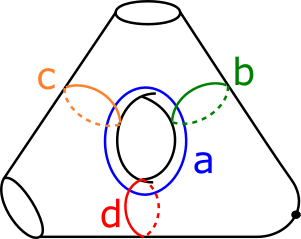}
% or [scale=0.85]
\caption{ The compactification $\bar\Sigma$.}
\label{fig:D_4compactified}
\end{center}
\end{figure} 

Using Lemma \ref{lem:fibration_J}, we immediately see that this induces a partial compactification of $X_r$, with each of the fibres of $\Pi_r$ having one puncture capped off. Call it $\bar{X}_r$, and keep the same notation for the fibration, namely $\Pi_r: \bar{X}_r \to \C$. 
The symplectic form $\tilde{\omega}$ given by Lemma \ref{lem:fibration_J} extends to a symplectic form on $\bar{X}_r$, which we also denote $\tilde{\omega}$. Outside a large compact set, it is a product with respect to $\Pi_r$. Similarly, the almost complex structure $\tilde{J}_k$ given by Lemma \ref{lem:fibration_J} extends to one on $\bar{X}_r$, which is a product outside a large compact set in $X_r$.

Iterating, using Lemma \ref{lem:fibration_J} and Remark \ref{rmk:bifibred_J}, this induces in turn a partial compactication of $Y_{r,s}$,  say $\bar{Y}_{r,s}$,  given by partially compactifying each of the fibres of $P_{r,s}$. This can be equipped with a symplectic form $\tilde{\omega} $ and an almost complex structure $\tilde{J}$, both of which are products with respect to the bifibration $(P_{r,s}, \pi_r)$ outside a large compact set in $Y_{r,s}$. Note also that both projection maps are pseudo-holomorphic with respect to these choices.

In order to prevent pseudo-holomorphic curves from `escaping to infinity', throughout this section we restrict ourselves to almost-complex structures which agree with $\tilde{J}$ outside of a (possibly arbitrarily large) bounded set. 

The union of the `point at infinity' on each copy of $\bar{M}$ gives a divisor in $\bar{X}_r$, and in turn in $\bar{Y}_{r,s}$. We will call both of these `divisors at infinity' $D$. (With the choices we have made $D$ is naturally almost-complex.)

\subsubsection{Homology, first Chern class and monotonicity}
We have that
$$
H_2 (\bar{X}_r , \Z) = H_2 (X_r, \Z) \oplus \Z
$$
where the second term is generated by, say, $H$, the class of the annulus in $\bar{M}$ bounded by curves $b$ and $d$ and capped off by two Lagrangian thimbles ending on each of $b$ and $d$.
Moreover, $H_2 (\bar{Y}_{r,s}, \Z) = \Z\langle H \rangle$, and $\tilde{\omega}_{\bar{Y}_{r,s}} (H) = \tilde{\omega}_{\bar{X}_r}(H)$ can be arbitrarily large, depending on our choice of compactifications; in particular, neither of those symplectic forms is exact.

Notice that the trivialisation of $T  M = T \{ x^3 + y^3 =1 \}$ given in Figure \ref{fig:reference_Lagrangian_line2} and used for Maslov class computations extends to a trivialisation of $T \bar{M}$; in a suitable identification with a twice-punctured square with sides glued in pairs, the reference tangent lines have slope one. Further, this in turn readily induces trivialisations of $T \bar{X}_r$ and $T \bar{Y}_{r,s}$, extending those of $T X_r$ and $T Y_{r,s}$. In particular, $c_1(\bar{Y}_{r,s}) = 0$ and $c_1(\bar{X}_r)=0$, and all of our Maslov index computations are unchanged. (Note however that neither $\bar{X}_r$ nor $\bar{Y}_{r,s}$ are monotone, because of the class $H$.)

\subsection{Holomorphic annuli counts in 3 dimensions}\label{sec:annuli_counts_3d}

\subsubsection{Annuli counts for monotone $\gamma_{2n} \times \Xi_g$ for $ n\neq 0$}

Fix a monotone Lagrangian $\gamma_{2n} \times \Xi_g \subset Y_{r,s}$, say with monotonicity constant $\kappa$.
Assume in this subsection that $n \neq 0$. 
 As the Lagrangian is monotone but not exact, the Maslov class  $[\mu] \in H^1(S^1 \times \Xi_g, \R)$ is non-zero. 

Recall that from Weinstein's tubular neighbourhood theorem, Lagrangians sufficiently close to $\gamma_{2n} \times \Xi_g$ in $Y_{r,s}$ correspond to graphs of closed one-forms $\alpha \in \Omega^1(\gamma_{2n} \times \Xi_g)$. As the class $[\mu]$ is non-zero, such a graph will itself be a monotone Lagrangian in $Y_{r,s}$ if and only if $[\alpha] \in  H^1(S^1 \times \Xi_g, \R)$ is a multiple of $[\mu]$. We pick a representative for this carefully, as follows. 
The class $[\mu]$ corresponds to a homotopy class of smooth maps $f_\mu: \gamma_{2n} \times \Xi_g \to S^1$ such that $df_\mu \in \Omega^1 (\gamma_{2n} \times \Xi_g)$ satisfies $[df_\mu] = [\mu]$. As $n \neq 0$, $f_\mu$ is non-trivial on the first factor, and so has a representative with no critical points. Moreover, any two such can be interpolated by representatives with no critical points.
Fix a Weinstein tubular neighbourhood for $\gamma_{2n} \times \Sigma_g$. 
The graph of $\epsilon df_\mu$ gives a displacement of $\gamma_{2n} \times \Xi_g$ in the monotone direction, i.e.~the direction determined by $[\mu]$,  which is disjoint from the original, and itself monotone with monotonicity constant $\kappa' > \kappa$. Moreover, the above considerations also show that for fixed (and sufficiently close to $\kappa$) $\kappa'$, any two such disjoint monotone displacements are Hamiltonian isotopic, through other $\kappa'$ monotone displacements disjoint from the original.
 Let's call $L$ the original Lagrangian and $L'$ its monotone displacement.

We will fix a choice of monotone displacement which is also fibred with respect to $P_{r,s}$ and $\Pi_r$, given by pushing the $S^1$ factor (i.e.~$\gamma_{2n}$) in the base of $P_{r,s}$ off itself, say to $\gamma_{2n}'$, to get a parallel copy enclosing sligthly more signed area, and, for the $\Xi_g$ factor,  expanding lobes of $S^1$ components in the base as needed to increase the monotonicity constant by the same amount. (We then attach the tori fibred above these $S^1$s via Polterovich surgery using the same matching cycles as before to get the displacement of $\Xi_g$, say $\Xi'_g$.) While $\Xi_g'$ inside $X_r$ will generally intersect $\Xi_g$ (in particular whenever $g \geq 2$), $\gamma_{2n} \times \Xi_g$ and its displacement are disjoint because of the effect of the $\gamma_{2n}$ factor. (Note that the intersection points of $\gamma_{2n}$ and $\gamma_{2n}'$ are naturally in two-to-one correspondence with the self-intersection points of $\gamma_{2n}$. Suppose that $\Xi_g \sqcup h(\Xi_g)$ lies above one such point, where $h$ is the monodromy from the self-intersection point back to itself. Then we get $\Xi'_g \cup h(\Xi_g)$ and $\Xi_g \cup h(\Xi'_g)$ above the corresponding two points of $\gamma_{2n} \cap\gamma_{2n}'$; and for $\Xi'_g$ a sufficiently small displacement of $\Xi_g$, these are both disjoint unions. It then follows that  $\gamma_{2n} \times \Xi_g$ and its displacement are disjoint.)

Let $\alpha \in H_1 (L, \Z)$ be any primitive class of Maslov index zero. (We follow the standard convention that `primitive' implies non-zero.) Let $\alpha' \in H_1 (L', \Z)$ be the corresponding class under the natural isomorphism $H_1 (L, \Z) \cong H_1(L', \Z)$ induced by the displacement.
Let $A \in H_2 (\bar{Y}_{r,s}, L \sqcup L'; \Z)$ be a class such that $\partial A = \alpha \sqcup \alpha' \in H_1(L \sqcup L'; \Z) = H_1(L, \Z) \oplus H_1 (L', \Z)$. 

\begin{definition}\label{def:moduli_space}
Let $J$ be an almost complex structure on $\bar{Y}_{r,s}$. We define the moduli space
$$
\mathcal{M}_J(\bar{Y}_{r,s}; A, \alpha)  
$$
to consist of all $(j,J)$--pseudo-holomorphic maps $u: (\mathcal{A}, j) \to (\bar{Y}_{r,s}, J)$ such that
\begin{itemize}
\item $(\A, j)$ is a holomorphic annulus (of arbitrary modulus), with oriented boundary components $\partial_1$ and $\partial_2$ (these are intrinsically oriented but not ordered);
\item $u(\partial_1) \subset L$ and $u(\partial_2) \subset L'$;
\item $[u(\A)] =A, [u(\partial_1)] = \alpha$ and $[u(\partial_2)] =  \alpha'$.
\end{itemize}
See Figure \ref{fig:annulus}.

\begin{figure}[htb]
\begin{center}
\includegraphics[scale=0.40]{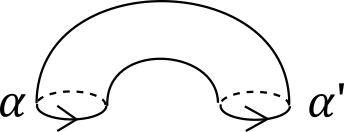}
% or [scale=0.85]
\caption{ Boundary conditions for the holomorphic map $u$.}
\label{fig:annulus}
\end{center}
\end{figure} 

Let $\bar{\mathcal{M}}_J (\bar{Y}_{r,s}; A, \alpha)$ be the quotient of $\mathcal{M}_J(\bar{Y}_{r,s}; A, \alpha) $ by rotations and the $\Z/2$ involution of the annulus (which trades the two boundary components). 

\end{definition}

The abstract moduli space of annuli has one conformal parameter, namely the modulus of the annulus. It also has a one-dimensional family of automorphisms, given by the rotations and the $\Z/2$ involution mentioned above. By \cite[Theorem 1.2]{Liu_thesis}, $\bar{\mathcal{M}}_J (\bar{Y}_{r,s}; A, \alpha)$ has expected dimension zero.

\begin{proposition} \label{prop:regularity_monotone}
Let $\tilde{J}$ be our choice of almost-complex structure on $\bar{Y}_{r,s}$ from Section \ref{sec:partial_compactifications}. 
Let the pair $(A, \alpha)$ be any as above: $A \in H_2 (\bar{Y}_{r,s}, L \sqcup L'; \Z)$, $\alpha \in H_1(L;\Z)$ a Maslov zero primitive class, and $\partial A = \alpha \sqcup \alpha'$, where $\alpha' \in H_1 (L';\Z)$ is the image of $\alpha$ under the displacement. 

Then 
$\mathcal{M}_{\tilde{J}} ( \bar{Y}_{r,s}; A, \alpha)$ is empty 
for all but one choice of $(A, \alpha)$, for which $J$ is regular.  (We will see that this choice of $\alpha$ is of the form $(0,\tilde{\alpha}) \in H_1(S^1) \oplus H_1 (\Xi_g)$, and that this choice of $A$ satisfies $A \cdot D = 1$, where $D \subset \bar{Y}_{r,s}$ is the divisor at infinity.)

Moreover, for that choice of $(A, \alpha)$, $\mathcal{M}_{\tilde{J}} ( \bar{Y}_{r,s}; A, \alpha)$ has dimension one, and, assuming we are working with 
$$
\Xi_g = \Xi_g (\{k_0, l_0, m_0 \}, \{ k_1, l_1, m_1 \}, \ldots, \{ k_g, l_g, m_g \} ),
$$
the signed count of points in $\bar{\mathcal{M}}_{\tilde{J}} ( \bar{Y}_{r,s}; A, \alpha)$ is $h_n k_1$, where $h_n$ is the number of self-intersections of the $\gamma_{2n}$ curve in the base of $P_{r,s}$.

\end{proposition}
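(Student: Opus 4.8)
The plan is to exploit the bifibred structure of $(\bar{Y}_{r,s}, \tilde{J})$ throughout. Since $\tilde{J}$ is chosen (Section \ref{sec:partial_compactifications}, Remark \ref{rmk:bifibred_J}) so that both $P_{r,s}$ and $\Pi_r$ are pseudo-holomorphic and $D$ is almost-complex, every $u \in \mathcal{M}_{\tilde{J}}(\bar{Y}_{r,s}; A, \alpha)$ descends to an honest holomorphic annulus $\bar{u} = P_{r,s} \circ u : \A \to \C$ with $\bar{u}(\partial_1) \subset \gamma_{2n}$ and $\bar{u}(\partial_2) \subset \gamma_{2n}'$. First I would pin down the class. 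Because $c_1(\bar{Y}_{r,s}) = 0$, any two relative classes with boundary $\alpha \sqcup \alpha'$ differ by a multiple of $H$ and so share the same Maslov index $\mu(A) = \mu(\alpha) + \mu(\alpha') = 0$; monotonicity in the exact part $Y_{r,s}$ then gives the area identity $\tilde{\omega}(A) = (A \cdot D)\,\tilde{\omega}(H)$. Positivity of area for a non-constant curve forces $A \cdot D \geq 1$, and the geometry of the fibre surface $\bar{M}$ (only the single puncture between $b$ and $d$ was capped, so a connected holomorphic curve with the primitive boundary class $\tilde\alpha$ meets the cap point once) forces $A \cdot D = 1$; the case $A \cdot D = 0$ is excluded since then $u$ lands in the exact $Y_{r,s}$ with Maslov-zero boundary and monotonicity makes it constant. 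Since $\gamma_{2n}$ has Maslov index $2n \neq 0$ on the $S^1$ factor, a Maslov-zero class cannot wind around $\gamma_{2n}$, so $\bar{u}$ is localized (by the open mapping theorem and the thinness of the monotone pushoff) near a single self-intersection point of $\gamma_{2n}$; in particular the boundary arcs are contractible in $\gamma_{2n}, \gamma_{2n}'$, whence $\alpha = (0, \tilde\alpha)$ with $\tilde\alpha \in H_1(\Xi_g)$ the meridian (vanishing-cycle) class. This establishes emptiness for all other $(A,\alpha)$ and the two parenthetical claims.

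Next I would identify the annuli that actually occur and carry out the count. Having localized $\bar{u}$ near a self-intersection of $\gamma_{2n}$, the fibre of $P_{r,s}$ over that point carries $\Xi_g \sqcup \rho(\Xi_g)$, and I would project one further step by $\Pi_r$ into the capped fibre $\bar{M}$. With $A \cdot D = 1$ and boundary class the meridian $\tilde\alpha$, the fibre component is forced to be the holomorphic disc realizing the ``half'' of $H$ bounded by the cycles $b$ and $d$ and passing once through the point at infinity. By the construction of $\Xi_g$, recall that the $\Pi_r$-projections of $\Xi_g$ and $\rho(\Xi_g)$ meet exactly along the intersection of the projections of $S_{k_0, l_0, m_0}$ and $R_{k_1, l_1, m_1}$, where $R_{k_1,l_1,m_1}$ restricts to $b$ while $S_{k_0,l_0,m_0}$ restricts to $c$ or to $d$. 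Only the fibres carrying the pair $(b,d)$ bound the half-$H$ disc in $\bar{M}$ (the $c$-fibres bound nothing, since that puncture was not capped), and counting these among the $R_{k_1,l_1,m_1}$--$S_{k_0,l_0,m_0}$ intersections yields exactly $k_1$ contributing configurations. Combined with the $h_n$ self-intersection points of $\gamma_{2n}$, each contributing a base annulus unique up to the reparametrization group, this gives the unsigned count $h_n k_1$; I would fix the signs by comparing the induced boundary orientations with the orientation of the half-$H$ disc, exactly as in the Maslov computation surrounding Figure \ref{fig:Maslov_calculation2}.

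The remaining analytic points I would treat as follows. For the distinguished class, regularity should follow from a split (fibred) transversality argument: along the bifibration the linearized $\bar{\partial}$-operator decomposes into a base factor, namely a thin holomorphic annulus in the flat $\C$, which is automatically regular, and a fibre factor, namely the $b$--$d$ disc in the Riemann surface $\bar{M}$, for which automatic transversality in a surface applies. The index bookkeeping is consistent with \cite[Theorem 1.2]{Liu_thesis}, giving $\dim \mathcal{M}_{\tilde{J}}(\bar{Y}_{r,s}; A, \alpha) = 1$ before quotienting by the rotation and $\Z/2$ actions, hence $\dim \bar{\mathcal{M}}_{\tilde{J}} = 0$. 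I would then exclude the degenerations of the compactified moduli space: the modulus-infinity boundary is ruled out because both boundary classes are homologically nontrivial; the modulus-zero boundary because $L$ and $L'$ are disjoint; and disc bubbling because any Maslov-two disc carrying positive area must meet $D$ (again by the area identity together with $c_1 = 0$), which would leave the principal component with $A \cdot D = 0$ and hence constant, contradicting $\alpha \neq 0$.

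The hard part will be the regularity and transversality statement for the fibred annuli, together with the rigorous exclusion of non-split and bubbled configurations: although the bifibred $\tilde{J}$ makes the split annuli transparent, one must verify that no holomorphic annulus with a genuinely two-dimensional base projection, or with a fibre component meeting $D$ more than once, can occur, and that the split curves are cut out transversally despite $\tilde{J}$ being a non-generic product-type structure. This is precisely where the careful geometric choices of Section \ref{sec:partial_compactifications} --- capping only the single puncture between $b$ and $d$, pushing $D$ far out so that $\tilde{\omega}(H)$ is large, and keeping $L$ and $L'$ disjoint via the $\gamma_{2n}$ factor --- are doing the essential work.
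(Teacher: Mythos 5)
Your overall strategy (project by $P_{r,s}$, then by $\Pi_r$, then analyse the fibre) is the right one, and is the same skeleton as the paper's proof, but two of your key steps are wrong as stated. First, the localization of $P_{r,s}\circ u$: you argue that a Maslov-zero class cannot wind around $\gamma_{2n}$ because the $S^1$ factor has Maslov index $2n\neq 0$. This is false in general, since $\alpha=(c,\tilde\alpha)\in H_1(S^1)\oplus H_1(\Xi_g)$ can have Maslov index $2nc+\mu(\tilde\alpha)=0$ with $c\neq 0$: the classes in $H_1(\Xi_g)$ take arbitrary even Maslov indices (e.g.~$2(l_i-k_i)$), so the fibre factor can cancel the winding contribution. (Your argument would also degenerate in the $n=0$ case, which the paper handles by the \emph{same} projection argument in Proposition \ref{prop:regularity_invariance_nonmonotone}(a), a sign that monotonicity of the $S^1$ factor is not the operative mechanism.) The correct step, as in the paper, is elementary complex analysis: a holomorphic annulus in $\C$ with boundary on the two parallel curves must have oriented boundary $c\gamma_{2n}\cup -c\gamma_{2n}'$ with \emph{opposite} multiplicities, whereas $\alpha$ and $\alpha'$ are \emph{equal} under the identification induced by the displacement; hence $c=0$ and $P_{r,s}\circ u$ is constant, landing on a point of $\gamma_{2n}\cap\gamma_{2n}'$. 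The same argument one level down shows $\Pi_r\circ u$ is also constant.

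Second, and consequently, your structural picture of the contributing curves is wrong: there is no split curve consisting of a ``thin base annulus'' times a ``fibre disc through $D$''. Since both projections are constant, each contributing curve is a \emph{vertical} annulus lying entirely in a single capped fibre $\bar M$, namely the obvious annulus enclosed between the vanishing cycles $b$ and $d$, which meets $D$ once through the cap point (a ``disc bounded by the cycles $b$ and $d$'' is not a disc). This invalidates your regularity argument via a base-annulus factor: in the paper, regularity comes from transversality of $\gamma_{2n}\pitchfork\gamma_{2n}'$ and of $\Pi_r(\Xi_g)\pitchfork\Pi_r(\rho(\Xi_g'))$ at the two constant values, combined with one-dimensional regularity in the fibre. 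Finally, your selection mechanism for the count ($c$-type fibres bound nothing because that puncture is uncapped) is only half the story: the paper's $h_nk_1$ is extracted by \emph{orientation} bookkeeping among the $h_n(2k_1+l_1+1)$ intersection points of the projected surfaces --- above $h_n(k_1+l_1+1)$ of them the fibre annuli have boundary $\alpha\sqcup-\alpha'$ (wrong class) and only above $h_nk_1$ do they have boundary $\alpha\sqcup\alpha'$. Without tracking the orientations of the $b$-cycles (the yellow/green versus yellow/blue intersections of Figure \ref{fig:fibred_linked_tori}), your criterion does not produce the correct count, nor the uniqueness of $(A,\alpha)$.
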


\begin{proof}
Suppose $u: \A \to \bar{Y}_{r,s}$ is a $(j, \tilde{J})$--pseudo-holomorphic map satisfying the conditions above. The map $P_{r,s} \circ u: \A \to \C$ is holomorphic with boundary components on $\gamma_{2n}$ and $\gamma_{2n}'$. 
Now note from standard complex analysis that such holomorphic annuli in $\C$ have boundary $c \gamma_{2n} \cup -c \gamma_{2n}'$, some integer $c$. As $\alpha$ and $\alpha'$ are \emph{equal} under the identification given by the small perturbation, $c=0$. 
It then follows that $P_{r,s} \circ u$ must be constant, with image lying in one of the intersection points of $\gamma_{2n}$ and $\gamma'_{2n}$. Thus $u(\A) \subset P_{r,s}^{-1}(p) \cong \bar{X}_r$, some $p \in \gamma_{2n}\cap \gamma'_{2n}$. 

Now consider the restriction $u: \A \to \bar{X}_{r}$. We can use similar considerations again: $\Pi_r \circ u: \A \to \C$ is a holomorphic map such that the images of the boundary components of $\A$ lie in $\Pi_r (\Xi_g)$ and $\Pi_r (\rho (\Xi_g'))$ respectively. In this case open mapping theorem type considerations applied to the configurations of Figures \ref{fig:fibred_linked_tori}  and \ref{fig:Xi_g}  show that the image of $\A$ must in fact lie in an intersection point of $\Pi_r (\Xi_g)$ and $\Pi_r (\rho (\Xi_g'))$. There are $h_n(2k_1+ l_1 + 1)$ such intersection points. Above each of these points, the fibre of $\Pi_r$ is $\bar{M}$, and there is a unique simple holomorphic annulus with boundaries on the restrictions of $\Xi_g$ and $\rho(\Xi'_g)$, which is just the `obvious' annulus enclosed between the vanishing cycles $b$ and $d$ in $\bar{M}$. 
(The covers of the simple annulus do not have primitive boundary classes.)

Above $h_n (k_1+l_1+1)$ of the intersection points of $\Pi_r (\Xi_g)$ and $\Pi_r (\rho (\Xi_g'))$, these annuli have boundaries of the form $\alpha \sqcup -\alpha'$, for some class $\alpha \in H_1(\Xi_g)$; these correspond to yellow and blue intersections in Figure \ref{fig:fibred_linked_tori}. Above the other $h_n k_1$ intersection points live annuli with boundaries of the form $\alpha \sqcup \alpha'$, corresponding to yellow and green intersections in Figure \ref{fig:fibred_linked_tori}; these are the ones we want. Note that this argument also gives the claimed uniqueness of $\alpha$.

Regularity follows from noticing that $\gamma_{2n} \pitchfork \gamma_{2n}'$ at the point $P_{r,s} \circ u (\A)$, and $\Pi_r (\Xi_g) \pitchfork \Pi_r (\rho (\Xi_g'))$ at the point $\Pi_r \circ u (\A)$, and applying one-dimensional regularity results. The signs agree at each point of the moduli space as the local configurations are identical.
\end{proof}

\begin{proposition}\label{prop:monotone_invariance}
The count of pseudo-holomorphic annuli is well-defined: for any pair $(A, \alpha)$ as above and any regular $J$, the signed count of points in $\bar{\mathcal{M}}_{J} ( \bar{Y}_{r,s}; A, \alpha)$ is the same as for $\bar{\mathcal{M}}_{\tilde{J}} ( \bar{Y}_{r,s}; A, \alpha)$.
\end{proposition}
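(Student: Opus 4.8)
The plan is to run the standard cobordism argument for invariance of counts in monotone Floer theory, the whole content being the care needed to exclude the various degenerations of annuli and of discs. First I would join $\tilde{J}$ to $J$ by a generic path $\{J_t\}_{t \in [0,1]}$ of almost complex structures, all agreeing with $\tilde{J}$ outside a fixed large bounded set, and form the parametrised moduli space $\mathcal{P} = \bigcup_{t \in [0,1]} \{t\} \times \bar{\mathcal{M}}_{J_t}(\bar{Y}_{r,s}; A, \alpha)$. For a generic path the usual transversality package makes $\mathcal{P}$ a smooth $1$-manifold whose boundary points at $t \in \{0,1\}$ are exactly $\bar{\mathcal{M}}_{\tilde{J}}(\bar{Y}_{r,s}; A, \alpha) \sqcup \bar{\mathcal{M}}_{J}(\bar{Y}_{r,s}; A, \alpha)$. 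Granting that $\mathcal{P}$ is compact and carries no further boundary strata, the signed count of the boundary of a compact oriented $1$-manifold vanishes, which is precisely the asserted equality of the two counts. So the entire problem reduces to compactness of $\mathcal{P}$ together with the absence of spurious boundary.

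Compactness in the target direction is essentially free from the set-up of Section \ref{sec:partial_compactifications}: since every $J_t$ equals $\tilde{J}$ outside a bounded set, the divisor $D$ is $J_t$-almost-complex there, positivity of intersection together with $A \cdot D = 1$ (Proposition \ref{prop:regularity_monotone}) bounds intersections with the divisor at infinity, and an integrated maximum principle, using that $\tilde{J}$ is a product for which $P_{r,s}$ and $\Pi_r$ are holomorphic, confines all curves to a fixed compact region. The remaining degenerations are of Gromov--Floer type. The two ends of the abstract annulus moduli (see \cite{Liu_thesis}) are handled by exactly the two structural inputs flagged in the introduction: the modulus-to-zero degeneration pinches the annulus to a nodal configuration in which the two boundary circles meet, which is impossible since $L \cap L' = \emptyset$; and the modulus-to-infinity degeneration breaks the annulus along a neck, which cannot occur either, as $L \cap L' = \emptyset$ leaves no Floer chords to break at, while primitivity of $\alpha$ and $\alpha'$ forbids any boundary circle from being capped off by a disc.

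There remains bubbling. Interior sphere bubbling is excluded because $c_1(\bar{Y}_{r,s}) = 0$ and, for a generic path, non-constant $J_t$-spheres sweep out a set of real codimension at least two, hence do not meet a $1$-dimensional family. Boundary disc bubbling is the main obstacle. Monotonicity together with $c_1 = 0$ shows that $L$ and $L'$ bound no non-constant holomorphic discs of Maslov index $\le 0$, so every bubble carries Maslov index $\ge 2$ and forces the surviving annulus component to carry Maslov index $\le -2$. A dimension count, using $\dim \bar{\mathcal{M}} = \mu$ from \cite{Liu_thesis} and checking that the extra boundary node raises the virtual dimension by at most one, shows that every such stratum has negative virtual dimension, hence is empty for generic $\{J_t\}$, \emph{except} when the bubble has Maslov index exactly $2$: that stratum is genuinely codimension one and must be addressed directly.

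I expect this Maslov-two case to be the hard part. Here I would invoke the vanishing of the Maslov-two disc count for the Lagrangians $\gamma_{2n} \times \Xi_g$: since these are of the form $S^1 \times (\text{surface})$, the count of Maslov two discs is zero on topological grounds, the fundamental class not lying in the image of the evaluation map from a non-trivial component of the free loop space (equivalently, the geodesic-uniqueness argument sketched in the introduction); note also that such discs cannot cross $D$, since by monotonicity their area is $2\kappa$ whereas crossing $D$ costs the arbitrarily large quantity $\tilde{\omega}(H)$, so the relevant count is intrinsic to $L$ and $L'$ and agrees for the two (as $L'$ is a small Hamiltonian pushoff of $L$). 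The signed contributions of the Maslov-two bubbling strata to $\partial \mathcal{P}$ are therefore weighted by this vanishing count and cancel. With all boundary strata other than the two ends $t \in \{0,1\}$ shown to be empty or algebraically trivial, the compact oriented $1$-manifold $\mathcal{P}$ delivers the equality of the two signed counts.
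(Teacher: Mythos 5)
Your overall framework (generic path of almost-complex structures agreeing with $\tilde{J}$ outside a bounded set, parametrised moduli space, cobordism argument) is consistent with what the paper implicitly does, and your exclusion of the two ends of the abstract annulus moduli space -- modulus zero via $L \cap L' = \emptyset$, modulus infinity via $\alpha \neq 0$ -- is exactly the paper's argument. The genuine gap is in the disc-bubbling analysis. Your claim that monotonicity together with $c_1 = 0$ forces every disc bubble to have Maslov index $\geq 2$ is false in $\bar{Y}_{r,s}$: the compactification is \emph{not} monotone (the paper notes this explicitly -- the class $H$ has $c_1(H)=0$ but arbitrarily large area), so a disc bubble passing through the divisor $D$ can have any Maslov index $\mu$ and still have positive area $\kappa\mu + \tilde{\omega}(H)$. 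The paper's proof is organised precisely around this point: since $A \cdot D = 1$, the unique intersection point with $D$ lies on exactly one component of a bubbled configuration, and the two cases are treated separately. If the disc carries the $D$-point, it is simple (the intersection has multiplicity one, following the closed-case argument of McDuff--Salamon), hence regular for generic $J$ with boundary Maslov index $2n \geq 0$; the complementary annulus then lies in the exact part $Y_{r,s}$ with boundary Maslov indices $-2n$ and $0$, and is killed by a purely topological capping argument: filling the Maslov $-2n$ boundary by a disc class of area $2n\kappa$ produces a class in $\pi_2(Y_{r,s}, L)$ of Maslov index zero but positive area, contradicting monotonicity of $L$ \emph{inside} $Y_{r,s}$. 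If instead the annulus carries the $D$-point, it is simple and regular, forcing the bubble in $Y_{r,s}$ to have Maslov index $m \leq 0$, whence its area $\kappa m \leq 0$ and it is constant. Your proposal contains neither the case split keyed to the $D$-intersection nor the capping argument, and these are the substance of the proof.

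Consequently your ``hard case'' of Maslov-two bubbles is a phantom, and your proposed resolution for it does not stand up. By your own bookkeeping a Maslov-two bubble leaves an annulus of total Maslov index $-2$; that annulus carries the $D$-point (so is simple, by the multiplicity-one argument), or else has primitive boundary class $\alpha$ (so is again simple), and a simple curve of virtual dimension $-2$ is excluded even in a generic one-parameter family -- so the stratum you call ``genuinely codimension one'' is empty, a conclusion internally inconsistent with your own dimension count. The paper's area argument excludes it for every admissible $J$, with no genericity needed on the annulus side. Your alternative mechanism -- cancelling the stratum against a vanishing count of Maslov-two discs -- would in any case require a gluing analysis identifying the stratum's contribution with a product of counts, and the vanishing itself is only a heuristic sketched in the paper's introduction (``it should follow from the arguments in\ldots''), never established and never needed. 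Finally, your side remark that Maslov-two discs cannot cross $D$ ``since by monotonicity their area is $2\kappa$'' misstates the bookkeeping: a $D$-crossing disc with boundary of Maslov index $2n$ has area $\tilde{\omega}(H) + 2n\kappa$; the correct energy comparison is the one in the paper's remark following the proof, namely $\tilde{\omega}(A) = \tilde{\omega}(H) \leq \tilde{\omega}(H) + 2n\kappa$, leaving no energy for the annulus.
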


\begin{proof}
As $J$ agrees with $\tilde{J}$, and so with a product, outside a very large but bounded set, we don't need to be concerned about pseudo-holomorphic curves running off to infinity in the horizontal direction. 
This means we `only' need to worry about two different possible phenomena: first, the fact that the abstract moduli space of holomorphic annuli has boundary; and, second, the possibility of disc bubbling. 

\emph{Excluding the boundary of the moduli space of annuli.} The abstract moduli space of holomorphic annuli has two boundary components: modulus zero and modulus infinity (see \cite[Figure 9]{Liu_thesis}). In the modulus zero case, the two boundary components of the annulus intersect; this is precluded in our case by the condition  $u(\partial_1) \subset L$ and $u(\partial_2) \subset L'$, as $L \cap L' = \emptyset $. In the modulus infinity case, one boundary component has shrunk to a point; this is precluded in our case by the conditions $[u(\partial_1)] = \alpha$ and $[u(\partial_2)] = \alpha'$ together with the assumption that $\alpha \neq 0 \in H_1(L, \Z)$ (and similarly for $\alpha'$).

\emph{Excluding disc bubbling.} It's enough to consider the pair $(A, \alpha)$ for which $\mathcal{M}_{J_0} ( \bar{Y}_{r,s}; A, \alpha)$ is non-empty. Note that the class $A$ is primitive, and intersects the `divisor at infinity' $D$ in exactly one point, with multiplicity one. There are two separate possibilities for bubbling, depending on where the intersection point with the divisor at infinity goes, as given in Figure \ref{fig:bubbling}.

\begin{figure}[htb]
\begin{center}
\includegraphics[scale=0.40]{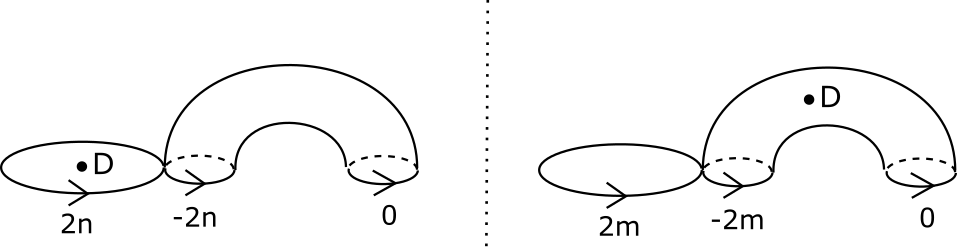}
% or [scale=0.85]
\caption{ The two possibilities for bubbling, depending on the position of the intersection point with $D$, together with Maslov indices  of boundary curves ($n, m \in \Z$).}
\label{fig:bubbling}
\end{center}
\end{figure} 

In the first case (left-hand side of Figure \ref{fig:bubbling}), the pseudo-holomorphic disc cannot be constant, as its boundary lies on $L$ or $L'$. Moreover, as the multiplicity of the intersection of the disc with $D$ is one, it must be a simple curve -- following \cite[Section 3.2]{McDuff-Salamon} in the closed case. 
Thus it is regular for a generic $J$; after quotienting out by holomorphic automorphisms, by \cite[Theorem 1.2]{Liu_thesis}, the moduli space of such discs has expected dimension $2n$, and so $n \geq 0$. 
The annulus lies in $Y_{r,s}$, and has non-negative symplectic area, with oriented boundary components of Maslov indices $-2n$ and $0$. 
Now notice that the component of Maslov index $-2n$ can be filled in by a (homotopy class of a) disc in $Y_{r,s}$ with positive symplectic area (and boundary, with the orientation induced from the disc, of Maslov index $2n$). After the filling, we get a class in $\pi_2 (Y_{r,s}, L \, \text{or} \, L')$ which has positive symplectic area but Maslov index zero, a contradiction. This rules out the first possibility for bubbling.

The second case is similar but simpler. Analogously to before the annulus must be regular, and so $m \leq 0$. But by monotonicity of $L \subset Y_{r,s}$, it must then be that $m=0$ and the disc is constant. 
\end{proof}

\begin{remark}
To rule out e.g. the first bubbling case, alternatively, one could note that 
$$
H_2 (\bar{Y}_{r,s}, L; \Z) \cong \Z \langle H \rangle \oplus H_1 (L).
$$
We have that $\omega(\A) = \omega(H)$; on the other hand, if $\beta \in H_1(L)$ has Maslov index $2n$ and $\hat{\beta}$ is the lift of $\beta$ to $H_2 (Y_{r,s}, L; \Z)$, then $\omega(\hat{\beta}) = 2n \kappa$, where $\kappa$ is the monotonicity constant of $L$. In particular, a disc through $D$ with boundary $\beta$ would have symplectic area $\omega(H) + 2n\kappa$, so at least equal to that of $\A$ as $n$ is non-negative. This leaves no energy for the annulus in $Y_{r,s}$ -- a contradiction.  

\end{remark}

\subsubsection{Annuli counts for $\gamma_0 \times \Xi_g$}

In the case where $L$ is $\gamma_0 \times \Xi_g$, it does not have a disjoint monotone displacement (except possibly when $g=0$), because the Maslov index of $\gamma_0$ is zero. We need to refine the argument of the previous section to deal with this complication.

We make the following observation, the proof of which is immediate.

\begin{lemma}\label{lem:nonexact_deformation}
Consider $\gamma_0 \subset \C$. It has open neighbourhoods $\nu_{1/2} \subset \nu \subset \C$ such that:
\begin{itemize}
\item $\nu$ does not not cover the entirety of any of the components of $\C \backslash \gamma_0$ (think of $\nu_{1/2}$ and $\nu$ as small thickenings of $\gamma_0$);
\item there exists $\delta > 0$ such that for any point $p \in P_{r,s}^{-1} (\C \backslash \nu)$, there is a symplectic embedding of $B_\delta(0)$ centered on $p$ and whose image is disjoint from $P_{r,s}^{-1}(\nu_{1/2})$. 
\end{itemize}

\end{lemma}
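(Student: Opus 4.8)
The plan is to choose both neighbourhoods as thin tubular neighbourhoods of the compact immersed curve $\gamma_0$ in the base of $P_{r,s}$, and then to propagate a \emph{uniform} ball size up into the total space using the bounded geometry established in Lemma \ref{lem:fibration_J} and Remark \ref{rmk:bifibred_J}.

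First I would set up the downstairs data. Since $\gamma_0$ is compact, its image meets only finitely many components of $\C \backslash \gamma_0$, exactly one of which is unbounded; each bounded component is open with positive inradius. Hence, for any $\epsilon$ smaller than the minimum of these inradii, the open tube $\nu(\epsilon) = \{ z \in \C : \text{dist}(z, \gamma_0) < \epsilon \}$ fails to contain the whole of any component, as the core of each bounded region (and all far-away points of the unbounded one) survives. Fixing such an $\epsilon$, I take $\nu_{1/2} = \nu(\epsilon/2)$ and $\nu = \nu(\epsilon)$, which gives the first bullet. Moreover, for any $z \in \C \backslash \nu$ and $w \in \nu_{1/2}$ the triangle inequality yields $\text{dist}(z,w) > \epsilon - \epsilon/2 = \epsilon/2$, so there is a positive buffer $d_0 := \epsilon/2$ with $\text{dist}(b, \nu_{1/2}) \geq d_0$ for every $b \in \C \backslash \nu$.

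Next I would produce the uniform $\delta$. Using Lemma \ref{lem:fibration_J} and Remark \ref{rmk:bifibred_J}, I equip $\bar{Y}_{r,s}$ with $\tilde{\omega}$ and a compatible metric that is standard on a large compact set and a product (with respect to the bifibration) outside it. This gives $\bar{Y}_{r,s}$ bounded geometry, hence a uniform lower bound $\delta_0 > 0$ on the radius of a metric-compatible symplectic Darboux ball centred at any point; it also gives a global Lipschitz bound $C$ for $P_{r,s}$, which is smooth everywhere, a submersion off its finitely many critical points, and genuinely a projection at infinity, so that $\|DP_{r,s}\|$ is bounded. I then fix $\delta \leq \delta_0$ small enough (uniformly) that any such Darboux ball of radius $\delta$ has ambient diameter $< d_0/C$. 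For any $p \in P_{r,s}^{-1}(\C \backslash \nu)$, restricting the Darboux embedding to $B_\delta(0)$ gives a symplectic ball centred at $p$ whose image has ambient diameter $< d_0/C$; by the Lipschitz bound its $P_{r,s}$-image lies within base-distance $< d_0$ of $b = P_{r,s}(p)$, and since $\text{dist}(b, \nu_{1/2}) \geq d_0$ this image meets $\nu_{1/2}$ in no point. Thus the ball is disjoint from $P_{r,s}^{-1}(\nu_{1/2})$, as required; note that only base-direction displacement is relevant, so the noncompactness of the fibres plays no role.

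The only real difficulty is the uniformity of $\delta$ over the noncompact $Y_{r,s}$: both the Darboux radius and the Lipschitz control could a priori degenerate at infinity. This is precisely what the standard-at-infinity, fibred product structure of Lemma \ref{lem:fibration_J} and Remark \ref{rmk:bifibred_J} excludes, which is why I would lean on those; the remaining observation, that a thin tube around a compact curve misses interior points of each complementary region while leaving a positive buffer, is elementary. This is in line with the authors' remark that the proof is immediate.
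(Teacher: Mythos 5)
Your proposal is correct, and it is worth noting that the paper itself offers no argument at all here: the lemma is introduced with ``the proof of which is immediate,'' so the comparison is against the implicit argument rather than a written one. Your handling of the first bullet (thin tubes of radius below the inradii of the finitely many bounded complementary components, plus the unbounded one) and the buffer $d_0=\epsilon/2$ is exactly the elementary part. For the second bullet, your route --- uniform Darboux radius plus a global Lipschitz bound on $P_{r,s}$ --- works, but there is a slightly cleaner mechanism that the product structure of Lemma \ref{lem:fibration_J} and Remark \ref{rmk:bifibred_J} hands you directly: outside a large compact set, $P_{r,s}$ \emph{is} the projection of a symplectic product (fibre factor) $\times$ (base disc), so at such points one can take the symplectic ball to be contained in (Darboux ball in the fibre factor) $\times$ (Euclidean disc of radius $d_0$ about $P_{r,s}(p)$); the projection of this set is literally that base disc, which avoids $\nu_{1/2}$ by the buffer, so no diameter/Lipschitz bookkeeping is needed, and on the remaining compact part a uniform $\delta$ exists by compactness. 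Two points in your write-up deserve flagging, since the phrase ``bounded geometry'' glosses over them: first, the fibre factor at infinity is itself noncompact, so the uniform Darboux radius there is not a formal consequence of compactness but rests on the conical ends of the affine fibres (where the induced K\"ahler metric is asymptotically standard, so Darboux radii in fact grow rather than degenerate --- this is true, but it is the actual content of the uniformity claim); second, $\tilde\omega$ and the product structure are only constructed over $D_s(0)$, so one should either take $s$ large enough that $\nu$ and all critical values lie well inside $D_s(0)$ and treat the region over $\C\setminus D_s(0)$ with the standard K\"ahler form (where the Lipschitz bound for $P_{r,s}$ follows from its being the restriction of a linear map on $\C^4$), or note that your Lipschitz argument applies verbatim there. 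With these caveats made explicit, your proof is a complete and faithful realization of what the paper treats as immediate.
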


Let $\gamma_0' \subset \C$ be a parallel displacement of $\gamma_0$ enclosing signed area $\epsilon >0$. As this can be taken to be arbitrarily small, assume that $\epsilon < \delta^2 /4$. Let $L'$ be given by $\gamma_0' \times \Xi_g$. 

Let $\tilde{\alpha} \in H_1 (\Xi_g)$ be any primitive Maslov zero class, and $\alpha = (0,\tilde{\alpha})$ be the corresponding class in $H_1(\gamma_0 \times \Xi_g)$; let $\alpha' \in H_1 (L'; \Z)$ be the image of $\alpha$ under the displacement. Let $A \in H_2 (\bar{Y}_{r,s}, L \sqcup L'; \Z)$ be such that $\partial A = \alpha \sqcup \alpha'$.

Given an almost complex structure $J$ on $\bar{Y}_{r,s}$, we define $\mathcal{M}_J(\bar{Y}_{r,s}; A, \alpha) $ and $\bar{\mathcal{M}}_J (\bar{Y}_{r,s}; A, \alpha)$ as before. 

\begin{proposition}\label{prop:regularity_invariance_nonmonotone}

We have the following analogues of Propositions \ref{prop:regularity_monotone} and \ref{prop:monotone_invariance}.

\begin{itemize}

\item[(a)] 

Let $\tilde{J}$ be our choice of almost-complex structure on $\bar{Y}_{r,s}$ from Section \ref{sec:partial_compactifications}, and let $(A, \alpha)$ be any pair as above. Then 
$\mathcal{M}_{\tilde{J}} ( \bar{Y}_{r,s}; A, \alpha)$ is empty 
for all but one choice of $(A, \alpha)$, for which $J$ is regular. 
Moreover, for that choice of $(A, \alpha)$, $\mathcal{M}_{\tilde{J}} ( \bar{Y}_{r,s}; A, \alpha)$ has dimension one, and 
the signed count of points in $\bar{\mathcal{M}}_{\tilde{J}} ( \bar{Y}_{r,s}; A, \alpha)$ is $k_1$.

\item[(b)]
The count of pseudo-holomorphic annuli is well-defined: for any pair $(A, \alpha)$ as above and any regular $J$, the signed count of points in $\mathcal{M}_{J} ( \bar{Y}_{r,s}; A, \alpha)$ is the same as for $\mathcal{M}_{\tilde{J}} ( \bar{Y}_{r,s}; A, \alpha)$.

\end{itemize}

\end{proposition}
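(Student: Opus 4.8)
The plan is to prove (a) by transcribing the proof of Proposition \ref{prop:regularity_monotone} almost verbatim, and to prove (b) by re-running the compactness-and-bubbling analysis of Proposition \ref{prop:monotone_invariance}, changing only the single step where the monotonicity of the displaced Lagrangian $L'$ was invoked; this is where Lemma \ref{lem:nonexact_deformation} and the quantitative choice $\epsilon < \delta^2/4$ enter. Throughout, $L = \gamma_0 \times \Xi_g$ and $L' = \gamma_0' \times \Xi_g$, and I first record that $L \cap L' = \emptyset$: as in the $n\neq 0$ case, $\gamma_0$ and $\gamma_0'$ meet only over the (single, $h_0=1$) self-intersection of $\gamma_0$, and there the two unions of $\Xi_g$-pieces $\Xi_g \sqcup h(\Xi_g)$ and $\Xi_g' \sqcup h(\Xi_g')$ are disjoint once $\epsilon$ is small.

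For (a), given $u \in \mathcal{M}_{\tilde J}(\bar{Y}_{r,s};A,\alpha)$, I would project by $P_{r,s}$: the map $P_{r,s}\circ u$ is a holomorphic annulus in $\C$ with boundary on $\gamma_0$ and $\gamma_0'$, whose boundary class is $c\gamma_0 \cup -c\gamma_0'$. Since $\alpha$ and $\alpha'$ agree under the displacement, $c=0$, so the standard open-mapping argument forces $P_{r,s}\circ u$ to be constant at one of the intersection points of $\gamma_0$ and $\gamma_0'$; hence $u$ lands in a fibre $\bar{X}_r$. Applying the same reasoning to $\Pi_r\circ u$ pushes $u$ into a fibre $\bar{M}$, where it must be the unique simple annulus between the vanishing cycles $b$ and $d$ carrying the boundary class $\alpha \sqcup \alpha'$. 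Counting the intersection points of $\Pi_r(\Xi_g)$ and $\Pi_r(\rho(\Xi_g'))$ of the correct (yellow–green) boundary type, as in Figures \ref{fig:fibred_linked_tori} and \ref{fig:Xi_g}, and using $h_0=1$, gives the count $k_1$; regularity follows from the transversalities $\gamma_0 \pitchfork \gamma_0'$ and $\Pi_r(\Xi_g)\pitchfork\Pi_r(\rho(\Xi_g'))$ together with one-dimensional regularity, exactly as before.

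For (b), I would interpolate $\tilde J$ to any given regular $J$ by a generic path $J_t$ and study the parametrised moduli space, a $1$-manifold whose boundary should consist only of the $t\in\{0,1\}$ fibres. The modulus-zero end is excluded by $L\cap L' = \emptyset$, the modulus-infinity end by $\alpha,\alpha'\neq 0$, and sphere bubbling by $c_1(\bar{Y}_{r,s})=0$, all precisely as in Proposition \ref{prop:monotone_invariance}. Disc bubbling on the monotone $L\subset Y_{r,s}$ is excluded by the filling argument of that proposition without change. The only genuinely new point is a non-constant disc bubble on the non-monotone $L'$, for which the filling argument no longer yields a contradiction on its own. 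Here I would argue that such a bubble, being holomorphic with boundary fibred over $\gamma_0'\subset\nu$, cannot remain in $P_{r,s}^{-1}(\nu)$ unless it is fibrewise (and then it is excluded by exactness of $M$ away from $D$, or is absorbed into the through-$D$ case); since $\nu$ contains no full component of $\C\setminus\gamma_0$, the maximum principle forces its $P_{r,s}$-image (or, fibrewise, its $\Pi_r$-image) to leave $\nu$, so its image meets a point over $\C\setminus\nu$ around which Lemma \ref{lem:nonexact_deformation} provides a symplectic ball $B_\delta(0)$ disjoint from the displacement region $P_{r,s}^{-1}(\nu_{1/2})$. The monotonicity lemma for holomorphic curves then bounds its area below by a definite multiple of $\delta^2$, whereas the $\epsilon$-smallness of the displacement makes $L'$ monotone up to an error controlled by $\epsilon<\delta^2/4$, so a Maslov-zero bubble is forced to have area below $\delta^2/4$ — a contradiction, so the bubble is constant.

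The hard part will be this last step: making rigorous the near-monotonicity estimate for $L'$ (that every class in $\pi_2(Y_{r,s},L')$ has area equal to $\kappa$ times its Maslov index up to an error of size $O(\epsilon)$) and the accompanying dichotomy that a non-constant disc bubble is either fibrewise-and-excluded or escapes the thin region $P_{r,s}^{-1}(\nu)$ and thereby acquires area at least a fixed multiple of $\delta^2$. This is exactly the pairing that the quantitative hypothesis $\epsilon<\delta^2/4$ and the balls of Lemma \ref{lem:nonexact_deformation} are engineered to supply, and it is the one place where the argument departs substantively from the monotone case of Proposition \ref{prop:monotone_invariance}.
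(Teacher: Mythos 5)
Part (a) of your proposal matches the paper: the proof of Proposition \ref{prop:regularity_monotone} indeed carries over verbatim, since nothing in it uses monotonicity of $L'$.

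In part (b), however, your treatment of a disc bubble on the $L'$ side has a genuine quantitative gap. Your ``near-monotonicity estimate'' --- that every class in $\pi_2(Y_{r,s},L')$ has area equal to $\kappa$ times its Maslov index up to an error $O(\epsilon)$ --- is false: if the bubble's boundary is $-\beta = (k\gamma_0', -\tilde{\beta}) \in H_1(\gamma_0' \times \Xi_g)$, its area is $k\epsilon + \kappa\,\mu(-\tilde{\beta}) \leq k\epsilon$, where the winding number $k$ is \emph{a priori unbounded}; the error scales with $k$, not with $\epsilon$ alone. (Also, the bubble need not have Maslov index zero --- regularity of the remaining annulus only forces its Maslov index to be non-positive, whence $k>0$.) Your escape-to-$\C\setminus\nu$ argument then produces a single $\delta$-ball and a lower area bound of order $\delta^2$, which does not contradict an upper bound of $k\epsilon$ once $k$ is large. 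The paper's proof closes exactly this gap with a topological device you are missing: the boundary class wraps $k>0$ times around $\gamma_0'$, so the component of the bubbled configuration lying in $Y_{r,s}$ must intersect the fibre $F = P_{r,s}^{-1}(\star)$, for $\star$ in the middle of a lobe (hence away from $\nu$), a signed total of at least $k$ times; applying the monotonicity lemma for minimal surfaces at each of these $k$ intersection points, using the balls $B_\delta$ of Lemma \ref{lem:nonexact_deformation}, gives area at least $k\pi\delta^2$, contradicting $k\epsilon < k\delta^2/4$ uniformly in $k$.

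Two further points. First, your phrase ``absorbed into the through-$D$ case'' does not dispose of the scenario where the disc bubble on the $L'$ side passes through $D$ (the left-hand case of the paper's Figure \ref{fig:bubbling_2}): there the small-area object is the \emph{annulus} in $Y_{r,s}$, with boundary $\alpha$ on $L$ and $\alpha' + \beta = (k\gamma_0', \tilde{\alpha}+\tilde{\beta})$ on $L'$, of area $\leq k\epsilon$; your $L$-side filling argument does not apply, and one needs the same $k$-fold fibre-intersection plus monotonicity-lemma contradiction, applied to the class $C$ with boundary $\alpha \sqcup (\alpha'+\beta)$. Second, your exclusion of fibrewise bubbles ``by exactness of $M$'' is off: $\Xi_g$ is in general only monotone (the statement covers arbitrary monotonicity constants, not just the exact case); the correct reason is that a fibrewise disc corresponds to $k=0$ and has area $\kappa\,\mu(\tilde{\beta}) \leq 0$, hence is constant --- and in the paper's homological formulation this case requires no separate treatment at all.
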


\begin{proof}
For part (a), the proof of Proposition  \ref{prop:regularity_monotone} completely carries over: nowhere does it use monotonicity. Note that we in fact get something slightly stronger: as with Proposition   \ref{prop:regularity_monotone}, the argument shows that the count is non-zero for a unique choice of $A \in H_2 (\bar{Y}_{r,s}, L \sqcup L'; \Z)$ and $\alpha \in H_1(L; \Z)$ a Maslov zero primitive class such that $\partial A = \alpha \sqcup \alpha'$. 

More care needs to be taken for part (b), for which the proof of Proposition \ref{prop:monotone_invariance} needs refining.
As $L$ and $L'$ are disjoint, and $\alpha \neq 0$, we again do not need to worry about the boundary of the abstract moduli space of annuli. Disc bubbling, however, now requires more care. The casework of Figure \ref{fig:bubbling} is now further broken down into four cases, depending on the position of the marked point and whether the bubble is formed on the side of $L$ or $L'$. 

Note that the regularity considerations for simple curves used in the proof of Proposition \ref{prop:monotone_invariance} still hold, as do the virtual dimension counts of \cite[Theorem 1.2]{Liu_thesis}.

Assume first the bubble is formed on the side of $L$.

\begin{itemize}

\item If the bubble doesn't go through $D$, it must be constant, following the corresponding part of the proof of Proposition  \ref{prop:regularity_monotone}. 

\item If the bubble goes through $D$, we get a contradiction as before by using symplectic area considerations. 
\end{itemize}

Now assume that the bubble is formed on the side of $L'$. For concreteness, the two possibilities are given in Figure \ref{fig:bubbling_2}. 

\begin{figure}[htb]
\begin{center}
\includegraphics[scale=0.40]{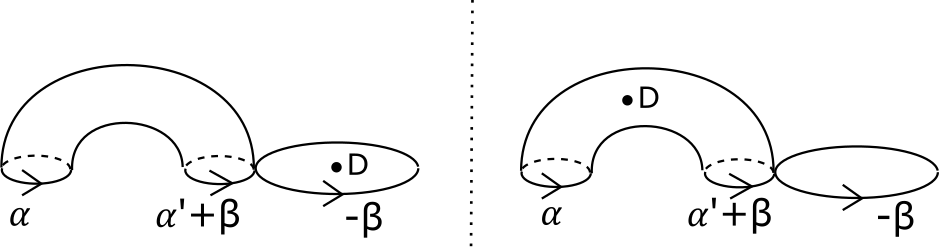}
% or [scale=0.85]
\caption{The two possibilities for bubbling, depending on the position of the intersection point with $D$, together with the homology classes of the boundaries.}
\label{fig:bubbling_2}
\end{center}
\end{figure} 

Let us first rule out the right-hand side scenario in Figure \ref{fig:bubbling_2}. As before the annulus is regular and so has non-negative Maslov index. Thus the disc has non-positive Maslov index. With respect to the obvious decomposition, say $-\beta = (k \gamma_0', -\tilde{\beta}) \in H_1(\gamma_0' \times \Xi_g)$, where $-\tilde{\beta} \in H_1 (\Xi_g)$ has non-positive Maslov index, and $k$ is an integer. Thus the disc has symplectic area $k \epsilon + \kappa \mu(-\tilde{\beta}) \leq k \epsilon$, which in particular implies $k > 0$. Let $F=P^{-1}(\star)$ be the fibre above a point $\star$ in the middle of a lobe of $\gamma_0$ and $\gamma_0'$ (in particular, away from $\nu$). Now notice that if $-\hat{\beta} \in H_2 (Y_{r,s}, L'; \Z)$  has boundary $-\beta$, then for topological reasons we must have that $F$ intersects $-\hat{\beta}$ at least $k$ times, counted with signs. Moreover, by the monotonicity lemma for minimal surfaces, $-\hat{\beta}$ must have symplectic area at least $k \pi \delta^2$, a contradiction. (Readers will be familiar with this flavour of application of the monotonicity theorem from one of the standard proofs of Gromov non-squeezing, for instance as given in McDuff--Salamon \cite[Theorem 9.3.1]{McDuff-Salamon}.)

Let us now look at the left-hand side scenario. Similarly we must have $\beta = (k \gamma_0', \tilde{\beta}) \in H_1 (\gamma_0' \times  \Xi_g)$, where $k$ is a constant and $\tilde{\beta} \in H_1(\Xi_g)$ has non-positive Maslov index. We have $\alpha'+ \beta = (k \gamma_0', \tilde{\alpha} + \tilde{\beta})$. The symplectic area of the annulus is $k \epsilon + \kappa \mu(\tilde{\beta}) \leq k \epsilon$, so $k > 0$. Pick $\star \in \C$ as before, and let $C \in H_2 (Y_{r,s}, L \sqcup L'; \Z)$ be the unique class with boundary $\alpha \oplus \alpha' + \beta$. Again, topological considerations show that $C$ must intersect $P^{-1}(\star)$ a signed total of $k$ times, and the monotonicity lemma applies again to get a contradiction.
\end{proof}

\begin{remark}
For genus one, there are splittings of $L$ as $S^1 \times \Sigma_g$ other than the `obvious' $\gamma_0 \times \Xi_g$.
Assume that there is a compactly supported symplectomorphism $f$ of $Y_{r,s}$ taking $\gamma_0 \times \Xi_g$ to $\gamma_0 \times \Xi_g'$ and which does not respect the splitting. Then the above proof, using $f(P^{-1}(\star))$, etc, would show that the count of points in $\bar{\mathcal{M}}_{J} ( \bar{Y}_{r,s}; f( A), f(\alpha))$ is well-defined. On the other hand, as noted in the proof, we know that it is non-zero for a unique choice of $A \in H_2 (\bar{Y}_{r,s}, L \sqcup L'; \Z)$ and $\alpha \in H_1(L; \Z)$ Maslov zero primitive such that $\partial A = \alpha \sqcup \alpha'$. This gives a contradiction: $f$ must in fact have respected the splitting. 
\end{remark}

\subsubsection{Conclusion}
Start with a monotone Lagrangian $\gamma_{2n} \times  \Xi_g \subset Y_{r,s}$, where recall that  $$\Xi_g = \Xi_g ( \{ k_0, l_0, m_0 \}, \ldots, \{ k_g, l_g, m_g \} ).$$
Suppose you're given a compactly supported symplectomorphisms of $Y_{r,s}$. Choosing a compactification of $Y_{r,s}$ to $\bar{Y}_{r,s}$ which contains the whole domain, the previous section shows that the count of holomorphic annuli $\bar{\mathcal{M}}_{J} ( \bar{Y}_{r,s}, \gamma_{2n} \times \Xi_g; A, \alpha)$, for any regular $J$ and the one class $(A, \alpha)$ for which it's non-zero, is an invariant of $\gamma_{2n} \times \Xi_g$ which is unchanged under compactly supported symplectomorphisms. 
In particular, if we're also given $\Xi'_g = \Xi_g ( \{ k'_0, l'_0, m'_0 \}, \ldots, \{ k'_g, l'_g, m'_g \} )$ with $k_1 \neq k_1'$,
we see that the Lagrangians $\gamma_{2n} \times \Xi_g$ and $\gamma_{2n} \times \Xi'_g$ must be distinct up to compactly supported symplectomophisms. 
Putting everything together, we get the following theorem.

\begin{theorem}\label{thm:main_3d}
Fix $g$. For any sufficiently large $r$ and $s$, we can construct an infinite family of homologous monotone Lagrangian $S^1 \times \Sigma_g$ in $Y_{r,s} = \{ x^3 + y^3 + z^r + w^s =1 \}$, with fixed arbitrary even Maslov class and monotonicity constant, distinct up to compactly supported symplectomorphisms of $Y_{r,s}$. (This includes the exact case.)

\end{theorem}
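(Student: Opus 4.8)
The plan is to obtain the infinite family by fixing all of the soft data and varying a single discrete parameter on which the annulus count depends, but the Maslov class, homology class, and monotonicity constant do not. Concretely, I would work with the Lagrangians $\gamma_{2n} \times \Xi_g$ of Section \ref{sec:S^1xXi_g}, where $\Xi_g = \Xi_g(\{k_0,l_0,m_0\}, \{k_1,l_1,m_1\}, \ldots, \{k_g,l_g,m_g\})$, and invoke the analysis already carried out: by Proposition \ref{prop:regularity_monotone} together with Proposition \ref{prop:monotone_invariance} when $n \neq 0$, and by Proposition \ref{prop:regularity_invariance_nonmonotone} when $n = 0$, the signed count of annuli in $\bar{\mathcal{M}}_J(\bar{Y}_{r,s}; A, \alpha)$ is $h_n k_1$ (respectively $k_1$), is nonzero only for the one distinguished pair $(A,\alpha)$, and is unchanged under compactly supported symplectomorphisms of $Y_{r,s}$.

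First I would fix the target even Maslov class and monotonicity constant $\kappa$, and realise them. By Proposition \ref{prop:Maslov_index_3d} the factor $S^1 = \gamma_{2n}$ has Maslov index $2n$, so choosing $n$ prescribes the $S^1$ slot; the $\Sigma_g$ factor, whose Maslov class can by the orientable normal-form lemma be taken to be $(2n_1, 0, \ldots, 2n_g, 0)$, is prescribed by the differences $l_i - k_i$ for the $T_{k_i,l_i,m_i}$ with $i \geq 2$ and by the analogous combination for the combined $S_{k_0,l_0,m_0} \# R_{k_1,l_1,m_1}$ genus-one piece, whose longitudinal Maslov index involves $m_1 - k_1 - l_1$. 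Taking $r \geq r'(g)$ and $s$ sufficiently large, and adjusting the lobe areas as in Section \ref{sec:S^1xXi_g}, makes each such Lagrangian monotone with constant $\kappa$, diffeomorphic to $S^1 \times \Sigma_g$.

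The crucial point is that the invariant-carrying parameter $k_1$ is decoupled from the soft invariants. Incrementing $k_1$ and $m_1$ in step leaves $m_1 - k_1 - l_1$ fixed, so by Lemma \ref{thm:Maslov_indices} (and its extension to $R_{k,l,m}$) the Maslov class is preserved; and since the homology class of $R_{k,l,m}$ depends only on $l$, it is untouched by changing either $k_1$ or $m_1$. Thus the one-parameter family obtained by letting $k_1 = 0, 1, 2, \ldots$ (with $m_1$ incremented alongside) consists of homologous monotone Lagrangians sharing the same Maslov class and monotonicity constant, while their annulus counts $h_n k_1$ (or $k_1$) run through infinitely many distinct values; hence no two members are related by a compactly supported symplectomorphism. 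The exact case is the special case of vanishing Maslov class, obtained with $n = 0$ and all surface longitudinal indices zero (so $m_1 = k_1 + l_1$ as $k_1$ varies): monotonicity then forces every disc to have zero area, so $L$ is exact, and the $n = 0$ invariance of Proposition \ref{prop:regularity_invariance_nonmonotone}, with count $k_1$, still separates the family.

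Since the substantive analysis—regularity, invariance of the count under the choice of $J$, and the exclusion of boundary degenerations and disc bubbling—is already established in the cited propositions, the only genuine work remaining is the parameter bookkeeping. The main thing to verify carefully is that one can move $k_1$ while simultaneously holding the Maslov class, the homology class, \emph{and} the monotonicity constant fixed: the compensating shift in $m_1$ must not disturb the homology class, which is exactly guaranteed by the $R_{k,l,m}$ homology depending only on $l$, and the annulus count must depend only on $k_1$ and $n$, independently of the remaining parameters.
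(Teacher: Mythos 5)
Your proposal is correct and takes essentially the same route as the paper: the paper also distinguishes the Lagrangians $\gamma_{2n} \times \Xi_g$ by the annulus counts $h_n k_1$ (resp.\ $k_1$ when $n=0$) supplied by Propositions \ref{prop:regularity_monotone}, \ref{prop:monotone_invariance} and \ref{prop:regularity_invariance_nonmonotone}, and produces the infinite family by varying $k_1$. Your explicit bookkeeping --- incrementing $m_1$ in step with $k_1$ so that the longitudinal Maslov index $2(m_1-k_1-l_1)$ is unchanged, noting that the homology class depends only on $l_1$, and adjusting lobe areas to fix the monotonicity constant --- is precisely what the paper leaves implicit in its ``putting everything together'' step.
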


\begin{remark}
The conclusion of Theorem \ref{thm:main_3d} also hold for well-behaved non-compactly supported symplectomorphisms, 
namely all of those which can be extended to the compactifications. 
\end{remark}

\subsection{Holomorphic annuli counts in 2 dimensions}\label{sec:annuli_counts_2d}

As annuli have Euler characteristic zero, by \cite[Theorem 1.2]{Liu_thesis} the expected dimension of a moduli space of annuli only depends on their Maslov class, and not on the dimension of the ambient space. This means that we can hope for well-defined  counts Maslov zero annuli in complex dimension two. (In complex dimension four or higher, one immediately runs into trouble because of the possible existence of discs with negative Maslov indices.) 

In particular, we're able to get a two-dimensional version of Theorem \ref{thm:main_3d}
in the case of tori.

\begin{theorem}\label{thm:main_2d}
For any sufficiently large $r$, we can construct an infinite family of homologous monotone Lagrangian tori in $X_r= \{ x^3 + y^3 + z^r  =1 \}$, with fixed arbitrary even Maslov class and monotonicity constant, distinct up to compactly supported symplectomorphisms of $X_r$. (This includes the exact case.)

\end{theorem}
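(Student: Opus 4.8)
The plan is to transport the argument of Section \ref{sec:annuli_counts_3d} into complex dimension two, replacing $\bar Y_{r,s}$ throughout by the partial compactification $\bar X_r$ of Section \ref{sec:partial_compactifications}, so that only the single fibration $\Pi_r\colon \bar X_r \to \C$ with fibre $\bar M$ is in play. For the infinite family I would take the genus-one surfaces $\Xi_1(\{k_0,l_0,m_0\},\{k_1,l_1,m_1\})$ of Section \ref{sec:S^1xXi_g}, which are embedded monotone Lagrangian tori in $X_r$ (for $r$ large) obtained by combining an $S_{k_0,l_0,m_0}$ with an $R_{k_1,l_1,m_1}$; these are exactly the tori that make the compactification relevant, since the $S$-factor involves the vanishing cycle $d$ and the puncture between $b$ and $d$ is the one capped off in $\bar M$. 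Using Lemma \ref{thm:Maslov_indices} together with the homology and monotonicity lemmas of Section \ref{sec:building_blocks}, I would let $k_1$ range over $\Z_{\geq 0}$ while compensating with $m_1$ (and keeping $l_1$ and the $S$-data fixed), so that all members are homologous, carry one prescribed arbitrary even Maslov class, and are monotone with a fixed constant $\kappa$; since the $R$-homology depends only on $l_1$ and the base Maslov index is $2(m_1-k_1-l_1)$, this is arranged without affecting the invariant below.

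To each such torus $L=\Xi_1$ I would attach a disjoint displacement $L'$, fibred over a parallel copy of the base curve and canonically determined up to the relevant Hamiltonian isotopy, exactly as in Section \ref{sec:annuli_counts_3d}: when the Maslov class of $L$ is nonzero I push in the monotone direction to obtain a disjoint monotone $L'$, and in the exact case I instead take a small parallel displacement enclosing area $\epsilon$ and proceed as in Proposition \ref{prop:regularity_invariance_nonmonotone}, via Lemma \ref{lem:nonexact_deformation}. Disjointness holds because at the relevant crossings of the two base curves $L$ restricts to $b$ while $L'$ restricts to $d$, and $b$ is disjoint from $d$ in $M$. I then define $\mathcal M_J(\bar X_r; A,\alpha)$ and its quotient exactly as in Definition \ref{def:moduli_space}, with $\alpha$ a primitive Maslov-zero class; by \cite[Theorem 1.2]{Liu_thesis} the quotient moduli space has expected dimension zero, the expected dimension depending only on the Maslov class and not on the ambient dimension.

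For the standard almost-complex structure $\tilde J$, composing any such annulus with $\Pi_r$ and applying the open mapping theorem forces the projection to be constant, since the two boundary classes agree under the displacement; hence the annulus lies in a single fibre $\bar M$ over a crossing of $\Pi_r(L)$ and $\Pi_r(L')$. The only contributions come from the crossings at which $L$ carries $b$ and $L'$ carries $d$ (or vice versa) with matching boundary orientations, each supporting the unique simple annulus between $b$ and $d$ in $\bar M$; as in Proposition \ref{prop:regularity_monotone} this yields a signed count proportional to $k_1$, with regularity coming from one-dimensional transversality and the identical local models at each point.

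Finally I would prove invariance of this count under compactly supported symplectomorphisms and under admissible changes of $J$, following Propositions \ref{prop:monotone_invariance} and \ref{prop:regularity_invariance_nonmonotone}: the modulus-zero and modulus-infinity ends of the abstract moduli of annuli are excluded by $L\cap L'=\emptyset$ and $\alpha\neq 0$ respectively, and disc bubbling is ruled out using $A\cdot D=1$ (which forces any bubble through $D$ to be simple), the absence of negative-Maslov discs in the monotone $X_r$, and area comparisons. Since the count is intrinsic to $L$ and depends nontrivially on $k_1$, distinct values of $k_1$ give tori not related by any compactly supported symplectomorphism, which yields the theorem. I expect the main obstacle to be the invariance step in the exact case: there is then no disjoint monotone displacement, so excluding disc bubbling requires the quantitative minimum-area input of the monotonicity lemma for minimal surfaces, precisely as in Proposition \ref{prop:regularity_invariance_nonmonotone}(b).
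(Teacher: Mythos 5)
Your proposal is correct, and at the level of method it is the same as the paper's: Theorem \ref{thm:main_2d} is proved there precisely by transporting Section \ref{sec:annuli_counts_3d} to $\bar{X}_r$, with a disjoint monotone displacement in the non-exact case, a small non-exact parallel copy (the analogue of Lemma \ref{lem:nonexact_deformation}, with $\Pi_r$ in place of $P_{r,s}$) in the exact case, the projection argument localising annuli to fibres over crossings, and invariance via the excluded ends of the abstract annulus moduli together with the improved two-dimensional disc dimension counts (a regular index-$2n$ disc moduli space has dimension $2n-1$ after quotienting by automorphisms, so $n>0$) and, in the exact case, the monotonicity lemma for minimal surfaces. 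The one genuine difference is the choice of family: the paper takes $T=T_{k,l,m}$, whereas you take $\Xi_1(\{k_0,l_0,m_0\},\{k_1,l_1,m_1\})$, and your choice is arguably the more internally coherent one. Since $T_{k,l,m}$ is fibred only by $b$ and $c$ curves (Remark \ref{rmk:A_3}), its crossings with a parallel displacement put $b$ against $c$ in the fibre, and with the compactification of Section \ref{sec:partial_compactifications} --- which caps the puncture between $b$ and $d$ --- both regions between $b$ and $c$ in $\bar{M}$ still contain punctures, so no holomorphic annuli would exist and the count would vanish. The paper's proof, which asserts a count of ``$k_1$'' for $T_{k,l,m}$ (a parameter that torus does not have), therefore implicitly requires either capping the puncture between $b$ and $c$ instead (the symmetric adaptation that also underlies the extension to $\{x^2+y^4+z^r=1\}$ in Section \ref{sec:extensions_limitations}), or the $R$/$S$-type crossings that your $\Xi_1$ family provides directly, as you observed in justifying why the $S$-factor makes the fixed compactification relevant. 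Your version needs no modification of the compactification, at the cost of a larger lower bound on $r$; otherwise the soft-invariant bookkeeping (homology through $l_1$, Maslov class through $m_1-k_1-l_1$, monotonicity through lobe areas) and the invariance argument are exactly as in the paper, so both routes are valid.
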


\begin{proof}
We will just use the family of Lagrangian tori given by $T_{k,l,m}$ (see Figure \ref{fig:fibred_torus_Tklm2}), although completely analogous arguments can be carried out for  other families, with changes to the annuli count formulae. 
Our proof will follow from inspecting the arguments of Section \ref{sec:annuli_counts_3d} and noticing that they can be replicated, indeed with some simplifications. We briefly note what changes are to be made.

Let $T = T_{k,l,m}$.
As before, let us first consider the case where $T$ is monotone but not exact. 
Define moduli spaces  $\mathcal{M}_J (\bar{X}_r; A, \alpha)$ and $\bar{\mathcal{M}}_J (\bar{X}_r; A, \alpha)$  following Definition \ref{def:moduli_space}. For our favoured almost-complex structure $\tilde{J}$ from Section \ref{sec:partial_compactifications}, one gets an analogue to Proposition \ref{prop:regularity_monotone}, with the one difference being that for the sole choice of $(A, \alpha)$ such that  $\mathcal{M}_{\tilde{J}} (\bar{X}_r; A, \alpha)$ is non-empty (in which case it is as before one-dimensional), the signed count of points in $\bar{\mathcal{M}}_{\tilde{J}} (\bar{X}_r; A, \alpha)$ is $k_1$. 
To see that this count is a well-defined invariant as $J$ varies, let us re-visit the proof of Proposition \ref{prop:monotone_invariance}. The boundary of the abstract moduli space of holomorphic annuli gets avoided as before; and the arguments to exclude disc bubbling carry over, in fact with small simplifications: in this dimension, if a moduli space of index $2n$ discs is regular, then after quotienting out by holomophic automorphisms it has expected dimension $2n-1$, so $n > 0$.

Let us now turn to the case where $T$ is exact. We let $T'$ be a small \emph{non-exact} deformation of $T$. We construct this analogously to Lemma \ref{lem:nonexact_deformation}, but using $\Pi_r$ instead of $P_{r,s}$: we use a close parallel copy of the immersed $S^1$ in the base of $\Pi_r$ (which is now playing the role of $\gamma_0$), and don't touch the `meridional' $S^1$ (which is in a fibre). We're then able to follow the proof of Proposition \ref{prop:regularity_invariance_nonmonotone}; as in the previous case, the expected dimensions of spaces of holomorphic discs get adjusted in our favour. 
\end{proof}

Note that the assertions of Remark \ref{rmk:linking} about the `linking' of the tori $R_{k,l,m}$ and $S_{n,p,q}$ now readily follow by considering similar such holomorphic annuli counts. 

\subsection{Extensions and limitations}\label{sec:extensions_limitations}

Because of their reliance on partial compactifications, the proof of Theorems \ref{thm:main_3d} and \ref{thm:main_2d} will not survive under a general exact symplectic embedding of Liouville domains: instead, if one wants the same conclusion for e.g.~the Milnor fibre of a `larger' singularity, one would need it to have a compactification in which holomorphic annuli still appear.

In the two-dimensional case, by Remark \ref{rmk:A_3}, the conclusion of Theorem \ref{thm:main_2d} would also hold for $\{ x^2 + y^4 + z^r = 1 \}$ for sufficiently large $r$. 

More generally, we see that the constructions and arguments of Sections \ref{sec:S^1xXi_g} through \ref{sec:annuli_counts_2d} (and in particular Section \ref{sec:partial_compactifications}) can also be made whenever $x^3+y^3$, i.e.~the singularity $D_4$, is replaced by $x^3+xy^u$, i.e.~the singularity $D_{u+1}$ (while not Brieskorn--Pham, this is still weighted homogeneous, so Remark \ref{rmk:weighted_homogeneous} applies). 

On the other hand, suppose we try to replace $D_u$ by an even larger singularity in two variables. Suppose we're given two disjoint vanishing cycles on the associated Milnor fibre. One can check the following: if removing those vanishing cycles from the Milnor fibre disconnects it, then both of the resulting components (there can't be more than two) have positive genus. This means that the analogue of counting pseudo-holomorphic annuli would now be to count higher genus pseudo-holomorphic curves (with Maslov index zero, and two boundary components). While in dimension 3 these have expected dimension zero, at least naively one can't hope for a well-defined count, as we can't rule all of the possible configurations in the boundary of the abstract moduli space of such holomorphic curves: there is no a priori reason to be able to rule out an internal connecting node on the curve (as in \cite[Figure 8]{Liu_thesis}). 

\section{Floer-theoretic properties}\label{sec:Floer_theory_properties}

\subsection{Distinguishing Lagrangians using Floer theory}\label{sec:Floer_theory}
We'll show that the families of Lagrangians which we construct in $X_r$ and in $Y_{r,s}$ (in Section \ref{sec:S^1xXi_g}) can be distinguished using their Floer homology group with a fixed Lagrangian sphere. From one perspective, this is a weaker invariant than the pseudo-holomorphic annuli counts of Section \ref{sec:annuli}, as it shows that the Lagrangians in the family are different up to Hamiltonian isotopy, rather than arbitrary compactly supported symplectomorphisms; on the other hand, this invariant survives under e.g.~exact symplectic embeddings of Liouville domains. We'll also see that it allows us to distinguish some Lagrangians for which the holomorphic annuli counts were all the same, or for which we didn't define the count (for instance, genus $g$ Lagrangians in $X_r$).

For the standard almost-complex structure $J$, there are no non-constant holomorphic discs in $X_r$ with boundary on $T_{k,l,m}$, as an immediate consequence of the open mapping theorem. In particular, the count of Maslov index two discs with boundary on $T_{k,l,m}$ vanishes (of course this is automatic if $T_{k,l,m}$ is exact or has minimal Maslov number four or more, so we would in fact only have to worry about the case where a `longitude' of $T_{k,l,m}$ has Maslov index two).  This means that the Floer cohomology group of $T_{k,l,m}$ with e.g.~a Lagrangian sphere $L$ is well defined. We will take our Floer cohomology groups to have coefficients in $\C$, and decorate the Lagrangians with a choice of rank one local system, following the set-up in \cite[Chapter 2]{Seidel_book}. In the case where $T_{k,l,m}$ is exact, $HF(T_{k,l,m}, L)$ can be equipped with a $\Z$--grading; in general, it can be equipped with a $\Z/ 2|k-l|$--grading. 

\begin{proposition}\label{prop:floer_coefficients}
Then we can find Lagrangian spheres in $\{x^2+y^4+z^{18}=1\}$, say $S_1, S_2$ and $S_3$, which are vanishing cycles for the singularity $x^2+y^4+z^{18}$ (and thus given by matching cycles in the base of a Lefschetz fibration given by $\epsilon(x,y)+z$ for a small generic linear $\epsilon$), such that for any choice of rank one local system on $T_{k,l,m}$,
$$
\text{rk } HF(T_{k,l,m}, S_1) = m+2k+1; \qquad \text{rk } HF(T_{k,l,m}, S_2) = 2k+1; \qquad \text{rk } HF(T_{k,l,m}, S_3) = 2l.
$$
In particular, the Lagrangians $T_{k,l,m}$ and $T_{k', l', m'}$, constructed in $\{x^2+y^4+z^{18}=1\}$ using the same basic configurations (i.e.~configurations of critical points of type $BB$ and $BC$) cannot be Hamiltonian isotopic whenever $(k,l,m) \neq (k', l', m')$. 
\end{proposition}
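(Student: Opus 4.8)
The plan is to compute each group $HF(T_{k,l,m},S_i)$ by exploiting that both $T_{k,l,m}$ and the spheres $S_i$ are fibred over the base of the Lefschetz fibration on $\{x^2+y^4+z^{18}=1\}$ (working with the $A_3$ configuration, as permitted by Remark \ref{rmk:A_3}): the torus $T_{k,l,m}$ sits above the immersed curve $\gamma_{k,l,m}$ with fibre a vanishing cycle ($b$ or $c$), while each sphere $S_i$ is the matching cycle of an arc $\sigma_i$ joining two type-$a$ critical values, with fibre the vanishing cycle $a$. First I would fix the three matching paths: $\sigma_1$ is the purple arc of Figure \ref{fig:fibred_torus_Tklm2}, which meets $\gamma_{k,l,m}$ in the $m+2k+1$ points $p_1,\dots,p_m,q_{m+1},\dots,q_{m+2k+1}$; $\sigma_2$ is chosen to meet only the $2k+1$ right-lobe points (detecting $k$); and $\sigma_3$ to meet the left lobe in its $2l$ points (detecting $l$). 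In each case, since $a$ is adjacent to both $b$ and $c$ in the $A_3$ chain, one has $|a\cap b|=|a\cap c|=1$, so every intersection of $\sigma_i$ with $\gamma_{k,l,m}$ in the base lifts to exactly one transverse intersection point of $S_i$ with $T_{k,l,m}$. Thus $CF(T_{k,l,m},S_i)$ has exactly $m+2k+1$, $2k+1$, and $2l$ generators respectively.

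The heart of the argument, and the step I expect to be the main obstacle, is showing that the Floer differential vanishes on each complex; this is precisely what forces the rank to be independent of the chosen rank-one local system, since a non-trivial differential would generically change rank once boundary strips are reweighted by holonomy. I would take a fibred almost-complex structure $J$ compatible with the fibration (Lemma \ref{lem:fibration_J} and Remark \ref{rmk:bifibred_J}), so that the projection is $J$-holomorphic and every $J$-holomorphic strip $u$ contributing to the differential projects to a holomorphic strip in $\C$ with boundary on $\gamma_{k,l,m}$ and $\sigma_i$. By the open mapping theorem there are two possibilities. If the base strip is constant, then $u$ lies in a single fibre $\bar M$ with both asymptotics at the \emph{unique} point of $a\cap b$ (or $a\cap c$), so $u$ must be constant, as a non-constant strip requires two distinct asymptotic intersection points. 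If the base strip is non-constant, it is an honest holomorphic bigon bounded by arcs of $\gamma_{k,l,m}$ and $\sigma_i$; I would exclude these by arranging, using the freedom of Section \ref{sec:building_blocks} to isotope the base curve relative to the critical values, that $\gamma_{k,l,m}$ and each $\sigma_i$ cobound no bigon, reinforced by the observation that consecutive base intersections along $\gamma_{k,l,m}$ carry the \emph{distinct, disjoint} fibre curves $b$ and $c$ across the $BC$ moves, so that no strip can close up in the fibre direction. With no non-constant strips, the differential is identically zero and $\mathrm{rk}\,HF(T_{k,l,m},S_i)$ equals the generator count for every local system, yielding the three displayed formulae.

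Finally, for the stated consequence, I note that each rank is a Hamiltonian-isotopy invariant of $T_{k,l,m}$: the groups $HF(T_{k,l,m},S_i)$ are well-defined because, as observed just before the proposition, the open mapping theorem rules out all non-constant holomorphic discs with boundary on $T_{k,l,m}$, so there is no disc bubbling and in particular the Maslov-two disc count vanishes. From the three ranks one recovers the parameters via $l=\tfrac12\,\mathrm{rk}\,HF(T_{k,l,m},S_3)$, $k=\tfrac12\bigl(\mathrm{rk}\,HF(T_{k,l,m},S_2)-1\bigr)$, and $m=\mathrm{rk}\,HF(T_{k,l,m},S_1)-\mathrm{rk}\,HF(T_{k,l,m},S_2)$. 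Hence if $(k,l,m)\neq(k',l',m')$ at least one of these ranks differs, so $T_{k,l,m}$ and $T_{k',l',m'}$ (built from the same $BB$ and $BC$ configurations, so that the $\sigma_i$ meet them in the patterns above) cannot be Hamiltonian isotopic.
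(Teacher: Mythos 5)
Your setup (fibred $J$, projecting strips to the base, the generator counts $m+2k+1$, $2k+1$, $2l$, and the recovery of $(k,l,m)$ from the three ranks) matches the paper, but the crucial step --- ruling out non-constant strips --- rests on a claim that fails. You propose to isotope the base curves so that $\gamma_{k,l,m}$ and each $\sigma_i$ cobound no bigon. This cannot be arranged: the paper's own Figure \ref{fig:Tklm_disc} exhibits exactly such a topological bigon in the base, between $p_1$ and $p_2$, and more generally base strips from $p_i$ to $p_j$ ($j>i$), from $p_i$ to $q_j$, and from $q_i$ to $q_j$ ($j>i$) genuinely exist. They are unavoidable because the strands of the immersed curve $\gamma_{k,l,m}$ must wind around the critical values between consecutive crossings of the matching path, so the complement of the two curves contains embedded strip regions with corners at pairs of intersection points. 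Your backup argument --- that no strip can close up in the fibre direction because consecutive intersections carry the disjoint curves $b$ and $c$ --- also fails for the points that matter: the $m$ points $p_1,\ldots,p_m$ all sit on strands where the fibre class is $b$ (indeed the paper notes they all contribute with the same sign), so a strip between $p_1$ and $p_2$ has identical fibre boundary conditions at both ends and closes up without obstruction.

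What actually kills these candidate strips in the paper is not their non-existence but a Maslov index computation: using Lemma \ref{thm:Maslov_indices}, the topological discs from $p_i$ to $p_j$ have Maslov index zero, and those ending at a $q_j$ have negative index. Hence by monotonicity they have zero or negative symplectic area and admit no non-constant holomorphic representatives (and in any case could not contribute to the index-one differential). The paper also sketches an alternative via the open mapping theorem, comparing with $T_{k,k,m}$ and using the $\Z$-grading in the exact case. Without an argument of this kind --- index/area, or grading --- your proof has a hole precisely at the step you yourself flagged as the main obstacle, since the existence of base bigons means vanishing of the differential is a nontrivial analytic fact, not a consequence of curve positioning. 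The remainder of your proposal (regularity of constant-projection strips, well-definedness of $HF$ via the absence of Maslov two discs, and the deduction that $(k,l,m)\neq(k',l',m')$ obstructs Hamiltonian isotopy) is sound and agrees with the paper.
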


\begin{proof}  Let $S_1$ be the matching cycle $S$ of Figure \ref{fig:fibred_torus_Tklm2}. (Following Remark \ref{rmk:A_3}, we drop the $d$ curve and work in $X = \{x^2+y^4+z^{18}=1\}$.) While $S$ isn't one of the distinguished collection vanishing cycles for $x^2+y^4+x^r$ which are given by matching cycles in Figure \ref{fig:distinguished_matching_cycles}, it is given by applying to one of them (a `type $a$' matching cycle) negative Dehn twists in a sequence of vanishing cycles which appear further down the list in the distinguished collection (these also correspond to `type $a$' matching cycles). In particular, $S$ itself in a vanishing cycle for $x^2+y^4+z^r$. 

The sphere $S=S_1$ intersects $T_{k,l,m}$ in $m+2k+1$ points, labelled $p_1, \ldots, p_m, q_{m+1}, \ldots, q_{m+2k+1}$ on the figure. We claim that the standard almost-complex structure is regular, and that for that choice there are no holomorphic discs between intersection points. 

First, we have that
$$
CF(S, T_{k,l,m}) = \C \langle p_1, \ldots, q_{m+2k+1} \rangle.
$$
Moreover, as a holomorphic disc in $X$ would project to one in $\C$ with the same boundary conditions, single variable complex analysis heavily constrains the possibilities for holomorphic discs: by inspection, the only possible differentials are from $p_i$ to $p_j$ with  $j>i$,  $p_i$ to $q_j$, or $q_i$ to  $q_j$ with $j>i$; see Figure \ref{fig:Tklm_disc} for a possible disc. 

\begin{figure}[htb]
\begin{center}
\includegraphics[scale=0.35]{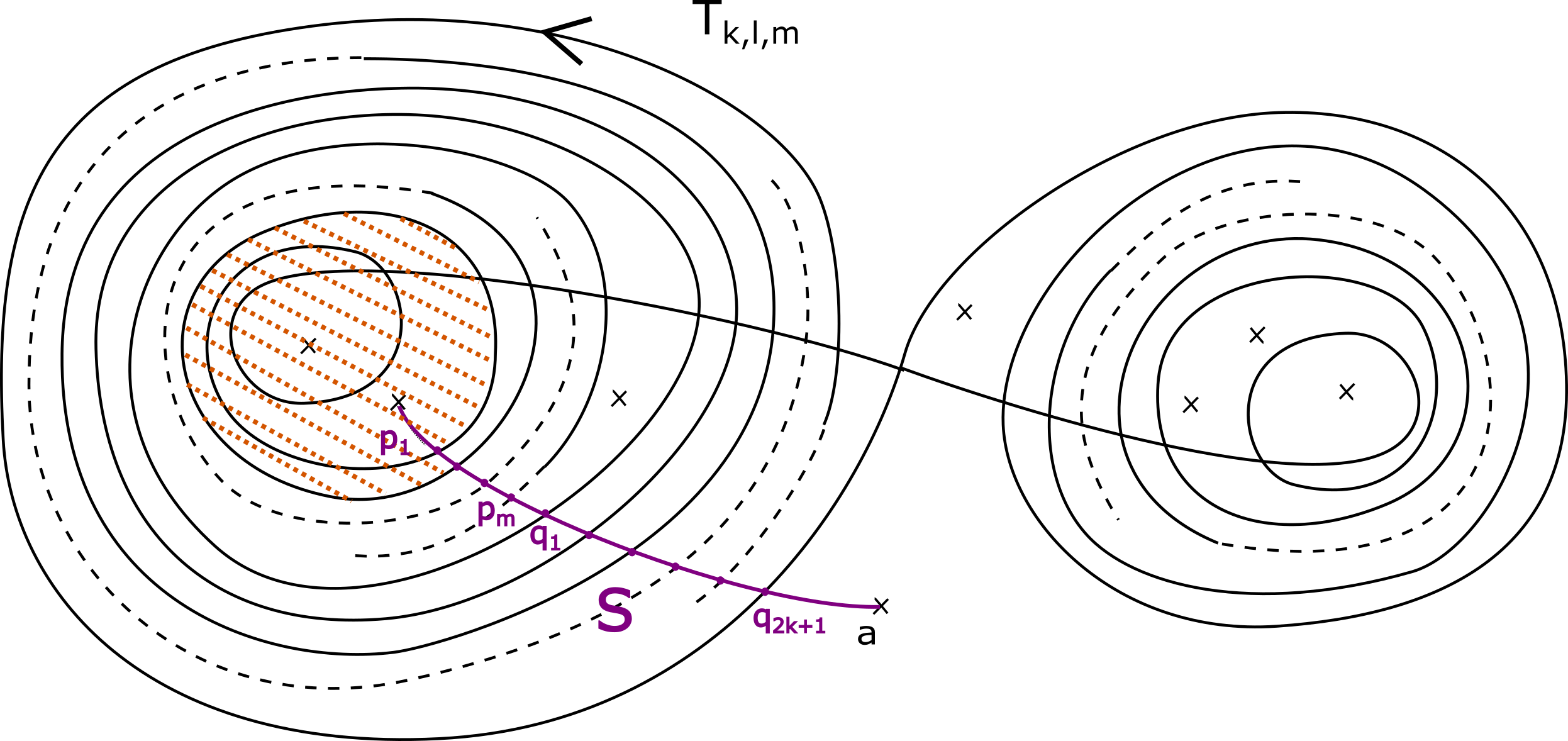}
% or [scale=0.85]
\caption{Image in $\C$ of topological discs between $p_1$ and $p_2$. We've omitted some labels for legibility; these can be found in Figure \ref{fig:fibred_torus_Tklm2}.}
\label{fig:Tklm_disc}
\end{center}
\end{figure} 

On the other hand, the Maslov index calculations of Lemma \ref{thm:Maslov_indices} imply that these topological discs have either Maslov index zero, in the case of $p_i$ and $p_j$, or negative Maslov index, in the case of $p_i$ (or $q_i$) and $q_j$. Thus they must have zero or negative symplectic area, and cannot have holomorphic representatives (even the area zero ones are not constant). (Alternatively, one could note that by also by an application of the open mapping theorem, the holomorphic discs contributing to the differential would be the same for $T_{k,l,m}$ and $T_{k,k,m}$ -- this just uses the fact that the image of such a disc can't cross the inflection point between the two `lobes' of $T_{k,l,m}$ -- and then make use of the $\Z$--grading on $CF(S, T_{k,k,m})$.) This proves the claim for $S_1 = S$.

Going back to Figure \ref{fig:fibred_torus_Tklm2}, we can write down matching cycles $S_2$ and $S_3$, constructed similarly to $S_1$ but starting at a type $a$ critical point in two different $BB$, such that $S_2$ intersects $T_{k,l,m}$ in $2k+1$ points, and $S_3$ in $2l$ points; a completely analogous argument shows that with the standard almost-complex structure there are no holomorphic discs contributing to the differential on $CF(T_{k,l,m}, S_i)$, $i=1,2$, which completes the proof.
\end{proof}

\begin{remark}
If we just cared about determining the triple $(k,l,m)$, the Floer cohomology with $S_1$, together with the Maslov class and the homology class, would suffice -- the characterisation using only Floer groups will be useful later.
\end{remark}

There are clearly similar statements for $R_{n,p,q}$ and $S_{u,v,w}$  the triples $(n,p,q)$ and $(u,v,w)$ are determined by the rank of the Floer cohomology groups of $R_{n,p,q}$ and $S_{u,v,w}$ with some vanishing cycles for $x^3+y^3+z^r$.  Similarly for the parameters definining the Klein bottles of Section \ref{sec:non-orientable} (and plenty of other multi-parameter families of tori or Klein bottles one might construct analogously).

We also get the following corollary.

\begin{corollary}\label{cor:floer_coefficients_genus_g}
Fix any $r$ large enough such that the family $\Lambda_g (\{k_1, l_1, m_1 \}, \ldots, \{k_g, l_g, m_g \})$ can be realised in $X_{2,4,r}$. Then all of the parameters $(k_1, l_1, m_1), \ldots, (k_g, l_g, m_g)$ are determined by the ranks of Floer cohomology groups of $\Lambda_g$ with vanishing cycles for $x^2+y^4+z^r$. 

Similarly for other genus $g$ Lagrangians built using the same techniques, such as $\zeta_g$ of Section \ref{sec:Hamiltonian_monodromy} and $\Xi_g$ of Section \ref{sec:S^1xXi_g} (with vanishing cycles for  $x^3+y^3+z^r$), as well as further variations by taking a connected sum of a mixture of tori and Klein bottles. 
\end{corollary}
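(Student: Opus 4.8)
The plan is to reduce everything to Proposition \ref{prop:floer_coefficients} by a localization argument. Recall that for a single torus $T_{k,l,m}$ that proposition produces three vanishing cycles $S_1, S_2, S_3$ with $\text{rk } HF(T_{k,l,m}, S_1) = m+2k+1$, $\text{rk } HF(T_{k,l,m}, S_2) = 2k+1$ and $\text{rk } HF(T_{k,l,m}, S_3) = 2l$, from which the triple is recovered by the arithmetic $k = (\text{rk}_2-1)/2$, $l = \text{rk}_3/2$, $m = \text{rk}_1 - \text{rk}_2$. It therefore suffices to produce, for each index $i$, an analogous triple of vanishing cycles whose Floer cohomology with $\Lambda_g$ computes $(k_i, l_i, m_i)$.

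First I would observe that, by the construction of Figure \ref{fig:Lambda_g}, $\Lambda_g$ agrees with a copy of the standalone torus $T_{k_i,l_i,m_i}$ away from the two Polterovich surgery regions attaching it to its neighbours in the chain; in particular the base curve $\gamma_{k_i,l_i,m_i}$ sits inside the base of $\Pi_r$ as an identified sub-configuration. For each $i$ I would then place vanishing cycles $S_1^{(i)}, S_2^{(i)}, S_3^{(i)}$ exactly as in Proposition \ref{prop:floer_coefficients}, but inside the sub-configuration belonging to the $i$-th torus, and chosen so that their matching paths avoid the surgery regions. With this positioning one has $S_j^{(i)} \cap \Lambda_g = S_j^{(i)} \cap T_{k_i,l_i,m_i}$, so the chain groups $CF(S_j^{(i)}, \Lambda_g)$ coincide with those of the single-torus computation, with the same generators $p_1, \ldots, p_{m_i}, q_{m_i+1}, \ldots$.

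The heart of the argument is then the same open mapping theorem input as in Proposition \ref{prop:floer_coefficients}: any $J$-holomorphic strip between two such generators projects under $\Pi_r$ to a holomorphic disc in $\C$ whose boundary lies on the projections of $S_j^{(i)}$ and of $\Lambda_g$. The key claim I would establish is that this projected disc cannot escape the region of the $i$-th sub-configuration, i.e.~cannot cross the inflection / connect-sum points separating the lobes of the $i$-th torus from the matching cycles and from the neighbouring tori. Granting this localization, the Maslov-index (equivalently, symplectic-area) bookkeeping of Lemma \ref{thm:Maslov_indices} applies verbatim: the only topological strips carry zero or negative index, hence admit no holomorphic representatives, the differential vanishes, and the ranks are $m_i+2k_i+1$, $2k_i+1$ and $2l_i$. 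In the exact case one may alternatively invoke the $\Z$-grading, exactly as in the proof of Proposition \ref{prop:floer_coefficients}. Either way this recovers each $(k_i, l_i, m_i)$.

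Finally, the remaining cases are formally identical. For $\zeta_g$, $\Xi_g$, and connected sums mixing tori and Klein bottles, each elementary building block ($T_{k,l,m}$, $R_{n,p,q}$, $S_{u,v,w}$, or a Klein-bottle analogue) carries its own triple of probe vanishing cycles --- for $x^3+y^3+z^r$ in place of $x^2+y^4+z^r$ where indicated --- localized to that block as above, and the same projection argument isolates its parameters. The main obstacle throughout is precisely the localization step: verifying that the relevant holomorphic strips stay within a single building block rather than wandering through the surgery necks into adjacent pieces. Once that is established, everything reduces to the single-block computations already in hand.
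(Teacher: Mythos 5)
Your proposal is correct and follows essentially the same route as the paper, whose (one-line) proof of Corollary \ref{cor:floer_coefficients_genus_g} likewise reduces to Proposition \ref{prop:floer_coefficients} by observing that the open mapping theorem applied to $\Pi_r$ still forbids Floer differentials for the standard almost-complex structure after the Polterovich surgeries. The localization step you single out as the main obstacle is exactly the point the paper implicitly relies on (cf.~the parenthetical in the proof of Proposition \ref{prop:floer_coefficients} about discs not crossing the inflection point between lobes, and the analogous remark following Proposition \ref{prop:Floer_coho_3d}), so your elaboration is faithful to, if more explicit than, the paper's argument.
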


\begin{proof}
The Floer cohomology calculations of Proposition \ref{prop:floer_coefficients} are unaffected by the Polterovich surgeries: open mapping theorem type considerations still apply to show that there can be no Floer differentials for the standard almost-complex structure. 
\end{proof}

Note that 
the calculations will remain valid under an exact symplectic embedding of Liouville domains, e.g.~by the integrated maximum principle \cite[Lemma 7.5]{Seidel_book}.
This completes the proof of Theorem \ref{thm:floer_2d}.

Upgrading to complex dimension three, we get the following.

\begin{proposition}\label{prop:Floer_coho_3d}
Fix $n \in \Z_{\geq 0}$, $g \geq 1$, and $r, s$ large enough such that the family of Lagrangians $\gamma_n \times \Xi_g$ of Section \ref{sec:S^1xXi_g} can be constructed in $Y_{r-1,s-1}$. 

Then there are Lagrangian spheres in $Y_{r,s}$, which moreover are vanishing cycles for $x^3+y^3+z^r+w^s$, whose Floer cohomology groups with $\gamma_n \times \Xi_g$ recover the coefficients $\{k_0, l_0, m_0\},$ \ldots, $\{ k_g, l_g, m_g\}$ determining $\Xi_g$.
\end{proposition}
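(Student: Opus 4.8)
The plan is to reduce everything to the two-dimensional computations of Proposition~\ref{prop:floer_coefficients} and Corollary~\ref{cor:floer_coefficients_genus_g} by working inside the bifibration $(P_{r,s},\Pi_r)$ on $Y_{r,s}$. Recall from Corollary~\ref{cor:floer_coefficients_genus_g} that all of the coefficients $\{k_0,l_0,m_0\},\ldots,\{k_g,l_g,m_g\}$ of $\Xi_g\subset X_r$ are already recovered by the ranks of the Floer groups $HF_{X_r}(\Xi_g,S^{(j)})$ for a finite collection of Lagrangian spheres $S^{(j)}\subset X_r$, each a vanishing cycle for $x^3+y^3+z^r$ realised (as in Proposition~\ref{prop:floer_coefficients}) as a matching cycle in the base of $\Pi_r$ obtained from a standard type~$a$ matching cycle by negative Dehn twists; for the standard almost-complex structure these differentials vanish by the open-mapping-theorem-plus-Maslov-index argument given there.

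First I would promote each $S^{(j)}$ to a Lagrangian sphere $\hat{S}^{(j)}\subset Y_{r,s}$. The spare critical points of $P_{r,s}$ available in $Y_{r,s}$ (the reason for requiring $\gamma_n\times\Xi_g$ to be constructible already in the smaller $Y_{r-1,s-1}$) let me take a matching path $\delta^{(j)}$ in the base of $P_{r,s}$ joining two critical values and, by dragging it through intermediate critical points exactly as in the two-dimensional construction of $S^{(j)}$, arrange its associated vanishing cycle in the fibre $X_r$ to be precisely $S^{(j)}$. The resulting matching cycle $\hat{S}^{(j)}$ is an $S^3$, and as a matching cycle between critical values of $P_{r,s}$ it is a vanishing cycle for $x^3+y^3+z^r+w^s$. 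I would further choose $\delta^{(j)}$ to meet $\gamma_n$ transversally in a fixed number $d_j\geq 1$ of points — all away from the self-intersection locus of $\gamma_n$, so that over each crossing $\gamma_n\times\Xi_g$ restricts to a single copy of $\Xi_g$ — and so that $\gamma_n$ and $\delta^{(j)}$ bound no immersed holomorphic bigon in the base.

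The chain-level count then factors through the bifibration: over a crossing point $p\in\delta^{(j)}\cap\gamma_n$ the fibre of $P_{r,s}$ is $X_r$, in which $\gamma_n\times\Xi_g$ restricts to $\Xi_g$ and $\hat{S}^{(j)}$ restricts to $S^{(j)}$, so $\dim CF(\gamma_n\times\Xi_g,\hat{S}^{(j)})=d_j\cdot\dim CF_{X_r}(\Xi_g,S^{(j)})$. For the differential I would use a bifibred almost-complex structure $\tilde{J}$ as in Remark~\ref{rmk:bifibred_J}; any Floer strip $u$ projects under $P_{r,s}$ to a holomorphic strip $v$ in $\C$ with boundary on $\gamma_n$ and $\delta^{(j)}$. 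By the open mapping theorem $v$ is either constant, in which case $u$ lies in one fibre $X_r$ and is a two-dimensional Floer strip between $\Xi_g$ and $S^{(j)}$ for the induced fibrewise structure (and there are none, by the first paragraph), or $v$ is non-constant and hence must be excluded. Granting this, the differential vanishes, $HF(\gamma_n\times\Xi_g,\hat{S}^{(j)})$ has rank $d_j\cdot\dim CF_{X_r}(\Xi_g,S^{(j)})$, and since $d_j$ is a fixed geometric constant independent of the parameters, letting $j$ range over the collection recovers all the coefficients, exactly as in two dimensions. (Well-definedness of the Floer groups is immediate as before: for $\tilde{J}$ there are no non-constant holomorphic discs with boundary on $\gamma_n\times\Xi_g$, so the Maslov-two disc count vanishes.)

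The hard part will be excluding the Floer strips with non-constant base projection $v$. This is the three-dimensional analogue of the delicate disc-exclusion step in Proposition~\ref{prop:floer_coefficients}, now carried out in the base of $P_{r,s}$: one must combine the open mapping theorem with the Maslov-index and area constraints forced by monotonicity to show that a non-constant $v$ would contribute the wrong index or negative area. It is precisely here that the careful choice of $\delta^{(j)}$ — transverse to $\gamma_n$, away from its self-intersections, and bounding no bigon — is used, and checking that such a $\delta^{(j)}$ can always be drawn compatibly with the matching-cycle constraint realising $S^{(j)}$ is the main point requiring hands-on control of the two immersed curves in the base.
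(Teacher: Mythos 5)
Your overall strategy is the paper's: promote the two-dimensional test spheres of Proposition \ref{prop:floer_coefficients} and Corollary \ref{cor:floer_coefficients_genus_g} to matching cycles for $P_{r,s}$, and use the projection to the base to reduce the strip count to the fibrewise one. But there is a genuine gap, and you flag it yourself: the exclusion of Floer strips whose base projection $v$ is non-constant is the crux of the whole proof, and you defer it (``Granting this\ldots'') rather than prove it. The paper makes this step immediate by a choice you pass over: the matching path (the blue arc in Figure \ref{fig:S^1xXi_g_generalMaslov}) is drawn so that it meets $\gamma_n$ transversally in \emph{exactly one} point, away from the self-intersections of $\gamma_n$, with the corresponding matching cycle $\mathbb{S}$ restricting to $S$ and $\gamma_n\times\Xi_g$ restricting to a single copy of $\Xi_g$ in the fibre over that point. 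With a single transverse crossing, all generators lie in the one fibre over that point, every strip's projection has both ends at that point, and a non-constant projection is ruled out at once by the open mapping theorem; hence every strip lies in the fibre $X_r$ and the complex is literally $CF_{X_r}(\Xi_g,S)$, so the result follows from Corollary \ref{cor:floer_coefficients_genus_g}. This is also where the hypothesis about $Y_{r-1,s-1}$ enters in the paper: the spare critical value of $\Pi_r$ supplies the second endpoint of the two-dimensional matching path defining $S$, and the spare critical value of $P_{r,s}$ is what lets one close off the three-dimensional matching cycle with only one crossing of $\gamma_n$.

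Your variant with $d_j\geq 1$ crossings plus a ``no immersed bigon'' condition makes the problem strictly harder than necessary: for $d_j>1$ one must additionally exclude index-one strips connecting generators in fibres over \emph{different} crossing points (whose projections are bigons between arcs of $\delta^{(j)}$ and $\gamma_n$), and you never verify that a bigon-free $\delta^{(j)}$ compatible with the matching-cycle constraint can actually be drawn --- this is exactly the ``main point requiring hands-on control'' that your proposal leaves open, so as written the proof is incomplete. The repair is simply to take $d_j=1$, which the explicit geometry of Figure \ref{fig:S^1xXi_g_generalMaslov} provides; with that single change your argument closes and coincides with the paper's.
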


\begin{proof}
Let $S$ be one of the test Lagrangian 2-spheres used for determining the coefficients of $\Xi_g$ in $X_r$ (this uses the fact that $\Xi_g$ itself can be constructed in $X_{r-1}$, to have a `extra' critical value for the second end of the matching path for the sphere). As it's a vanishing cycle for $x^3+y^3 +z^r$, we can find a matching path in the base of $P_{r,s}$ which intersects $\gamma_n$ in precisely one point, and such that in the fibre above that point, the corresponding matching cycle, say $\mathbb{S}$, restricts to $S$, and $\gamma_n \times \Xi_g$ restricts to $\Xi_g$. It is given back in Figure \ref{fig:S^1xXi_g_generalMaslov} for the case of a general $n$ ($n=0,2$ are similar), and in turn uses the assumption on $s$ to get the `spare' critical value needed to close off that matching cycle.

As the images of $\mathbb{S}$ and $S^1 \times \Xi_g$ in the base of $P_{r,s}$ intersect transversally in a single point, using the standard almost complex structure, it's immediate that the Floer cohomology of $\mathbb{S}$ and $S^1 \times \Xi_g$ is precisely given by the Floer cohomology of $S$ and $\Xi_g$, and the result then follows from Corollary \ref{cor:floer_coefficients_genus_g}. 
\end{proof}

\begin{remark}
The conclusion of Proposition \ref{prop:Floer_coho_3d} applies much more broadly:
\begin{itemize}
\item to families of Lagrangians of the form $S^1 \times \Sigma$, where the surface $\Sigma$ is a connected sum of an arbitrary number of tori and Klein bottles which could be constructed using any of the variations of Remark \ref{rmk:variations_on_Xi_g}. (In particular, the bounds on $r$ and $s$ needed in order to get infinite families of $S^1 \times \Sigma_g$ can be significantly improved from the naive ones one would get using the numbers in Section \ref{sec:S^1xXi_g}.)

\item to families of Lagrangians of the form $L= \#_{i=1}^l S^1 \times \Sigma_i$, obtained by taking Polterovich connected sums of the $S^1 \times \Sigma_i$ with matching spheres. (As before, the surfaces $\Sigma_i$ are themselves the connected sum of an arbitrary number of tori and Klein bottles.) 
This connect sum procedure is the three-dimensional analogue of the constructions $\Lambda_g$, $\Xi_g$ etc, and the same argument applies regarding ranks of Floer cohomology. 
More precisely, just as in the two-dimensional case, consider a matching cycle $\mathbb{S}$ use to detect the parameters used to define $\Sigma_i$, and whose image in the base of $P_{r,s}$ only intersects the image of $S^1 \times \Sigma_i$ for one $i$. The open mapping theorem applies as before to show that discs contributing to the differential on $CF(L, \mathbb{S})$ are exactly the same as the ones contributing to $CF(S^1\times \Sigma_i, \mathbb{S})$. 
\end{itemize}
\end{remark}

The discussion of Floer cohomology groups in the 3-dimensional case has been conducted in $Y_{r,s}$, i.e.~$\{ x^3 + y^3 + z^r + w^s =1 \}$. 
However, if we only care about the conclusions of Theorem \ref{thm:floer_3d}, we claim that it's enough to take $\{ x^2 + y^4 + z^r + w^s =1 \}$, for sufficiently large $r$ and $s$, as ambient space: we only needed $Y_{r,s}$ in Section \ref{sec:S^1xXi_g} in order to construct $\Xi_g$ (Figure \ref{fig:Xi_g}), with the carefully arranged overlay of $S_{k_0, l_0, m_0}$ and $R_{k_1,l_1,m_1}$ above some self-intersection points of $\gamma_{2n}$, in order to get a non-trivial count of holomorphic annuli. Working simply with $T_{k,l,m}$, we can write down similar but simpler constructions in $\{ x^2 + y^4 + z^r + w^s =1 \}$, with the chief difference being that we get, above any self-intersection point of the base $S^1$, $T_{k,l,m} \sqcup \rho (T_{k,l,m}) \subset X_{2,4,r}$ for some $\rho$ such that $\Pi(T_{k,l,m}) \cap \Pi(\rho (T_{k,l,m})) = \emptyset$, where $\Pi:X_{2,4,r} \to \C$ is the same Lefschetz fibration as before.

This completes the proof of Theorem \ref{thm:floer_3d}

\subsubsection{Further diffeomorphism types}\label{sec:further_diffeo_types}
We briefly note how the ideas Sections \ref{sec:finite_order_automorphisms} and  \ref{sec:Hamiltonian_monodromy} might be further developed when the ambient manifold is (for instance) $X_{2,4,r,s}$ for sufficiently large $r$ and $s$. 
For concretness, let $\zeta_g (\{ k_1, l_1, m_1 \}, \{ k_2, l_2, m_2 \})$ be as in Section \ref{sec:finite_order_automorphisms} ($g$ odd as before), and $\rho$ the product of positive Dehn twists in vanishing cycles for $X_{2,4,r}$ such that $\rho$ fixes $\zeta_g$ setwise but acts on it by an order $(g-1)/2$ rotation pointwise. Now note that at the cost of enlarging $r$, we can use Propositions \ref{prop:Phi_definition} and \ref{prop:Phi_Dehntwists} to find another product of positive Dehn twists in vanishing cycles for $X_{2,4,r}$, say $\sigma$, such that $\sigma (\zeta_g) \cap \zeta_g = \emptyset$; and also $\tilde{\sigma}$, again a product of \emph{positive} Dehn twists in vanishing cycles, such that $\tilde{\sigma} \sigma (\zeta_g) = \zeta_g$, where the latter identity holds pointwise (after deforming by a suitable Hamiltonian isotopy). 

Taking $s$ to be very large, we can now use $\rho, \sigma$ and $\tilde{\sigma}$ to mimick the constructions of Section \ref{sec:S^1xXi_g} to get monotone Lagrangians in $X_{2,4,r,s}$ which are $\Sigma_g$ bundles over $S^1$ (as always immersed in the base of $P_{r,s}$), with monodromy given by $\rho$. (Recall that $X_\mathbf{a}$ is defined in general at the start of Section \ref{sec:preliminaries}). The base $S^1$ can be arranged to have arbitrary even Maslov index; and Floer cohomology with matching cycles in $Y_{2,4,r,s}$ allows us to distinguish such Lagrangians up to isotopy. We can also use matching cycles to take connected sums of several copies of such Lagrangians, and / or with Lagrangians of the form $\Sigma_g \times S^1$; and one can come up with variations replacing $\Sigma_g$ with certain connected sums of tori and Klein bottles. (Also, dropping monotonicity would readily allow for e.g.~the case where $g$ is even.)

For concretness, we record the one exact case, using the Maslov index calculation of Section \ref{sec:finite_order_automorphisms}.

\begin{proposition}\label{prop:further_diffeo_types}
Let $E$ be the $\Sigma_3$ bundle over $S^1$ with monodromy of order two described above, and let $L$ be an arbitrary connected sum of copies of $E$ and $ S^1 \times \Sigma_g $, where $g$ can vary. Then for all sufficiently large $r$ and $s$, there exists an infinite family of exact monotone Lagrangians in $Y_{2,4,r,s}$ which are diffeomorphic to $L$, homologous, and distinct up to Hamiltonian isotopy. This is preserved under exact symplectic embeddings of $Y_{2,4,r,s}$.
\end{proposition}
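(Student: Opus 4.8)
The plan is to assemble the constructions and invariants already developed, the only genuinely new input being the bookkeeping that makes the exact case work. First I would realise $E$ as an exact Lagrangian $\Sigma_3$--bundle over an immersed $S^1$ in the base of $P_{r,s}$. Following the discussion above and the remark at the end of Section~\ref{sec:finite_order_automorphisms}, build the genus--$3$ surface $\zeta_3$ using type~$a$ matching cycles, so that the base rotation $\rho$ of $\Pi_{r'}$ which fixes $\zeta_3$ setwise acts on it as a mapping class of order $g-1=2$; the associated longitude has Maslov index $2(3-g)=0$ at $g=3$, which is exactly the coincidence that will permit exactness. Proposition~\ref{prop:Phi_definition} and Proposition~\ref{prop:Phi_Dehntwists} express $\rho$, together with the auxiliary maps $\sigma$ and $\tilde\sigma$ (with $\sigma(\zeta_3)\cap\zeta_3=\emptyset$ and $\tilde\sigma\sigma(\zeta_3)=\zeta_3$ pointwise), as products of \emph{positive} Dehn twists in vanishing cycles for $X_{2,4,r}$. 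Mimicking the construction of Section~\ref{sec:S^1xXi_g}, these monodromies let me fibre $\zeta_3$ over an immersed $S^1$ in the base of $P_{r,s}$ to produce the $\Sigma_3$--bundle $E\subset Y_{2,4,r,s}$ with monodromy $\rho$.

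To obtain the general diffeomorphism type $L$, I would then take Polterovich connected sums, along matching--cycle spheres, of copies of $E$ with copies of exact $S^1\times\Sigma_g$; the latter are realised as $\gamma_0\times\Xi_g$, where $\gamma_0$ (the Maslov--zero base curve of Figure~\ref{fig:S^1xXi_g_Maslov0}) gives the $S^1$ factor and $\Xi_g$ is built from exact tori $T_{k,k,m}$. Exactness of the whole is then arranged as follows: all disc classes lie in $\pi_2(Y_{2,4,r,s},L)\cong\pi_1(L)$, and their symplectic areas factor through $H_1(L)$; the base immersed $S^1$'s can be chosen to enclose zero signed area and the lobes adjusted (using Lemma~\ref{lem:add_base_form} and Lemma~\ref{lem:fibration_J}) so that every such area vanishes, while the Maslov class vanishes by the index computations of Lemma~\ref{thm:Maslov_indices} together with the longitude index $2(3-g)=0$. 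Since the surgeries are supported near isolated transverse intersection points of matching--cycle spheres with the fibred Lagrangians, they can be performed preserving exactness. This yields an exact Lagrangian diffeomorphic to $L$, with vanishing Maslov class, hence trivially monotone.

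The infinite homologous family is produced by varying the discrete data of the constituent tori while freezing the soft invariants. In the exact case at hand I would keep each torus building block of the form $T_{k,k,m}$ (so that its Maslov contribution vanishes, by Lemma~\ref{thm:Maslov_indices}), keep $m$ fixed and $k\geq 1$ (fixing the homology class, by the earlier homology computation for $T_{k,l,m}$), and let $k\to\infty$. Every member is then diffeomorphic to the same $L$, lies in the same homology class, and is exact with vanishing Maslov class.

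Finally I would distinguish the members up to Hamiltonian isotopy. By Proposition~\ref{prop:Floer_coho_3d} and Corollary~\ref{cor:floer_coefficients_genus_g}, there are matching--cycle $2$--spheres in $Y_{2,4,r,s}$, themselves vanishing cycles for $x^2+y^4+z^r+w^s$, whose images in the base of $P_{r,s}$ meet the image of a single summand transversally in one point; the open mapping theorem shows that the standard almost complex structure is regular with no Floer differentials, so the ranks of the Floer cohomology groups recover the parameters $(k,k,m)$ of that summand. Distinct parameters give distinct ranks, so the Lagrangians are pairwise non--Hamiltonian--isotopic; and, exactly as for Theorem~\ref{thm:floer_3d}, these Floer computations survive exact symplectic embeddings of $Y_{2,4,r,s}$ by the integrated maximum principle \cite[Lemma~7.5]{Seidel_book}. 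The main obstacle is the first step: because $\zeta_3$ with type~$a$ cycles is \emph{not} monotone in $X_r$, one must verify that its $S^1$--bundle $E$, with nontrivial order--two monodromy, can genuinely be deformed so that all of its disc areas vanish at once --- this is where the Maslov--zero longitude (the genus--$3$ coincidence) and the area--control lemmas are essential --- and one must check that the no--disc argument underlying the Floer computation, which was set up for product Lagrangians, still applies verbatim to the bundle $E$ and to the connected sum.
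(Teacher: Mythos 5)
Your proposal is correct and follows essentially the same route as the paper, whose own justification of Proposition \ref{prop:further_diffeo_types} is simply the sketch preceding it in Section \ref{sec:further_diffeo_types}: build $E$ from $\zeta_3$ and the maps $\rho, \sigma, \tilde{\sigma}$ supplied by Propositions \ref{prop:Phi_definition} and \ref{prop:Phi_Dehntwists}, fibre over an immersed $S^1$ in the base of $P_{r,s}$ as in Section \ref{sec:S^1xXi_g}, take Polterovich connected sums, and distinguish the members by Floer cohomology with fixed matching cycles (as in Corollary \ref{cor:floer_coefficients_genus_g} and Proposition \ref{prop:Floer_coho_3d}), with persistence under exact embeddings via the integrated maximum principle. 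You are in fact more careful than the text at the one delicate point: reconciling genus $3$ with order-two monodromy forces the type-$a$ variant from the remark in Section \ref{sec:finite_order_automorphisms} (the main construction only yields an order $(g-1)/2$ rotation, which is trivial at $g=3$), and the exactness bookkeeping you flag --- tuning all disc areas to zero, made possible by the Maslov-zero longitude $2(3-g)=0$ --- is precisely what the paper asserts without detail.
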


\begin{remark}
We can't hope to get non-zero holomorphic annuli counts for non-trivial $\Sigma$ bundles over $S^1$ without significantly refining our constructions:  the set-ups of Figures \ref{fig:Zeta_g} and \ref{fig:Xi_g} can't readily be amalgamated while preserving all the features one would need.
\end{remark}

\subsection{Floer cohomology for families of 2-dimensional Lagrangians and comparison with cluster mutations}\label{sec:no_mutations}

Aside from acting by symplectomorphisms, in the two-dimensional case a well understood technique for getting new Lagrangians from old ones is to use disc surgeries, also known as geometric mutations. 
More precisely, given a Lagrangian surface $L$ in a symplectic four-manifold together with a Lagrangian disc with boundary on it, one can construct a new Lagrangian surface $L'$ via so-called `disc surgery' on the original one \cite{MLYau_surgery}. If $L$ is monotone, then under suitable conditions $L'$ is too; similarly with exactness. 
This construction has been shown to have rich connections with the theory of cluster mutations, explored e.g.~in \cite{STW, Pascaleff-Tonkonog}, where the disc surgeries are refered to as (geometric) mutations. 

The torus case is particularly well understood; we follow the discussion in \cite{Pascaleff-Tonkonog}. Given a Lagrangian seed, i.e.~a  monotone Lagrangian torus $T$ together with a collection of Lagrangian discs with boundary on $T$
\cite[Definition 4.7]{Pascaleff-Tonkonog}, 
the authors explain how to iteratively perform mutations on the torus \cite[Definition 4.9]{Pascaleff-Tonkonog}. Moreover, the local model for a single mutation is given by passing from the Clifford to the Chekanov torus in $\C^2 \backslash \{ xy=1 \}$ (see \cite[Sections 4.5--4.7]{Pascaleff-Tonkonog}). Further, the Floer cohomology for this local model is known, see e.g.~the calculation in \cite[Proposition 11.8]{Seidel_lectures}, which closely follows \cite{Auroux-tduality}. In this local model, depending on the choice of rank one local systems, the two tori are either isomorphic or the Floer cohomology between them vanishes. (The matching of local systems to get non-zero Floer cohomology is what gives the wall-crossing formula.)  In the exact case, the same remains true of $T$ and its geometric mutation $T'$.

Yau also introduces disc surgeries on higher genus Lagrangian surfaces \cite{MLYau_surgery}; they are also included in the discussion in \cite[Section 2]{STW} (though the reader may wish to recall the caveats of e.g.~\cite[Sections 1.2.4 and 2.2]{STW} regarding iterations of mutations). Note that there isn't a single model for disc surgery in this case, as it depends on the isotopy class of the boundary of the disc on the Lagrangian. However, given the result on local systems and microlocal sheaves of \cite[Equation 1]{STW}, one might nonetheless expect that depending on the choice of rank one local system, an exact Lagrangian surface $L$ and its geometric mutation $L'$ are either isomorphic or have Floer cohomology zero.

In contrast, our constructions yield the following.

\begin{theorem}\label{thm:no_mutations}

Let $L$ be a  copy of $T_{k,l,m}$  with monotonicity constant $\kappa$, and, for $\lambda > 0$, let $L'$ be a copy of   $T_{k+\lambda, l+ \lambda, m}$ with monotonicity constant $\kappa'$, constructed using the same basic configurations in $X_{u,v,r}$, for some  $u,v $ such that $ 1/u + 1/v \leq 2/ 3$ and $r \geq 18$.

Let $(\alpha, \beta) \in (\C^\ast)^2$ parametrise choices of rank one local systems on $L$, with respect to the same basis for $H_1(L, \Z)$ as in Lemma \ref{thm:Maslov_indices}; similarly, we'll use $(\alpha', \beta') \in (\C^\ast)^2)$ for $L'$. Then we have that 

$$
\text{rk } HF((L, (\alpha, \beta)), (L', (\alpha', \beta')) =
\begin{cases}
2 \lambda & \text{ if } \beta = \beta' \\
2 \lambda & \text{ if } \beta = -\beta' \\
0 & \text{ otherwise } 
\end{cases}
$$
This property is preserved under exact symplectic embeddings of Liouville domains. Moreover, in the case where $u=v=3$, we can combine this with Theorem \ref{thm:main_2d} to get families of Lagrangian tori not related by compactly supported symplectomorphisms, in particular Dehn twists. 

\end{theorem}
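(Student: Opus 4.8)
The plan is to compute the Floer complex directly from the Lefschetz fibration $\Pi : X_{u,v,r} \to \C$, exactly in the style of Proposition \ref{prop:floer_coefficients}, treating $L = T_{k,l,m}$ and $L' = T_{k+\lambda,l+\lambda,m}$ as Lagrangians fibred over the immersed base curves $\gamma_{k,l,m}$ and $\gamma_{k+\lambda,l+\lambda,m}$. First I would make $L$ and $L'$ transverse by taking the base curve of $L'$ to be a small parallel pushoff of that of $L$ in the looping regions, so that every intersection point of $L$ and $L'$ projects to an intersection point of the two base curves. Above such a point the two Lagrangians restrict to vanishing cycles (parallel transports of the meridian $V$), and, just as in the disjointness arguments for $R_{k,l,m}$ and $S_{n,p,q}$, an honest intersection point of $X_{u,v,r}$ survives only over those base points where the two fibre cycles \emph{coincide}. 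A direct count against Figure \ref{fig:fibred_torus_Tklm2} should show these relevant base points split into two families of size $\lambda$, coming respectively from the $\lambda$ extra loops in the left ($k$) lobe and the $\lambda$ extra loops in the right ($l$) lobe; over each of them the two copies of $V$ meet in two points, giving $4\lambda$ generators in total.

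Next I would cut the differential down to a fibrewise one. Since $\Pi \circ u$ is holomorphic with boundary on $\gamma_{k,l,m} \sqcup \gamma_{k+\lambda,l+\lambda,m}$, the open mapping theorem together with the Maslov-index (hence signed-area) bookkeeping of Lemma \ref{thm:Maslov_indices} rules out any strip whose base projection is nonconstant of positive area, exactly as in the proof of Proposition \ref{prop:floer_coefficients}. Consequently the complex splits as a direct sum of purely vertical pieces, one over each relevant base point, and each such piece is the self-Floer complex of the circle $V$ inside the smooth fibre (under the standing hypothesis $1/u+1/v \le 2/3$ the fibre has enough topology for $V$ to be a curve whose self-Floer cohomology is rank two). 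This is precisely the Clifford/Chekanov local computation in $\C^2\setminus\{xy=1\}$ recalled above, see \cite[Proposition 11.8]{Seidel_lectures} and \cite{Auroux-tduality}: the two fibrewise generators either cancel or survive according to whether the meridional holonomies agree.

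The content of the $\beta = \pm\beta'$ dichotomy is that the relative orientation of $V$ flips between the left-lobe and right-lobe base points, a consequence of the orientation reversal of $b$ across a $BB$ configuration (Figure \ref{fig:bb2}). Tracking holonomies, the fibrewise differential over a relevant point should have coefficient proportional to $\beta-\beta'$ at the left-lobe points and to $\beta+\beta'$ at the right-lobe points, with the longitudinal holonomies $\alpha,\alpha'$ contributing only an overall unit. Hence when $\beta = \beta'$ the $\lambda$ left-lobe pieces survive (rank $2$ each) while the right-lobe ones cancel; when $\beta = -\beta'$ the roles reverse; and for every other value all pieces are acyclic. Either way the rank is $2\lambda$, and otherwise $0$, which is the stated formula. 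I expect this orientation/holonomy bookkeeping to be the main obstacle: pinning down the two coefficients as $\beta\mp\beta'$ (rather than, say, $\beta\beta'^{-1}\mp1$) and getting the $\pm$ signs consistent with the chosen spin structures and the $\Z/2|k-l|$ grading is the only genuinely delicate point, the geometric input being otherwise identical to Proposition \ref{prop:floer_coefficients}.

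Finally, the two invariance clauses. Preservation under exact symplectic embeddings of Liouville domains follows as for Corollary \ref{cor:floer_coefficients_genus_g}: the whole computation uses only the standard $J$, and curves are confined by the open mapping theorem and the integrated maximum principle \cite[Lemma 7.5]{Seidel_book}, so none can escape into the enlarged domain. For the last clause I would specialise to $u=v=3$, so that $X_{3,3,r}=X_r$, and invoke Theorem \ref{thm:main_2d}: the Maslov-zero annulus count distinguishes $T_{k,l,m}$ (count $k$) from $T_{k+\lambda,l+\lambda,m}$ (count $k+\lambda$), so no compactly supported symplectomorphism, and in particular no product of Dehn twists, can carry one to the other. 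Combined with the rank computation, which for $\lambda$ large exceeds the rank $0$ or $2$ permitted by the mutation local model and so forbids any sequence of geometric mutations, this exhibits tori related neither by geometric mutation nor by compactly supported symplectomorphisms.
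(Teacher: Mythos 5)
Your proposal is correct and follows essentially the same route as the paper's proof: perturb to a fibred position with $2\lambda$ base intersection points carrying $4\lambda$ generators, kill all non-fibrewise strips via the open mapping theorem plus the Maslov-index bookkeeping of Lemma \ref{thm:Maslov_indices}, reduce to the $T^\ast S^1$ local model whose two strips contribute $\beta \mp \beta'$ according to the relative orientation of the meridian (reversed across $BB$ configurations), and conclude invariance and the $u=v=3$ clause exactly as you do. The only cosmetic difference is in the orientation bookkeeping: the paper reads the $\lambda$-plus-$\lambda$ split of same- versus opposite-orientation intersection fibres directly off its figure (the green/blue dots of Figure \ref{fig:Tklm_HF}) rather than deducing it from a strict left-lobe/right-lobe dichotomy, but this does not affect the count or the rank formula.
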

 
\begin{remark}\label{rmk:no_mutations}
If instead $r \geq 18g$ (or greater), one can similarly go about calculating the Floer cohomology groups of e.g.~the genus $g$ Lagrangians $\Lambda_g (T_{k_1, l_1, m_1}, \ldots, T_{k_g, l_g, m_g})$ for varying values of $k_i, l_i$ and $m_i$. While we don't do this in detail here, it will be clear from the proof of Theorem \ref{thm:no_mutations} that one can obtain infinite families of homologous monotone genus $g$ Lagrangians (all with the same Maslov class and monotonicity constant) such that the Floer cohomology between them, for suitable rank one local systems, takes arbitrary large rank finite rank. 
\end{remark}

\subsubsection*{Discussion}
There is a general expectation that the smoothing of the singularity $x^u + y^v + z^r$ (i.e.~its Milnor fibre) should be mirror to a resolution of this singularity: Brieskorn--Pham  singularities fall into the framework of Berglund--Hubsch \cite{BH93}, and are their own Berglund--Hubsch transposes; some versions of mirror symmetry have been proved in \cite{Futaki-Ueda, Favero-Kelly}, or \cite{Chan-Ueda} in the case of $A_n$. 
In particular, one might expect the  (wrapped) Fukaya category of $X_{u,v,r}$ to be mirror to a category of coherent sheaves associated with the resolution of the singularity $x^u+y^v+z^r$. 
The mirror-symmetric counterpart to the statement about mutations is that given their Floer-theoretic properties, the family of tori we construct can't correspond to cluster charts on some (fixed) mirror variety, or more generally to structure sheaves of points of $(\C^\ast)^2$ affine charts inside such a mirror variety.
This contrast with what we understand of a number of families of tori in two-dimensional examples, e.g.~in the 
case of $A_n$ \cite{Auroux-tduality, Chan-Ueda, Lekili-Maydanskiy}, for $\C P^2$ \cite{Auroux-tduality, Vianna_CP2} or del Pezzo surfaces \cite{AKO, Vianna_delPezzo}, and arguably most remarkably for log CY surfaces \cite{GHK_birational, GHKK, Pascaleff}; cluster structures associated to Grasmannians have also recenty been used to construct families of exotic Lagrangian tori in them \cite{Castronovo}.

\begin{proof}
We'll again use fibredness over $S^1$ to apply tools from standard complex analysis. First, we note that irrespective of $\kappa$ and $\kappa'$, we can always pick Hamiltonian deformations of $L$ and $L'$ which are still fibred, and such that the intersection between them is naively minimal, as given in Figure  \ref{fig:Tklm_HF} in the case of $T_{2,2,0}$ and $T_{1,1,0}$: namely, we first arrange for there to be $2 \lambda$'s worth of fibrewise $S^1$ intersections, and then deform each of these $S^1$s in the fibre direction to get two intersection points (the local model in the fibre is the zero-section in $T^\ast S^1$ and a small Hamiltonian perturbation of it). 
Given a fibred $T_{k+ \lambda, l+ \lambda, m}$, one can draw a fibred $T_{k,l,m}$ by using its projection to $\C$ as a guide, running closely parallel to it and then `skipping' some twists in both the left and right-hand sides lobes. (This was done in  Figure \ref{fig:Tklm_HF}.)
 To see that one needn't worry about possible variations in $\kappa$ and $\kappa'$, note that one can first take a Hamiltonian deformation of $L'$ (in particular, preserving $\kappa'$) such that the relative sizes of the  area contribuations from the parts of the left and right lobes that get `skipped' are arbitrary.

\begin{figure}[htb]
\begin{center}
\includegraphics[scale=0.4]{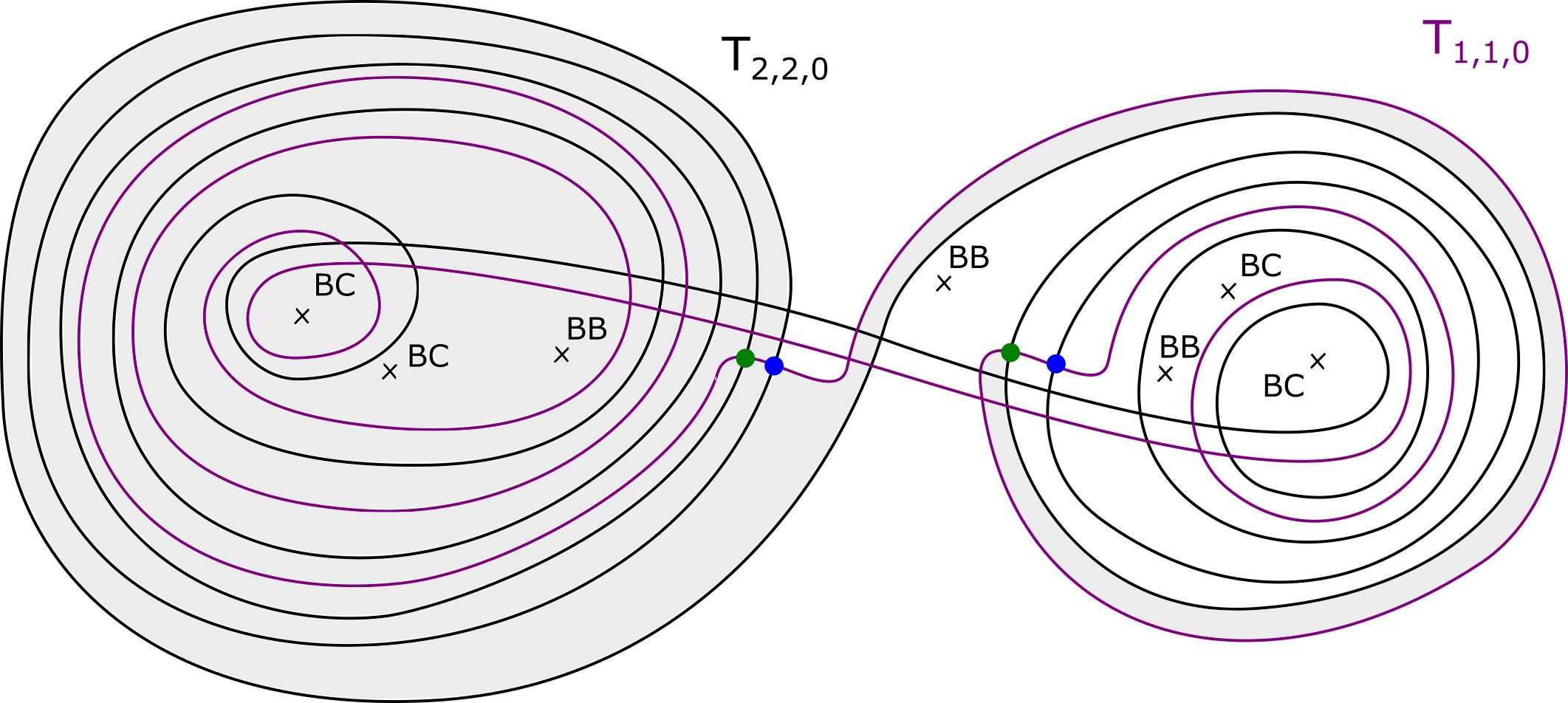}
% or [scale=0.85]
\caption{Calculating $HF(T_{2,2,0}, T_{1,1,0})$. There are two intersection points above each of the marked dots, in each case coming from perturbing a fibrewise $S^1$. The colour of the dot encodes the orientation of the fibre type $b$ cycle for $T_{2,2,0}$, using the conventions of Figure \ref{fig:fibred_torus_Tklm2}. The shaded region is a potential projection of a holomorphic disc. 
}
\label{fig:Tklm_HF}
\end{center}
\end{figure}

With respect to these choices, $CF((L, (\alpha, \beta)), (L', (\alpha', \beta'))$ has $4 \lambda$ generators. Let's calculate the differential. Work with the standard $J$. 
Suppose there's a holomorphic disc $u: D \to X_{u,v,r}$ with boundary on $L$ and $L'$. We use the same trick as before: $\Pi \circ u: D \to \C$ must be a holomorphic map with boundary on $\Pi(L), \Pi(L')$. We claim that such holomorphic maps to $\C$ must be constant. 

Let us first consider potential holomorphic discs between two intersection points both on the left lobe. Start with the example of Figure \ref{fig:Tklm_HF}. 
Let $m_1, M_1$ be the intersection points above the left green dot, and $m_2, M_2$ be the ones above the left blue dot, ordered so that $\text{deg} (M_i) = \text{deg} (m_i) +1$.  
Using holomorphicity of $\Pi \circ u$, we see that  the only possible discs with non-constant projections have projections similar to the one in Figure \ref{fig:Tklm_disc}, which, given our ordering of $L$ and $L'$, would in our case give a differential from $m_1$ or $M_1$ to $m_2$ or $M_2$. 
(To rule out other discs, it's perhaps easiest to note that the boundary of the projection of any other potential holomorphic disc would differ from the one already described by an integer multiple of at least one of the two immersed $S^1$s in the base.) 

On the other hand, the  Maslov index calculations in the proof of \ref{thm:Maslov_indices} show that our potential disc would have index $-1$ if it were from $M_1$ to $M_2$, or from $m_1$ to $m_2$; and index zero from $M_1$ to $m_2$. 
If we generalise to work instead with $\lambda > 1$, we now see that the indices of potential holomorphic discs (still between intersection points both on the left lobe) would become more and more negative. 

The situation for holomorphic discs between intersection points both on the right lobe is completely analogous. Finally, let us consider holomorphic discs going between the two lobes. Holomorphicity of the projection map now implies that the point in right-hand lobe must project to the outermost green point; the shaded region in Figure \ref{fig:Tklm_HF} gives the potential holomorphic disc projection; now notice that such configurations have already been studied when thinking about the case with both intersection points in the left-hand lobe.

This mean for index reasons, we only need to worry about holomorphic discs whose image is contained entirely in a fibre of $\Pi$. There are $4 \lambda$ such discs, clearly regular: two in each of the fibres where $L$ and $L'$ intersect.
Tracking local systems, there are two possible configurations: the meridian in $T^\ast S^1$ and a push-off with the same orientation (green dot), in which case the contributions from the two discs cancel themselves out precisely when  $\beta = \beta'$; 
and the meridian in $T^\ast S^1$ and a push-off with the opposite orientation (blue dot), in which case the contributions from the two discs cancel themselves out precisely when  $\beta = - \beta'$.
\end{proof}

\begin{remark}
One could simplify the argument above, at least in the exact case (where the Lagrangians carry an absolute grading) at the cost of increasing $r$ and modifying the construction of $T_{k,l,m}$, for instance by adding two $BB$ type elementary configurations in each lobe to ensure that the indices of the intersection points were all further apart. 
\end{remark}

\begin{remark}
By constructing Lagrangians whose projections to $\C$ are disjoint, we can of course also construct (finite) families of Lagrangians all of which are Floer-theoretically disjoint.
\end{remark}

\subsection{Floer cohomology within families of 3-dimensional Lagrangians} \label{sec:floer_3d_family}

With suitable care, one would expect to be able to leverage the bifibration on $Y_{r,s}$, together with the fibred nature of the Lagrangians we construct, to calculate the Floer cohomology groups between at members of various families of Lagrangians from Section \ref{sec:S^1xXi_g}. We record the following observation for the exact case.

\begin{proposition}
Fix $g$. For suitably large $r, s$, there exists an infinite family of homologous, monotone exact Lagrangians of the form $S^1 \times \Sigma_g$ inside $Y_{r,s}$ such that for suitable choices of rank one local systems, the Floer cohomology between members of that family can take arbitrarily large rank.
\end{proposition}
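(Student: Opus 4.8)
The plan is to reduce the three-dimensional computation to the two-dimensional one already carried out in Theorem \ref{thm:no_mutations}, exploiting the fibred structure of both the ambient space and the Lagrangians. Concretely, I would take the family of Lagrangians to be of the form $\gamma_0 \times \Xi_g$ inside $Y_{r,s}$, where $\gamma_0$ is the winding-number-zero curve of Figure \ref{fig:S^1xXi_g_Maslov0} (whose $S^1$ factor has Maslov index zero, so that exactness is possible), and where the surface factor $\Xi_g$ varies through the parameters $\{k_i,l_i,m_i\}$ as in Section \ref{sec:S^1xXi_g}. More precisely, one should arrange two members $L = \gamma_0 \times \Xi_g$ and $L' = \gamma_0 \times \Xi_g'$ so that the surface factors $\Xi_g$ and $\Xi_g'$ differ in exactly the way that $T_{k,l,m}$ and $T_{k+\lambda, l+\lambda, m}$ differ in Theorem \ref{thm:no_mutations}; then the rank of $HF$ between the surface factors inside $X_r$ can be made an arbitrarily large $2\lambda$ by suitable choice of rank one local systems.

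The key analytic step is to show that the Floer differential on $CF(L, L')$ is governed entirely by the fibre direction, so that $HF(L,L')$ factors as a (graded) tensor product of the $S^1$ contribution with the surface contribution. First I would perturb $\gamma_0$ to a parallel copy $\gamma_0'$ in the base of $P_{r,s}$ meeting it transversally, so that $L$ and $L'$ meet in fibres lying over the intersection points of $\gamma_0$ and $\gamma_0'$; over each such point the intersection is a copy of $\Xi_g \cap \Xi_g'$ inside the fibre $X_r$. Then, working with the standard almost complex structure, which makes both $P_{r,s}$ and $\Pi_r$ pseudo-holomorphic (Lemma \ref{lem:fibration_J} and Remark \ref{rmk:bifibred_J}), I would run the open mapping theorem argument twice: any holomorphic strip in $Y_{r,s}$ projects under $P_{r,s}$ to a holomorphic strip in $\C$ with boundary on $\gamma_0, \gamma_0'$, and single-variable complex analysis together with the disjointness/area considerations of Theorem \ref{thm:no_mutations} forces this projection to be constant, so the strip lives in a single fibre $X_r$; there one applies the computation of Proposition \ref{prop:floer_coefficients} and Theorem \ref{thm:no_mutations} verbatim. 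This shows $HF(L,L') \cong HF(\gamma_0, \gamma_0'; T^\ast S^1) \otimes HF(\Xi_g, \Xi_g')$ for matched local systems, whence the rank is an arbitrarily large multiple of $2\lambda$.

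I would then invoke Corollary \ref{cor:floer_coefficients_genus_g} to confirm that the surface-factor computation of Theorem \ref{thm:no_mutations} applies to $\Xi_g$ (rather than just $T_{k,l,m}$), with the parameters detected by Floer cohomology with the appropriate test spheres, and use the monotonicity/area adjustments of Section \ref{sec:building_blocks} to arrange that all members are homologous, exact, with the same Maslov class. Finally, distinctness within the family and the fact that the ranks grow follow from choosing $\lambda$ arbitrarily large, while exactness is guaranteed because the Maslov index of $\gamma_0$ is zero and the surface pieces can be taken exact.

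\emph{The main obstacle} I expect is twofold. First, I must genuinely rule out holomorphic strips whose base projection is nonconstant: the argument of Theorem \ref{thm:no_mutations} rules these out by a combination of the open mapping theorem and negative-Maslov-index/negative-area bookkeeping, and one has to check carefully that in three dimensions the contribution splits as claimed and that no strip escapes the fibre. Second, and more delicately, one must ensure that the \emph{grading} bookkeeping is compatible, so that the fibre-direction cancellations (the $\beta = \pm\beta'$ dichotomy) genuinely produce the stated rank rather than collapsing under the differential once the $S^1$-factor $HF(\gamma_0,\gamma_0';T^\ast S^1)$ is tensored in; here the absolute $\Z$-grading available in the exact case (as exploited in Theorem \ref{thm:no_mutations} and the remark following it) is what makes the splitting rigorous, and the argument of Theorem \ref{thm:no_mutations} transplants with the modifications already foreseen there.
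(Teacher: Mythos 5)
Your overall strategy is the paper's own: take $\gamma_0 \times \Xi_g$ for varying surface parameters, perturb one member so that it fibres over a nearby curve $\gamma_0'$ in the base of $P_{r,s}$, and use the fibred structure together with open-mapping and index considerations to reduce the Floer differential to the fibrewise discs already computed in the proof of Theorem \ref{thm:no_mutations}. However, there is a genuine error in your key analytic step: the open mapping theorem does \emph{not} force the $P_{r,s}$-projection of a holomorphic strip to be constant. The constancy argument you import works for the annulus counts of Proposition \ref{prop:regularity_monotone} because there the prescribed boundary homology classes force the winding number $c$ to vanish; for a Floer strip between intersection points lying over the \emph{two different} intersection points of $\gamma_0$ and $\gamma_0'$, the projection covers one of the two lune regions bounded by the base curves, and such strips genuinely exist. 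The paper's proof handles exactly these: above each pair of corresponding intersection points there are two holomorphic discs, forming the standard configuration in $T^\ast S^1$ (a neighbourhood of whose zero section is immersed in the base of the fibration), and their contributions cancel precisely for suitably matched rank one local systems along the $S^1$ factor. Your constancy claim is also internally inconsistent with your own tensor-product formula: if every strip lay in a single fibre, then $HF(L,L')$ could not depend on the holonomies along the $\gamma_0$ factor, whereas $HF(\gamma_0, \gamma_0'; T^\ast S^1)$ vanishes for unmatched holonomies and has rank two for matched ones.

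The repair is what the paper does, and what your ``main obstacle'' paragraph gestures towards without resolving: use the open mapping theorem only to classify nonconstant base projections as the two lunes, observe that the corresponding strips come in cancelling pairs for suitable local systems on the circle factor, and only then conclude that the surviving differential is the fibrewise one, which Theorem \ref{thm:no_mutations} computes to have cancelling or non-cancelling contributions according to the $\beta = \pm\beta'$ dichotomy, yielding arbitrarily large rank $2\lambda$ per base point. With that correction your argument coincides with the paper's proof; the remaining bookkeeping --- exactness via the Maslov-zero curve $\gamma_0$, homologous members of the family, and detection of the parameters as in Corollary \ref{cor:floer_coefficients_genus_g} --- is as you describe.
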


As before, we only need $x^2+y^4+z^r+w^s$, and the conclusion persists if we take connected sums of such Lagrangians, or under inclusion of Liouville domains. 

\begin{proof}
This is similar to before. Let's look at the case of $\gamma_0 \times \Xi_g$, for two different choices of $\Xi_g = \Xi_g (\{ k_0, l_0, m_0 \}, \ldots, \{k_g, l_g, m_g \})$.  Considering a Hamiltonian perturbation of one of them whose projection to the base of $P_{r,s}$ is given by Figure \ref{fig:S^1xXi_g_Maslov0_HF}, we see that there are two collections of intersection points between the two Lagrangians (above each of the dots in the base), in natural one-to-one correspondence. Moreover, for each pair of such points, there are two holomorphic discs between them, whose contributions cancel for suitable choices of rank one local systems (this is merely a slightly unusual presentation of a standard configuration in $T^\ast S^1$; a neighbourhood of the zero section therein is immersed in the same of the fibration). Using the same circle of index and open mapping type considerations as before, the Floer cohomology calculation then boils down to finding 
the holomorphic discs in the fibres above each of the dots, which we already know from the proof of Theorem \ref{thm:no_mutations}.
\begin{figure}[htb]
\begin{center}
\includegraphics[scale=0.30]{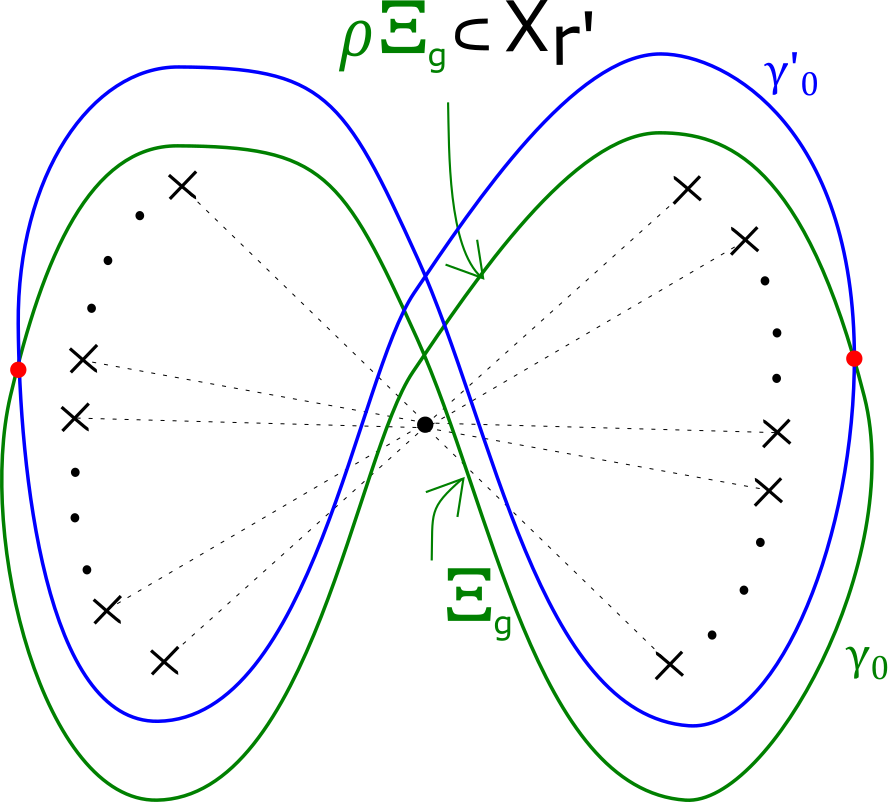}
% or [scale=0.85]
\caption{Calculating the Floer cohomology between Lagrangians of the form $\gamma_0 \times \Sigma_g$. (Some labels are suppressed for legibility -- we're using the same configuration as Figure \ref{fig:S^1xXi_g_Maslov0}. The Hamiltonian perturbation of a Lagrangian $\gamma_0 \times \Sigma_g$ is fibred over the curve $\gamma_0'$. }
\label{fig:S^1xXi_g_Maslov0_HF}
\end{center}
\end{figure} 
\end{proof}

%%%%%%%%%%%%%%%%%%%%%%%%%%%%%%%%%%%%
%%%%%%%%%%%%%%%%%%%%%%%%%%%%%%%%%%%%

\section{Generalisations to higher dimensions}\label{sec:higher_dim}

Starting with our three-dimensional examples, we can iteratively use the monodromy tricks provided by Propositions \ref{prop:Phi_definition} and \ref{prop:Phi_Dehntwists}, as well as Polterovich surgery, to construct interesting families of monotone Lagrangians in higher-dimensional Brieskorn-Pham hypersurfaces.

More precisely, starting with the Lagrangians of Theorem \ref{thm:floer_3d} and iterating one dimension at a time, we can do the following:
\begin{enumerate}

\item Given a monotone Lagrangian $L$ in $X_1 = \{ z_0^{a_0} + \ldots + z_{m-1}^{a_{m-1}} = 1 \}$, get a monotone Lagrangian $S^1 \times L $ in $X_2 = \{ z_0^{a_0} + \ldots + z_{m-1}^{a_{m-1}} + z_m= 1 \} \cong \C^m$ by taking a product with  an embedded $S^1 \subset D \subset \C$ such that the Lefschetz fibration $z_m + \epsilon (z_0, \ldots, z_{m-1}): X_2 \to \C$ is essentially trivial above $D$. (The function $\epsilon (z_0, \ldots, z_{m-1})$ is a generic linear perturbation.) The $S^1$ factor has Maslov index two. 

\item 
Given a monotone Lagrangian $L$ in $X_1 = \{ z_0^{a_0} + \ldots + z_{m-1}^{a_{m-1}} = 1 \}$, for sufficently large $a_m$, get a monotone Lagrangian $S^1 \times L$ in $X_2' = \{ z_0^{a_0} + \ldots + z_{m-1}^{a_{m-1}} + z_m^{a_m}= 1 \}$ fibred over an immersed $S^1$ in the base of the Lefschetz fibration $z_m + \epsilon (z_0, \ldots, z_{m-1}): X_2 \to \C$,  analogously to the constructions of Section \ref{sec:S^1xXi_g}. The total monodromy about the immersed $S^1$ gives a symplectomorphism of the fibre $X_1$ which is, say, the $l$th power of the total monodromy $\varrho$ of $X_1$ viewed as the Milnor fibre of a singularity: using an auxiliary Lefschetz fibration of the form $z_{m-1} + \epsilon' (z_0, \ldots, z_{m-2}): X_1 \to \C$ and the notation of Section \ref{sec:rotations}, one would have $\varrho = \Phi^k$, meaning that in a large compact set, $\varrho$ is induced by a $2\pi$ rotation of the base of this fibration. (In particular, $\varrho^l$ acts as the identity on $L$.)

The Maslov index for this $S^1$ factor is calculated as in Proposition \ref{prop:Maslov_index_3d}: twice the total winding number of the immersed $S^1$ in the base, adjusted down by $2l$. In particular, the $S^1$ factor can take arbitrary even Maslov index. 

\item Using Polterovich surgery, for sufficiently large $a_m$, we can get any connected sums of the Lagrangians in (2), with the induced Maslov indices.

\end{enumerate}

The examples in (1) are clearly null-homologous. In (2) or (3) one can check that they are primitive in homology (with $\Z$ coefficients in the orientable case and $\Z/2$ otherwise) -- for instance, for any such Lagrangian we can find a matching cycle which intersects it in exactly one point. 
Let $(\dagger)$ the collection of possible pairs of diffeomorphism types and Maslov classes described by the iterative process above. (Note that for a given diffeomorphism type $L$, we can get as Maslov class any element of $H^1(L, \Z)$ which is not excluded by orientability-type considerations.)
 It is also clear that in situations (2) and (3) -- whenever $a_m$ is sufficiently large -- we can typically tell two examples apart by calculating their Lagrangians Floer cohomology with a fixed matching cycle. In particular,

\begin{proposition}\label{prop:floer_higher_dim}
Fix an $m$-dimensional  manifold  $L$ and $\mu \in H^1(L;\Z)$ such that  $(L, \mu) \in (\dagger)$, and any constant $\kappa > 0$. Then for sufficiently large $a_i$, there exist infinitely homologous monotone Lagrangian $L \subset \{ z_0^{2} + z_1^4 + z_3^{a_3}+ \ldots +z_m^{a_m}= 1 \} = X $, distinct up to Hamiltonian isotopy, with Maslov index $\mu$ and monotonicity constant $\kappa$. The bounds on the $a_i$ only depend on the diffeomorphism type of $L$. The class $[L] \in H_m (X)$ is primitive, where coefficients should be taken in $\Z$ or $\Z/2$ depending on orientability of $L$.

This property is preserved under exact symplectic embeddings. 
\end{proposition}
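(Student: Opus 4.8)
The plan is to run the iterative construction (1)--(3) preceding the statement, carrying along at every dimension increase simultaneous control of the Maslov class, the monotonicity constant, and a Floer-theoretic invariant that separates family members. First I would build the family. Starting from the three-dimensional Lagrangians of Theorem \ref{thm:floer_3d} (or from the elementary surface building blocks of Section \ref{sec:building_blocks}), I would raise the dimension one variable at a time. Given a monotone $L'$ inside $X_1 = \{z_0^{a_0}+\cdots+z_{m-1}^{a_{m-1}}=1\}$, realise $X_1$ as the smooth fibre of the Lefschetz fibration on $X_2' = \{z_0^{a_0}+\cdots+z_{m-1}^{a_{m-1}}+z_m^{a_m}=1\}$ obtained by morsifying and projecting to $z_m$, and form $S^1 \times L'$ fibred over an immersed $S^1$ in the base. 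The key point is that, by Propositions \ref{prop:Phi_definition} and \ref{prop:Phi_Dehntwists}, the total monodromy around a suitable immersed loop is a power of the base-rotation symplectomorphism $\Phi$: since the total monodromy $\varrho$ of $X_1$ satisfies $\varrho = \Phi^k$, and the relevant power $\varrho^l$ fixes $L'$ up to Hamiltonian isotopy, the loop closes up --- provided $a_m$ is large enough to draw the required configuration. Varying the number of full twists in the lobes produces the infinite family; exactly as in the two- and three-dimensional cases, distinct members differ by null-homologous immersed pieces and are therefore homologous.

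Second I would fix the invariants. By the computation of Proposition \ref{prop:Maslov_index_3d}, the Maslov index of the new $S^1$ factor is twice the total winding number of the immersed curve adjusted down by $2l$, so it realises any even integer; combined with the inductively controlled Maslov class of $L'$ this produces precisely the pairs $(L,\mu) \in (\dagger)$. Monotonicity with the prescribed $\kappa$ is arranged as before: the meridian/fibre classes bound thimbles of zero symplectic area, so only the areas of the discs bounded by lifts of the immersed curve must be set, which is done by expanding or shrinking lobes in the base. Primitivity of $[L] \in H_m(X)$ follows by exhibiting a matching cycle meeting $L$ transversally in a single point, so that it pairs to $1$ with $[L]$ over $\Z$ (or $\Z/2$ in the non-orientable case).

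Third --- the step that does the real work --- I would distinguish the members up to Hamiltonian isotopy by Floer cohomology with a fixed matching cycle, generalising Proposition \ref{prop:floer_coefficients}, Corollary \ref{cor:floer_coefficients_genus_g} and Proposition \ref{prop:Floer_coho_3d}. For the standard almost-complex structure, fibredness over the base circles together with the open mapping theorem confines any holomorphic strip to a single fibre, so the differential on $CF(L, \mathbb{S})$ reduces to the fibrewise one already computed in low dimensions; choosing $\mathbb{S}$ so that its projection meets that of $L$ in a single point, the ranks of these groups recover the construction parameters and hence separate distinct members. Persistence under exact symplectic embeddings of Liouville domains is then immediate from the integrated maximum principle (\cite[Lemma 7.5]{Seidel_book}), which keeps the relevant holomorphic curves inside the original domain and so leaves the computation unchanged.

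The main obstacle I anticipate is the inductive bookkeeping at each dimension increase: verifying that the monodromy around the chosen immersed loop is genuinely a power of $\Phi$ acting trivially on $L'$ (so that the loop closes), and that the open-mapping/fibredness argument continues to confine Floer trajectories to fibres uniformly in the dimension, so the computation reduces cleanly to the lower-dimensional one. These are essentially repackagings of the arguments already carried out in Sections \ref{sec:rotations} and \ref{sec:Floer_theory}, but making the reduction precise in arbitrary dimension --- and tracking the lower bounds on the $a_i$ needed to fit the configurations --- is where the care is required.
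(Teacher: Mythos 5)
Your proposal is correct and follows essentially the same route as the paper: the paper's argument for Proposition \ref{prop:floer_higher_dim} is precisely the iterative constructions (1)--(3), with monotonicity and Maslov class controlled as in Proposition \ref{prop:Maslov_index_3d}, primitivity via a matching cycle meeting $L$ once, and family members separated by Floer cohomology with a fixed matching cycle using the fibred/open-mapping confinement, persisting under exact embeddings by the integrated maximum principle. Your closing remarks on the inductive bookkeeping accurately identify where the (routine but necessary) care lies.
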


In particular, we get families of examples of monotone Lagrangians, including Lagrangian tori, which are primitive in homology but have arbitrarily high minimal Maslov number.

\subsection{Further diffeomorphism types}\label{sec:further_diffeo_higher_dim}

We briefly remark that with a little care, we can mimick the constructions of Section \ref{sec:finite_order_automorphisms} and \ref{sec:further_diffeo_types} in higher dimensions. Given a pair $(L, \mu) \in (\dagger)$, fix a monotone Lagrangian $L \subset  \{ z_0^{a_0} + \ldots + z_{m-1}^{a_{m-1}} = 1 \} = X$. Use this to construct a monotone Lagrangian $\tilde{L}$ in $ \{ z_0^{a_0} + \ldots + z_{m-1}^{2ka_{m-1}} = 1 \} = \tilde{X}$ as follows: first take $2k$ copies of $L$, arranged cyclically using the automorphism $z_{m-1} \mapsto \zeta z_{m-1} $ of $ \tilde{X}$, where $\zeta$ is a $(2k)$th root of unity; now perform Polterovich surgery on the copies of $L$ together with $2k$ (\emph{not} $2k-1)$ matching cycles, say $k$ of them corresponding to a fixed vanishing cycle, and $k$ of them corresponding to a second vanishing cycle, which is disjoint from the first one. 
By modifying the area enclosed by the chain of matching cycles we can arrange for the resulting Lagrangian to be monotone, and for there to exist products of positive Dehn twists on $\tilde{X}$, say $\rho, \sigma$ and $\tilde{\sigma}$, such that $\rho$ fixes $\tilde{X}$ setwise and acts as an order $k$ rotation pointwise; $\sigma(\tilde{L}) \cap \tilde{L} = \emptyset$; and $\tilde{\sigma} \sigma(\tilde{L}) = \tilde{L}$, where this identity holds pointwise  up to Hamiltonian isotopy. 
For sufficiently large $a_m$, this then allows us to construct monotone Lagrangians in   $ \{ z_0^{a_0} + \ldots + z_{m-1}^{a_{2km-1}} + z_m^{a_m} = 1 \} $ which are non-trivial $\tilde{L}$ bundles over $S^1$, with monodromy of order $k$. Further, as in Proposition \ref{prop:further_diffeo_types}, for fixed `soft' invariants these can often be distinguished up to Hamiltonian isotopy by the Floer cohomology with a fixed matching cycle.

%\section{TO DO}

\bibliography{bib}{}
\bibliographystyle{alpha}

\end{document}